\documentclass[11pt]{amsart}
\usepackage{geometry}
\geometry{a4paper}
\usepackage{graphicx}
\usepackage{amssymb}
\usepackage{mathtools}
\usepackage{ tipa }
\usepackage{ tikz-cd}
\usepackage{rotating}
\usepackage{enumitem}
\usepackage{hyperref}
\usepackage{scalerel}

\newcommand{\anchor}{{{\scalerel*{\includegraphics{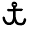}}{A}}}}
\newcommand{\boat}{{{\scalerel*{\includegraphics{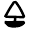}}{A}}}}

\usepackage{tikz-cd}

\usepackage{bm}


\newcommand{\CC}{\mathbb C}
\newcommand{\DD}{\mathbb D}

\newcommand{\FF}{\mathbb F}

\newcommand{\HH}{\mathbb H}
\newcommand{\II}{\mathbb I}

\newcommand{\RR}{\mathbb R}

\newcommand{\ZZ}{\mathbb Z}

\newcommand{\cA}{\mathcal{A}}

\newcommand{\cD}{\mathcal{D}}

\newcommand{\cG}{\mathcal{G}}

\newcommand{\cI}{\mathcal{I}}

\newcommand{\cM}{\mathcal{M}}
\newcommand{\cN}{\mathcal{N}}

\newcommand{\cT}{\mathcal{T}}


\let\emptyset\varnothing










	\newcommand{\cfh}{\widehat{CF}}

	\newcommand{\hfh}{\widehat{HF}}


	\newcommand{\cfkm}{CFK^-}



	\newcommand{\cflm}{CFL^-}



	\newcommand{\bsa}{{{BSA}}}

	\newcommand{\cfa}{CFA}
	\newcommand{\cfd}{{CFD}}
	\newcommand{\cfda}{{CFDA}}

\newcommand{\balpha}{{\pmb{\alpha}}}
\newcommand{\bbeta}{{\pmb{\beta}}}
\newcommand{\bgamma}{\pmb{\gamma}}
\newcommand{\bdelta}{\pmb{\delta}}

\newcommand{\blambda}{\pmb{\lambda}}

\newcommand{\btheta}{ \pmb{\theta} }

\newcommand{\ba}{{\bf a } }
\newcommand{\bb}{{\bf b } }

\newcommand{\bn}{{\bf n } }

\newcommand{\bp}{{\bf p } }
\newcommand{\bx}{{\bf x } }
\newcommand{\by}{{\bf y } }
\newcommand{\bz}{{ \bf z} }
\newcommand{\bw}{{ \bf w} }

\newcommand{\bC}{{ \pmb C} }
\newcommand{\bI}{{ \pmb I} }
\newcommand{\bJ}{{ \pmb J} }



\newcommand{\bbox}{\boxtimes}


\newcommand{\ind}{\mathrm{ind}}
\newcommand{\arrowrho}{\overrightarrow{\rho}}
\newcommand{\arrowdelta}{\overrightarrow{\delta}}
\newcommand{\arrowlambda}{\overrightarrow{\lambda}}

\newcommand{\conf}{\mathrm{Conf}}
\newcommand{\arrowP}{\overrightarrow{P}}
\newcommand{\freemod}{\cM_{\leftrightarrow}}

\newcommand{\arrowiota}{\overrightarrow{\iota}}

\newcommand{\inte}{\mathrm{Int}}

\newcommand{\handle}{\frak{H}}

\newcommand{\border}[1]{{\pmb #1}}


\newcommand{\glue}{\mbox{\textscriptg lue}}

\newcommand{\punc}{\mbox{\textwynn unc}}


\newcommand{\emb}{\mathrm{emb}}

\newcommand{\start}{\mathrm{start}}

\newcommand{\fin}{\mathrm{end}}
\newcommand{\jump}{\mathrm{jump}}


\newcommand{\tildemod}{\widetilde{\cM}}


\newcommand{\ev}{\mathrm{ev}}

\newcommand{\delbar}{\bar{\partial}}


\newcommand{\reg}{\mbox{reg}}

\newcommand{\dom}{\mbox{Dom}}

\newcommand{\poly}{\mathrm{Poly}}

\newcommand{\col}{\mathrm{Col}}

\newcommand{\ac}{\mathrm{ac}}
\newcommand{\ob}{\mathrm{ob}}

\theoremstyle{plain}
\newtheorem{thm}{Theorem}

\newtheorem{lem}[thm]{Lemma}
\newtheorem{prop}[thm]{Proposition}

\theoremstyle{definition}
\newtheorem{defn}[thm]{Definition}
\newtheorem{exmp}[thm]{Example}

\theoremstyle{plain}
\newtheorem{rmk}[thm]{Remark}
\newtheorem{convention}[thm]{Convention}

\newlist{Mlist}{enumerate}{1}
\setlist[Mlist]{label=(M-\arabic*):}

\newlist{BOR}{enumerate}{1}
\setlist[BOR]{label=(BOR-\arabic*):}

\newlist{FIL}{enumerate}{1}
\setlist[FIL]{label=(FIL-\arabic*):}

\title[The bordered Floer homology of link complements]{Holomorphic polygons and the bordered Heegaard Floer homology of link complements}
\author{Thomas Hockenhull}

\thanks{The author was supported by an EPSRC studentship from 2013-2017, and by Imperial College London mathematics department from 2017-2018. Some of this work was undertaken whilst visiting Princeton University, which was funded by the Doris Chen award.}

\begin{document}
\begin{abstract}In this paper, we describe the construction of an $\cA_\infty$ multi-module in terms of counts of holomorphic polygons in a series Heegaard multi-diagrams. We show that this is quasi-isomorphic to the type-A bordered-sutured invariant of a link complement with a view to calculating, in the sequel, these invariants in terms of the link Floer homology of the corresponding link.
\end{abstract}
\maketitle

\section{Introduction}
Suppose that $L \subset S^3$ is a link of $h$ components, and $\Lambda$ is a choice of framing for $L$. The purpose of this paper is to introduce an $\cA_\infty$ multi-module associated to $(L, \Lambda)$ over $\cT^h$, where $\cT$ is the algebra associated to a parameterised torus, as in \cite{LOT}: we call this $\poly(L, \Lambda)$. The $\cA_\infty$ quasi-isomorphism type of $\poly(L, \Lambda)$ is an invariant of the pair $(L, \Lambda)$, or, indeed, the three-manifold with parameterised boundary $(S^3 - L, \Lambda)$ --- and is closely related to the bordered-sutured Heegaard Floer invariants of this three-manifold.

$\poly(L, \Lambda)$ is defined in terms of \emph{holomorphic polygons}, and is closely related to the invariants $\cfh$ for closed three-manifolds, as defined in \cite{Ozsvath-Szabo:2001}. At its root, the chain complex $\cfh(Y)$ is defined as a chain complex arising from counting holomorphic discs in a symplectic manifold associated to a \emph{Heegaard diagram} corresponding with $Y$; that is, $\cfh(Y)$ is $\cfh(\cD)$ for some Heegaard diagram $\cD$ representing $Y$. More generally, one may consider a \emph{Heegaard $n$-diagram} for any $n \ge 2$, and count corresponding $n$-gons in an associated symplectic manifold. A Heegaard $m$-diagram corresponds with a sequence $Y^{0,1}, Y^{1,2} \dots, Y^{n-2,n-1}, Y^{n-1,0}$ of three-manifolds, and counting holomorphic $n$-gons yields maps
$$m_n: \cfh(Y^{0,1}) \otimes \cdots \otimes \cfh(Y^{n-2,n-1}) \rightarrow \cfh(Y^{n-1, 0}).$$

These are well-known to satisfy an $\cA_\infty$ relation, which is shown by counting the number of ends of one-dimensional moduli spaces and showing that they are of even parity. These moduli spaces decompose by \emph{homology classes}, and, in fact, the parity of the ends of a one-dimensional moduli space remains even when restricted to curves of a given homology class. We exploit this extra information to define $\poly(L, \Lambda)$.

Broadly speaking, this multi-module is defined by associating a series of $h$ sequences of elements of the torus algebra $\cT$ to each fixed homology class, in such a way that the $\cA_\infty$ relations for each homology class are compatible with the algebra --- yielding an $\cA_\infty$ multi-module over the algebra $\cT^h$.

The main theorem we show is:
\begin{thm}\label{mainthmlite}The invariant $\poly(L, \Lambda)$ is quasi-isomorphic to the bordered-sutured multi-module $\bsa(S^3 - L, \Lambda)$.
\end{thm}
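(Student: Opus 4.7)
The plan is to identify the two $\cA_\infty$ multi-modules by comparing their defining moduli spaces on a shared geometric setup. First, I would fix a bordered-sutured Heegaard diagram $\cH$ for $(S^3 - L, \Lambda)$ adapted to the parameterisation: near each of the $h$ torus boundary components $\cH$ carries the standard $\balpha$-arcs and basepoints whose holomorphic-curve combinatorics model generators of $\cT$. The Heegaard multi-diagram $\cD$ underlying $\poly(L, \Lambda)$ should then arise from $\cH$ by amputating a neighbourhood of each boundary component and replacing the resulting punctures with a stack of $\bbeta$-families, chosen so that polygons counted by the maps $m_n$ encode, on each homology class, a tuple in $\cT^h$ matching a Reeb-chord sequence at east infinity for $\cH$.

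The next step would be a neck-stretching argument along $h$ curves in the Heegaard surface, one encircling each torus boundary. As the necks are stretched, a holomorphic curve contributing to $\bsa(\cH)$ degenerates into an interior holomorphic polygon in $\cD$ glued along matched intersection points to exterior pieces that record sequences of Reeb chords. The core content is a compactness-and-gluing theorem: every limit has this form; the Reeb-chord sequences that appear are exactly those encoded by the tuple of $\cT^h$-elements assigned to the polygon's homology class; and generically each matched pair glues back uniquely. This would produce, on the chain level, an equality of structure coefficients between the operations $m_n$ defining $\poly(L, \Lambda)$ and those defining $\bsa(S^3-L, \Lambda)$, homology class by homology class.

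Armed with this identification, I would define a candidate map $\Phi\colon \poly(L,\Lambda)\to \bsa(S^3-L,\Lambda)$ as the identity on the common generating set of intersection points in the interior of the Heegaard surface, and verify that it intertwines the $\cA_\infty$ actions using the homology-class-refined polygon count which is the defining feature of $\poly$. That $\Phi$ is a quasi-isomorphism would then follow by filtering both complexes by the number of boundary-parallel Reeb-chord insertions carried by a sequence of inputs: the filtered pieces of $\Phi$ are identities on a common ``closed-up'' complex $\cfh$ of the interior diagram, and a spectral-sequence comparison concludes the argument.

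The main obstacle will be the gluing half of the neck-stretching step: one must show that Reeb-chord sequences described combinatorially through the torus algebra --- together with all their composition and idempotent relations --- match, on the nose and with the correct $\FF_2$-parities, the broken configurations produced when the bordered curves degenerate. Closely related difficulties are achieving simultaneous transversality across the whole family of Heegaard $n$-diagrams invoked by $\poly$, and ruling out spurious boundary degenerations (e.g.\ into $\bbeta$-$\bbeta$ strips between stacked $\bbeta$-families) which would otherwise obstruct the identification of the moduli spaces.
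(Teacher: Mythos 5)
Your instinct that the comparison should proceed via a degeneration/neck-stretching argument identifying moduli spaces is correct, but there are two concrete problems with the way you have set it up, and they are connected.

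First, the direction of the neck-stretch matters. You propose stretching along $h$ circles in the Heegaard surface $\Sigma$ encircling the torus boundary components. Such a stretch produces degenerate pieces (``teeth'') living near east infinity, but it does not split a bi-gon into a higher-sided polygon: the number of Lagrangian boundary conditions (the number of $\bgamma$-families) is fixed by the diagram, and nothing in a $\Sigma$-direction stretch manufactures new $\bgamma$-families. The paper instead stretches in the disc factor $D_m$, inserting a long neck near the edge $e^1$. When this neck breaks, the polygon $D_m$ degenerates into a $D_{m-1}$ glued along $e^1$ to an upper half-plane $\HH$ carrying a ``holomorphic anchor.'' That is the mechanism that peels off one algebraic input and trades a genuine Reeb-chord asymptotic at east infinity (a ``boat'' chord) for a chord read off from the homology class of the curve across a new approximating $\balpha$-copy (an ``anchor'' chord). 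Your version of the degeneration cannot produce this trade, so it does not by itself interpolate between bi-gon counts and polygon counts.

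Second, and more fundamentally, you are missing the inductive scaffolding that makes the analysis tractable. The paper does not try to equate $\bsa$ with $\poly$ in one shot. It defines a family of intermediate $\cA_\infty$ multi-modules $\poly_k(\border\cD)$, where the final $k$ algebra inputs are supplied by the homology class (anchor data) and the rest by bona fide Reeb-chord asymptotics, shows $\poly_0(\border\cD) \cong \bsa(\border\cD)$ (Proposition \ref{kiszeroisbsa}), shows $\poly_\kappa(\border\cD) \cong \poly(\border\cD)$ for $\kappa$ large enough (using provincial admissibility, Proposition \ref{kappabound}, and Proposition \ref{eastcompactifytoclosed}), and then proves a one-step comparison $\poly_k \cong \poly_{k+1}$ (Theorem \ref{polyequivalence}) by the neck-stretch in $D_m$ described above (Proposition \ref{neckstretch}). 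Each step peels off exactly one anchor; the index, transversality, and boundary-degeneration analysis is then confined to a single $\HH$-piece (Propositions \ref{anchorsaresimple}, \ref{simpleanchorssimpler}, \ref{correctdataboundary}, \ref{howmanyanchors}), and the ``match the $\cT$-combinatorics on the nose'' problem you flag becomes a local model computation at one east puncture rather than a simultaneous global statement. Your filtration/spectral-sequence plan is a placeholder for this iteration, but without the intermediate modules it does not say what the filtered pieces are or why the comparison map is an isomorphism on each; in the paper the comparison at each step is a literal mod-$2$ identification of structure coefficients, not a spectral-sequence argument.

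So: right broad strategy, wrong degeneration, and a missing inductive ladder. Your own list of obstacles (matching the chord combinatorics mod $2$, simultaneous transversality, ruling out spurious $\bbeta$-$\bbeta$ degenerations) is accurate, but your strategy does not yet resolve them; the $\poly_k$ ladder is precisely what reduces each of them to a statement about a single anchor, where they can actually be checked.
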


The drawback of the invariants $\bsa$ is that they are often rather tricky to compute (cf. eg.  \cite{LOT, Hanselman:graph, LOT:mappingclasses, Levine:doubles}). The module $\poly(L, \Lambda)$, whilst somewhat involved in its definition, is built from objects which have in one guise or other been present from the advent of Heegaard Floer homology for closed three-manifolds. There is a close relationship between the types of polygon maps used to build $\poly(L, \Lambda)$ and the invariants $\cflm(L)$ --- see, for instance,  \cite{Ozsvath-Szabo:2003, Ozsvath-Szabo:integer, Ozsvath-Szabo:rational}, \cite{Rasmussen:03},  \cite{Manolescu:linksurgeries}. In the sequel to this paper \cite{me:satellite}, we will utilise this to show that the invariants $\bsa(S^3 - L, \Lambda)$ are determined by the invariant $\cflm(L)$, generalising the result of \cite{LOT}, which shows that for a knot the bordered invariants $\cfd(S^3 - K, \Lambda)$ are determined by the knot invariant $\cfkm(K)$.

Conceptually, our approach is perhaps advantageous to that of \cite{LOT} in that --- besides generalising directly to the case of general link complements --- its connections with pre-existing gluing type results in closed Heegaard Floer homology are clear. In particular, it allows one to easily write down formulae for gluings in terms of mapping cones of polygonal maps between (filtered) Heegaard Floer complexes. We will use this in the sequel to write down a mapping cone formula for the knot Floer homology of a satellite knot which depends upon the knot Floer homology of the companion knot and the link Floer homology of the pattern link, and bypasses the need to understand the machinery of bordered Heegaard Floer homology.

\subsection{Motivation}
The invariant $\poly(L, \Lambda)$ is rather daunting at first, but has many precursors in Heegaard Floer homology that make it a natural object to study.

In \cite[Section 4.2]{Ozsvath-Szabo:BDC}, a chain complex is associated to a framed $h$-component link. This complex is closely related to an iteration of the \emph{link surgeries mapping cone} complex $X(L, \Lambda)$ considered in \cite{Manolescu:linksurgeries}. As a set, this complex is given by $X = \bigoplus_{I \in \{m,l\}^h}\cfh(Y(I))$, where $Y(I)$ is the manifold formed by surgering $Y$ along $L$ according to the framing induced by $\Lambda$ and $I$. The differential $D$ counts a sum of polygon maps that respect a fixed ordering of the vertices of $I^h$.

The invariant $\poly(L, \Lambda)$ is defined in a very similar manner, but with two main differences. Firstly, polygon maps in $\poly(L, \Lambda)$ do not obey the ordering of the vertices: instead, we must consider all possible polygon maps --- i.e. the maps in $\poly(L, \Lambda)$ split as sets of maps, where each set respects a fixed ordering of the vertices of $I^h$. The other clear difference is that $\poly(L, \Lambda)$ is an $\cA_\infty$-module, whereas $X(L, \Lambda)$ is a chain complex. This can be reconciled by considering instead the differential graded module $(P, \partial) := \poly(L, \Lambda) \otimes \cT^*$ (here, we write $\cT^*$ to denote the tensor algebra associated to the torus algebra: see \cite[p.10]{LOT}). Both $P$ and $X(L, \Lambda)$ split as a direct sum over idempotents of the torus algebra, and the provision of an extra base-point for each component of $L$ in the corresponding Heegaard multi-diagram gives a filtration upon the differential of $X(L, \Lambda)$, which corresponds with the splitting of $\partial$ according to corresponding sequences of algebra elements. When $L$ is a knot, this should be compared to \cite{eftekhary:splicing}.

More recently, the complex $X(L, \Lambda)$ has been studied from the perspective of bordered Heegaard Floer homology, to understand the difference between two spectral sequences between the reduced Khovanov homology of the mirror of a link $L'$ and the Heegaard Floer homology $\hfh(\Sigma(L'))$, where $\Sigma(L')$ is the branched double cover of $L'$: the one given in \cite{Ozsvath-Szabo:BDC}, and another defined in terms of bordered Heegaard Floer homology \cite{LOT:SS}. This relationship follows naturally from Theorem \ref{mainthmlite}, but, perhaps more interestingly, the methods of \cite{LOT:SSII} can be applied, together with a model computation, to give a different proof of Theorem \ref{mainthmlite} to the one in the present paper.

We sketch the main idea. The main result of \cite{LOT:SSII} is a pairing theorem for holomorphic polygons in \emph{bordered Heegaard multi-diagrams} (cf. Section \ref{borderedpolygonmodulispaces} below). Approximately, this states that if $\cD_1$ and $\cD_2$ are bordered Heegaard $k$ and $l$-diagrams respectively (i.e. $\cD_1$ has $\bbeta^1, \dots, \bbeta^k$ sets of attaching circles and one set $\balpha$ of attaching arcs and circles, and similar for $\cD_2$), a pair of holomorphic polygons $u_1$ and $u_2$ with appropriate boundary conditions (encoded by elements of the torus algebra) corresponds with a holomorphic polygon in a diagram $\cD$. Here, $\cD$ corresponds roughly with the diagram formed by gluing $\cD_1$ and $\cD_2$ together along a common boundary: but in order to make sense of attaching circles, it is often necessary to add a series of perturbed `copies' of some $\bbeta^i$ to $\cD_1$ or $\cD_2$, to promote $\cD_1$ to a bordered Heegaard $l$-diagram (if $k<l$, say).

Consider a cube of framings with vertices given by elements $I \in \{m, l\}^2$. To this we may associate a type $D$ structure given by the sum of the invariants $\cfd(\cD(I))$, where each $\cD(I)$ is a bordered Heegaard multi-diagram for the torus, with attaching circles specified by $I$. We may promote this type $D$ structure to a type $DA$-bimodule by assigning to each domain $B $ for $\cD(I)$ a sequence of elements of the torus algebra $\arrowrho(B)$, and defining $m_{1,k}(\bx, \arrowrho)$ to be a map counting holomorphic curves in homology class $B$ with $\arrowrho(B) = \arrowrho$, such that summing these $m_{1,k}(\bx, \arrowrho)$ over all sequences of $\arrowrho$ gives the structure map $\delta^1$ for the invariant $\cfd(\cD(I))$. By direct calculation, one can see that this $DA$-bimodule is equivalent to the identity $DA$-bimodule for the torus, $\cfda(\II)$. As such, we have that $\cfa(E(L)) \cong \cfa(E(L)) \bbox \cfda(\II)$, but, by the pairing theorem and the above discussion, we know that this is equal to the sum $\bigoplus_I \cfa(E(L)) \bbox \cfd(\cD(I))$, which is a sum of polygon maps in a bordered heegaard multi-diagram with $|L|-1$ boundary components, splitting by the algebra element associated to the restriction of their domain to the portion $\cD(I)$. Iterating this for all boundary components of $E(L)$ gives an identification of $\cfa(E(L))$ with a module defined in terms of polygon maps in closed Heegaard diagrams, with extra algebra information stemming from how the associated domain behaves in the regions corresponding with $\cD(I)$ --- this is precisely the way the object $\poly(L, \Lambda)$ is defined.

This should be compared with the results of \cite[Section 6.2]{auroux:fukaya}; in particular, the sketched argument above is approximately equivalent to the sketch proof of Proposition 6.5  given in that paper, but re-cast in a setting and language that is perhaps more accessible to the reader with background in Heegaard Floer homology, rather than more broad symplectic geometry.\footnote{For full transparency, I learned of Auroux's results roughly two-thirds of the way through writing this paper.} 

\subsection{Organisation}
In Section \ref{algebra} we recall the algebraic definitions of $\cA_\infty$ multi-modules necessary for the rest of the paper, and some relevant constructions and properties of the torus algebra $\cT$. Then, in Section \ref{diagrams}, we consider the close relationship between bordered-sutured Heegaard diagrams which represent the complement of a link, and a corresponding family of `partially splayed' and `splayed' Heegaard diagrams. The invariant $\poly(L, \Lambda)$ is defined in terms of the splayed diagrams, which are a family of closed Heegaard multi-diagrams --- but we will need to consider a series of auxiliary modules $\poly_k(L, \Lambda)$ defined in terms of partially splayed diagrams, which are bordered-sutured $k$-multi-diagrams. These correspond roughly with the final $k$ algebra inputs into a structure map $m_n$ being contributed not by asymptotics at a puncture in the Heegaard diagram, but instead by algebra elements associated to the homology class of a holomorphic curve. In particular, when $k$ is zero, all algebra elements are contributed by asymptotics at a puncture, and so these recover the bordered sutured Floer homology of the link complement; when $k$ is large enough, these modules stabilise and there is no interesting behaviour near the puncture --- so these modules may be identified with $\poly(L, \Lambda)$.

In Section \ref{modulispaces}, we define a series of moduli spaces of holomorphic polygons which will be instrumental in our definition of the invariants $\poly(L, \Lambda)$, and the auxiliary modules $\poly_k(L, \Lambda)$. We carefully analyse the possible boundary of these moduli spaces, thus ensuring that the $\cA_\infty$ relations hold for these multi-modules. We also lay the foundations of a neck-stretching argument which will allow us, in the final section, to identify all of the modules $\poly_k(L, \Lambda)$ for each $k$, thus proving Theorem \ref{mainthmlite}.

\subsection{Acknowledgements}
I owe the Heegaard Floer community at large a debt of graditude for their support and encouragement. In particular, I would like to thank Marco Marengon for many useful and interesting conversations. I am also grateful to Peter Ozsv\'{a}th for hosting me in Princeton, and the helpful conversations that took place there.

Robert Lipshitz has gone far and beyond in his encouragement and support, and in his tirelessness in answering my questions about the `LOT' oeuvre --- he and the University of Oregon were also very gracious hosts to me in the spring of 2016. Richard Thomas has been invaluable in his support, encouragement and cajoling over the past year or so.

Finally, I have been constantly humbled by the generosity and kindness of my supervisor Jacob Rasmussen. Without his reassurance, encouragement and patience, I doubt any of this would have been possible.

\section{Algebraic background}\label{algebra}
We spend this section recalling the algebraic background of the bordered sutured invariants --- although these are defined in \cite[Section 8]{Zarev}, the algebraic structure is not discussed at length.

\subsection{The torus algebra}
We first define the algebras that we will work over for most of this paper. These are the \emph{torus algebra}, $\cT$, and the products $\cT^k : = \underbrace{\cT \times \cdots \times \cT}_{k \text{\ times}}$.

The torus algebra is generated by eight elements as an $\FF_2$-module, which are the two idempotents $\iota_m$ and $\iota_l$, and the six Reeb chords $\rho_1, \rho_2, \rho_3, \rho_{12}, \rho_{23}$ and $\rho_{123}$. The differential is trivial, and the idempotents satisfy $\iota_m \iota_l = \iota_l \iota_m = 0$ --- we also write $1$ for the element $\iota_m + \iota_l$. The Reeb chords satisfy $\rho_1 \rho_2 = \rho_{12}, \rho_{2}\rho_3 = \rho_{23}$ and $\rho_1 \rho_{23} = \rho_{12} \rho_3 = \rho_{123}$. The other non-zero products are:
$$\begin{array}{c c c}\iota_m \rho_1 = \rho_1 \iota_l = \rho_1, & \iota_l \rho_2 = \rho_2 \iota_m = \rho_2, & \iota_m \rho_3 = \rho_3 \iota_l = \rho_3,\\
\iota_m \rho_{12} = \rho_{12} \iota_m = \rho_{12}, & \iota_l \rho_{23} = \rho_{23}\iota_l = \rho_{23}, & \iota_m \rho_{123} = \rho_{123}\iota_l = \rho_{123}\end{array}.$$

These algebra elements may be interpreted in terms of \emph{Reeb chords}. Let $Z$ denote an oriented circle with four marked points $\ba = \{a_1, \dots, a_4\} \subset Z$, together with a basepoint $b \in Z \setminus \ba$ --- we suppose that $\ba$ is numbered in such a way that a positive path from $b$ encounters them in order. A \emph{Reeb chord} $\rho \in (Z \setminus z, \ba)$ is an immersed arc in $Z \setminus z$ with endpoints in $\ba$, and orientation agreeing with that on $Z$. To each Reeb chord $\rho$ we associate the algebra element $a(\rho) : = \rho_{[i,j]}$, where $\partial \rho = a_j - a_i$ and $[i,j]$ is the unique substring of $123$ beginning with $i$ and ending with $j$. We will often conflate a Reeb chord with its corresponding algebra element.

We say that two Reeb chords $\rho$ and $\rho'$ are \emph{composable} if the product $\rho \rho'$, which we call their \emph{composition}, is non-zero. More generally, we call sequence $(\rho^1, \dots, \rho^k)$ of Reeb chords \emph{composable} if for every $i = 1, \dots, k-1$, the chords $\rho^i$ and $\rho^{i+1}$ are composable. If $\arrowrho$ is a sequence of Reeb chords, then there is an associated sequence of Reeb chords $\arrowrho'$ formed by replacing any composable subsequences of Reeb chords with their composition, which we call the \emph{reduction} of $\arrowrho$. There is an equivalence relation upon sequences of Reeb chords, given by saying that $\arrowrho$ and $\arrowrho'$ are \emph{composable-equivalent} if and only if their reductions agree.

In a similar manner, we say that two chords $\rho, \rho'$ are \emph{$123$-composable} if they are composable and their composition is $\rho_{123}$, and make a similar definition of the \emph{$123$-reduction} of a sequence of Reeb chords. We shall say that a pair of sequences of Reeb chords are \emph{$123$-equivalent} if and only if their $123$-reductions agree.

\begin{rmk}Being $123$-composable depends only upon \emph{pairs} of adjacent chords. In particular, the sequence $(\rho_1, \rho_2, \rho_3)$ is $123$-reduced, whereas the sequence $\rho_1, \rho_{23}$ has $123$-reduction $\rho_{123}$.
\end{rmk}

One can see from the definition of the torus algebra that some Reeb chords behave rather differently to others with respect to products by the elementary idempotents. If $\iota_\delta \cdot \rho \ne 0$ we will say that $\iota_{\start}(\rho) = \iota_\delta$, and if $\rho \cdot \iota_\delta \ne 0$ we will say that $\iota_{\fin}(\rho) = \iota_\delta$.

\begin{defn}\label{defjump} Suppose that $\arrowrho = (\rho^1, \dots, \rho^m)$ is a sequence of elements of the torus algebra. To the sequence $\arrowrho$ we associate the sequence of its \emph{jumping chords}. That is, the subsequence $$\jump(\arrowrho) := (\rho^i \in \arrowrho: \iota_{\start}(\rho^i) \ne  \iota_{\fin}(\rho^i))$$
of all chords in the sequence that have different starting to ending idempotent. 
\end{defn}

We turn our attention to what we term a \emph{splicing} of a series of Reeb chords. These should be thought of as something between a set of elements $\vec{b}^1, \dots, \vec{b}^k$ where $\vec{b}^i \in \cT^{\otimes h}$, and a set of sequences of elements $\vec{a}_1, \dots, \vec{a}_h$ where each $\vec{a}_j \in \cT^{k_j}$.

\begin{defn}\label{splicing}Suppose that $\arrowrho_1, \dots, \arrowrho_h$ are a series of sequences of Reeb chords $\arrowrho_j : = (\rho^1_j, \dots, \rho^{i_j}_j)$ with corresponding jumping sequences $\jump(\arrowrho_1), \dots, \jump(\arrowrho_j)$.

An $m$-\emph{splicing} of $\arrowrho_1, \dots, \arrowrho_h$ is an ordered length $m$ partition $P_j : = (P^1_j, \dots, P^{m}_j)$ of each $\jump(\arrowrho_j)$, allowing for parts $P^i_j$ to be empty, and such that \begin{itemize}\item$|P^i_j| \le 1$ for every $i,j$ and \item for each $i$, the union $P^i_1 \cup \cdots \cup P^i_h$ is nonempty.\end{itemize}

Every splicing $\vec\sigma$ has an underlying set of unordered partitions, which we shall often denote by $\sigma$. We often write $|\vec\sigma| = m$ to indicate that $\vec\sigma$ is an $m$-splicing. We also write $\col(\vec\sigma)$ for the number of elements in the set
$$\{i : P^i_j \text{\ is nonempty for more than one\ } j\}.$$

We shall say that $\vec\sigma$ is an \emph{interleaving} of $\arrowrho_1, \dots, \arrowrho_h$, or that $\vec\sigma$ is \emph{interleaved} if $\col(\vec\sigma) = 0$: that is, for every $i$, precisely one $P^i_j$ is non-empty.
\end{defn}

If we are given two sets of sequences of Reeb chords $\arrowlambda = \arrowlambda_1, \dots, \arrowlambda_h$ and $\arrowdelta = \arrowdelta_1, \dots, \arrowdelta_h$, where each $\arrowlambda_j$ is of length $l_j$ and each $\arrowdelta_j$ is of length $k_j$, there is a corresponding set of sequences of Reeb chords $\arrowlambda \star \arrowdelta$ defined by $(\arrowlambda \star \arrowdelta)_j : = (\arrowlambda_j, \arrowdelta_j)$. Similarly, given splicings $\vec\sigma_1 = P_1, \dots, P_h$ and $\vec\sigma_2 = Q_1 \dots Q_h$ for $\arrowrho$ and $\arrowdelta$ respectively, then there is a corresponding splicing $\vec\sigma_1 \star \vec\sigma_2$ of $\arrowrho \star \arrowdelta$, formed by $(\vec\sigma_1 \star \vec\sigma_2) = (P_1, Q_1), \dots, (P_h, Q_h)$.

A splicing naturally gives rise to a sequence of sets of idempotents in the torus algebra:
\begin{defn}\label{splicingtoidemps}Let  $\arrowrho : = \arrowrho_1, \dots, \arrowrho_h$ be a series of sequences of algebra elements $\arrowrho_j$, and let $\vec\sigma = P_1, \dots, P_h$ be an $m$-splicing for $\vec{a}$.

For each $j$, let $P_j^{i_1}, \dots, P_j^{i_{|\jump(\arrowrho_j)|}}$ denote the parts of $P_j$ which are nonempty, and let $\rho_j^{i_k}$ denote the (unique) Reeb chord in each $P_j^{i_k}$. Each $P_j^{i_k}$ has an associated sequence of parts $$\arrowP_j^{i_k} : =(P_j^{i_{k}},P_j^{i_{k}+1} \dots, P_j^{i_{k+1}-1}),$$ its \emph{trailed} sequence, where only  $P_j^{i_k} \in \arrowP_j^{i_k}$ is non-empty. The sequence $P_j$ decomposes as a union 
$$P_j = \bigcup_{k = 0}^{|\jump(\arrowrho_j)|} \arrowP^{i_k},$$
where we define $\arrowP_j^{i_{0}}$ to be the union of the empty partitions $(P^1_j, \dots, P^{i_1 - 1}_j)$.

We define a set of idempotents by
$$\iota^i_j : = \left\{\begin{array}{c l}\iota_\start(\rho^{i_1}_j) &\text{ if } P^i_j \in \arrowP_j^{i_0},\\
\iota_{\fin}(\rho^{i_k}_j) & \text{ if } P^i_j \in \arrowP_j^{i_k}, k \ne 0 \end{array}\right. .$$

We collect these into sequences of idempotents $\arrowiota^0, \dots, \arrowiota^{m}$ by $\arrowiota^i = \iota^i_1, \dots, \iota^i_h$, and denote by $\iota_{\vec\sigma} : = (\arrowiota^1, \dots, \arrowiota^{m+1})$.
\end{defn}
We can also go in the other direction, associating a splicing to a pair of a set of sequences of Reeb chords and a set of sequences of idempotents:

\begin{defn}\label{reebidemptosplicing}
Suppose that we are provided with a set of sequences of Reeb chords $\arrowrho_1, \dots, \arrowrho_h$, and a sequence of sets of idempotents $\arrowiota = (\arrowiota^0, \dots, \arrowiota^{m})$ where each $\arrowiota^i = \iota^i_1, \dots, \iota^i_h$ satisfies that for $j = 1, \dots, h$, the sequence of idempotents $\arrowiota_j : = \iota^0_j, \dots, \iota^{m}_j$ has
$$|\{\iota^i_j \in \arrowiota_j: \iota^i_j \ne \iota^{i-1}_j\}| = |\jump(\arrowrho_j)|.$$
There is a corresponding splicing given by setting $P^i_j = \emptyset$ when $\iota^i_j = \iota^{i-1}_j$, and for each $\iota^{i_k}_j \ne \iota^{i_k-1}_j$ we set $P^i_j$ to be the part $\{\rho_j^{i_k}\}$, where $\rho_j^{i_k}$ is the $k$th chord in $\jump(\arrowrho_j)$ --- we call this $\vec\sigma(\arrowrho, \arrowiota)$.
\end{defn}
We will also need to consider splitting a sequence of Reeb chords in two --- roughly, in what follows, one half $\arrowrho_\boat$ corresponds with genuine asymptotic behaviour of a holomorphic curve, whereas $\arrowrho_\anchor$ will be extracted from the topology of the homology class of the curve.

\begin{defn}Suppose that $\arrowrho = \arrowrho_1, \dots, \arrowrho_h$ is a set of sequences of Reeb chords, and $\vec\sigma$ is an interleaved splicing of $\arrowrho$. We shall say that a \emph{$k$-shipping} of $\arrowrho$ is a pair $\arrowrho_\boat, \arrowrho_\anchor$ (where $\arrowrho_\boat = \arrowrho_{\boat, 1}, \dots, \arrowrho_{\boat, h}$ and $\arrowrho_\anchor = \arrowrho_{\anchor, 1}, \dots, \arrowrho_{\anchor, h}$) together with a pair of splicings $\vec\sigma_\boat, \vec\sigma_\anchor$ such that:
\begin{itemize}\item for some $j$, we have that $\arrowrho_{\anchor, j}$ begins with a jumping chord,
\item $\vec\sigma_\anchor$ is a $k'$-splicing of $\arrowrho_\anchor$, where
$$k' = \max\{|\jump(\arrowrho_{\anchor,1})| \times \cdots \times |\jump(\arrowrho_{\anchor, h})|, k\},$$
and
\item $(\arrowrho_\boat, \vec\sigma_\boat) \star (\arrowrho_\anchor, \vec\sigma_\anchor)=(\arrowrho, \vec\sigma)$.
\end{itemize}

If $(\arrowrho_\boat, \vec\sigma_\boat)$ and $(\arrowrho_\anchor, \vec\sigma_\anchor)$ form a $k$-shipping of $(\arrowrho, \vec\sigma)$, and $(\arrowrho'_\boat, \vec\sigma'_\boat)$ and $(\arrowrho'_\anchor, \arrowrho'_\anchor)$ form a $(k+1)$-shipping of $(\arrowrho, \vec\sigma)$ then we shall say that the pair $(\arrowrho'_\boat, \vec\sigma'_\boat)$ and $(\arrowrho'_\anchor, \arrowrho'_\anchor)$ \emph{splays} $(\arrowrho_\boat, \vec\sigma_\boat)$ and $(\arrowrho_\anchor, \vec\sigma_\anchor)$.
\end{defn}

\subsection{$\cA_\infty$ multi-modules}
We begin with $\cA_\infty$-modules as a warm-up. Let $\cA$ be a differential graded algebra over $\FF_2$, with ring of idempotents $\cI \subset \cA$. A (right) $\cA_\infty$ module over $\cA$ is a module over $\FF_2$ with right action of $\cI$ and maps
$$m_{k+1}: M \otimes_{\cI} \cA \otimes_{\cI} \otimes \cdots \otimes_\cI \cA \rightarrow M,$$
satisfying the $\cA_\infty$ relation
\begin{align*}0& = \sum_{i = 0}^n m_{n-i+1}(m_{i+1}(x, a_1, \dots, a_i) a_{i+1}, \dots, a_n)\\ &+ \sum_{i=1}^{n-1}m_n(x, a_1, \dots, \mu(a_i, a_{i+1}), a_{i+2}, \dots, a_n)\\ &+ \sum_{i=1}^n m_{n+1}(x, a_1, \dots, \partial a_i, \dots, a_n)\end{align*}
for every $x \in M$ and $a_1, \dots, a_n \in \cA$.

More general is the notion of an \emph{$\cA_\infty$ multi-module}. If $\cA_1, \dots, \cA_h$ are differential graded algebras over $\FF_2$, each with corresponding ring of idempotents $\cI_1, \dots, \cI_h$, then a (right) $\cA_\infty$ multi-module over $\cA_1, \dots, \cA_h$  is a module $M$ over $\FF_2$ with a right action of $\cI: = \cI_1 \otimes \cdots \otimes  \cI_h$, and equipped with maps
$$m_{i_1, \dots, i_h}: M \otimes_{\cI} \cA_1^{\otimes i_1} \otimes \cdots \otimes  \cA_h^{\otimes i_h} \rightarrow M,$$
where $\cA_i^{\otimes i_j}$ is the result of taking the tensor product of $\cA_i$ with itself $i_j$ times, over the ring $\cI_j$.

The relations these maps must satisfy are more difficult to write down; we roughly follow Hanselman's presentation in \cite[Section 2.1]{Hanselman:graph}, with some modifications. If $\vec{a_j} := (a_j^1, \dots, a_j^{i_j}) \in \cA_j^{\otimes i_j}$ is a sequence of elements of $\cA_j$, then for every $0 \le k \le i_j$ we define the truncations
\begin{align*}T_k(\vec{a_j}) &:= (a_j^1, \dots, a_j^{k}) \\
T^k(\vec{a_j}) &:= (a_j^k, \dots, a_j^{i_j}),
\end{align*}
and the maps
\begin{align*}\bar{\mu}^k(\vec{a_j}) &: = (a_j^1, \dots, a_j^{k-1},  \mu(a_j^k, a_j^{k+1}), a_j^{k+2}, \dots a_j^{i_j})\\
\bar{\partial}(\vec{a_j}) &: = \sum_{k=1}^{j_h - 1}(a_j^1, \dots, a_j^{k-1},  \partial a_j^k, a_j^{k+1}, \dots a_j^{i_j}).
\end{align*}
This allows us to define the sequence
$$\bar{\mu}_j^k(\vec{a_1}, \dots, \vec{a_h}) : = (\vec{a_1}, \dots, \bar{\mu}^k(\vec{a_j}), \dots, \vec{a_h}).$$

With this in mind, and omitting subscripts, the relations which the multiplication maps must satisfy is given by
\begin{align*}0 & = \sum_{j_1, \dots, j_h} m(m(x, T_{j_1}(\vec{a_1}), \dots, T_{j_h}(\vec{a_h})), T^{j_1}(\vec{a_1}), \dots, T^{j_h}(\vec{a_h}))\\
&+ \sum_{j=1}^h\sum_{k = 1}^{i_j -1} m(x, \bar{\mu}^k_j(\vec{a_1}, \dots, \vec{a_h})) \\
&+ \sum_{k=1}^h m(x, \vec{a_1}, \dots, \vec{a_{k-1}}, \bar{\partial}(\vec{a_k}), \vec{a_{k+1}}, \dots, \vec{a_h}).
\end{align*}

When $\cA_1, \dots, \cA_h$ are all copies of the torus algebra, we will have cause to consider decompositions of these maps according to the extra data of a splicing of $\vec a_1, \dots, \vec a_h$ --- the easiest way to define the $\cA_\infty$ structure maps we will be interested in later in the paper is in terms of a series of auxiliary maps which sum to give the structure maps, and themselves satisfy a relation that implies the $\cA_\infty$ relations for their sums. First, we need to discuss operations upon interleaved splicings which are compatible with the operations $T_*, T^*$ and $\bar{\mu}^i_j$.

Fix an interleaved splicing $\vec\sigma$ of a set of sequences of Reeb chords $\arrowrho = \arrowrho_1, \dots, \arrowrho_h$. We shall say a pair of indices $i, j$ is \emph{compatible} with $(\arrowrho, \vec\sigma)$ if either:
\begin{enumerate}\item $\rho^i_j \rho^{i+1}_j = 0$, or
\item $\rho^i_j \rho^{i+1}_j$ is nonzero and precisely one of $\rho^i_j, \rho^{i+1}_j$ is jumping (in part $P^{k}_j$, say), or
\item $\rho^i_j \rho^{i+1}_j$ is nonzero, both of $\rho^i_j \in P^{k_i}_j$ and $\rho^{i+1}_j \in P^{k_{i+1}}_j$ are jumping, and we have that they are in consecutive parts --- that is, $\rho^i_j \in P^{k_i}_j, \rho^{i+1}_j \in P^{k_i +1}_j$ for some $k_i$.
\end{enumerate}
If $i, j$ are compatible with $\arrowrho, \vec\sigma$, then there is an associated splicing of $\bar{\mu}^i_j(\arrowrho)$ which we call $\vec\sigma^i_j$: in case (2), we set $\vec\sigma^i_j$ to be the splicing of $\bar{\mu}^i_j(\arrowrho)$ obtained from $\vec\sigma$ by replacing $P^k_j$ with the part $\{\rho^i_j \rho^{i+1}_j\}$. In case (3), we set $\vec\sigma^i_j$ to be the splicing given by omitting, for each $j$, the parts $P^{k_i}_j$ and $P^{k_i+1}_j$ from $P_j$.

If $\vec\sigma$ is interleaved, we shall say that an index $0<k<m$ is \emph{collidable} for $(\arrowrho, \vec\sigma)$ if for every $j$ we have that only one of $P^k_j$ and $P^{k+1}_j$ is non-empty. In this circumstance, we can define an associated $m-1$-splicing of $\arrowrho$ associated to $\vec\sigma$ and $k$ which we call $\vec\sigma(k)$, the \emph{collision of $\vec\sigma$ at $k$}. $\vec\sigma(k)$ consists of the partitions $Q_j$, where each $Q_j$ is formed by
$$Q_j^i := \left\{ \begin{array}{c c}P_j^i  & \mbox{ if } i < k\\
P_j^i \cup P_j^{i+1} & \mbox{ if } i = k\\
P_j^{i+1}  & \mbox{ if } i > k + 1
\end{array}\right. .$$

In words, we amalgamate the $k$ and $k+1$-th parts of each partition. Colliding does not preserve being interleaved.

\begin{defn}Let  $\vec\sigma(i_1, \dots, i_h)$ denote the set of all splicings of all sets of sequences of Reeb chords in  $\cA_1^{\otimes i_1} \otimes \cdots \otimes  \cA_h^{\otimes i_h}$.
We shall say a sequence of maps $$n_{i_1, \dots, i_h}: M \otimes_{\cI} \cA_1^{\otimes i_1} \otimes \cdots \otimes  \cA_h^{\otimes i_h} \times \vec\sigma(i_1, \dots, i_h) \rightarrow M$$
satisfies a \emph{partial $\cA_\infty$ relation} if, for every $x \in M$, set of sequences of Reeb chords $\arrowrho$, and interleaving $\vec\sigma$ of $\arrowrho$, we have that

\begin{align*}\label{partialainf} 0 & = \sum_{\substack{\arrowlambda \star \arrowdelta = \arrowrho \\ \vec\sigma_1 \star \vec\sigma_2 = \vec\sigma}} n(n(x, \arrowlambda, \vec\sigma_1), \arrowdelta, \vec\sigma_2 )\\
&+ \sum_{i, j \text{\ compatible}} n(x,\bar{\mu}^i_j(\arrowrho), \vec\sigma^i_j) \\
&+ \sum_{k \text{\ collidable}}^{|\vec\sigma|-1} n(x, \arrowrho, \vec\sigma(k)).
\end{align*}
\end{defn}

\begin{prop}Let $n_{i_1, \dots, i_h}$ be a sequence of maps satisfying the partial $\cA_\infty$ relation. Then the maps
 $$m_{i_1, \dots, i_h}: M \otimes_{\cI} \cA_1^{\otimes i_1} \otimes \cdots \otimes  \cA_h^{\otimes i_h} \rightarrow M$$
 defined by
 $$m_{i_1, \dots, i_h}(x, \arrowrho) : = \sum_{\vec\sigma \text{\ an interleaving for\ }\arrowrho} n_{i_1, \dots, i_h}(x, \arrowrho, \vec\sigma)$$
 satisfy the $\cA_\infty$ relations.
\end{prop}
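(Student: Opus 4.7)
The plan is to sum the partial $\cA_\infty$ relation over all interleavings $\vec\sigma$ of $\arrowrho$ and match the result term-by-term with the ordinary $\cA_\infty$ relation for $m$. First I would expand the quadratic term $\sum_{\arrowlambda \star \arrowdelta = \arrowrho} m(m(x, \arrowlambda), \arrowdelta)$ using the definition $m(y, -) = \sum_{\vec\sigma \text{ interl.}} n(y, -, \vec\sigma)$; the key observation is that concatenation $(\vec\sigma_1, \vec\sigma_2) \mapsto \vec\sigma_1 \star \vec\sigma_2$ gives a bijection between pairs of interleavings of $(\arrowlambda, \arrowdelta)$ and interleavings of $\arrowrho$ equipped with a splitting. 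The quadratic term of the $\cA_\infty$ relation thus equals the sum, over interleaved $\vec\sigma$, of the first term appearing in the partial relation, and by applying the partial relation this rewrites as the sum over interleaved $\vec\sigma$ of the compatibility and collision contributions. Note that the $\bar\partial$ term of the $\cA_\infty$ relation vanishes automatically, since $\cT$ has trivial differential.

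For the collision contributions, I would exhibit a fixed-point-free involution on the pairs $(\vec\sigma, k)$ indexing the collision sum. Given such a pair with $\vec\sigma$ interleaved and $k$ collidable, the non-empty parts at columns $k$ and $k{+}1$ lie in distinct sequences $j_k \neq j_{k+1}$ by the interleaving and collidability conditions. Swapping these two columns produces a different interleaved $\vec\sigma'$ for which $k$ remains collidable and $\vec\sigma'(k) = \vec\sigma(k)$. This pairs up all collision contributions, forcing them to sum to zero mod $2$.

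For the compatibility contributions, I would match them with the $\bar\mu$ term $\sum_{i,j} m(x, \bar\mu^i_j(\arrowrho)) = \sum_{i,j} \sum_{\vec\tau \text{ interl.}} n(x, \bar\mu^i_j(\arrowrho), \vec\tau)$. Case analysis confirms that $\vec\sigma^i_j$ is always interleaved: in case (2) the composition $\rho^i_j \rho^{i+1}_j$ is jumping and $\vec\sigma^i_j$ merely relabels a single column, while in case (3) the idempotents force the composition to be non-jumping and the two consecutive columns containing $\rho^i_j, \rho^{i+1}_j$ (which by interleaving are empty in all other slots) drop out cleanly. One must then verify that, for each fixed $(i,j)$ with nonzero product, the assignment $\vec\sigma \mapsto \vec\sigma^i_j$ induces the correct bijection of the two sums mod $2$.

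The main obstacle I anticipate is this last bijection, especially in case (3) where two columns of $\vec\sigma$ collapse. One must track carefully how many interleaved $\vec\sigma$ (with $(i,j)$ compatible in the sense of case (2) or case (3)) yield a prescribed interleaving $\vec\tau$ of $\bar\mu^i_j(\arrowrho)$ and check that the total multiplicity is odd, perhaps via a further pairing argument that interacts with the collision cancellation above. Once this combinatorial bookkeeping is pinned down, the three pieces — matched quadratic term, vanishing collision sum, and correctly identified compatibility terms — assemble into the full $\cA_\infty$ relation for $m$.
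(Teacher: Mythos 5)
Your proposal tracks the paper's proof step for step: sum the partial relations over interleavings, match the quadratic term via the concatenation bijection $(\vec\tau_1,\vec\tau_2)\mapsto\vec\tau_1\star\vec\tau_2$, cancel the collision terms via the column-swap involution, and match the compatibility contributions with the $\bar\mu$ term. The paper's proof is, if anything, terser than yours: it simply asserts that "the first two terms in the partial relation sums to the first two terms in the $\cA_\infty$ relations" and that the collision terms pair off.

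However, the ``main obstacle'' you flag in case (3) is, as far as I can tell, a genuine gap and not just bookkeeping that will resolve itself. In case (3) the assignment $\vec\sigma\mapsto\vec\sigma^i_j$ sends an interleaving with $m$ columns to one with $m-2$ columns, and the fibres of this map have \emph{variable, possibly even} cardinality. A minimal test case: take $h=2$, $\arrowrho=((\rho_1,\rho_2),(\rho_3))$ and $(i,j)=(1,1)$, so $\bar\mu^1_1(\arrowrho)=((\rho_{12}),(\rho_3))$. The three interleavings of $\arrowrho$ are the shuffles $(\rho_1,\rho_2,\rho_3)$, $(\rho_1,\rho_3,\rho_2)$, $(\rho_3,\rho_1,\rho_2)$; of these, exactly \emph{two} (the first and third) have $\rho_1,\rho_2$ in consecutive columns and hence have $(1,1)$ compatible, and both map to the unique interleaving $(\rho_3)$ of $\bar\mu^1_1(\arrowrho)$. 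The compatibility contribution is therefore $2\,n\bigl(x,((\rho_{12}),(\rho_3)),(\rho_3)\bigr)=0 \pmod 2$, while the $\bar\mu$ term of the $\cA_\infty$ relation contributes $m(x,\bar\mu^1_1(\arrowrho))=n\bigl(x,((\rho_{12}),(\rho_3)),(\rho_3)\bigr)$, a single copy. Meanwhile your quadratic bijection is correct, and your swap involution does cancel all the collision contributions among themselves (both are essentially what the paper says too), so there is nothing left over to absorb the discrepancy; the two sides of the $\cA_\infty$ relation differ by $n\bigl(x,((\rho_{12}),(\rho_3)),(\rho_3)\bigr)$, which the partial relation does not force to vanish. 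Your suggested fix --- a further pairing ``that interacts with the collision cancellation'' --- cannot work in the form you describe, because the surplus compatibility terms are values of $n$ at the \emph{shorter} input $\bar\mu^i_j(\arrowrho)$ and at interleaved splicings, whereas the collision terms are values of $n$ at $\arrowrho$ itself and at non-interleaved splicings; these live in disjoint summands and cannot cancel one another. Closing this gap therefore requires either an amended definition of case-(3) compatibility or of $\vec\sigma^i_j$ (so that the fibres of $\vec\sigma\mapsto\vec\sigma^i_j$ become odd), or a reformulation of the partial relation; the paper's one-line argument does not supply this, and neither, as you honestly acknowledge, does your sketch.
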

\begin{proof}This follows readily by fixing $x$ and $\arrowrho$, and summing the partial $\cA_\infty$ relations over all interleavings for $\arrowrho$. The first two terms in  the partial relation sums to the first two terms in the $\cA_\infty$ relations, and the final terms cancel in pairs where $\vec\sigma$ and $\vec\sigma'$ are the two unique interleavings which satisfy $\vec\sigma(k) = \vec\sigma'(k)$.
\end{proof}

\section{Diagrams}\label{diagrams}
In this section, we will be concerned with \emph{diagrams} of various sorts. In the most generality:

\begin{defn}A \emph{diagram} consists of the following data:
\begin{enumerate}\item A connected surface $\Sigma$ of genus $g$, with $h$ boundary components $\partial \Sigma_1, \dots, \partial \Sigma_h$.
\item An ordered tuple of \emph{attaching curves} $\bgamma = \{\bgamma^1, \dots, \bgamma^m\}$, where each $\bgamma^i$ is a set of $(g + h -1) - k_i$   pairwise disjoint embedded curves and $2 k_i$ pairwise disjoint embedded arcs in $\Sigma$. We require that for every $i$ and $j$, $\bgamma^i$ intersects $\bgamma^j$ transversely.
\item A set of marked points $\bb = \{b_1, \dots, b_h\}$, where $b_i \in \partial \Sigma_i$.
\item A set of distinguished points $\bp = \{ p_1, \dots, p_l\}$, where $p_i \in \inte(\Sigma)$.
\end{enumerate}
\end{defn}

\begin{defn}Let $\cD$ be a diagram, with two sets of attaching curves and arcs $\bgamma, \bgamma'$. A \emph{generator} for $(\bgamma, \bgamma')$ is a set of intersection points $\bx = \{x_1, \dots, x_g\}$ where the $x_i$ are members of $\bgamma \cap \bgamma'$, such that:
\begin{itemize}\item every closed curve in $\bgamma$ and $\bgamma'$ contains precisely one $x_i$, and
\item exactly half of the arcs in each of $\bgamma$ and $\bgamma'$ contain precisely one $x_i$.
\end{itemize}

We will denote the set of generators for $(\bgamma, \bgamma')$ by $\cG(\bgamma, \bgamma')$.

In more generality, we will say a \emph{sequence of generators} for a diagram with more than two sets of attaching curves $\bgamma^1, \dots, \bgamma^m$ is a sequence $\bx^{1,2}, \dots, \bx^{m,1}$ where $\bx^{i,i+1} \in \cG(\bgamma^i, \bgamma^{i+1})$ (here, the superscripts are taken modulo $m$).
\end{defn}

If $\cD$ is a diagram with $m$ sets of attaching curves, then for any subset $S \subset \{1, \dots, m\}$ of size $l$, there is an associated set of attaching circles $\bgamma(S) : = \{\bgamma^i: i \in S\}$, and the data $\cD|_{S} := (\Sigma; \bgamma(S); \bp; \bb)$ also constitutes a diagram, which we call an \emph{$l$-subdiagram of $\cD$}.

We will later be interested in perturbing some of the curves in a diagram. A way to do this is given in \cite[Lemma 11.8]{Lipshitz:cylindrical}. Namely, Lipshitz describes a Hamiltonian $H: \Sigma \rightarrow \RR$ which is supported within a tubular neighbourhood of a set of attaching curves, such that the corresponding Hamiltonian isotopy $H(x,t)$ behaves as in Figure \ref{fig:approx} ($\theta$ refers to the $S^1$ co-ordinate of the tubular neighbourhood, identified with $S^1 \times [0,1]$). 
\begin{figure}[h]
\centering{
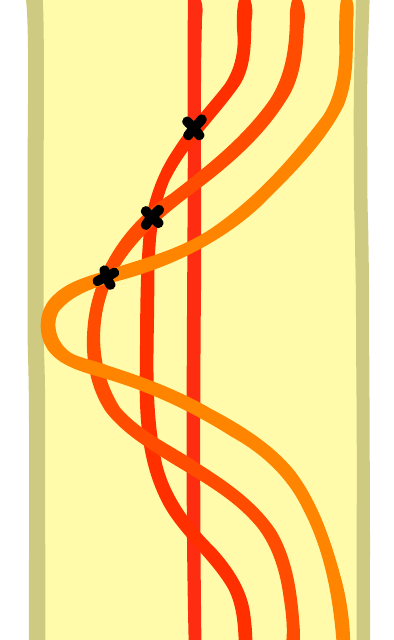
\caption{The behaviour of the hamiltonian isotopy $H(x,t)$, where $t = 0$ is shown in red and $t = \varepsilon$ in orange.}
\label{fig:approx}\phantomsection
}
\end{figure}

\begin{defn}Suppose $\bgamma$ is a collection of arcs and curves in some diagram. Then we define the $\varepsilon$-small $m$-fold approximation of $\bgamma$ to be the set of sets of arcs and curves $\bgamma(\varepsilon, m) = \{\bgamma^1(\varepsilon, m), \dots, \bgamma^j(\varepsilon,m)\}$, where $\bgamma^i(\varepsilon,m) = H(\bgamma,i \varepsilon / m)$. We shall sometimes write $\bgamma^{\varepsilon/m; i}$ for $\bgamma^i(\varepsilon,m)$ when it is more convenient.

If $\bgamma^1, \dots, \bgamma^m$ is a sequence of curves that may be written as $\bgamma(\varepsilon, m)$ for some $\bgamma, \varepsilon, m$, we will often say that $\bgamma^1, \dots, \bgamma^m$ are \emph{small approximations of one another}, and, when $m=2$, we shall write $\bgamma^1 \approx \bgamma^2$. If $\bgamma^1, \bgamma^2 \subset \Sigma$ are small approximations of one another for some $\varepsilon$, we will say that the subset of $\Sigma$ given by $\cup_{t \in [0, \varepsilon]}H(\bgamma^1, t)$ is the \emph{approximation region} for $\bgamma^1, \bgamma^2$. More generally, if $\bgamma^1, \dots, \bgamma^m$ are all small approximations of one another, we say the approximation region for $\bgamma^1, \dots, \bgamma^m$ is the union of the approximation regions for each $\bgamma^i \approx \bgamma^{i+1}$.
\end{defn}

Note that in an $\varepsilon$-small $m$-fold approximation, any consecutive set of arcs and curves $\bgamma^i, \bgamma^{i+1}$ constitute the $\varepsilon/m$-small 2-fold approximation of $\bgamma$. For any $2$-fold approximation $\bgamma^i, \bgamma^{i+1}$, we can choose a \emph{distinguished generator $\btheta^{i,i+1}$} : a choice of, for every $i = 1, \dots, g+k$, one of the two elements of $\bgamma \cap \bgamma'$. Hence, given an $\varepsilon$-small $m$-fold approximation, we may choose a distinguished generator for each consecutive pair $\bgamma^i, \bgamma^{i+1}$ to give a set of distinguished generators $\vec\btheta = \{\btheta^{1,2}, \dots, \btheta^{m-1,m}\}$.

We will be largely concerned with a canonically defined set of distinguished generators. Namely:
\begin{exmp}\label{thetagen}Consider the intersection points $\theta^{i, i+1}_{+,j} \in \gamma_j^i \cap \gamma_{j}^{i+1}$ as shown in Figure \ref{fig:approx}. The collection of $\btheta_+^{i, i+1} = \cup_j \theta^{i, i+1}_{+,j} $ for every $i$ gives a set of distinguished generators $\vec\btheta_+$.

Sometimes, we refer to the other choice of intersection point in $\gamma^i_j \cap \gamma^{i+1}_j$ as $\theta^{i, i+1}_{-, j}$.
\end{exmp}

\begin{defn}Suppose $\bgamma$ and $\bgamma'$ are two collections of arcs and curves in some diagram, and let $\bx \in \cG(\bgamma, \bgamma')$ be a generator.

If $\bgamma^{\varepsilon/m; 1}, \dots, \bgamma^{\varepsilon/m; m}$ is the $\varepsilon$-small $m$-fold approximation to $\bgamma$, then, provided $\varepsilon$ is sufficiently small, there are a series of canonical \emph{nearest point maps}, which are given by, for each $i$, the map $\varphi_{i}: \cG(\bgamma, \bgamma') \rightarrow \cG(\bgamma^{\varepsilon/m; i}, \bgamma')$ which sends  $x_j \in \gamma_j \cap \gamma'_{\sigma(j)}$ to the canonical nearest point $x_j^{\varepsilon/m;i} \in \gamma^{\varepsilon/m; i}_j \cap \gamma'_{\sigma(j)}$.

If we are given a set of distinguished generators $\vec\btheta$ for the approximation, then this allows us to associate to any pair of generators $\bx, \by \in \cG(\bgamma, \bgamma')$ a sequence of generators $$(\bx', \btheta^{1,2}, \dots, \btheta^{m-1,m}, \by')$$
in the set
$$\cG(\bgamma', \bgamma^{\varepsilon/m; 1}) \times \cG(\bgamma^{\varepsilon/m; 1}, \bgamma^{\varepsilon/m; 2}) \times \dots \times \cG(\bgamma^{\varepsilon/m; m-1}, \bgamma^{\varepsilon/m; m}) \times \cG( \bgamma^{\varepsilon/m; m}, \bgamma'),$$
where $\bx' = \bx$, and $\by' = \varphi_m(\by)$. We call this sequence the \emph{$\varepsilon$-small $m$-fold splaying} of $(\bx, \by)$, and shall often abuse notation and write $(\bx, \btheta^{1,2}, \dots, \btheta^{m-1m}, \by)$ for the relevant generator.
\end{defn}

\subsubsection{Toroidal bordered Heegaard diagrams}
One of the main instances of a diagram we shall consider is when $\cD$ specifies a three-manifold with boundary that is a disjoint union of tori. The definition we shall use is:
\begin{defn}\label{bordereddiagrams}Suppose that $\cD = (\Sigma; \bgamma; \bb; \bp)$ is a diagram. Then we shall say that $\cD$ is a \emph{toroidal bordered Heegaard diagram} if:
\begin{enumerate}\item $\Sigma$ is a genus $g$ surface with $h$ boundary components.
\item $\bgamma$ consists of only two sets of curves, $\balpha$ and $\bbeta$, such that $\bbeta$ consists  of simple closed curves, and $\balpha$ can be partitioned into two sets,  $\balpha = \balpha^a \cup \balpha^c$, where $\balpha^a$ consists  of $2h$ arcs and $\balpha^c$ consists of $g-h$ closed curves.

\item The set $\balpha^a$ can be written $\balpha^a = \{\alpha^m_1, \alpha^l_1, \dots, \alpha^m_{h}, \alpha^l_{h}\}$, where $\alpha^m_i \cup \alpha^l_i$ intersects $\partial \Sigma_i$ relative to the point $b_i \in \partial \Sigma_i$ as shown in the figure,  and is disjoint from $\partial \Sigma_j$ for $i \ne j$.

\item Every region of the surfaces $\Sigma - \balpha$ and $\Sigma - \bbeta$ meets some base-point $b_i \in \bb$ (we sometimes call this \emph{homological linear independence}). 

\item  The set of points $\bp$ is empty (we therefore usually omit this from the notation).

\end{enumerate}
\end{defn}

In the case where $\cD$ is a toroidal bordered Heegaard diagram with $|\partial \Sigma| = 1$, this constitutes a bordered Heegaard diagram compatible with a manifold with torus boundary in the sense of \cite{LOT}. When $|\partial \Sigma| > 1$, the data corresponds with a \emph{bordered sutured Heegaard diagram} in the sense of \cite{Zarev}. The data of $\bb = \{b_1, \dots, b_{k} \}$ corresponds with a series of arcs $\Gamma_\bb \subset \partial \Sigma$, so that $\balpha^a$ and $\Gamma_\bb$ specify an embedding of the disjoint union of $g-k$ arc diagrams for the solid torus into $\Sigma$: that is, $\cD$ is equivalent to a bordered sutured Heegaard diagram for a three-manifold with $k$ torus boundary components. We will tend to ignore the distinction between $\cD$ and the corresponding bordered sutured diagram.

\begin{convention}To avoid proliferation of adjectives, we will often refer to a toroidal bordered Heegaard diagram as simply a \emph{bordered diagram}.
\end{convention}

Note that when $\cD$ is a bordered diagram, a generator for $\cD$ is a set of intersection points $x_1, \dots, x_g$, where $x_1, \dots, x_{h} \in \balpha^a \cap \bbeta$ and $x_{h+1}, \dots, x_g \in \balpha^c \cap \bbeta$. Without loss of generality, for $i = 1, \dots, h$, we have that $x_i \in \alpha_i^{\delta_i} \cap \beta_{\sigma(i)}$, where $\delta_i \in \{m, l\}$ and $\sigma$ is some permutation of $\{1, \dots, g\}$. If $\delta = (\delta_1, \dots, \delta_h)$ is a sequence of elements of $\{m, l\}$, then we can associate the idempotent $\iota(\delta) = (\iota_{\delta_1}, \dots, \iota_{\delta_h})$ in $\cT^h$. Accordingly, we may associate to any generator $\bx$ the corresponding idempotent $\iota(\bx) := \iota(\delta)$ --- this is called $I_A(\bx)$ in \cite{LOT}.

It will be of use to us later to note that although the set $\bp$ is empty if $\cD$ is a bordered diagram, there is a set of points $\bp(\bb)\subset \inte(\Sigma)$ given by pushing each $b_i\in \bb$  slightly off of the boundary $\partial \Sigma_i$ into the region of $\Sigma - \balpha -\bbeta$ adjacent to $b_i$.

\subsubsection{Heegaard multi-diagrams}

Another important instance of a diagram is a \emph{multi-pointed Heegaard multi-diagram}. We follow approximately the treatment given in \cite{LOT:SSII}.

\begin{defn}Let $\cD = (\Sigma; \bgamma; \bb; \bp)$ be a diagram. Then we say $\cD$ is a \emph{multipointed Heegaard $m$-diagram} if:
\begin{enumerate}\item $\Sigma$ is a closed surface of genus $g$.
\item $\bgamma$ consists of $m$ sets of curves $\bgamma^1, \dots, \bgamma^m$, where each $\bgamma^i$ is a set of $g+h-1$ simple closed curves (i.e. none of them are arcs) which span a $g$-dimensional sublattice of $H_1(\Sigma)$. Equivalently, for each $i$, the surface $\Sigma - \bgamma^i$ has  precisely $h$ components $A^i_1, \dots, A^i_h$.
\item The set $\bb$ is empty (this is implied by $\Sigma$ being closed, and we will often omit it from the notation).
\item The set $\bp$ can be ordered as $\bp = (w_1, z_1, \dots, w_h, z_h)$, such that $w_j, z_j \in \cap_{i=1}^h A^i_j$. We will often write $\bp = \bw \cup \bz$ for the partition of $\bp$ into $(w_1, \dots, w_h)$ and $(z_1, \dots, z_h)$ respectively.
\end{enumerate}

In the case where $|\bgamma| = 2$, we will say that $\cD$ is a \emph{basic Heegaard diagram}.
\end{defn}

\subsubsection{Bordered multi-diagrams}
A common generalisation of the two types of diagram already considered is a \emph{toroidal bordered multi-diagram}. This is a diagram $$\cD = (\Sigma; \bgamma^0, \bgamma^1, \dots, \bgamma^{m-1}; \bb; \bp)$$ such that $(\Sigma, \bgamma^0, \bgamma^1, \bb)$ is a toroidal bordered Heegaard diagram, where $\bgamma^0$ are closed curves; $\bgamma^1 = \bgamma^{1,a} \cup \bgamma^{1,c}$ splits as a series of arcs and closed curves; and for $i \ge 2$ each of the sets of curves $\bgamma^i$ is a set of $g+h-1$ simple closed curves.

Given a toroidal bordered multi-diagram, there are two associated surfaces which we will have cause to mention later. The first is the associated closed surface $\bar{\Sigma}$, which comes with associated arcs $\bar{\bgamma}^1$ --- we may view $\Sigma$ as $\bar{\Sigma} - \partial\bar{\Sigma}$. The second is the surface $\Sigma_{\bar{e}}$ which is the result of collapsing each boundary circle $\partial \bar{\Sigma}_i$ of $\bar{\Sigma}$ down to a point $e^\infty_j$ --- thus we can also view $\Sigma$ as a punctured surface $\Sigma_{\bar{e}} \setminus \{e^\infty_1, \dots, e^\infty_h\}$. This comes endowed with natural closed curves $\bgamma_{\bar{e}}$ which are the closures of $\bgamma$ under this identification, and base-points $\bp_{\bar{e}} = \bp \cup \bp(\bb)$ so that $\cD_{\bar{e}} = (\Sigma_{\bar{e}}; \bgamma_{\bar{e}}; \bp_{\bar{e}})$ is a diagram.

\subsubsection{Converting between diagrams}
Most of the content of this paper is in relating counts of certain moduli spaces related to bordered diagrams with those related to multi-pointed Heegaard multi-diagrams. As such, we will need to have a dictionary between concepts for the above two diagrams.

We will work with a special class of diagrams in both cases. We first need to introduce the notion of a \emph{handle}.

\begin{defn}For $n\in \ZZ$, the \emph{closed $n$-twisted handle}  is a tuple $$\handle(n) = (H; \alpha_\handle(m), \beta_\handle, \alpha_\handle(l); \bp_\handle),$$ where:
\begin{itemize}\item $H$ is the tube $S^1 \times [-1,1]$, with left and right ends $\handle_L := S^1 \times \{-1\} $ and $\handle_R := S^1 \times \{1\}$ respectively;
\item $\alpha_\handle(m)$ is the curve $S^1 \times \{0\}$;
\item $\beta_\handle$ is the arc $\{0\} \times [-1,1]$;
\item $\alpha_\handle(l)$ is the arc  $\{(e^{n i \pi t},t): t \in [-1, 1]\}$.
\item $\bp_\handle$ is the pair of basepoints $(w_\handle, z_\handle)$ as defined below.
\end{itemize}

For the definition of $\bp_\handle$, we choose a disc neighbourhood of the point $\alpha_\handle \cap \gamma_\handle$ which is sufficiently small that the triple $(D, \alpha_\handle \cap D, \gamma_\handle \cap D)$ is modelled on the unit disc $\DD \subset \CC$, together with the imaginary and real axes respectively. Under this identification, we put $w_\handle = \frac{1}{2}e^{\frac{3 \pi i}{4}}$ and $z_\handle = \frac{1}{2}e^{\frac{-3 \pi i}{4}}$.

The  \emph{bordered $n$-twisted handle} is a tuple $\border{\handle}(n) = (\border{H}; \alpha_\border{\handle} (m), \beta_\border{\handle},  \alpha_\border{\handle}(l); b)$, where $\border{H}, \alpha^m_\handle, \beta_\handle, \alpha^l_\handle$ are the punctured tube and arcs induced from ${H}, \alpha_\handle, \beta_\handle, \gamma_\handle$ by removing $D$ from $H$. Under the identification of $\DD$ with $D$, the boundary of $\border{H}$ consists of three components: $\border\handle_L, \border\handle_R$ and a new circle $\border\handle_\partial$ modelled upon $S^1 = \partial \DD$. We define $b$ to be the point in $\eta_\partial$  that corresponds with $e^{-\frac{\pi i}{4}}$.

It will be useful for us to note here that there is a clear bijection between the sets of curves $\alpha_\handle(m), \beta_\handle, \alpha_\handle(l)$ and $\alpha_\border{\handle} (m), \beta_\border{\handle},  \alpha_\border{\handle}(l)$ respectively, given by the map $\cap_D(\gamma) = \gamma \cap (\Sigma \setminus D)$.
\end{defn}

Please see Figure \ref{fig:closedhandlelong} for a picture of the closed $-6$-twisted handle with the curves $\beta_\handle$ and $\alpha_\handle(l)$ pictured; and Figure \ref{fig:closedhandlemerid} for a picture of a closed handle with the curves $\beta_\handle$ and $\alpha_\handle(m)$ shown. A bordered $-6$-twisted handle is also shown in Figure \ref{fig:borderedhandle}.

We will shortly construct some diagrams by gluing handles to a multi-pointed Heegaard multidiagram in a way specified by some fixed data. First, we consider sequences of idempotents:

\begin{defn}An \emph{$h$-splaying sequence of idempotents} is a sequence $\arrowiota$ of $m+1$ sequences of elements in the subring of idempotents of the $m$-torus algebra: $\arrowiota = \arrowiota^0, \dots, \arrowiota^{m}$, where $\arrowiota^i = (\iota^i_1, \dots, \iota^i_h)$ and each $\iota^i_j$ is one of $\iota_m$ or $\iota_l$.
\end{defn}
Note that if $\arrowrho = \arrowrho_1, \dots, \arrowrho_h$ is a set of sequences of Reeb chords and $\vec\sigma$ is a splicing for $\arrowrho$, the associated sequence $\arrowiota_\sigma$ is an $h$-splaying sequence of idempotents.
In general, the data we shall require is as follows.
\begin{defn}Suppose $\cD$ is a basic Heegaard diagram $(\Sigma; \balpha, \bbeta; \bp)$, where $|\bp| = 2h$. By definition, we can number the regions $A$ of $\Sigma - \balpha$ and $B$ of $\Sigma - \bbeta$ by the numbers $1, \dots, h$ so that the points $\bp = \{w_1, z_1, \dots, w_h, z_h\}$ satisfy $(w_i, z_i) \in A_i \cap B_i$. 
 
A set of \emph{stabilisation data for $\cD$} is a tuple $d = (\blambda_{w,z}, \blambda_{z,w}, \bn, \varepsilon, \arrowiota)$, where:
\begin{enumerate}
\item $\blambda_{w,z} = \{\lambda_{w_1, z_1}, \dots,  \lambda_{w_k, z_k}\}$ and $\blambda_{z,w} = \{\lambda_{z_1, w_1}, \dots,  \lambda_{z_k, w_k}\}$ are oriented arcs in $\Sigma$ chosen such that:\begin{itemize}\item $\partial \lambda_{w_i, z_i} = z_i - w_i$, \item $\partial \lambda_{z_i, w_i} = w_i - z_i$, \item $\blambda_{w,z} \cap \balpha = \emptyset$, and  \item$\blambda_{z,w} \cap \bbeta = \emptyset$.\end{itemize}
\item $\bn = (n_1, \dots, n_h)$ is a sequence of integers;
\item $\varepsilon$ is a small approximation parameter; and
\item $\arrowiota$ is an $h$-splaying sequence of idempotents.\end{enumerate}
\end{defn}

This is demonstrated in Figure \ref{fig:stabdata} for a specific choice of basic Heegaard diagram. (This is a  $4$-pointed Heegaard diagram for the Hopf link in $S^3$.)
\begin{figure}[h]
\centering{
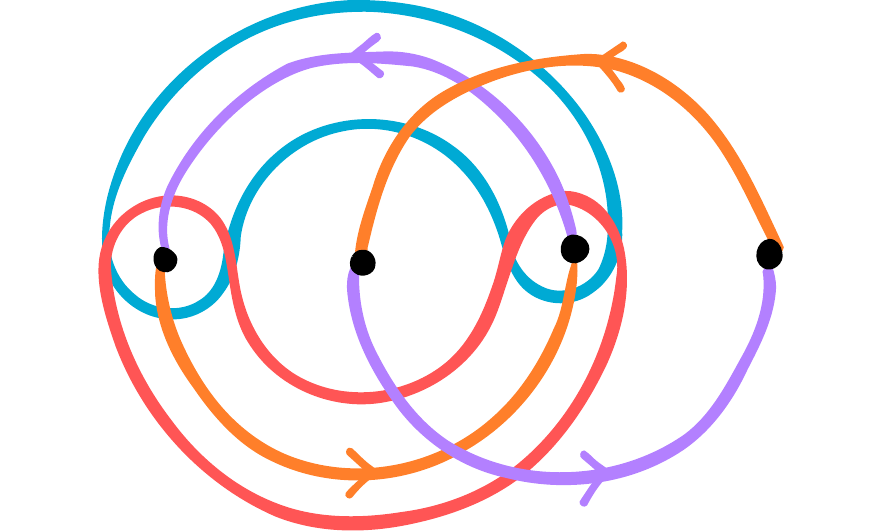
\caption{Stabilisation data for a $4$-pointed basic Heegaard diagram. Here, the $\balpha$ curves are shown in red and the $\bbeta$ in blue.}
\label{fig:stabdata}
}
\end{figure}

We now describe the process of gluing handles to a basic Heegaard diagram. The reader is strongly encouraged to refer to Figure \ref{fig:borderedhandle} either whilst or instead of reading the following definition.

\begin{defn}Let $\cD = (\Sigma; \balpha, \bbeta; \bp)$ be a basic Heegaard diagram, where $|\bp| = 2h$, and let $d = (\blambda_{w,z}, \blambda_{z,w}, \bn, \varepsilon, \arrowiota)$ be a set of stabilisation data for $\cD$. Suppose that  $(\border{\handle}_1, \dots, \border{\handle}_h)$ is a set of bordered handles.

We label the elements of  $\bp$ by $(w_1, z_1, \dots, w_h, z_h)$, where $w_i$ and $z_i$ both belong to the same component of $\Sigma - \balpha - \bbeta$. Choose a pair of small neighbourhoods $N^w_i$ and $N^z_i$ around $w_i$ and $z_i$ which are holomorphically identified with the unit disc $\DD \subset \CC$ in such a way that the baspoint $z_i$ is identified with the origin and the arcs $\lambda_{z_i, w_i} \cap N^w_i$ and $\lambda_{z_i, w_i} \cap N^w_i$ are identified with the positive and negative sides of the real axis respectively; and that $\lambda_{w_i, z_i} \cap N^z_i$ and $\lambda_{w_i, z_i} \cap N^z_i$  are identified with the positive and negative sides of the real axis respectively.

Now choose homeomorphisms $h_i^L$ (resp. $h_i^R$) from $\border{\handle}_i^L$ (resp. $\border\handle_i^R$) to $\partial N^w_i$  (resp. $\partial N^z_i$), in such a way that $h_i^L(\alpha_{\border\handle_{i}}(l) \cap \border\handle_i^L) = \lambda_{z_i, w_i}  \cap N^w_i$ and $h_i^L(\beta_{\border\handle_{i}} \cap \border\handle_i^L) = \lambda_{w_i, z_i}  \cap N^w_i$ (resp. $h_i^R(\alpha_{\border\handle_{i}}(l) \cap \border\handle_i^L) = \lambda_{z_i, w_i}  \cap N^w_i$ and $h_i^R(\beta_{\border\handle_{i}} \cap \border\handle_i^L) = \lambda_{w_i, z_i}  \cap N^w_i$).

Form the surface $$\border\Sigma(d) : = (  \Sigma - N_1^w - N_2^z - \cdots - N_h^w - N_h^z )\cup_{h^L, h^R} (H_1, \dots, H_h)$$  by gluing the handles to $\Sigma$ via the homeomorphisms $h_i^L$ and $h_i^R$. We denote by $\border\glue$ the composition of the projection map from $\Sigma$ to $(\Sigma - N_1^w - N_2^z - \cdots - N_h^w - N_h^z)$ with the gluing map; that is, it is a map
$$\border\glue: \Sigma \sqcup H_1 \sqcup \dots \sqcup H_h \rightarrow \border\Sigma(d).$$
There is a set of $h$ curves $\bbeta_\border\handle; = (\lambda_{w_1, z_1} \cup_{h_i^L, h_i^R} \beta_{\border\handle_1}, \dots, \lambda_{w_h, z_h} \cup_{h_h^L, h_h^R} \beta_{\border\handle_h})$ contained in $\Sigma(d)$, and also a sequence of arcs $\balpha^a_{\border\handle}$ given by $\alpha^a(m_i) := \border\glue(\alpha_{\border\handle_i}(m))$ and $\alpha^a(l_i) : = \alpha_{\border\handle_i}(l) \cup_{h_h^L, h_h^R} \lambda_{z_1, w_1}$, for $i = 1, \dots, h$.

These allow us to form the bordered diagram $ \border\cD(d) = (\border\Sigma(d), \border\balpha(d)^a, \border\balpha(d)^c, \border\bbeta(d), \bb(d)))$, where:\begin{itemize}\item $\Sigma(d)$ is as above;\item $\balpha(d)^a$ are the arcs $\balpha^a_\border\handle$; \item$\balpha(d)^c$ are the images $\border\glue(\balpha)$;  \item$\bbeta(d)$ is the union of $\bbeta_\border\handle$ and the images $\border\glue(\bbeta)$; and \item $\bb(d)$ is the image $\border\glue(\bb(\border\handle_1) \cup \dots \cup \bb(\border\handle_h))$.
\end{itemize}
We call this diagram the \emph{bordering} of $\cD$; any bordered diagram that arises as $\cD(d)$ for some $(\cD, d)$ we will call \emph{stable}.
\end{defn}

\begin{figure}[h]
\centering{
\makebox[\textwidth]{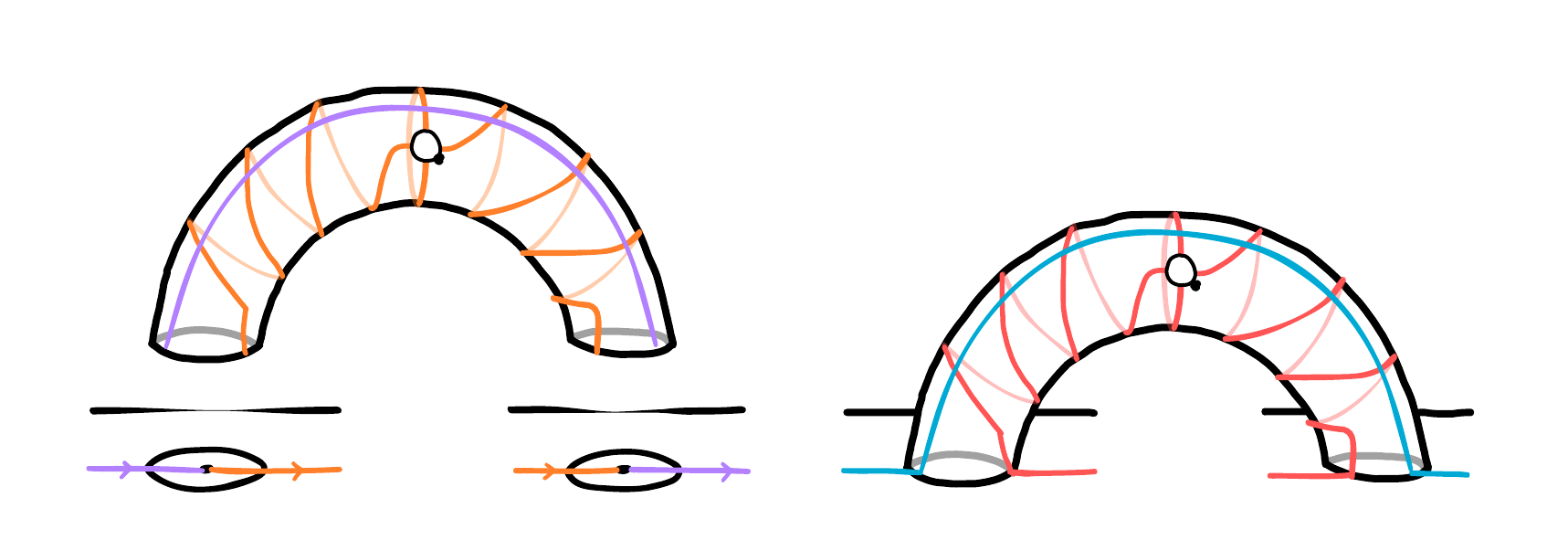}
\caption{The bordered stabilisation of a basic Heegaard diagram at one pair of basepoints}
\label{fig:borderedhandle}
}
\end{figure}

\begin{figure}[h]
\centering{
\makebox[\textwidth]{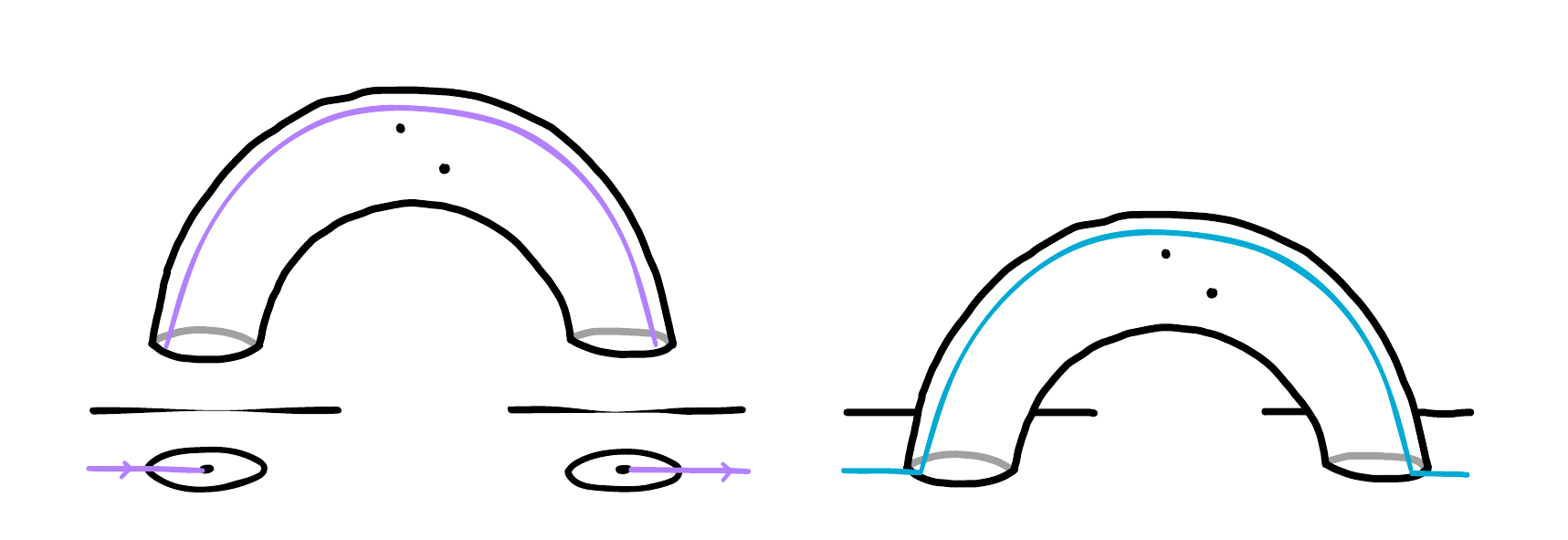}
\caption{The closed stabilisation of a basic Heegaard diagram at one pair of basepoints $w_i, z_i$, where the idempotent $\iota = \iota_{\delta_1} \otimes \cdots \otimes \iota_{\delta_m}$ satisfies $\delta_i = m$}
\label{fig:closedhandlemerid}
}
\end{figure}

Note that in this construction, the elements of the approximation data $\varepsilon$ and $\arrowiota$ were redundant. A small adaptation of the above also gives us a multi-pointed Heegaard multi-diagram (illustrated in Figure \ref{fig:closedhandlelong} and Figure \ref{fig:closedhandlemerid}):

\begin{defn}\label{defunsplayed} Let $\cD = (\Sigma; \balpha, \bbeta; \bp)$ be a basic Heegaard diagram, where $|\bp| = 2h$, and let $d = (\blambda_{w,z}, \blambda_{z,w}, \bn, \varepsilon, \arrowiota)$ be a set of stabilisation data, such that $\arrowiota$ consists of a single idempotent of $ \cT^h$, $\iota = (\iota_{\delta_1}, \cdots, \iota_{\delta_h})$  where each $\delta_i \in \{m, l\}$. Suppose further that  $(\handle_1, \dots, \handle_h)$ is a set of closed handles.

One can imitate the construction of the surface $\border\Sigma(d)$ from the previous definition, but with $(\handle_1, \dots, \handle_h)$ in place of $(\border\handle_1, \dots, \border\handle_h)$ to yield a surface $\Sigma(d, \iota) : = \Sigma \cup_{h^L, h^R} (H_1, \dots, H_h)$  by gluing the handles to $\Sigma$ via homeomorphisms $h_i^L$ and $h_i^R$. We denote by $\glue$ the corresponding gluing map $\glue: \Sigma \sqcup H_1 \sqcup \dots \sqcup H_h \rightarrow \Sigma(d, \iota)$.

Again, there is a set of $h$ curves $\bbeta_\handle; = (\lambda_{w_1, z_1} \cup_{h_i^L, h_i^R} \beta_{\handle_1}, \dots, \lambda_{w_h, z_h} \cup_{h_h^L, h_h^R} \beta_{\handle_h})$ contained in $\Sigma(d, \iota)$. There is also a sequence of curves contingent on $\iota$, given by the set $\balpha_\handle(d, \iota) = \{\gamma_i(\delta_i)\}$, where $$\gamma_i(\delta_i) = \left\{\begin{array}{c c}  \glue(\alpha_{\handle_i}(m)) & \mbox{ if } \mu_i = m\\
\alpha_{\handle_i}(l) \cup_{h_i^L, h_i^R} \lambda_{z_i, w_i} & \mbox{ if } \mu_i = l \end{array}\right. .$$

These allow us to form the diagram $\cD(d, \iota) = (\Sigma(d, \iota), \balpha(d, \iota), \bbeta(d, \iota), \bp(d, \iota))$, where:\begin{itemize}\item $\Sigma(d, \iota)$ is as above; \item$\balpha(d,\iota)$ is the union of $\balpha_\handle(d, \iota)$ and  the images $\glue(\balpha)$;\item $\bbeta(d, \iota)$ is the union of $\bbeta_\handle$ and the images $\glue(\bbeta)$; and\item $\bp(d, \iota)$ is the image $\glue(\bp_{\handle_1} \cup \dots \cup \bp_{\handle_h})$.\end{itemize}
\end{defn}

\begin{figure}[h]
\centering{
\makebox[\textwidth]{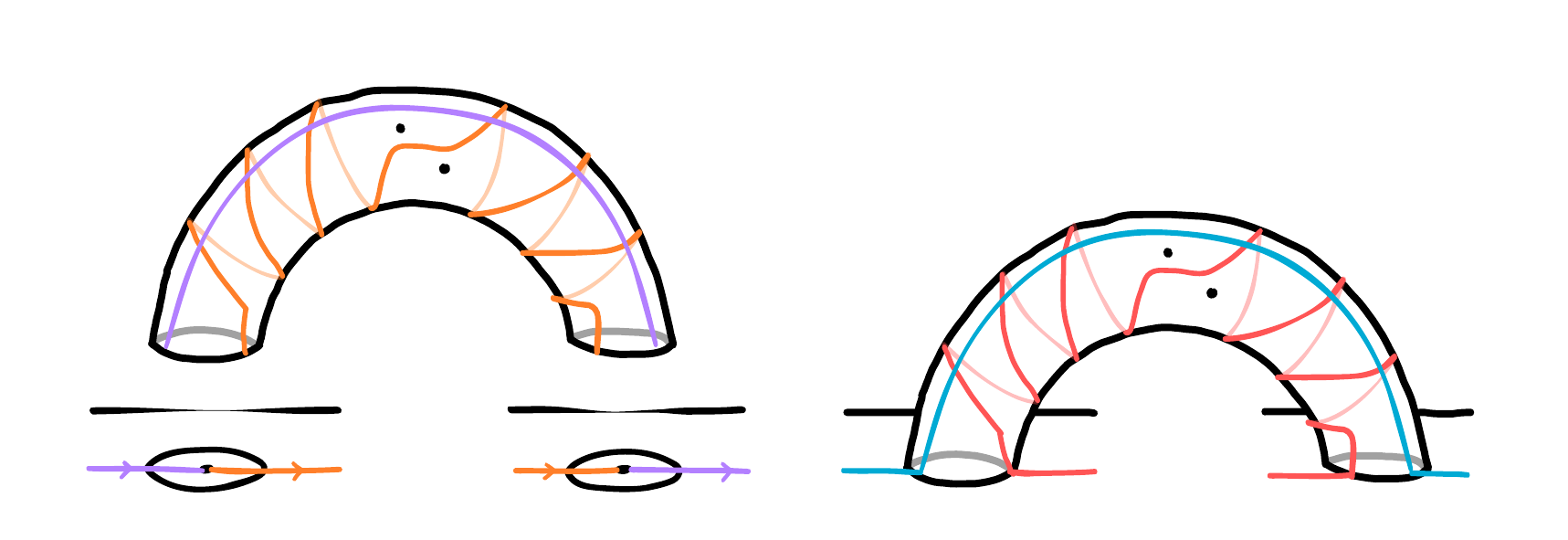}
\caption{The closed stabilisation of a basic Heegaard diagram at one pair of basepoints $w_i, z_i$, where the idempotent $\iota = \iota_{\delta_1} \otimes \cdots \otimes \iota_{\delta_m}$ satisfies $\delta_i = l$.}
\label{fig:closedhandlelong}
}
\end{figure}

Note that the map $\cap_D: \handle \rightarrow \border\handle$ extends to a map $\cap_D: \cD \rightarrow \border\cD(d, \iota)$, which yields a bijection $\cap_D: \bbeta(d, \iota) \rightarrow \bbeta(d)$. It no longer yields a bijection of the sets $\balpha(d, \iota)$; rather, it gives an injective map sending $\glue(\balpha) \mapsto \border\glue(\balpha)$ bijectively, and each element $\gamma_i(\delta_i) \in \balpha_\handle(d, \iota)$ to the corresponding arc $\cap_D(\gamma_i(\delta_i)) = \alpha^a(\delta_i)$. It follows that to every generator $\bx \in \cG(\border\cD)$ we may associate a generator $\cap_D^{-1}(\bx) \in \cD(d, \iota(\bx))$ (we often abuse notation for this generator and also denote it by $\bx$).

The diagram $\cD(d, \iota)$ as defined above constitues a multi-pointed Heegaard multi-diagram (in fact a simple Heegaard diagram), but does not use the information $\varepsilon$ from the approximation data $d$, and is restricted to the case where $\arrowiota$ is a single set of idempotents. In more generality, we make a similar construction:

\begin{defn}Let $\cD = (\Sigma; \balpha, \bbeta; \bp)$ be a basic Heegaard diagram, where $|\bp| = 2h$, and let $d = (\blambda_{w,z}, \blambda_{z,w}, \bn, \varepsilon, \arrowiota)$ be a set of stabilisation data for $\cD$ such that $\arrowiota =  \iota^1, \dots, \iota^m$.

The \emph{splayed diagram} associated to the pair $(\cD, d)$ is the tuple $$\cD(d):= (\Sigma(d), \balpha^1(d), \dots, \balpha^m(d), \bbeta(d), \bp(d))$$ given by:
\begin{itemize}\item $\Sigma(d) = \Sigma(d, \iota^1)$ (this is the same as $\Sigma(d, \iota^i)$ for any $i$).
\item $\balpha^i(d)$ is given by the $\varepsilon$-small approximation $H(\balpha(d, \iota(\iota_i), i\varepsilon / m)$ to the set of curves $\balpha(d, \iota(\iota_i))$ for each $i = 1, \dots, m$, as described in Definition \ref{defunsplayed}.

Here, for each $i = 1, \dots, h$, if we denote by $\alpha(d, \iota, \handle_i)$ curve of $\balpha(d, \iota)$ which intersects $\handle_i$, then we chose the identification of the tubular neighbourhood $N(\alpha, i)$ of $\alpha(d, \iota, \handle_i)$ with $S^1 \times [0,1]$ so that the intersection of the complementary curve $\alpha(d, \iota^{c}, \handle_i)$ with $N(\alpha, i)$ coincides with the arc $\pi \times [0,1]$. (See Figure \ref{fig:approxexample}.)

\begin{figure}[h]
\centering{
\scalebox{0.7}{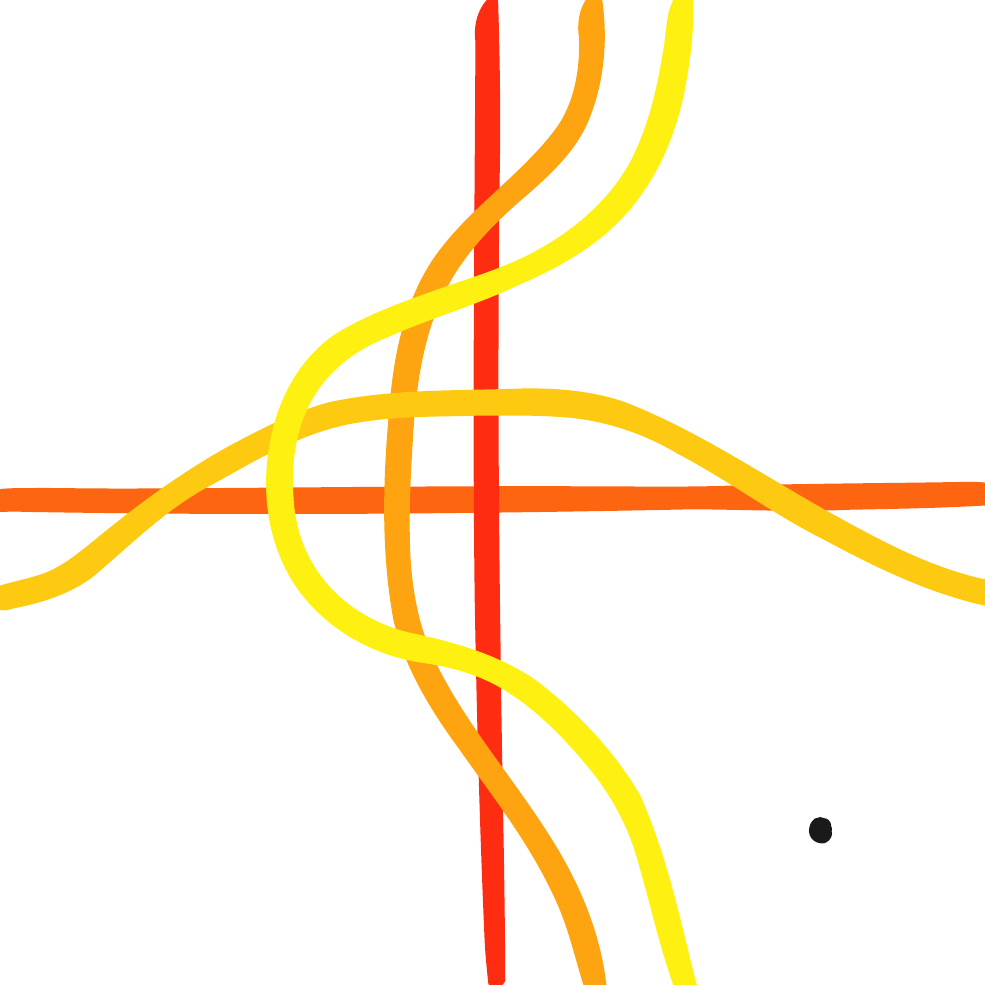}
\caption{A splayed diagram near $e^\infty_j$.}
\label{fig:approxexample}
}
\end{figure}

\item $\bbeta(d)$ is given by the set $\bbeta(d, \iota_1)$ (this, again, is really independent of $\iota_1$)
\item $\bp(d)$ is given by the set $\cup_{i=1}^m\bp_{\handle_i}$.
\end{itemize}
\end{defn}

If $\border\cD$ is a stable bordered diagram $\border\cD = \border\cD(d)$ for some $d$ and $\cD$, then we will refer to the corresponding splayed diagram $\cD(d)$ as the \emph{splaying} of $\border\cD$ . We will often denote this simply by $\cD(\varepsilon, \arrowiota)$, as these are the only parts of the splaying data $d$ not determined by the bordered diagram $\border\cD(d)$.

There is one further variant which we shall need to consider:
\begin{defn}Suppose that $\cD$ is a basic Heegaard diagram, $d$ is a set of stabilisation data for $\cD$. Consider the corresponding splayed diagram $$\cD(d, \arrowiota):= (\Sigma(d), \balpha^1(d), \dots, \balpha^m(d), \bbeta(d), \bp(d)).$$ 
We note that each $\balpha^i(d)$ is disjoint from the points $e^\infty$. It follows that there is an induced set of sets of curves in the surface $\border\Sigma(d)$, given by the images of the $\balpha^i(d)$ under the map $\cap_D$. As such, there is a corresponding bordered multi-diagram $\cD^\partial(d, \arrowiota)$ given by
$$\cD^\partial(d, \arrowiota) : = (\Sigma(d, \arrowiota), \balpha^a(d, \arrowiota) \cup \balpha^c(d, \arrowiota), \balpha^1(d,\arrowiota), \dots, \balpha^m(d, \arrowiota), \bbeta(d, \arrowiota), \bp(d, \arrowiota)).$$
(We often relabel so that $\Sigma(d, \arrowiota) = \Sigma(d)$, etc.)

We call this the \emph{partial splaying of $\cD$} associated to the data $d, \arrowiota$
\end{defn}

Later, we will need to perturb subdiagrams into a more standard form:
\begin{defn}Suppose that $\cD$ is a splayed or partially splayed diagram, and consider a subdiagram $\cD'$ with attaching curves $\bgamma = \bgamma^{i_1}, \dots, \bgamma^{i_k}$. Let $\bgamma' = \bgamma^{i'_1},\dots, \bgamma^{i'_{k'}}$ denote the subsequence of these attaching curves obtained by omitting any $\bgamma^{i_j}$ such that $\bgamma^{i_{j-1}} \equiv \bgamma^{i_{j}}$.

Each set of curves $\bgamma^{i'_j}$ is given either by $H(\balpha(d, \arrowiota^{i'_j}, i'_j \varepsilon/m)$ for some sequence of idempotents $\arrowiota^{i'_j}$ or is equal to $\balpha$ or $\bbeta$ --- say there are $m'$ curves of the first sort.  We let $\bgamma_{\text{reg}}$ denote the sequence of attaching curves obtained from $\bgamma'$ by replacing each $\bgamma^{i'_j}$ of the first sort with the perturbation $\bgamma^{i'_j}_{\text{reg}} : = H(\balpha(d, \arrowiota^{i'_j}, j \varepsilon/m'))$, and $\cD'_{\text{reg}}$ be the diagram obtained from $\cD'$ by replacing the attaching curves used with $\bgamma_{\text{reg}}$.

We shall call this diagram the \emph{regularisation} of $\cD'$.

\end{defn}

The bordered stabilisation and splaying of a Heegaard diagram exhibit a number of similarities.

Let $\cD : = (\Sigma; \balpha, \bbeta; \bp)$ be a basic Heegaard diagram, and $d = (\blambda_{w,z}, \blambda_{z,w}, \bn, \varepsilon, \arrowiota)$ some stabilisation data for $\cD$. Recall that associated to the curves $\balpha$ are the $m$-fold $\varepsilon$ approximations $\balpha^1, \dots, \balpha^m$.

The image of $\balpha^i$ under the map $\glue$ coincides with a set of $g-1$ of the closed stabilisation curves $\balpha^i(d) \subset \cD(d) = (\Sigma(d); \balpha^1(d), \dots, \balpha^m(d); \bbeta(d); \bp(d)) $, or similarly in the partial splaying of $\cD$. We number the $\balpha^i(d)$ so that these $g-h$ curves are given by the set $\{\alpha^i_{h+1}(d), \dots, \alpha_{g+h-1}^i(d)\} \subset \balpha^i(d)$. It follows that to any distinguished generators $\vec\btheta_{\mathrm{approx}} = \btheta_{\mathrm{approx}}^{1,2}, \dots, \btheta_{\mathrm{approx}}^{m-1,m}$ for the $m$-fold $\varepsilon$-approximation of $\balpha$ we may associate a corresponding set of partial generators $\glue(\vec\btheta_{\mathrm{approx}})$ for the splayed or partially splayed diagram $\cD(d)$, which we also denote by $\vec\btheta_{\mathrm{approx}}$.

The curves in each $\balpha^i(d)$ not in the image of $\glue(\balpha^i)$, which we assume are labelled by $\alpha^i_1, \dots, \alpha^i_h$ such that, for each $i$ either:
\begin{itemize}\item $\alpha^i_j \cap \alpha^{i+1}_j$ is a single point $e^{\infty,i}_j$ (this corresponds with the idempotents $\iota_j^i$ and $\iota_{j}^{i+1}$ being different).
\item $\alpha^i_j$ and $\alpha^{i+1}_j$ are Hamiltonian isotopic (in fact, they are $\varepsilon/m$-small approximations of one another).
\end{itemize}

In the second case, there are two points in $\alpha^i_j \cap \alpha^{i+1}_j$. For every such $i, j$, we choose an element $\theta^{i,i+1}_{j} \in \alpha^i_j \cap \alpha^{i+1}_j$. We term such a choice a \emph{splaying extension} of the distinguished generators $\vec\btheta_\mathrm{approx}$. For every fixed $i$, we collect the intersection points $e^{\infty, i, i+1}_j$ and $\theta^{i,i+1}_j$ into a set $\btheta^{i,i+1}_{\mathrm{ext}}$, and set $\btheta^{i,i+1} = \btheta^{i,i+1}_{\mathrm{ext}} \cup \btheta^{i,i+1}_{\mathrm{approx}}$ --- this is a bona fide set of generators dependent upon both the choice of distinguished generators and the choice of splaying extension.

\begin{exmp}\label{thetagen}Suppose that $\vec\btheta_{\mathrm{approx}}$ is the distinguished set of generators $\vec\btheta_+$ described in Example \ref{thetagen}. Then we shall call the corresponding set of $\btheta^{i,i+1}$ by the same name $\btheta^{i,i+1}_+$ --- and hope that this does not cause confusion.

If we wish to draw attention to the number of generators in $\vec\btheta_+$, we will write $\vec\btheta_+(k)$, where $k = |\vec\btheta_+(k)|$.
\end{exmp}
Recall there are corresponding generators $(\bx', \by')$ in the splayings $\cD(0, \iota(\bx))$ and $\cD(0,\iota (\by))$, given by the map $\cap_D^{-1}$ (see  the discussion following Definition \ref{defunsplayed}). The nearest point map gives a generator $\bar{\by}$ corresponding with $\by'$ in the set of generators $\cG(\balpha^m, \bbeta)$; it follows that the tuple $(\bx, \btheta^{1,2}, \dots, \btheta^{m-1, m}, \bar{\by})$ constitutes a sequence of generators for the splayed or partially splayed diagram $\cD(\varepsilon, \arrowiota)$. We call this sequence the \emph{splaying} of $(\bx, \by)$.

\begin{rmk}\label{nbhdrmk}For sufficiently small $\varepsilon$, for each $j$ all of the points $e_j^{\infty, i}$ may all be assumed to be contained in a small disc neighbourhood $N_j$ of $e^\infty_j$ which is disjoint from the curves $\bbeta(d)$.
\end{rmk}

\begin{exmp}\label{splayingexmp} Figures \ref{fig:splay1}, \ref{fig:splay2} and \ref{fig:splay3} show the bordered stabilisation of the diagram in Figure \ref{fig:stabdata} by the data  $d =(\blambda_{w,z}, \blambda_{z,w}, \bn, \varepsilon, \arrowiota)$, where  $\blambda_{w,z}$ and $\blambda_{z,w}$ are as given in Figure \ref{fig:stabdata}, $\bn = (0, 0)$, $\varepsilon$ isn't pictured, and $\arrowiota$ is stated below each figure. Also demonstrated are a series of generators, and their respective splayings.

\begin{figure}[h]
\centering{
\makebox[\textwidth]{\scalebox{0.5}{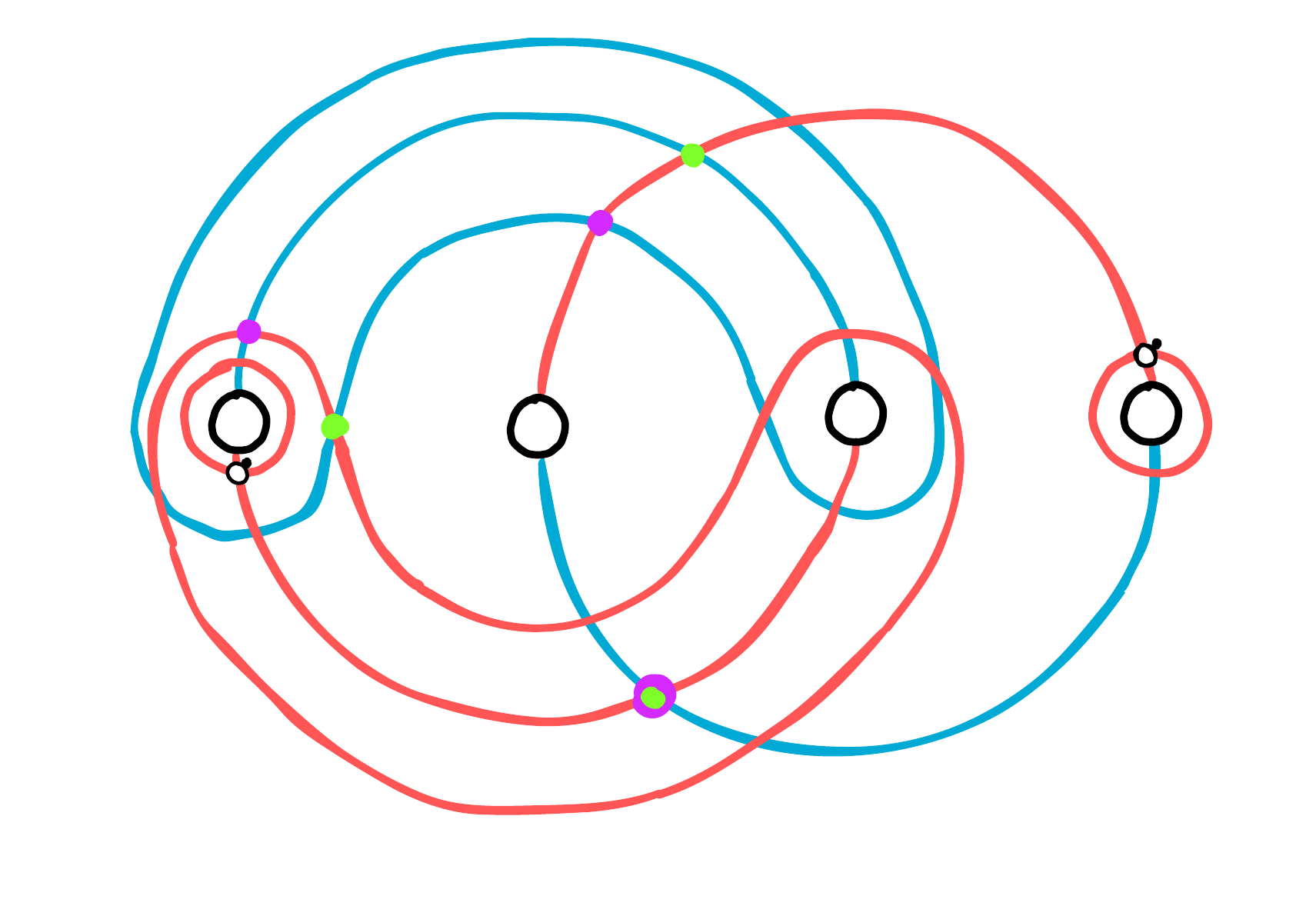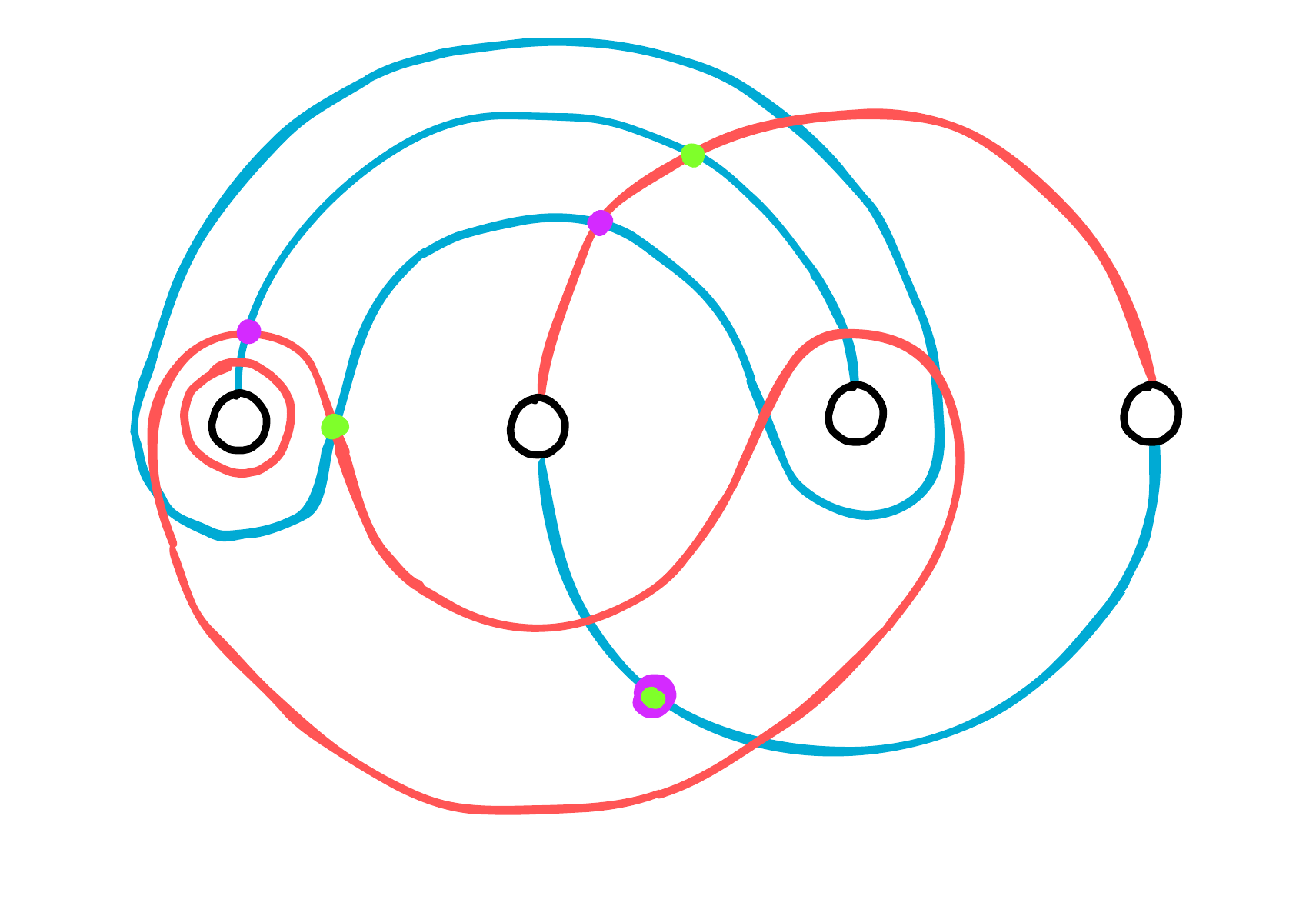}}
\caption{The splayings of the diagram in Figure \ref{fig:stabdata} with splaying data $d$ as stated in Example \ref{splayingexmp}. On the left is shown the bordered stabilisation with a pair of generators $\bx$ (violet) and $\by$ (lime green); on the right is shown the splaying by the idempotents $(\iota_l, \iota_l)$ together with the corresponding splayed generator.}
\label{fig:splay1}
}
\end{figure}

\begin{figure}[h]
\centering{
\makebox[\textwidth]{\scalebox{0.5}{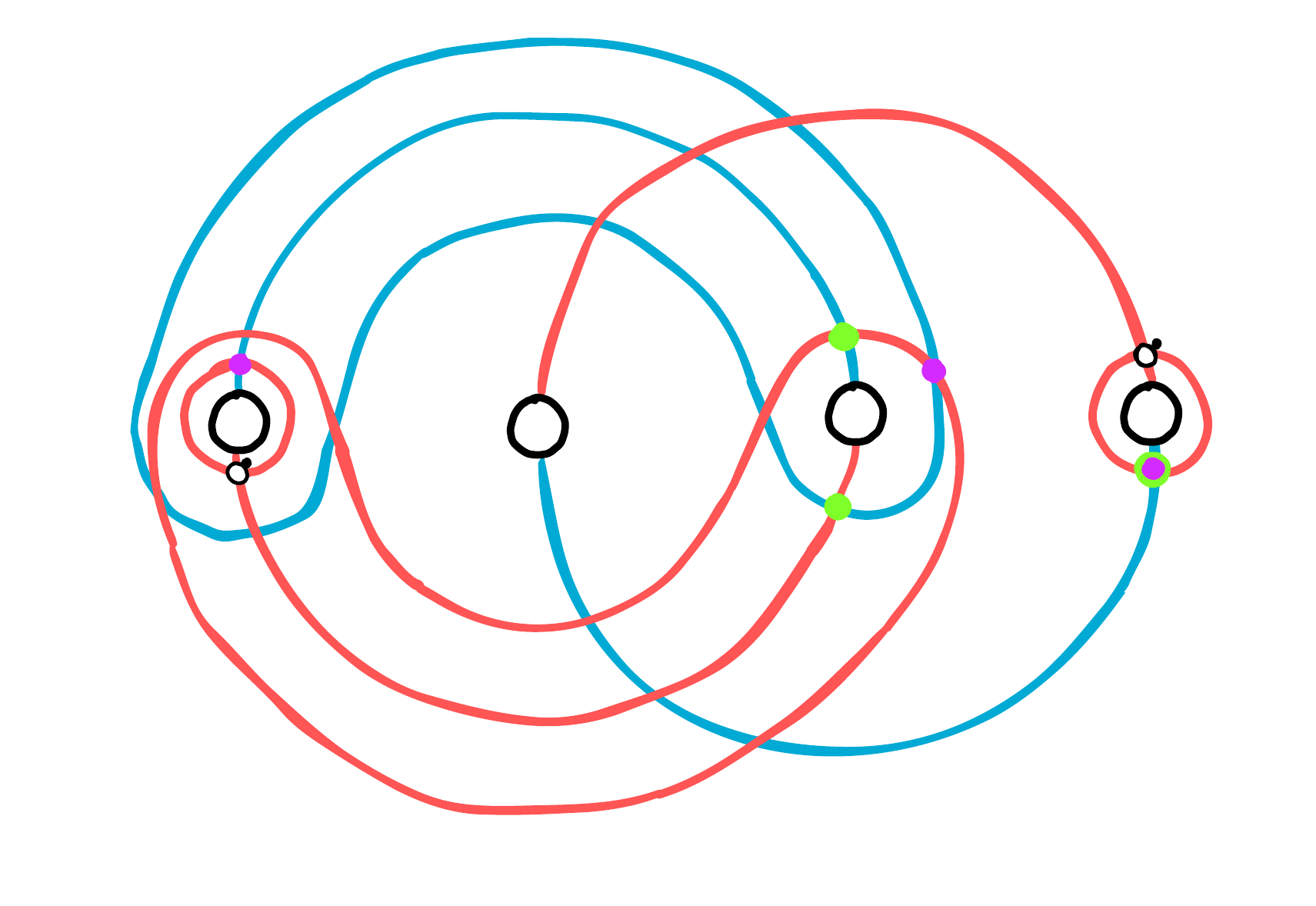 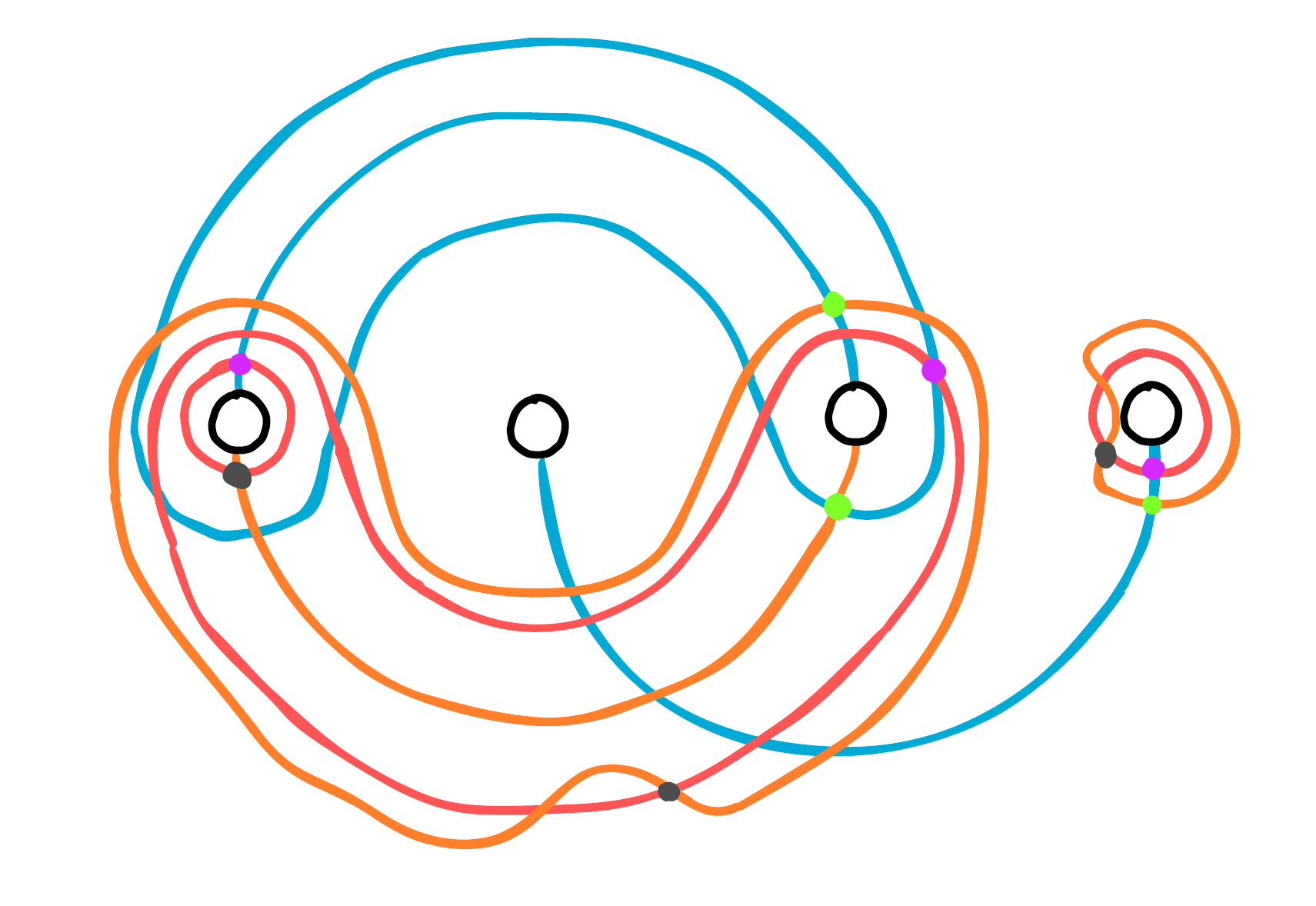}}
\caption{The splayings of the diagram in Figure \ref{fig:stabdata} with splaying data $d$ as stated in Example \ref{splayingexmp}. On the left is shown the bordered stabilisation with a pair of generators $\bx$ (violet) and $\by$ (lime green); on the right is shown the splaying by idempotents $((\iota_m, \iota_m), (\iota_l, \iota_m))$, together with the corresponding splayed generators. Here, the distinguished generator is shown in dark grey.}
\label{fig:splay2}
}
\end{figure}

\begin{figure}[h]
\centering{
\makebox[\textwidth]{\scalebox{0.5}{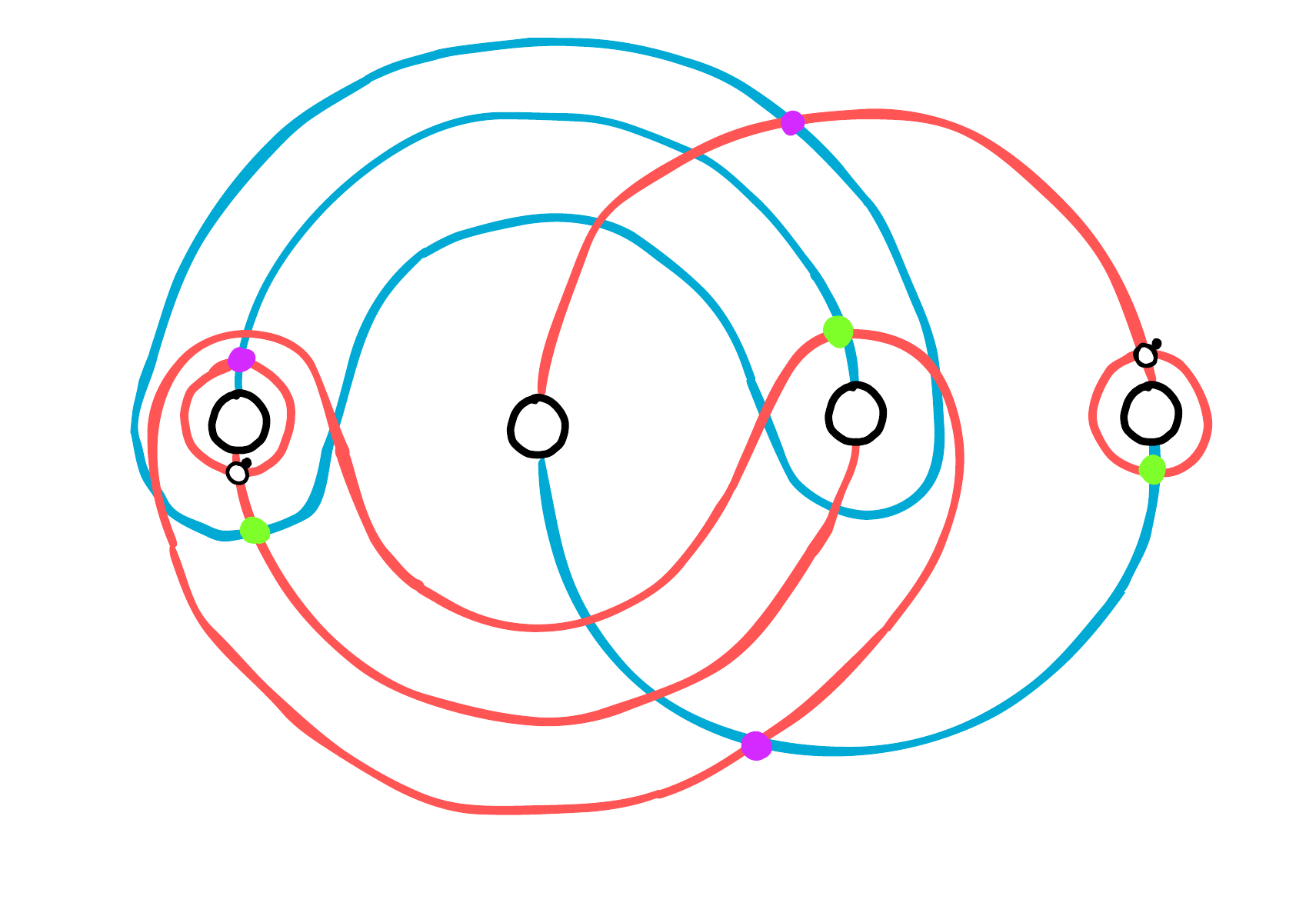 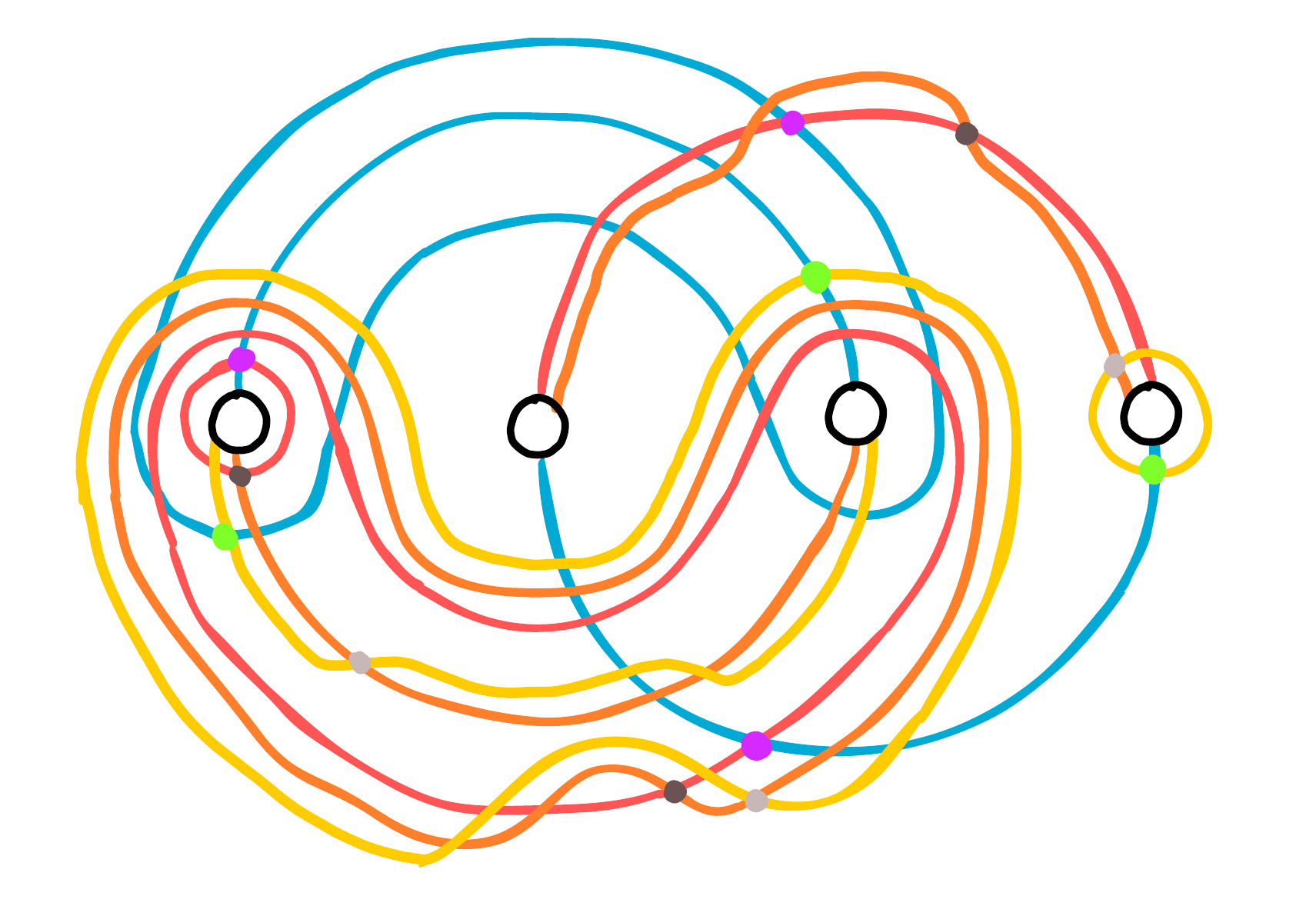}}
\caption{The splayings of the diagram in Figure \ref{fig:stabdata} with splaying data $d$ as stated in Example \ref{splayingexmp}. On the left is shown the bordered stabilisation with a pair of generators $\bx$ (violet) and $\by$ (lime green); on the right is shown the splaying by the sequence of sets of idempotents $((\iota_m, \iota_l), (\iota_l, \iota_m), (\iota_l, \iota_l))$ together with the corresponding splayed generators. Here, the distinguished generators are shown in dark grey and light grey respectively.}
\label{fig:splay3}
}
\end{figure}

\end{exmp}

\begin{convention}From here on, we will label the attaching curves of a splayed or partially splayed diagram $\cD(\varepsilon, \arrowiota)$ or $\cD^\partial(\varepsilon, \arrowiota)$ by $\bgamma^0, \dots, \bgamma^{m}$, where $\bgamma^0 = \bbeta$, and the curves $\bgamma^1, \dots, \bgamma^m$ are given by the splayed curves $\bgamma^i = \balpha^i(d)$, or, for a partially splayed diagram, $\bgamma^1 = \balpha = \balpha^a \cup \balpha^c$ and the subsequent curves are the closed curves $\bgamma^i = \balpha^i(d)$.
\end{convention}
\subsection{Domains}

Suppose that $\cD = (\Sigma; \bgamma^0, \dots, \bgamma^m; \bb; \bp)$ is a diagram. In the next chapter, we will be interested in maps
$$u: (S, \partial S) \rightarrow (\Sigma \times D_{m+1}, \bgamma^0 \times e^0 \cup \cdots \cup \bgamma^m \times e^m)$$
where $S$ is a Riemann surface with boundary and punctures upon its boundary, and $D_{m+1}$ is an $m+1$-gon --- that is, a disc with $m+1$ boundary punctures $v^{0,1}, \dots, v^{m,0}$ (see Section \ref{polygons}) --- with sides $e^0, \dots, e^{m}$. Given any such $u$, one can post-compose with the projection maps $\pi_\Sigma: \Sigma \times D_{m+1} \rightarrow \Sigma$ and $\pi_D: \Sigma \times D_{m+1} \rightarrow D_{m+1}$ to yield corresponding maps $u_\Sigma$ and $u_D$.

Near any boundary puncture $q$ of $S$ that is mapped to $v^{i, i+1}$ under $u_D$, the maps $u$ which we study will be asymptotic to a point $I^{i,i+1}(q)$ of the form
$$I^{i,i+1}(q) : = x^{i,i+1}(q) \times v^{i,i+1},$$
for some point $x^{i,i+1} \in \bgamma^i \cap \bgamma^{i+1}$, such that the set
$$\bx^{i,i+1} : = \bigcup_{q \in u_D^{-1}(v^{i,i+1})}x^{i,i+1}(q)$$
constitutes a generator for $(\Sigma, \bgamma^{i}, \bgamma^{i+1})$ --- that is, the sequence $\cG(u) : = (\bx^{0,1}, \dots, \bx^{m,0})$ is a sequence of generators for $\cD$.

We label the following objects:
\begin{align*}\bC^{i} &: = \bgamma^{i} \times e^i;\\ \bI^{i, i+1} &:= \bigcup_{q \in u_D^{-1}(v^{i, i+1})}I^{i, i+1}(q);\\
S^{\partial} &: = (\partial \Sigma \setminus \bb) \times e^0 .\end{align*}

It will follow from our assumptions on $u$ that there is an associated map $\bar{u}$ which represents some class in the relative homology group
$$H_2(\bar{\Sigma} \times I_s \times I_t, (\bC^0 \cup \cdots \cup \bC^m) \cup (\bI^{0,1}(\bx^{0,1} \cup \cdots \cup \bI^{m,0}(\bx^{m,0})) \cup S^\partial).$$
We denote by $\pi_2(\bx^{0,1}, \dots, \bx^{m,0})$ the set of all elements of the above group which map to the relative fundamental class of $\bI^{0,1}(\bx^{0,1}) \cup \cdots \cup \bI^{m,0}(\bx^{m,0})$ under the composition of the boundary homomorphism and collapsing the remainder of the boundary.

A related notion is that of a \emph{domain}. Namely, if $\cD$ is a diagram, then a \emph{domain} is an element of the homology group $$\dom(\cD) : = H_2(\bar{\Sigma}, \bgamma^1 \cup \cdots \cup \bgamma^m \cup \partial \bar{\Sigma}).$$ 
This homology group is generated by the regions $R_i$ of $\Sigma - \bgamma$, and thus any domain is given by a formal linear combination $\sum r_i R_i$.

A \emph{periodic} domain is a domain with zero multiplicity (coefficient) at all regions $R_i$ where $R_i \cap (\bz \cup \bb) \ne \emptyset$ and with boundary equal to a linear combination of (full) curves and arcs from $\bgamma$. A \emph{provincial periodic domain} is a domain which further has zero multiplicity at all regions $R_i$ such that $R_i \cap \partial \Sigma \ne \emptyset$.

We say a diagram $\cD$ is \emph{admissible} if every periodic domain in $\cD$ has both positive and negative coefficients; we say it is \emph{provincially admissible} if every provincial periodic domain in $\cD$ has both positive and negative coefficients.

Any homology class $B$ in the group $\pi_2(\bx^{0,1}, \dots, \bx^{m,0})$ can be projected under $\pi_\Sigma$ to an element of $\dom(\cD)$, which we call its \emph{domain}. One can see that a homology class is uniquely determined by the corresponding domain; thus we often conflate the two notions.

Given a sequence of Reeb chords $(\rho^1, \dots, \rho^k)$ of the torus algebra, there is an associated homology class $[\arrowrho] = [\rho^1]+[\rho^2] + \dots + [\rho^k]$, where $[\rho^i]$ is the image of the Reeb chord $\arrowrho^i$ in the relative homology group $H_1(S^1, \ba)$. If $\cD$ is a toroidal Heegaard multi-diagram with $h$ boundary components, each of the relative homology groups $H_1(\partial \Sigma_j, \ba_j)$ are identified with $H_1(S^1, \ba)$.

\begin{defn}\label{compatible}Suppose that $B \in \pi_2(\bx^{0,1}, \dots, \bx^{m, 0})$ is a domain. We shall say that $B$ is \emph{$\arrowrho$-compatible} for $\arrowrho = (\arrowrho_1, \dots, \arrowrho_h)$ if, for each $j$, we have that the image of $B$ in $H_1(\partial \Sigma_j, \ba_j)$ is equal to $[\arrowrho_j]$.
\end{defn}

\begin{convention}When $\cD$ is the splaying of a diagram $\border \cD$, if $\bx, \by \in \cG(\border \cD)$ are generators, we shall often abbreviate the corresponding group
$$\pi_2(\bx', \btheta^{1,2}_+, \dots, \btheta^{m-1,m}_+, \by')$$
as $\pi_2^{\btheta_+}(\bx,\by) \subset \dom(\cD(d))$.
\end{convention}

We end this subsection with a brief discussion of generators and domains in east-compactified diagrams, which will come in useful in Section \ref{eastcompactifying}. Recall that to any diagram $\cD$ is associated a corresponding east-compactified diagram $\cD_{\bar{e}}$. Generators for $\cD$ have obvious counterparts in $\cD_{\bar{e}}$, and we generally abuse notation and call these by the same label; any homology class $B$ for $\cD$ has a corresponding east-compactified class $B_{\bar{e}}$  in the relative homology group
$$H_2( \Sigma_{\bar{e}} \times D_n, ((\bgamma^1_{\bar{e}} \times e^1) \cup \cdots \cup (\bgamma^m \times e^m) ) \cup (\bI^{0,1} \times \cdots \times \bI^{m-1, 0})).$$

\subsubsection{Splaying domains}\label{splayingdomains}

We would like to understand the relationship between domains in a bordered diagram $\cD = (\Sigma; \balpha, \bbeta; \bb)$ and a corresponding (partial) splaying $\cD^\partial(\varepsilon, \arrowiota)$. First, we set up some notation for describing different qualities of sequences of Reeb chords.

Fix a stable bordered Heegaard diagram $\border\cD$, generators $\bx, \by \in \cG(\cD)$, a set of sequences of Reeb chords $\arrowrho_\boat = \arrowrho_{\boat, 1}, \dots, \arrowrho_{\boat, h}$, and a sequence of idempotents $\arrowiota$. Consider the partially splayed diagram $\cD^{\partial}(\varepsilon, \arrowiota)$ and the splayed diagram $\cD(\varepsilon, \arrowiota)$. We shall associate to any $\arrowrho_\boat$-compatible domain $B \in \pi_2^{\btheta_+}(\bx, \by) \subset \dom(\cD^{\partial}(\varepsilon, \arrowiota))$, or any domain $B \in \pi_2^{\btheta_+}(\bx, \by) \subset \dom(\cD(\varepsilon, \arrowiota))$ a sequence of Reeb chords $\arrowrho(B)$.

\begin{convention}Recall from the beginning of the section that any homology class in $\dom(\cD)$ corresponds with a homology class in $\dom(\cD_{\bar{e}})$ --- we make this identification for the rest of this section, and consider domains in diagrams where the underlying surface is closed.
\end{convention}

Consider a small neighbourhood $N_j$ of $e^\infty_j \in \Sigma_{\bar{e}}$, which is disjoint from $\bbeta$ and such that for every $i$, $e^{\infty,i}_j$ is contained in $N_j$, as in Remark \ref{nbhdrmk}. Suppose that $B \in \pi_2^{\btheta_+}(\bx, \by)$ is a domain, and that $u$ is a map from a surface with boundary and punctures upon the boundary to $\Sigma \times D_m$ or $\Sigma_{\bar{e}} \times D_m$, with homology class $B$. If $S_j$ denotes the restricted source $(\pi_\Sigma \circ u)^{-1}(N_j)$, then the map $u'_j = u|_{S_j}$ takes the form of a map
$$u'_j: (S_j, \partial S_j) \rightarrow (N_j \times D_{m+1}, \partial N_j \cup \gamma_j^1 \times e^1 \cup \cdots \cup \gamma_j^m \times e^m).$$

Maps of this form are in bijection with maps
$$u_j: (S_j, \partial S_j) \rightarrow (N_j \times D_{m+1}, \partial N_j \cup \delta_j^1 \times e^1 \cup \cdots \cup \delta_j^{m'} \times e^{m'}),$$
where $\delta_j^1, \dots, \delta_j^{m'}$ is the subsequence of $\gamma_j^1, \dots, \gamma_j^m$ formed by deleting every arc $\gamma_j^i$ such that $\gamma_j^i$ approximates $\gamma_j^{i-1}$. On the level of domains, the map $u_j$ has an associated domain
$$B_j \in H_2(N_j, \partial N_j \cup (\delta_j^1 \cup \cdots \cup \delta_j^{m'}))$$
which can be described by choosing a point $x_{R}$ for every region of $N_j - (\gamma_j^1 \cup \cdots \cup \gamma_j^{m})$ not contained in the approximation regions between any two adjacent curves $\gamma_j^i, \gamma_j^{i+1}$ which approximate one another: $B_j$ is the domain given by requiring that the multiplicity of $B_j$ at each $x_R$ is the same as the multiplicity of $B|_{N_j}$ at each $x_R$.

In what follows, it will help us to have an understanding of the homology groups $H_2(N_j, \partial N_j \cup \delta^i_j)$ and $H_2(N_j, \partial N_j \cup \delta^i_j \cup \delta^{i+1}_j)$.

The pair $(N_j, \delta^i_j)$ is modelled upon either the pair $(\DD, \Re)$ or $(\DD, \Im)$ depending upon whether $\delta^i_j$ is meridional or longitudinal. In these cases, the groups $H_2(\DD, \Re)$ and $H_2(\DD, \Im)$ are generated by the half-discs $\DD - \Re$ and $\DD-\Im$ respectively, only one of which has multiplicity zero at the base-point $p_j$ in each case. We label these homology classes by $D(\rho_{12})$ and $D(\rho_{23})$ respectively, and the corresponding domain under the identification with $(N_j, \delta^i_j)$ by $D_j^i(\rho_{12})$ and $D_j^i(\rho_{23})$ respectively (see Figure \ref{fig:nonjumplocal}). We generalise this to length $n_i$ repeated sequences of these chords, by setting $D_j^i(\rho_{12}, \dots, \rho_{12}) := n_i \cdot D_j^i(\rho_{12})$, and similar for $\rho_{23}$ --- in both instances, we allow the degenerate length 0 sequence with associated domain equal to zero.
\begin{figure}[h]
\centering{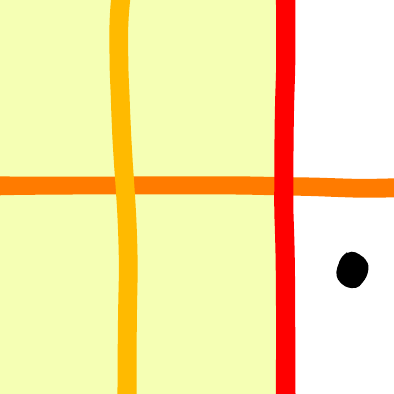\quad \quad 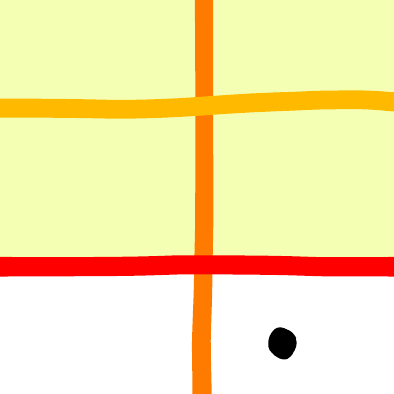
\caption{A series of examples of various domains (shaded green) in the neighbourhood $N_j$ of $e^\infty_j$. On the left is an example of $B^1(\rho_{12})$, and on the right is $B^1(\rho_{23})$.}
\label{fig:nonjumplocal}
}
\end{figure}
If $\delta^i_j$ and $\delta^{i+1}_j$ are not small approximations of one another, there are two models for the ordered triple $(N_j, \delta_j^i , \delta_j^{i+1})$ depending upon the configuration of the curves:
\begin{itemize}\item $(\DD, \Im, \Re)$ if $\delta^i_j$ is meridional and $\delta^{i+1}_j$ longitudinal;
\item $(\DD, \Re, \Im)$ if $\delta^i_j$ is longitudinal and $\delta^{i+1}_j$ is meridional;
\end{itemize}

$H_2(\DD, \Re \cup \Im)$ is generated by four quadrants $B(\rho_0), B(\rho_1), B(\rho_2), B(\rho_3)$, where $B(\rho_0)$ is the quadrant of $(\DD, \Re \cup \Im)$ containing the point $p_j$, and the others are given in the order they are encountered clockwise from $B(\rho_0)$ (see Figure \ref{fig:reebdomains}). 
\begin{figure}[h]
\centering{
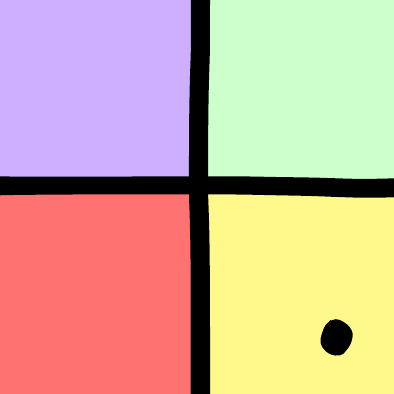
\caption{The four quadrants of $\DD$.}
\label{fig:reebdomains}
}
\end{figure}There is a natural identification $\varphi: H_1(S^1, \ba_j) \rightarrow H_2(\DD, \Re \cup \Im)$, and we denote $B(\rho) := \varphi(\rho)$. We denote by $B_j^i(\rho)$ the corresponding domain in $H_2(N_j, \partial N_j \cup (\delta_j^i \cup \delta_j^{i+1}))$. (Please see Figure \ref{fig:localdomainsjumping} for some examples.)

\begin{figure}[h]
\centering{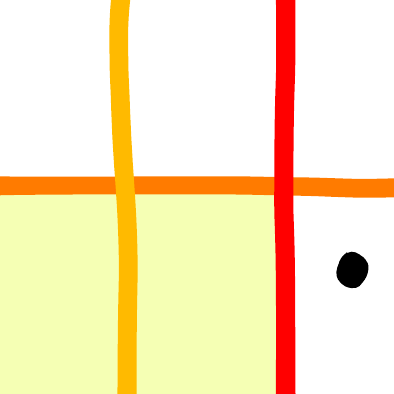\quad\quad \quad 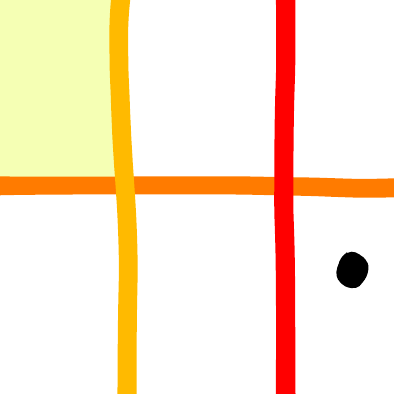
\caption{A series of examples of various domains (shaded green) in the neighbourhood $N_j$ of $e^\infty_j$. On the left is an example of $B^1(\rho_{1})$, and on the right is $B^2(\rho_{2})$. We note how the sum of these domains has an obtuse corner at the intersection of $\gamma^1_j$ and $\gamma^2_j$.}
\label{fig:localdomainsjumping}
}
\end{figure}
We also extend this assignment to the case where $\cD$ is partially splayed and $\bgamma^1 \cap N_j = (\gamma^{1,m}_j \cup \gamma^{1,l}_j) \cap N_j$. Noting that $(N_j, \bgamma^1 \cap N_j)$ may be identified with $(\DD, \Im \cup \Re)$, we may associate to a sequence of Reeb chords $\arrowrho$ the corresponding domain $B(\arrowrho)$ identified with $\varphi(\arrowrho)$, in the homology group $H_2(N_j, \partial N_j \cup (\bgamma^1 \cap N_j))$.

Recall that two sequences of Reeb multi-chords $\arrowrho, \arrowrho'$ are 123-equivalent if their corresponding 123-composed sequences agree. We introduce a related notion for domains:
\begin{defn}We shall say that a sequence of domains
$$S = (D_j^1,B_j^1,D_j^2,B_j^2, \dots,D_j^k, B_j^k , D_j^{k+1}),$$
is a \emph{sequence of Reeb domains} if each $D_j^i$ is equal to $D_j^i(\arrowrho^i)$ for some repeated sequence $\arrowrho^i : =(\rho^i, \rho^i, \dots, \rho^i)$ of a non-jumping chord $\rho^i \in \cT$, and each $B_j^i$ is equal to $B_j^i(\underline{\rho}^i)$ for some jumping chord $\underline{\rho}^i \in \cT$. For such a sequence, denote by $\arrowrho(S)$ the sequence of Reeb chords
$$(\arrowrho^1 \cup \underline{\rho}^1 \cup \arrowrho^2 \cup \dots \cup \underline{\rho}^k \cup \arrowrho^k).$$

If $S'$ is another sequence of Reeb domains, we shall say that $S$ and $S'$ are \emph{composable-equivalent} if the homology classes in $H_2(N_j, \partial N_j \cup \delta^1_j \cup \cdots \cup \delta^{m'}_j)$ formed by summing $S$ and $S'$ are the same.
\end{defn}

As the name suggests, this relation is closely related to the composable-equivalence of corresponding Reeb chords:
\begin{prop}Suppose $S$ and $T$ are sequences of Reeb domains. Then they are composable-equivalent if and only if the sequences $\arrowrho(S)$ and $\arrowrho(T)$ are $123$-equivalent.
\end{prop}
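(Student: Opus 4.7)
The plan is to treat the two directions separately, with the forward direction a direct computation and the reverse a uniqueness argument exploiting that $123$-reduction yields a canonical representative.

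For the forward direction, I would observe that $123$-equivalence is generated by single applications of $123$-composition to adjacent pairs. Given the block structure of $\arrowrho(S) = \arrowrho^1 \cup \underline{\rho}^1 \cup \arrowrho^2 \cup \cdots$, where each $\arrowrho^i$ is a repeat of a non-jumping chord ($\rho_{12}$ or $\rho_{23}$) and each $\underline{\rho}^i$ is a single jumping chord, the only $123$-composable adjacent pairs are of two shapes: the last $\rho_{12}$ of some $\arrowrho^i$ together with $\underline{\rho}^i = \rho_3$, or $\underline{\rho}^i = \rho_1$ together with the first $\rho_{23}$ of $\arrowrho^{i+1}$. Both pairs compose to $\rho_{123}$. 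In each case the $123$-reduction alters the sequence of Reeb domains by the identity $D_j^i(\rho_{12}) + B_j^i(\rho_3) = B_j^i(\rho_{123})$, respectively $B_j^i(\rho_1) + D_j^{i+1}(\rho_{23}) = B_j^i(\rho_{123})$, in the ambient group $H := H_2(N_j, \partial N_j \cup \delta_j^1 \cup \cdots \cup \delta_j^{m'})$. Both identities follow from naturality of $\varphi$: the half-disc $D_j^i(\rho_{12})$, when included in $H$, coincides with the sum of the two quadrants at the $i$-th transition lying on the side of $\delta_j^i$ opposite $p_j$, so adjoining $B_j^i(\rho_3)$ yields the three quadrants that make up $B_j^i(\rho_{123})$.

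For the reverse direction, I would first invoke the forward direction to $123$-reduce both $S$ and $T$ without changing their homology classes, so that it suffices to recover a $123$-reduced chord sequence from $[S] \in H$. I would decode region by region in $N_j$: the multiplicity of $[S]$ at the "outer" half-disc region adjacent to $\delta_j^i$ and away from $p_j$ (but outside any transition quadrant) recovers the count $|\arrowrho^i|$, while the multiplicities at the four quadrants of each transition $(\delta_j^i, \delta_j^{i+1})$ recover $\underline{\rho}^i$ via the inverse of $\varphi$. Crucially, in a $123$-reduced sequence the local configurations $(\rho_{12}, \rho_3)$ and $(\rho_1, \rho_{23})$ do not occur, and this is precisely what prevents half-disc multiplicities from being absorbed into quadrant multiplicities at the adjacent transition; the decoding is therefore unambiguous and the two reduced sequences must agree.

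The main obstacle is the bookkeeping in the reverse direction: carefully describing the decomposition of $N_j \setminus (\delta_j^1 \cup \cdots \cup \delta_j^{m'})$ into regions, identifying which correspond to "non-jumping $D$-halves" versus "transition quadrants," and verifying that the $123$-reduced hypothesis is exactly the condition that forbids the local moves $D_j^i(\rho_{12}) + B_j^i(\rho_3) \leftrightarrow B_j^i(\rho_{123})$ which would otherwise permit distinct reduced chord sequences to share a homology class.
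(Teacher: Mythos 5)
Your plan is essentially the same as the paper's: the heart of both is the observation that each $123$-composition move corresponds to a local identity on domains (the paper phrases it as replacing $B^i_j(\rho_1)$ by $B^i(\rho_{123})$ and decrementing $D^{i+1}_j$, you phrase it as $D_j^i(\rho_{12}) + B_j^i(\rho_3) = B_j^i(\rho_{123})$, etc.), so both prove the same two identities and then need to argue that the $123$-reduced chord sequence is recoverable from the homology class. Your forward step matches the paper closely and is correct.

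The one place I'd push back is the decoding recipe in your reverse step. You propose to read off $|\arrowrho^i|$ as the multiplicity of $[S]$ at an ``outer half-disc region \dots outside any transition quadrant,'' and $\underline\rho^i$ from the multiplicities at the quadrants of the $i$-th transition via $\varphi^{-1}$. Neither of these works as literally stated: already in the smallest case $m' = 2$ the curves $\delta^1_j$ and $\delta^2_j$ cut $N_j$ into exactly four quadrants, so there is no region outside the transition picture, and the multiplicity at a quadrant receives contributions from \emph{both} half-discs and the transition quadrant simultaneously (e.g.\ the multiplicities at the three quadrants not containing $p_j$ are $n_1 + b_1$, $n_1 + n_2 + b_2$, $n_2 + b_3$, where $(b_1,b_2,b_3)$ is the indicator vector of $\underline\rho^1$). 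Recovering $(n_1, n_2, \underline\rho^1)$ is not a region-by-region read-off but a small linear system in which the $123$-reduced constraints ($\underline\rho = \rho_3 \Rightarrow n_1 = 0$, $\underline\rho = \rho_1 \Rightarrow n_2 = 0$) are what single out a unique solution. You do identify this bookkeeping as the main obstacle, and the core insight---that the $123$-reduced hypothesis is exactly what breaks the ambiguity coming from $D^i(\rho_{12}) = B^i(\rho_1) + B^i(\rho_2)$---is correct; but the explicit decoding as sketched would not go through. For what it's worth, the paper itself only asserts uniqueness of the reduced representative (``one can see there is a unique sequence $U(S)$\dots'') without spelling out this injectivity, so you have correctly located where the real content is.
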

\begin{proof}Suppose that $S$ is a sequence of Reeb domains and consider the corresponding sequence of Reeb chords $\arrowrho(S)$. One can see there is a unique sequence $U(S)$ of Reeb domains that is composable-equivalent to $S$, and such that $\arrowrho(U(S))$ is the $123$-reduction of $\arrowrho(S)$.

Indeed, $\arrowrho(S)$ is not $123$-reduced if and only if there are two adjacent chords $\rho_k, \rho_{k+1} \in \arrowrho(S)$ that are either equal to the torus algebra chords $\rho_1, \rho_{23}$ or $\rho_{12}, \rho_3$. In the first case, if $B^i_j = B^i(\rho_1)$ and $D^{i+1}_j = n_{i+1}B^{i+1}(\rho_{23})$ are the corresponding domains in $S$ then one can see that the sequence $S'$ given by replacing $B^i_j$ by $B^i(\rho_{123}) = B^i(\rho_{1}) + B^i(\rho_{23})$ and $D^{i+1}_j $ by $(n_{i+1}-1)B^{i+1}(\rho_{23})$ is composable-equivalent to $S$, and the corresponding $\arrowrho(S')$ differs from $\arrowrho(S)$ by replacing $\rho_k, \rho_{k+1}$ with $\rho_k\cdot \rho_{k+1}$. Similar holds when $(\rho_k, \rho_{k+1}) = (\rho_{12}, \rho_3)$. Repeating this process for any $123$-composable chords yields the $U(S)$ required.

It follows that $S$ and $T$ are composable-equivalent if and only if $U(S) = U(T)$, which holds if and only if the $123$-reductions of $\arrowrho(S)$ and $\arrowrho(T)$ agree, and so the result follows.
\end{proof}

\begin{prop}\label{constructdomains}Suppose that $B \in \pi_2^{\btheta_+}(\bx, \by)$  is a domain which carries a holomorphic representative, and is $\arrowrho_\boat$-compatible. Then for every $j = 1, \dots, h$, the domain $B_j - B(\arrowrho_{\boat,j})$  may be written uniquely, up to composable-equivalence, as a sequence of Reeb domains.\end{prop}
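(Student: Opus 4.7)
The plan is to work locally in the neighbourhood $N_j$ of $e^\infty_j$ and carry out an induction on the number of \emph{jumping transitions} in the sequence $\gamma^1_j,\dots,\gamma^m_j$ --- that is, on the indices $i$ at which $\gamma^i_j$ and $\gamma^{i+1}_j$ fail to be small approximations of one another. The key starting observation is that $B_j - B(\arrowrho_{\boat,j})$ is, by construction, an element of $H_2(N_j,\partial N_j \cup (\bgamma \cap N_j))$ whose boundary on $\partial N_j$ is precisely the residual part of $\arrowrho_j$ not carried by $\arrowrho_{\boat,j}$, and whose corners at any interior intersection point of two approximating curves are pinned by the requirement that the sequence of generators be $\vec\btheta_+$ (together with $\bx$ and $\by$, whose corners lie outside $N_j$). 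Because $B$ admits a holomorphic representative, every region of $N_j - \bgamma$ occurs in $B_j$ with nonnegative multiplicity, so the decomposition problem is one of peeling off layers of nonnegative domains from the outside in.

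For the \textbf{base case}, suppose there are no jumping transitions. Then all of $\gamma^1_j,\dots,\gamma^m_j$ are small approximations of one another, and $(N_j,\bgamma\cap N_j)$ is modelled on $(\DD,\Re)$ or $(\DD,\Im)$. The $\btheta_+$ convention forces the corners at every $\gamma^i_j \cap \gamma^{i+1}_j$ to pair up in such a way that the multiplicities across the approximation regions are strictly decreasing, and the difference of multiplicities between consecutive regions is determined by a single non-jumping chord $\rho^i \in \{\rho_{12},\rho_{23}\}$. This forces a unique expression $B_j - B(\arrowrho_{\boat,j}) = D_j^1(\arrowrho^1)$ for some repeated sequence $\arrowrho^1$, which is the desired decomposition.

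For the \textbf{inductive step}, let $k$ be the smallest index at which $\gamma^k_j$ and $\gamma^{k+1}_j$ are not approximations --- that is, the first jump. The outer block $\gamma^1_j,\dots,\gamma^k_j$ consists entirely of small approximations of one meridional or longitudinal curve, and the base-case analysis supplies a unique repeated Reeb domain $D_j^1(\arrowrho^1)$ accounting for the layers in this block. At the jumping point $e^{\infty,k}_j$, the nonnegativity of multiplicities together with the $\btheta_+$ corner condition on either side forces a contribution $B_j^1(\underline{\rho}^1)$ for a unique jumping chord $\underline{\rho}^1$ obtained by reading the relative homology class on $\partial N_j$ across that transition. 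Subtracting $D_j^1(\arrowrho^1) + B_j^1(\underline{\rho}^1)$ from $B_j - B(\arrowrho_{\boat,j})$ yields a new domain whose local configuration is the same problem with one fewer jump, and the inductive hypothesis completes the construction.

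\textbf{Uniqueness up to composable-equivalence} reduces, after the above algorithm extracts a canonical sequence, to showing that any two sequences of Reeb domains summing to the same homology class differ by the moves $D_j^i(\rho_{12}) + D_j^{i+1}(\rho_3) \sim B_j^i(\rho_{12}) + \cdots$ and $B_j^i(\rho_1) + D_j^{i+1}(\rho_{23}) \sim \cdots$; this is precisely the content of the immediately preceding proposition on $123$-equivalence. The \textbf{main obstacle} is verifying that the jumping chord extracted at each transition is determined without ambiguity: one must check carefully that the configuration of $\btheta_+$ corners on the two adjacent approximation blocks, combined with the nonnegativity coming from the holomorphic representative, rules out any other choice of $\underline{\rho}$. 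Once this pinning is in hand, the induction and the previous proposition combine to give the result.
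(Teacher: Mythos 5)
Your overall strategy is essentially the same as the paper's: work locally in $N_j$, exploit nonnegativity of multiplicities from the existence of a holomorphic representative, and induct by peeling off one repeated non-jumping domain plus one jumping quadrant at each jumping transition. That is the right skeleton. However, two points are muddled. First, the base case: once you have passed from the $\gamma^i_j$ to the subsequence $\delta^i_j$ (deleting each curve that small-approximates its predecessor, which is built into the definition of $B_j$), the ``no jumping transitions'' case leaves a single curve $\delta^1_j$, and then $B_j$ is just $n_1 D^1_j(\rho)$ for the unique non-jumping $\rho$ with the right idempotent. There is no ``strictly decreasing multiplicities across approximation regions'' to establish; the $\btheta_+$-corner bookkeeping in the approximation regions has already been absorbed into the passage to $\delta$-curves, so invoking it here is a red herring. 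Second, and more seriously, the ``main obstacle'' you flag is misconceived: the jumping chord at a transition is \emph{not} determined without ambiguity. In the longitudinal case the paper's construction splits on $n_1 > n_2$ versus $n_1 \le n_2$, and when $n_1 = n_2$ both $\rho_1$ (followed by some $\rho_{23}$'s) and $\rho_3$ (preceded by some $\rho_{12}$'s) give valid sequences of Reeb domains with the same sum --- this is exactly the ambiguity that composable-equivalence is designed to quotient out. Since composable-equivalence is \emph{defined} as ``summing to the same homology class,'' uniqueness up to composable-equivalence is automatic once existence is shown; it requires no argument at all, and in particular does not require the chord to be unambiguous. What the induction actually needs to verify is that after subtracting $D^1_j + B^1_j$ the remainder has equal multiplicity on both sides of $\delta^2_j$ (so that one has genuinely reduced to the group $H_2(N_j, \partial N_j \cup (\delta^3_j \cup \cdots))$), and this is what the explicit three-case analysis of $(n_1, n_2, n_3)$ in the paper delivers; your appeal to ``$\btheta_+$ corners on either side'' does not supply this check.
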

\begin{proof}
We construct an explicit sequence of Reeb domains $S$ inductively. If there is only one set of $\bgamma$-curves, $B_j$ is an element of the group $H_2(N_j, \partial N_j \cup \delta^1_j)$, which, as discussed, is equal to $n_1 B_j^1(\rho)$ for some non-jumping $\rho \in \cT$.

Consider a small disc neighbourhood $M^1_j$ of $e^{\infty,1}_j$, disjoint from all other $e^{\infty,i}_j$ and $e^{\infty}_j$. The restriction of the domain of $B$ to $M^1_j$ is modelled upon $n_1 B(\rho_1) + n_2 B(\rho_2) + n_3 B(\rho_3)$ for some $n_1, n_2, n_3 \ge 0$. There are three cases:
\begin{itemize}\item If $\gamma^2_j$ is meridional, write $D^1_j : = n_3 B^1_j(\rho_{23})$ and $B^1_j : = B^1_j(\rho_2)$, else
\item if $\gamma^2_j$ is longitudinal and $n_1 > n_2$, write $D^1_j : = n_2B^1_j(\rho_{12})$ and $B^1_j : = B^1_j(\rho_1)$, else
\item if $\gamma^2_j$ is longitudinal and $n_1 \le n_2$, write $D^1_j : n_1B^1_j(\rho_{12})$ and $B^1_j : = B^1_j(\rho_3)$.
\end{itemize}
It follows that the domain $B_j' = B_j - (D^1_j + B^1_j)$ has the same multiplicity upon either side of $\gamma^2_j$. As such, $B_j'$ is an element of $H_2(N^j, \partial N^j \cup (\delta^3_j \cup \dots \cup \delta^k_j))$, and the result follows by induction.
\end{proof}

With this in mind, we may make a definition:
\begin{defn}Let $B \in \pi_2^{\btheta_+}(\bx, \by)$ be a domain which carries a holomorphic representative $u$, and let $S_j$ be a decomposition of each $B_j$ into Reeb domains given by the construction in Proposition \ref{constructdomains}. We denote by $[\arrowrho(B)_j]$ the $123$-equivalence class of Reeb multi-chords defined by $[\arrowrho(B)_j] := [\arrowrho(S_j)]$. We often collect these together, writing $[\arrowrho(B)] = ([\arrowrho(B)_1], \dots, [\arrowrho(B)_h])$.
\end{defn}

By construction, the sequence $[\arrowrho(B)_1], \dots, [\arrowrho(B)_h]$ satisfies that for any set of representatives $\arrowrho(B)_1, \dots, \arrowrho(B)_h$ of each corresponding $123$-equivalence class, the sequence of idempotents $\arrowiota_j : = \iota^1_j, \dots, \iota^{m+1}_j$ has
$$|\{\iota^i_j \in \arrowiota_j: \iota^i_j \ne \iota^{i-1}_j\}| = |\jump(\arrowrho_j)|,$$
as in Definition \ref{reebidemptosplicing}. As such:

\begin{defn}If $B \in \dom(\cD(\varepsilon, \arrowiota))$ is a domain, then there is a well-defined splicing of $\arrowrho(B)_1, \dots, \arrowrho(B)_h$ given by $\vec\sigma(B) : = \vec\sigma(\arrowrho, \arrowiota)$.
\end{defn}

We shall later be interested in domains which look similar between splayed diagrams. A useful construction to bear in mind (cf. \cite{LOT:SSII}[Definition 3.34]) is:
\begin{defn}Fix an $h$-splaying sequence of idempotents $\arrowiota$, and consider an additional tuple of idempotents $\iota = (\iota_1, \dots, \iota_h)$. Let $\arrowiota' = (\iota, \arrowiota)$ and consider the partially splayed diagrams $\cD^\partial(\varepsilon, \arrowiota)$ and $\cD^\partial(\varepsilon, \arrowiota')$: we label the curves in each by $\bgamma^0, \dots, \bgamma^m$ and $\bdelta^0, \dots, \bdelta^{m+1}$ respectively. For every $i > 1$, the curves $\bgamma^i$ and $\bdelta^{i+1}$ are small approximations of one another (with associated approximation region $R^{i}$, and the curves $\bgamma^2$ are small approximations of some subset of the curves $\bgamma_{\bar{e}}^1$ in the east-compactification $\cD^\partial(\varepsilon, \arrowiota)|_{\bar{e}}$ (with associated approximation region $R^1$).

For every curve $\gamma^i_j \in \bgamma^i$, choose a pair of base-points on either side of $\gamma^i_j$ such that the base-points are disjoint from each of the approximation regions $R^i$. For any $$B' \in \pi_2(\bx, \btheta^{1,2}, \dots, \btheta^{m-1,m}, \by) \subset \dom(\cD^\partial(\varepsilon, \arrowiota')),$$ we let $\phi(B')$ denote the unique domain
$$B \in \pi_2(\bx, \btheta^{2,3}, \dots, \btheta^{m-1,m}, \by) \subset \dom(\cD^\partial(\varepsilon, \arrowiota))$$
with the same multiplicities at each of these base-points as $B'$ (here we abuse notation and write the same symbols for generators which are identified by the nearest-point map). We sometimes call $\phi(B')$ the domain obtained from $B'$ by \emph{taking multiplicities away from the approximation region}, and shall say that $B'$ \emph{splays} $B$ if $\phi(B')= B$.
\end{defn}

\section{Moduli spaces of holomorphic curves and polygons}\label{modulispaces}

\subsection{Complex structures on polygons}\label{polygons}

Before we describe the moduli spaces that the results of the paper will be based upon, we need to fix notation for moduli spaces of polygons. Most of this section follows \cite[Chapter 9]{Seidel:Fukaya}.

Let $D_m$ denote a disc, with $n$ punctures upon the boundary (we often view these as marked vertices). Choose a distinguished edge $e^0 \subset \partial D_m$, and label the remaining edges $e^1, \dots, e^{m-1}$ anticlockwise from $e^0$; also label the vertices by $p^{i, i+1}$, so that the vertex $p^{i,i+1}$ lies between $e^i, e^{i+1}$ (here, superscripts are taken modulo $m$).

Let $\conf(D_m)$ denote the moduli space of equivalence classes of positively oriented complex structures upon the disc $D_m$, up to biholomorphism that preserves the labelling of the edges (and vertices). The moduli space $\conf(D_m)$ has a natural compactification by trees of holomorphic polygons which we summarise now.

\begin{defn}An \emph{$m$-leaved tree} is a properly embedded planar tree $T \subset \RR^2$ with $m+1$ semi-infinite edges, one of which is distinguished and called the \emph{root} of $T$, and the others we call the \emph{leaves}. We denote by $V(T)$ the set of vertices of $T$, and $\mbox{Ed}^{\text{int}}$ the set of interior (finite) edges of $T$.

A \emph{flag} in $T$ is an oriented pair of a vertex and adjacent edge, $(V,e)$, oriented so that the edge $e$ begins at $V$. We orient $T$ so that every path from the root to a leaf is positively oriented. To every vertex $v$ of $T$ we can associate a series of flags, given by the pair $(V, e)$ and oriented accordingly. One of these flags is \emph{negative} with respect to $V$, in the sense that the orientation on $e$ induced by $T$ differs from that stemming from the flag (i.e. the edge points `into' $V$), and the others are \emph{positive}. It follows that we can order the flags originating at any given vertex $V$ by saying $f_0(V)$ is the unique negative flag at $V$, and ordering the remainder $f_1(V), \dots, f_{|V| - 1}(V)$ so that they are encountered in order from $f_0(V)$, anticlockwise. We call this the \emph{root-ordering} of the flags.
\end{defn}

Suppose we are given an $m$-leaved tree $T$, and a collection of Riemann surfaces $S = \{S(V)\}_{V\in V(T)}$ with boundary and punctures on the boundary, such that the boundary punctures of $S(V)$ are in one-to-one correspondence with the flags $f(V)$. To each interior edge $e$ in $T$ connecting $V_1, V_2$ there are thus two related boundary punctures: one in $S(V_1)$ corresponding to the flag $(V_1, e)$, and one in $S(V_2)$ corresponding to the flag $(V_2, e)$. There is an operation upon the surface $S$ given by deleting a small open neighbourhood of each of these punctures, and identifying the two new resultant boundary arcs to yield a new surface. Performing this operation iteratively over all interior edges gives a connected (topological) surface, $S(T)$. 

For disks, we require more predictable behaviour near the punctures.
\begin{defn}Let $D$ be a representative for some equivalence class in $\conf(D_n)$. A choice of \emph{strip-like ends} for $D$ is a choice of open neighbourhood $N^{i,i+1}$ of each boundary puncture $p^{i,i+1}$ --- so that $N^{i, i+1} \cap N^{j, j+1} = \emptyset$ for $i \ne j$ --- together with a choice of a biholomorphism between each $N^{i, i+1}$ and the strip $\RR_+ \times [0,1]$ for every $i \ne 0$, and between $N^{0, 1}$ and the strip $\RR_- \times [0,1]$.
\end{defn}

Suppose $T$ is a $d$-leaved tree, and that we are given a collection of Riemann surfaces $S = \{S(V)\}_{V\in V(T)}$, where each $S(V)$ represents some equivalence class in $\conf(D_{|V|})$ and is endowed with a choice of strip-like ends (i.e. a tree of holomorphic discs with strip-like ends). We shall say that a map $\rho: \mbox{Ed}^{\text{int}}(T) \rightarrow (0, \infty)$ is a \emph{length assignment} for $T$ --- thus giving $T$ the structure of a metric graph. We often omit $\rho$ from the notation.

We refine the construction of $\cup_TS$. There is a natural correspondence between the flags of each vertex $V$ and the punctures of $D_{|V|}$, between the puncture $p^{i, i+1} \in D_{|V|}$ and the flag $f_i(V)$. When we delete neighbourhoods of punctures corresponding with a negative flag $(V_1, e)$ and positive flag $(V_2, e)$, we take these to be the open neighbourhoods identified by the choices of strip-like ends with $(-\infty, -\log(\rho(e)) \times [0,1] \subset \RR_- \times [0,1]$ and $(\log(\rho(e), \infty) \times [0,1] \subset \RR_+ \times [0,1]$ respectively.

This construction yields an element $D_T$ in some equivalence class of $\conf(D_m)$ --- an $m$-pointed disc with Riemann surface structure, with vertices labelled $p^{0,1}, \dots, p^{m-1, 0}$ starting from the one corresponding with the root of $T$ (and edges labelled accordingly). Moreover, there is a natural choice of strip-like ends upon $D_T$ induced by those upon the punctures corresponding with the semi-infinite flags of $T$.

Allowing edge lengths to degenerate and take the value $0$ allows us to describe the compactification of the moduli space $\conf(D_m)$. If a given edge length is zero, we omit this edge from the cutting and pasting procedure described above --- effectively cutting the edge into two semi-infinite edges, and thus the tree into more components, upon which we perform the gluing process individually. The resulting collection of discs is described by a tree $\bar{T}$, which is given by collapsing all internal edges of nonzero length, and assigning to each vertex $\bar{v}$ of $\bar{T}$ a $|\bar{v}|$-pointed disc in $\conf(D_{|V|})$.

There is a canonical way to label the edges of $\bar{T}$ in a way compatible with those of $T$. We first label the semi-infinite flags of $\bar{T}$ by $(0,1), (1,2), \dots, (m,0)$ anti-clockwise from the root of $\bar{T}$. The task now is to label the interior flags of $\bar{T}$ by labels compatible with these: in such a way that the edges of the corresponding disks $D_V$ can be labelled by $0, 1, \dots, m$, so that an edge between punctures $(i,k)$ and $(k,j)$ is labelled by $k$. The way to do this is to first label the vertices which just have semi-infinite flags and one in-coming flag --- in this case, the semi-infinite flags are already labelled anticlockwise by some contiguous subsequence $(j, j+1), (j+1, j+2), \dots, (j+k-1,j+k)$, so the root flag is forced to be labelled by $(j+k, j)$. We can then delete this vertex from $\bar{T}$ and replace the root flag with a semi-infinite flag labelled by $(j+k, j)$, and repeat the process until all edges of $\bar{T}$ are labelled.

\begin{defn}Suppose that $\cD$ is a diagram with $m$ sets of attaching curves $\bgamma^1, \dots, \bgamma^m$, and let $T$ be a (possibly degenerate) $m$-leaved metric tree. Then a \emph{generator assignment} for $T$ is an assignment of a generator $\bx^{i,j} \in \cG(\bgamma^i, \bgamma^j)$ to each edge of $\bar{T}$ labelled by $(i,j)$. If $\vec\bx$ is a generator assignment for $T$, we refer to the sequence of generators corresponding to the edges adjacent to a vertex $V \in \bar{T}$, ordered by root-ordering, by $\vec\bx(V)$. Note that a generator assignment for $T$ has a well-defined overall sequence of generators $\vec\bx(T)\in \cG(\bgamma^1, \dots, \bgamma^m)$, given by the set of generators labelling the exterior edges of $T$.

Given a generator assignment $\vec\bx$, a \emph{family of compatible domains} is a set $\{B_V\}_{V \in V(\bar{T})}$ of domains such that that $B_V$ is a member of the group $\pi_2(\vec\bx(V))$. The juxtaposition $B(T) = \sum_{V \in V(\bar{T})} B_V$ is, accordingly, a member of the group $\pi_2(\vec\bx(T))$.
\end{defn}

We would like to work with bona fide representatives of each equivalence class for $\conf(D_m)$, with strip-like ends. Once these are chosen, the compactification of the moduli space $\conf(D_m)$ is then described by the set $\cup_{T, \rho} D_T$, indexed over all $d$-leaved trees and all (possibly degenerate) length assignments $\rho$. As edge lengths in this space degenerate to zero, the surface $D_T$ degenerates to a disjoint union of surfaces $\{D_{V}\}_{V \in V(\bar{T})}$. 

To make this make sense, one must choose representatives $D$ and choices of strip-like ends for every $d \le m$ and each equivalence class of $\conf(D_d)$, so that at these boundary strata the strip-like ends chosen agree with those induced by those chosen upon $D_d$. This is possible: see, for instance, \cite[Section 9g]{Seidel:Fukaya}.

From this point, we fix such a set of choices of representatives for equivalence classes with strip-like ends such that this works (i.e. we have consistency on the boundary strata), and abuse notation by identifying elements of $\conf(D_m)$ with these representatives. We also fix a choice of symplectic forms $\{\omega_j\}_{j \in \conf(D_m)}$ where $\omega_j$ is compatible with $j$, so that if $(j_n)_{n = 1}^\infty \in \conf(D_m)$ is a sequence of complex structures converging to the collection $\{j_{\bar{v}} \}_{\bar{v} \in V(\bar{T})} \in \partial \overline{\conf(D_m)}$, then $\omega_{j_n} \rightarrow \{\omega_{j_{\bar{v}}}\}_{\bar{v} \in V(\bar{T})}$.
  
We will also have need to fix identifications $\eta^i$ of a small neighbourhood $N(e^i)$ of each edge $e^i$ in the polygons $(D_m, j) \in \conf(D_m)$ with $\RR \times [0, \varepsilon)$, so that on each of these $N(e^i)$ the symplectic form $\omega_j$ agrees with the pullback of the standard symplectic form on $\RR^2$. To make this make sense, we require that these identifications are continuous, consistent with the choice of strip-like ends, and consistent across boundary strata of $\conf(D_m)$ (see \cite[Section 3.3]{LOT:SSII} and \cite[Section 10e]{Seidel:Fukaya}, for instance).

We will later need to single out some vertices of our trees:

\begin{defn}\label{specialvertices}Let $T$ be a tree. Then the set of \emph{comb vertices} of $\bar{T}$ is the set of vertices of $\bar{T}$ visited by the path in $\bar{T}$ connecting the exterior edge labelled by $(0,1)$ (i.e. the root of $\bar{T}$) with the exterior edge labelled by $(1,2)$. The set of \emph{spinal vertices} of $\bar{T}$ is the set of vertices of $\bar{T}$ which have a flag $f_V$ labelled by $(i,j)$ where one of $i$ or $j$ is equal to $0$. The set of \emph{easterly vertices} of $\bar{T}$ is the set of non-spinal vertices of $\bar{T}$.

If $V$ is spinal, we associate to it its \emph{storied tree} $T' \subset T$, given by the maximal connected subtree of $T$ which contains $V$ and no other spinal vertices (so, it contains $V$ and a series of easterly vertices).
\end{defn}

\subsection{Sources}
In this section, we define what shall be the sources of the various types of holomorphic curves considered in the remainder of this chapter. We first fix positive integers $m$, $h$, and a disc $D_m\in \conf(D_m)$.
\begin{defn}A \emph{topological $(h,m)$-source} is a  surface $S$, with boundary and punctures $\punc(S)$ together with a labelling $\lambda: \punc(S) \rightarrow \punc(D_m) \cup \cT$ of the boundary punctures of $S$ by either a puncture in $\partial D_m$ or a Reeb chord in $\cT$. We will write $E(S)$ for the subset of `east' punctures $\lambda^{-1}(\cT)$, and also require that $E(S)$ is partitioned into $h$ subsets $E_1(S), \dots, E_h(S)$.

An \emph{$(h,m)$-source} is a topological $(h,m)$-source together with a choice of complex structure $j_S$ which gives $S$ the structure of a smooth (not nodal) Riemann surface.
\end{defn}

It will be useful to have some more adjectives to describe various types of source.

\begin{defn}We give names to the following specific varieties of $(h,m)$ source:
\begin{itemize}
\item A (topological) \emph{bordered} source is a (topological)  $(h,2)$-source.

\item A (topological) \emph{polygonal}  $m$-source is a (topological) $(0,m)$-source such that $E(S) = \emptyset$.

\item A (topological)  \emph{tooth} source is a (topological) $(h,m)$-source together with a labelling of the punctures $E(S)$ by either `$e$' or `$w$'. We will usually denote these by $T$, and shall write $E_j(S)$ (resp. $W_j(S)$) for the set of punctures labelled by `$e$' (resp. `$w$') in part $E_j(S)$.

\item In the degenerate case where $m = 1$, i.e. $D_m = \HH$ is the upper-half plane, with one boundary puncture at $\infty$, we call a (topological) $(h, 1)$ source an \emph{anchor source}.
\end{itemize}
\end{defn}

Later, we will need to fix the behaviour of holomorphic curves near a certain subset of non-east punctures. The relevant kind of source is:

\begin{defn}\label{splayedsource}A (topological) \emph{splayed} $(h,m)$-source is a (topological) $(h,m)$ source together with a distinguished set $\rho_{123}(S) \subset \punc(S) - E(S)$, and a partition $Q$ of $\rho_{123}(S)$ into three sets, $Q = (Q_{123}, Q_{1,23}, Q_{12,3})$.
\end{defn}

If we are given a metric tree $T$, then we can associate to every vertex $V$ of the corresponding tree $\bar{T}$ a (topological) $(h,|V|)$ source $S_V$. If, for each $V$ and $V'$ connected by an edge of $\bar{T}$, we are given a bijection between the punctures of $S_{V}$ and $S_{V'}$ corresponding with this edge, we can associate to $T$ a corresponding (topological) $(h, m)$ source $S_T$, where $m$ is the number of non-interior edges of $\bar{T}$. This is formed by deleting small strip neighbourhoods of punctures of $S$ corresponding with an edge of $T$, and gluing the resultant ends together according to the chosen bijection. There are natural decorations of $S_T$ induced by those upon $S_V$: each east puncture in a source $S_V$ is preserved by the gluing process, and thus has a corresponding puncture in $S_T$: we label the resultant punctures by the corresponding element of $\cT$, and partition them according to $E_j(S_T) = \cup_{V \in V(T)}E_j(S_V)$. If some sources $S_V$ are also endowed with the extra data of a distinguished set $\rho_{123}(S_V)$ and partition $Q_V$ as in Definition \ref{splayedsource}, the resultant source has a natural distinguished set $\rho_{123}(S_T)$ given by the union of all of the $\rho_{123}(S_V)$ after the gluing procedure --- where we omit any elements of $\rho_{123}(S_V)$ which are glued along, and regard a (topological) $(h,m)$ source as a splayed source with empty distinguished set.

If $\bar{T}$ only has two vertices $V_1, V_2$, then we sometimes denote the corresponding source by $S_{V_1} \natural S_{V_2}$. A similar construction can also be made for tooth sources: if $S$ is a bordered source and $T$ is a tooth source, with a bijection between $E(S)$ and $W(T)$, we can form a glued source $S \natural T$ given by deleting small neighbourhoods of $E(S)$ and $W(T)$ and identifying the resultant `stubs' via the bijection.

\subsection{Moduli spaces of holomorphic curves}

We will now define a series of moduli spaces of holomorphic curves. Suppose that $\cD = (\Sigma; \bgamma; \bb; \bp)$ is a diagram, where $\bgamma = \{\bgamma^{1} , \dots, \bgamma^{m}\}$.  We shall be concerned with holomorphic maps from an $m$-source into the four-manifold $X_{\cD,m}:= \Sigma \times D_{m}$. We assume that $\Sigma$ is endowed with a symplectic form $\omega_\Sigma$, and choose a complex structure $j_\Sigma$ upon $\Sigma$ which is compatible with $\omega_\Sigma$.

We also assume that $j_\Sigma$ is cylindrical near the boundary, in that we fix small neighbourhoods $N_j \subset \Sigma_{\bar{e}}$ of each $e^\infty_j$, together with a symplectic identification of each $U_j = N_j - e^\infty_j$ with $S^1 \times (0,1)$. Under this identification we assume that the complex structures $(j_\Sigma)|_{U_j}$  and arcs $\bgamma \cap U_j$ are also invariant in the $(0,1)$-direction. It follows from this assumption that the complex structure $j_\Sigma$ can be extended to a complex structure $j_{\bar{e}}$ upon $\Sigma_{\bar{e}}$.

We denote by
\begin{align*}\pi_\Sigma: \Sigma \times D_m &\rightarrow \Sigma, \mbox{ and }\\
\pi_D: \Sigma \times D_m &\rightarrow D_m\end{align*}
the relevant projection maps.

\begin{defn}An \emph{admissible} family of almost-complex structures for $\cD$ is a collection $\bJ = \cup_{m \ge 1} \bJ_m$ of families of almost-complex structures $\bJ_m =\{J_j\}_{j \in \conf(D_m)}$ upon $X_{\cD, m}$ satisfying:
\begin{enumerate}\item The projection map $\pi_D: \Sigma \times D_m \rightarrow D_m$ is $(J_j, j)$-holomorphic for each $j$.
\item The fibres of the maps $\pi_D$ are $J_j$-holomorphic for every $j$.
\item Each $J_j$ is adjusted to the split symplectic form $\omega_\Sigma \oplus \omega_j$ on $\Sigma \times D_m$.
\item Near the punctures $e^\infty_1 \times D_m, \dots, e^\infty_h \times D_m$, the almost complex structures $J_j$ are split.
\end{enumerate}
\end{defn}

For fixed $m$, we shall study moduli spaces of maps into $X_{\cD,m}$ which are $J \in \bJ_m$-holomorphic. We will require some form of consistency for the family $\bJ$, so that for any family of elements of $\conf(D_m)$ converging to some point in $\partial \overline{\conf(D_m)}$, the corresponding family of almost-complex structures upon $X_{\cD, m}$ converges to the correct tree of almost-complex structures. Namely:

\begin{defn}Suppose $\bJ$ is a family of admissible almost-complex structures. Recall that for any $m$ and sequence of $m$-leaved metric trees $(T_n)_{n \ge 0}$ converging to some degenerate tree $T$, the chosen family of equivalence classes $\conf(D_m)$ satisfies that the corresponding almost-complex structures $j_n : = j_{T_n} \in \conf(D_m)$ converge to a tree of complex structures $\{j_V\}_{V \in V(T)}$, where each $j_V \in \conf(D_{|V|})$.

We say that $\bJ$ is \emph{consistent} if the family $J_{j_n}$ converges to the corresponding tree $\{J_{j_V}\}_{V \in V(T)}$ --- where each $J_{j_V}$ is an almost-complex structure on $X_{\cD, |V|}$.
\end{defn}

Given a diagram $\cD$, there are corresponding Lagrangian `cylinders' inside the four-manifold $X_\cD$. At their most basic, these are given by the sets of cylinders and half-cylinders $C^{i}:=\bgamma^i \times e_i \subset \Sigma \times e_i$. In Section \ref{sub:anchorsperturbations}, we will perturb these cylinders slightly, replacing $C^i$ with cylinders $C^i(\varepsilon)$ associated to some Hamiltonian perturbation of $\Sigma$. In this case, the corresponding definition is to be made by replacing $C^i$ with $C^i(\varepsilon)$ below. 

With these definitions in mind, we specify the types of holomorphic curves we shall be interested in.
\begin{defn}Let $\cD = (\Sigma; \bgamma; \bb; \bp)$ be a diagram with attaching curves $\bgamma = \bgamma^1, \dots, \bgamma^m$, and write $\bgamma^i = \bgamma^{i, a} \cup \bgamma^{i, c}$ for the partition of each $\bgamma^i$ into arcs and closed curves, respectively. Suppose also that we are given a partition of the points $\bp = \bw \cup \bz$, where either part may be empty.

Fix a consistent, admissible family $\bJ$ of almost complex structures for $\cD$, and a source $S$, endowed with a complex structure $j_S$. We consider pairs $(u, j)$, where $j \in \conf(D_m)$ and $$u: (S, \partial S) \rightarrow (X_\cD, C^1 \cup \cdots \cup C^n),$$
is a map that satisfies the following conditions:

\begin{Mlist}\item $u$ is $(j_S,J_j)$-holomorphic.
\item  $u: S \rightarrow \Sigma \times D_n$ is proper.
\item $u$ has finite energy
\item The image of $u$ is disjoint from the sets $\bb \times D_n$ and $\bz \times D_n$.
\item $u$ extends to a proper map $u_{\bar{e}}: S_{\bar{e}} \rightarrow \Sigma_{\bar{e}} \times D_n$. (This is only meaningful when $\Sigma_\cD$ has boundary.)
\item $\pi_{\DD} \circ u_{\bar{e}}$ is a $g$-fold branched cover.
\item $u$ is an embedding.
\item At each puncture $q$ of $S$ labelled by $p^{i, i+1}$, $\lim_{z \rightarrow q}(\pi_D\circ u) (z) = p^{i,i+1}$.
\end{Mlist}

When $\Sigma$ has boundary, we require three further conditions:

\begin{BOR}\item At each puncture $q\in E(S)_j$, $\pi_\Sigma \circ u$ is asymptotic to the Reeb chord labelling $q$ upon the boundary component $\partial\Sigma_j$.

\item For every $i = 1, \dots, n$ and $j = 1, \dots, g+h-1$, and every $t \in e^i$, we require that $u^{-1}(\gamma^{i,c}_j \times \{t\})$ consists of a single point.

\item (Strong boundary monotonicity) For every $i = 1, \dots, n$ and $j = 1, \dots, 2k_i$, and every $t \in e^i$, we require that $u^{-1}(\gamma^{i,a}_j \times \{t\})$ consists of at most one point.
\end{BOR}
\end{defn}

From these conditions, it is clear that near each puncture $q$ of $S$ which is labelled by $p^{i,i+1}$, the map $u$ is asymptotic to $$I^{i, i+1}(q): = x^{i, i+1}(q)\times p^{i,i+1},$$ where $x^{i,i+1}(q)$ is a point in $H^i(\gamma^i, \infty) \cap H^{i+1}(\gamma^{i+1}, -\infty)$.
We collect these into sets of points $\bx^{i,i+1} : = \{x^{i,i+1}(q) : \lambda(q) = p_{i,i+1}\}$, for each $i$, and collect the points $I^{i, i+1}(q)$ into corresponding sets $I^{i, i+1}(\bx^{i, i+1})$.

We will be concerned with a few specific instances of this construction, which we spend the next sections giving more thorough names to.

\subsection{For bordered diagrams}\label{borderedmoduli}This section briefly recounts \cite[Section 5.2]{LOT}.

Suppose that $\cD$ is a toroidal bordered diagram, $\cD = (\Sigma; \balpha, \bbeta; \bb)$. We relabel $C^1 := C^\balpha, C^2 := C^\bbeta$ for clarity's sake, and consider the case when $S$ is a bordered source.

Given generators $\bx$ and $\by$, a choice of almost-complex structure $J$ upon $X_{\cD,2}$, a fixed decorated source $S$ and a homology class $B \in \pi_2(\bx, \by)$, we shall write $\freemod^B(\bx, \by; S)$ for the moduli space of curves $u$ with source $S$ which satisfy (M-1)--(M-8) and (BOR-1)--(BOR-3) with respect to $J$, in the homology class $B$, with asymptotics $(\bx, \by)$. Sometimes, to make explicit which almost-complex structure we are using, we will write $\freemod^B(\bx, \by; S; J)$.

If $u \in \freemod^B(\bx, \by; S)$ is a holomorphic curve, then there is a correpsonding \emph{evaluation map} for each puncture $q$ of $S$ labelled by a Reeb chord, $\ev_q(u) : = t \circ u_{\bar{e}}(q)$. Sometimes, if $u$ is implied, we refer to $\ev_q(u)$ as the \emph{height} of $q$. There are evaluation maps $$\ev_j = \prod_{q \in E_j(S)} \ev_q: \freemod^B(\bx, \by; S; J) \rightarrow \RR^{|E_j(S)|},$$
which fit together into a single map $\ev: \freemod^B(\bx, \by; S; J) \rightarrow \RR^{|E(S)|}$ given by $\ev:= \ev_1 \times \cdots \times \ev_h$.

If we fix a partition $P_j$ of $E_j(S)$, there is a corresponding subspace $\Delta_P\subset \RR^{|E_j(S)|}$ given by $\{x_p = x_q : P^i_j \in P_j, p, q \in P^i_j\}$. This allows us to further split the moduli space above by requiring that the heights of punctures in each part coincide, namely:

\begin{defn}Let $\bx, \by$ be generators, $B \in \pi_2(\bx,\by)$, $S$ be a decorated source, and $P: = (P_1, \dots, P_h)$ partitions of the set $E_1(S), \dots, E_h(S)$. Then we write
$$\freemod^B(\bx, \by;S;P_1, \dots, P_h) : = \ev^{-1}(\Delta_{P_1} \times \cdots \times \Delta_{P_h}) \subset \freemod^B(\bx, \by; S).$$
\end{defn}

Again, we sometimes write $\freemod^B(\bx, \by;S;P_1, \dots, P_h;J)$ to make explicit which almost-complex structure we are using, and often abbreviate to $\freemod^B(\bx, \by;S;P)$ for brevity.

A further refinement is gotten by endowing each partition $P_j$ with an ordering of its parts, $\arrowP_j$. We can then write $\freemod^B(\bx, \by;S;\arrowP)$ for the subset of the set of curves $u \in \freemod^B(\bx, \by;S;P_1, \dots, P_h)$ for which the ordering of each $\arrowP_j$ agrees with that given by dictating $P^i_j < P^{i'}_j$ if $\ev_{q^i}(u) < \ev_{q^{i'}}(u)$ for all (any) $q^i \in P_j^i, q^{i'} \in P_j^{i'}$. Often we abbreviate $\arrowP : = (\arrowP_1, \dots, \arrowP_h)$.

If we are given a set of sequences of  Reeb chords $\arrowrho_{\boat} = (\arrowrho_{\boat,1}, \dots, \arrowrho_{\boat, 1})$, then we shall say the pair $(\arrowP, S)$ is \emph{compatible} with $\arrowrho_{\boat}$ if the sequence of sets of Reeb chords gotten from $\arrowP_j$ by replacing each part $P^i_j$ with the set of Reeb chords labelling the punctures $q \in P^i_j \subset E_j(S)$ is equal to $\rho_{\boat, j}^i$, and we shall often write $\freemod^B(\bx, \by;S;\arrowrho_{\boat})$ for the moduli space $\freemod^B(\bx, \by;S;\arrowP)$, where $(\arrowP, S)$ are compatible with $\arrowrho_{\boat}$. If $(\arrowP, S)$ are $\arrowrho_\boat$-compatible, for any $\rho \in \arrowrho_{\boat, j} \in \arrowrho_\boat$ we shall write $q(\rho)$ for the corresponding puncture of $S$.

\begin{rmk}
The moduli space $\freemod^B(\bx, \by;S;\arrowrho_{\boat})$ has an action by $\RR$, induced by translating in the $\RR$-direction of $D_2 = [0,1] \times \RR$. Unless $S$ is a disjoint union of $g$ strips, each with two boundary punctures, and the map $u$ has trivial homology class, this translation is free --- and we say $u$ is \emph{stable}. We will have cause to consider the quotient moduli space $\freemod^B(\bx, \by;S;\arrowrho_{\boat})/ \RR$ which we term $\tildemod^B(\bx, \by;S;\arrowP)$.

In \cite{LOT}, what we call $\freemod$ is called $\tildemod$, and the moduli space we call here by $\tildemod$ is called simply $\cM$. We reserve the latter for certain moduli spaces of bordered polygon with constraints on behaviour near certain corners: however, in the case where the bordered polygon is in fact a bi-gon, this does not present a further constraint and thus $\cM = \tildemod$, agreeing with the previous notation in the literature.
\end{rmk}

In \cite{LOT}, a number of properties of these moduli spaces are explored. We presently re-state those that shall be of use to us.

\begin{prop}[{\cite[Proposition 5.6]{LOT}}] \label{borderedtransversality} For each $\bx, \by, S$, there is a dense set of admissible $J$ such the moduli spaces $\freemod^B(\bx, \by;S;J)$ are transversally cut out by the $\delbar$-equations. Moreover, for any countable set $\{M_i\}$ of submanifolds of $\RR^{|E(S)|}$, there is a dense set of admissible $J$ which satisfy that $\ev: \freemod^B(\bx, \by;S; J) \rightarrow \RR^{|E(S)|}$ is transverse to all of the $M_i$: i.e. there is a dense set of admissible $J$ such that the moduli spaces $\freemod^B(\bx, \by;S;\arrowrho_{\boat}; J)$ are also transversely cut out.

In particular, the set of \emph{sufficiently generic} admissible almost complex structures --- those for which the moduli spaces $\freemod^B(\bx, \by;S;\arrowrho_{\boat};J)$ are transversally cut out for \emph{all} choices of $\bx, \by, S, \arrowrho_{\boat}$ --- is nonempty.
\end{prop}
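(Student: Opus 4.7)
The plan is a standard universal moduli space / Sard--Smale argument adapted to the bordered setting, essentially the strategy employed in \cite{LOT}. First I would fix appropriate analytic function spaces: enlarge the space of admissible almost complex structures $\bJ_2$ to a Banach manifold $\bJ_2^\ell$ of $C^\ell$-perturbations (for large $\ell$), supported away from the cylindrical ends at $\bp, \bb$ and compatible with the fibration $\pi_D$, the split condition near $e^\infty_j \times D_2$, and the taming condition. Correspondingly, realize the space of maps $u:(S,\partial S)\to(X_\cD,C^\balpha\cup C^\bbeta)$ in an appropriate Sobolev or weighted H\"older space completion imposing the asymptotic decay conditions (M-1)--(M-8) and (BOR-1)--(BOR-3) at the boundary punctures.

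Next, assemble the \emph{universal moduli space} $\freemod^B(\bx,\by;S;\bJ_2^\ell)$ of all pairs $(u,J)$ with $u$ of the given asymptotics and homology class and $J\in\bJ_2^\ell$, with the source complex structure varying over Teichm\"uller-like slices. I would show this universal space is a smooth Banach manifold by checking that the linearization of the $\delbar_J$-operator, augmented by the derivative with respect to $J$, is surjective at every solution. The key analytic input is that each $u$ is somewhere injective on a dense open subset of $S$ (which follows from the embedding condition (M-7) together with $\pi_\DD\circ u$ being a $g$-fold branched cover, (M-6)); this guarantees that a cokernel element of $D\delbar_J$ can be killed by a $J$-perturbation supported near a regular, somewhere-injective point. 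The admissibility constraints on $\bJ$ are mild enough to still allow such perturbations, provided we work in the interior of $\Sigma\times D_2$ away from the fibers $\Sigma\times\partial D_2$ and from the cylindrical region near the boundary punctures, which is where $u$ is embedded and transverse to the fibers.

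Once the universal space is smooth, the projection $\pi:\freemod^B(\bx,\by;S;\bJ_2^\ell)\to\bJ_2^\ell$ is Fredholm; Sard--Smale yields a residual (hence dense) set of regular values $J$, for which $\freemod^B(\bx,\by;S;J)=\pi^{-1}(J)$ is transversally cut out. A standard Taubes-style argument passes from the $C^\ell$-category to the $C^\infty$-category, giving density within the original space $\bJ_2$. For the second assertion, enlarge the universal setup by the evaluation map $\ev:\freemod^B(\bx,\by;S;\bJ_2^\ell)\to\RR^{|E(S)|}$, which is a smooth map of Banach manifolds; the parametric transversality theorem applied to each fixed submanifold $M_i$ produces a dense set of $J$ for which $\ev$ is transverse to $M_i$, and a countable intersection remains dense by the Baire property. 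Applying this with $\{M_i\}$ equal to the countable collection of diagonals $\Delta_{P_1}\times\cdots\times\Delta_{P_h}$ (one for each choice of ordered partitions $\arrowP$) simultaneously cuts out all the moduli spaces $\freemod^B(\bx,\by;S;\arrowrho_\boat;J)$ transversally. Finally, intersect the resulting dense sets over the countable family of data $(\bx,\by,S,\arrowrho_\boat,B)$ to obtain a single dense set of sufficiently generic $J$.

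The main obstacle is the surjectivity of the linearized operator on the universal moduli space in the presence of the admissibility constraints (cylindrical near $\bp\cup\bb$, split near $e^\infty_j$, compatible with $\pi_D$). I expect this to go through because the somewhere-injective locus of $u$ lies in the interior region where $J$ may be freely perturbed; but one must verify that a pseudo-holomorphic $u$ satisfying the stated properties cannot be entirely concentrated in the constrained region, which uses (M-6), (BOR-2), and (BOR-3) to rule out degenerate configurations.
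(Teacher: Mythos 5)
The paper gives no proof of this proposition---it cites \cite[Proposition 5.6]{LOT}, whose proof in turn refers to Lipshitz's cylindrical transversality result---and your universal-moduli-space/Sard--Smale argument (enlarge to a Banach space of $C^\ell$ admissible $J$'s, establish smoothness of the universal moduli space via somewhere-injectivity of $u$ relative to the fibration $\pi_D$, project and apply Sard--Smale, pass to $C^\infty$ by a Taubes argument, then handle the evaluation-map transversality parametrically and intersect over the countable data) is precisely that argument. You correctly flag the one genuinely delicate point: because admissibility confines perturbations of $J$ to the horizontal directions compatible with $\pi_D$ and away from east infinity, the surjectivity step needs that the embeddedness (M-7) together with the branched-cover property (M-6) supply injectivity within fibers at a dense set of points of $D_2$, which is exactly the lemma Lipshitz proves.
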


By the implicit function theorem, it follows that for sufficiently generic $J$ the moduli space of $u \in \freemod^B(\bx, \by;S; J; \arrowrho_{\boat})$ satisfying $\ind(D\delbar (u)) = k$ is a $k$-dimensional manifold. The index at any given $u$ satisfies a formula in terms of the topology of $S$ and $B$, contingent on the \emph{Euler measure} of a domain:
\begin{defn}\label{eulermeasure}Let $B \in \pi_2(\bx^{0,1}, \dots, \bx^{k,0})$ be a domain. Then its \emph{Euler measure}, $e(B)$, is given by
$$e(B) : = \chi(B) - \frac{\mbox{ac}(B)}{4} + \frac{\mbox{ob}(B)}{4},$$
where $\chi(B)$ denotes the Euler characteristic, and $\mbox{ac}(B)$ and $\mbox{ob}(B)$ are the number of acute and obtuse corners in $B$ respectively.
\end{defn}

The relevant index formula is:

\begin{prop}[{\cite[Proposition 5.8]{LOT}}] \label{indbord} The expected dimension of $\freemod^B(\bx, \by;S; \arrowrho_{\boat};J)$ is given by
$$\ind(B, S, P) = g - \chi(S) + 2 e(B) + |\arrowrho_{\boat, 1}| + \cdots + |\arrowrho_{\boat, h}| ,$$
where $|\arrowrho_{\boat,j}|$ is the number of Reeb chords in $\arrowrho_{\boat, j}$, and $e(B)$ is the Euler measure of $B$.
\end{prop}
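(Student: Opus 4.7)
The plan is to prove this by directly importing the framework of Lipshitz's cylindrical index formula and extending it with the Reeb-chord contributions coming from the east punctures. First I would recall that for a curve $u\in\freemod^B(\bx,\by;S;\arrowrho_\boat;J)$, the expected dimension at $u$ equals the Fredholm index of the linearised $\delbar$-operator $D_u\delbar$ acting on sections of $u^*T(\Sigma\times D_2)$ with totally real boundary conditions coming from $T(C^\balpha\cup C^\bbeta)$ and exponentially decaying asymptotic behaviour at interior and east punctures. Admissibility of $\bJ$ guarantees that $D_u\delbar$ splits, up to lower-order compact terms, as a sum of a horizontal operator (modelled on $\pi_\Sigma\circ u$) and a vertical operator (modelled on $\pi_D\circ u$), so that the index decomposes as an interior contribution plus a sum over east punctures.

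For the interior contribution I would invoke (the proof of) the cylindrical index formula of Lipshitz \cite{Lipshitz:cylindrical}: for a curve with no east punctures, the Fredholm index equals $g-\chi(S)+2e(B)$. The argument there is to compute the index of the horizontal and vertical operators separately, using that $\pi_D\circ u$ is a branched $g$-fold cover of $D_2$ (condition (M-6)) and applying Riemann-Hurwitz, while the horizontal operator computation exploits the Euler measure of $B$ and the number of corners of $\bx,\by$. Nothing in that argument uses the closedness of $\Sigma$ away from the punctures, so it transplants verbatim to the bordered setting once one quotients the Fredholm problem near $e^\infty_j\times D_2$ by the asymptotic behaviour imposed by (BOR-1).

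The heart of the proof is therefore to show that each Reeb chord puncture $q\in E_j(S)$ contributes $+1$ to the index, independent of which chord $\rho=\lambda(q)$ labels it; summing over all chords then gives $|\arrowrho_{\boat,1}|+\cdots+|\arrowrho_{\boat,h}|$, yielding the formula. For this I would compute the relative Maslov / Conley-Zehnder index of the asymptotic operator at such a puncture, with respect to the capping determined by the Lagrangian framing coming from the $\alpha$-arcs on $\partial\Sigma_j$. In the split cylindrical model near $e^\infty_j\times D_2$ this reduces to a model Fredholm problem on a half-plane with boundary on two arcs of the torus $\partial\Sigma_j$ meeting the chord endpoints; a direct computation (or an appeal to the standard SFT-type formula for punctured holomorphic curves with Lagrangian boundary) shows the Conley-Zehnder contribution is $+1$ per chord, and that this is independent of the particular Reeb chord since all chords in the torus algebra have endpoints on the same pair of arc systems with the same framing data.

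The main obstacle, and where I would spend most of the technical effort, is the third step: ensuring that the asymptotic operator contribution is correctly identified as $+1$ and showing that the capping/framing conventions are consistent with the ones implicit in the Lipshitz-style interior computation so that no double-counting occurs. A good sanity check is to verify the formula on a small list of model domains — bigons with a single Reeb chord asymptotic and small triangular domains for the torus algebra — where both $\ind(B,S,P)$ can be computed by hand and the expected dimension of $\freemod^B(\bx,\by;S;\arrowrho_\boat)$ is known from \cite{LOT}; matching these pins down all normalisations.
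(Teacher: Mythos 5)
The paper does not prove this proposition — it imports it verbatim as Proposition~5.8 of \cite{LOT} and gives no independent argument, so there is no in-paper proof to compare against. Your high-level decomposition (Lipshitz's interior contribution $g - \chi(S) + 2e(B)$ plus $+1$ per east asymptotic) does produce the right answer and agrees with the final shape of the LOT argument, but two things in your sketch need fixing before it would constitute a proof.

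First, the claim that admissibility makes $D_u\bar\partial$ ``split, up to lower-order compact terms,'' into a horizontal and a vertical operator — and that the index therefore decomposes as ``interior plus a sum over east punctures'' — is not how Lipshitz or \cite{LOT} actually compute the index, and the inference as stated is muddled. Admissibility makes $\pi_\Sigma\circ u$ and $\pi_D\circ u$ separately holomorphic, which constrains the moduli problem; it does not automatically give you index-additivity of a factored linearized operator. Even granting such a split, the east punctures contribute through the \emph{horizontal} operator (which is what carries the Reeb-chord asymptotics), not as a by-product of the vertical branched-cover structure, so the way you bundle the east contribution together with the split is a non-sequitur. The references instead compute the index of the single Cauchy--Riemann operator directly, via the relative Chern number of $u^*T(\Sigma\times D_2)$ and the Maslov index of the totally real boundary conditions, relating these to $e(B)$ and $\chi(S)$.

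Second, the additive term is the number of \emph{parts} of the ordered partition ($|P|$ in LOT's notation, $|\arrowrho_{\boat,1}|+\cdots+|\arrowrho_{\boat,h}|$ here), not the number of east punctures $|E(S)|$. The LOT proof first establishes the formula for the discrete partition, where these coincide, and then observes that each coincidence constraint in the evaluation map cuts the expected dimension by one, replacing $|E(S)|$ with $|P|$. Your ``$+1$ per Reeb-chord puncture'' step only handles the first half. In the torus-algebra setting of this paper no two chords can occur at the same boundary height, so every part is a singleton and the two quantities agree; but since you are proving \cite[Prop.~5.8]{LOT} in its stated generality, the codimension-one reduction cannot be omitted.
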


It follows from this that for fixed $B$ and $S$, the dimension of the moduli space $\freemod^B(\bx, \by;S; \arrowrho_{\boat};J)$ is well-defined. There is a further interpretation of this formula solely in terms of data about the domain $[u]$, though we do not pursue this here; the important related fact for us will be:
\begin{prop}\cite[Proposition 5.69]{LOT} \label{embeddedeuler}Whether or not a holomorphic curve $u \in \freemod(\bx, \by; S; \arrowrho_{\boat};J)$ is embedded is determined solely by the Euler characteristic of the source $S$: the curve $u$ is embedded if and only if $\chi(S) = \chi_\emb(S)$ for some well-defined quantity $\chi_\emb(S)$.
\end{prop}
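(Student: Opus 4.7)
The plan is to follow the approach pioneered in the cylindrical formulation of Heegaard Floer homology (and generalised to the bordered setting in LOT): compute the self-intersection number of $u$ as a topological quantity depending only on $B$, $\arrowrho_\boat$ and $\chi(S)$, and then invoke positivity of intersections to conclude embeddedness is equivalent to a linear equation in $\chi(S)$.

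First I would recall that, since $u$ is $J$-holomorphic and $J$ is (after suitable perturbation) generic, any failure of embeddedness is isolated: it consists of a finite collection of interior double points, plus a controlled set of boundary double points (points where two boundary branches of $u_\Sigma$ cross, or where distinct east punctures of $S$ map to the same Reeb chord at the same height). By positivity of intersections for $J$-holomorphic curves in the almost-complex four-manifold $X_{\cD,2}$, each interior double point contributes strictly positively to a suitable (twisted) self-intersection count; the boundary contributions can be made non-negative after passing to a small generic perturbation (``doubling'' the curve across $\partial D_2$). Thus $u$ is embedded if and only if this self-intersection count vanishes.

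Next I would compute the self-intersection count topologically. Writing $B = [\pi_\Sigma \circ u]$ and using that $\pi_D \circ u$ is a $g$-fold branched cover (condition (M-6)), the total branching $\mathrm{br}(u)$ of $\pi_D \circ u$ is given by Riemann--Hurwitz: $\mathrm{br}(u) = g - \chi(S) + |E(S)|/\text{something}$, reflecting the topology of $S$ versus that of $D_2$ and the east punctures. Combining this with the Euler measure $e(B)$ (Definition~\ref{eulermeasure}) and the Reeb chord data $\arrowrho_\boat$, standard arguments with the adjunction formula on the closed surface $\Sigma_{\bar e}$ (together with a careful boundary analysis at the Lagrangian cylinders $C^\balpha, C^\bbeta$ and at the ends) express the self-intersection number as
\[
2 \cdot (\text{int. double pts}) + (\text{bdry. double pts}) \;=\; \chi_\emb(B,\arrowrho_\boat) - \chi(S),
\]
where the right-hand side depends only on the homology class $B$, the sequences $\arrowrho_\boat$, and the topology of $S$ through its Euler characteristic.

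Defining $\chi_\emb(S) := \chi_\emb(B,\arrowrho_\boat)$, embeddedness is thus equivalent to $\chi(S) = \chi_\emb(S)$, as required. The main obstacle, as in LOT, is the boundary bookkeeping in the second step: tracking how acute and obtuse corners of $B$ (entering via $e(B)$), branching of $\pi_D \circ u$, Reeb-chord asymptotic crossings, and crossings between $\balpha$- and $\bbeta$-boundary branches all contribute to the self-intersection count, and showing that these boundary contributions can be arranged (by a generic perturbation consistent with the strong boundary monotonicity condition (BOR-3)) to have the correct sign so that positivity of intersections applies. Once this combinatorial identity is established, the argument is purely formal; the transversality needed to make isolated double points the only obstruction is supplied by Proposition~\ref{borderedtransversality}.
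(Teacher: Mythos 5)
The paper does not prove this proposition itself --- it simply cites \cite[Proposition 5.69]{LOT} --- and your sketch follows exactly the argument given there: express the self-intersection (singularity) count of $u$ via adjunction and Riemann--Hurwitz as a quantity depending only on $B$, $\arrowrho_\boat$, and $\chi(S)$, then apply positivity of intersections to conclude that $u$ is embedded iff that count vanishes, i.e.\ iff $\chi(S) = \chi_\emb(B, \arrowrho_\boat)$ (note $\chi_\emb$ depends on $B$ and $\arrowrho_\boat$, not on $S$, so your definition $\chi_\emb(S) := \chi_\emb(B,\arrowrho_\boat)$ is the correct reading of the paper's slightly loose notation). The boundary bookkeeping you flag as the main obstacle is precisely where the substance of LOT's proof lies, so your assessment of where the work remains is accurate.
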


Though the moduli spaces $\tildemod^B(\bx, \by;S ; \arrowrho_{\boat};J)$ form manifolds, they are  not compact except when the moduli space $\tildemod^B(\bx, \by;S; \arrowrho_{\boat};J)$ is zero-dimensional. They admit certain compactifications in terms of \emph{holomorphic combs}, which we consider in Section \ref{combs}.

\subsubsection{Splicings and moduli spaces}\label{splicingbordered}
Suppose that $S$ is a bordered source and $\arrowP$ is a sequence of partitions $\arrowP_1, \dots, \arrowP_h$, such that $S, \arrowP$ is compatible with a sequence of sets of Reeb chords $\arrowrho = (\arrowrho_1, \dots, \arrowrho_h)$.

The definitions of the moduli spaces  $\freemod^B(\bx, \by;S;P)$ and $\freemod^B(\bx, \by;S;\arrowP)$ stipulate relations on the relative heights of punctures from each set $E_j(S)$, as compatible with the sequence $\arrowrho$. More generally, we could consider constraints upon relative heights of punctures which belong to $E_j(S)$ and $E_{j'}(S)$ for $j \ne j'$.

Suppose we are given an $m$-splicing $\vec\sigma = \vec\sigma_1, \dots, \vec\sigma_h$ of $\arrowrho$, and that $(S, \arrowP)$ are $\arrowrho$-compatible. There are $m$ corresponding sets
$$Q^{i} : = \{q(\rho) : \rho \in \sigma_1^i \cup \cdots \cup \sigma_h^i\}$$
of punctures. In turn, these yield corresponding submanifolds of $\RR^{|E(S)|}$ specified by $\Delta_{\sigma}^i: = \{x_p = x_q:  p, q \in Q^i\}.$

This allows us to define moduli spaces $\tildemod^B(\bx, \by;S;P, \sigma)$ and $\tildemod^B(\bx, \by;S;\arrowP, \sigma)$, given by $\ev^{-1}(\Delta_{\sigma}^1, \dots, \Delta_{\sigma}^m)$ for the evaluation maps from $\tildemod^B(\bx, \by;S;P)$ and $\tildemod^B(\bx, \by;S;\arrowP)$ to $\RR^{|E(S)|}$ respectively. These are transversally cut out for sufficiently generic $J$ by Proposition \ref{borderedtransversality}.

The corresponding index formula is given by
$$\ind(B, S, P, \sigma) = g - \chi(S) + 2 e(B) + |\arrowrho_{\boat, 1}| + \cdots + |\arrowrho_{\boat, h}| - \col(\sigma),$$
and is proved in a similar manner as \cite[Proposition 5.8]{LOT} --- each height constraint coming from $\sigma$ drops the expected dimension by one.

These moduli spaces only depend upon the unordered partitions in $\sigma$. The extra information of the order of the partitions in $\vec\sigma$ allows us to define even more restricted moduli spaces $\tildemod^B(\bx, \by;S;\arrowP, \vec\sigma)$ as the set of $u \in \tildemod^B(\bx, \by;S;\arrowP, \sigma)$ such that the ordering of the parts in $\sigma$ according to $\vec\sigma$ agrees with the ordering of the parts in $\sigma$ by dictating that $Q^i < Q^{i'}$ if $\ev_{q_i}(u) < \ev_{q_{i'}}(u)$ for every (any) $q^i \in Q^i$ and $q^{i'} \in Q^{i'}$.

\subsection{Multipointed Heegaard multidiagrams}\label{polygonmodulispaces} We now turn our attention to the relevant moduli spaces when $\cD$ is a multipointed Heegaard $m$-diagram, $\cD = (\Sigma, \bgamma^0, \dots, \bgamma^{m-1}, \bp)$, where $\bp$ splits as $\bw \cup \bz$.

Fix a collection of admissible compatible almost-complex structures $\bJ$. As $\Sigma$ is closed, the conditions (BOR-1)--(BOR-3) do not apply, and we are concerned with pairs $(u, j)$ where $j \in \conf(D_m)$ and $u$ is a $(j_S, J_j)$-holomorphic curve satisfying (M-1)--(M-8) disjoint from the set $\bz \times D_m$. 

The conditions imply that $\bx^{i, i+1}$ is a generator for the pair $(\bgamma^i, \bgamma^{i+1})$. In particular, the sequence $\vec\bx = (\bx^{0, 1}, \dots, \bx^{m-1, 0})$ constitutes a sequence of generators for $\cD$.

For $m > 2$, we collect the holomorphic curves $(u, j)$ with source $S$ and given asymptotics $\vec\bx$ in homology class $B \in \pi_2(\vec\bx)$  satisfying (M-1) -- (M-8) into moduli spaces $\tildemod^B(\vec\bx; S)$. If $m = 2$, as in the previous section the relevant moduli space has a free $\RR$-action, and so we denote by $\cM^B(\vec\bx; S)$ the result of taking the quotient by this. We call an element of one of these moduli spaces a \emph{holomorphic $m$-gon}.
As in the previous section, we sometimes include the family $\bJ$ in the notation to draw attention to it: writing things like $\cM^B(\vec\bx; S; \bJ)$, for instance.

For any $S \subset \{0, \dots, m-1\}$ with $|S| = l$, we can apply the above construction to $\cD|_S$ to define corresponding moduli spaces $\cM^B(\bx^{i_0,i_1}, \dots, \bx^{i_{l-1}, i_0}; S; J)$.

The relevant transversality result for holomorphic $m$-gons is:

\begin{prop}[{\cite[Proposition 3.8]{LOT:SSII}}] There is an admissible, compatible family of almost-complex structures $\bJ$ for $\cD$, such that for every source, every $l$-subdiagram of $\cD$, every sequence of generators for such a subdiagram, and every almost-complex structure $J \in \bJ_l$, the corresponding moduli spaces are all transversally cut out.
\end{prop}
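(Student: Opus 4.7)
The plan is to argue by induction on $m$ (the number of sides of the polygon), using at each stage a standard universal moduli space argument, and then to take a countable intersection to obtain a single family $\bJ$ that works for all the auxiliary data simultaneously. Throughout, we allow $J_j$ to vary in a sufficiently rich Banach manifold $\cJ_j$ of almost-complex structures satisfying admissibility (i.e.\ we perturb only within the class of almost-complex structures making the conditions (1)--(4) of admissibility hold, which amounts to perturbing in a neighborhood of the diagonal in the fiber direction of $\Sigma \times D_m$, away from the cylindrical neighborhoods of $\bp$ and away from the Lagrangian cylinders $C^i$).

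For the base case $m=2$, this is the cylindrical transversality result for basic Heegaard diagrams and is essentially \cite[Proposition~5.6]{LOT} in the closed setting; the proof proceeds by forming the universal moduli space $\cM^B(\vec\bx;S;\cJ_2)$, showing that the universal linearized $\delbar$-operator is surjective (which follows from a standard injectivity-of-simple-points argument, together with the fact that somewhere-injective points lie in the complement of the Lagrangians and basepoints, where $J$ is free to vary), and then applying the Sard-Smale theorem to the projection $\cM^B(\vec\bx;S;\cJ_2) \to \cJ_2$. For the inductive step, one fixes choices of $\bJ_k$ for $k < m$ which are already regular for all the data $(\vec\bx,S,B)$ at those lower values, and for each $j \in \conf(D_m)$ and each $J \in \cJ_m$ one asks that $J$ restrict on each boundary stratum to the already-chosen trees of almost-complex structures from lower $m$.

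With that consistency condition fixed, the space of admissible families $\bJ_m$ consistent with the previously chosen lower-$m$ data is still a nonempty Banach manifold, essentially because the compatibility at $\partial \overline{\conf(D_m)}$ prescribes $J$ only on a codimension-one subset and we are free to perturb in the interior. The universal moduli space argument now goes through as before: somewhere-injective points of any holomorphic polygon $u$ in the \emph{open} part $\conf(D_m)$ provide an open set in $\Sigma \times D_m$ where one can perturb $J$ and vary the $\delbar$-equation to first order in any direction. This gives surjectivity of the universal linearisation, and Sard-Smale yields a comeagre set $\bJ_m^{\reg}(\vec\bx,S,B) \subset \cJ_m$ of regular families.

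Finally, to get a single $\bJ$ working for \emph{all} data simultaneously we intersect: for each $l \le m$, each $l$-subdiagram $\cD|_S$, each homology class $B$, each source $S$, and each sequence of generators, we obtain a comeagre set of regular $J$; the countable intersection over this countable amount of data is still comeagre, hence nonempty by Baire. The main technical obstacle is the consistency bookkeeping at boundary strata of $\overline{\conf(D_m)}$: one must check that the inductive hypothesis on regularity for lower-$m$ trees $\{j_V\}_{V \in V(\bar T)}$ is not disturbed by the perturbation chosen in the interior, and that the limiting behavior of $J_{j_n}$ as $j_n$ approaches $\partial \overline{\conf(D_m)}$ matches the chosen tree of complex structures. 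This is handled by working with families of $J$ that are already fixed in a neighbourhood of each boundary stratum and only perturbing away from those strata, exactly as in \cite[Section~3.3]{LOT:SSII}; the interpolation is possible because the configuration space $\conf(D_m)$ has a collar neighborhood structure near its compactification.
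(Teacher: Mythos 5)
The paper does not prove this proposition; it is quoted verbatim as \cite[Proposition 3.8]{LOT:SSII} with no argument supplied. Your sketch — inductive construction over $m$ consistently with boundary strata of $\overline{\conf(D_m)}$, a universal moduli space argument with Sard--Smale for fixed auxiliary data, and a Baire-category intersection over the countably many choices of sub-diagram, source, homology class, and generator sequence — is the standard template and does match the structure of the cited proof in Lipshitz--Ozsv\'ath--Thurston. One point worth stating more carefully in a full write-up: the admissibility conditions (1)--(4) constrain $J$ to be fibered over $D_m$, so the allowed perturbations are only the ``off-diagonal'' components of $J$ in the $\Sigma$-fiber direction; one must check that this constrained space still suffices to make the universal linearised $\delbar$-operator surjective, which uses that $\pi_\Sigma\circ u$ is non-constant (guaranteed by embeddedness and the branched-covering condition on $\pi_D\circ u$) to supply somewhere-injective points in the fiber direction. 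Your sketch alludes to this but attributes the perturbation freedom slightly too broadly (``away from the Lagrangian cylinders'' is not the constraint; the constraint is the fibered structure). Otherwise the argument is sound.
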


There is a corresponding index formula:

\begin{prop}\label{indpoly}Suppose that $(u, j) \in \cM^B(\bx^{0, 1}, \dots, \bx^{k-1, k}, \bx^{k, 0}; S)$ is a holomorphic polygon. Then the index of the $D\delbar$ operator at $(u, J)$ is given by
$$\ind(B, S) = \frac{3-k}{2} g - \chi(S) + 2e(B).$$
\end{prop}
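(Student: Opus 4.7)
The plan is to compute the Fredholm index of the linearized $\bar\partial$ operator at $(u,j)$ directly, following the approach established for bordered Heegaard Floer homology (as in Proposition \ref{indbord}, which itself parallels Lipshitz's cylindrical index formula for bigons) and for holomorphic triangles in the original Ozsv\'ath--Szab\'o set-up. I would first decompose the total index into two independent contributions: the dimension $k-2$ of $\conf(D_{k+1})$ coming from varying $j$, and the Fredholm index of $D\bar\partial_u$ for fixed $j$, computed via Riemann--Roch for bordered Riemann surfaces with totally real Lagrangian boundary conditions along the cylinders $\bC^i = \bgamma^i \times e^i$.

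Next I would split the pullback bundle $u^* T(\Sigma \times D_{k+1})$ using the product structure. The $TD_{k+1}$-summand contributes the index of $\bar\partial$ on the $g$-fold branched cover $\pi_D\circ u : S \to D_{k+1}$ with boundary on the appropriate edges; by Riemann--Hurwitz this is essentially controlled by $\chi(S)$ and $g$. The $T\Sigma$-summand contributes the Maslov index of the loop of totally real subspaces along $\partial S$, and this is where the Euler measure enters: away from corners the Maslov index is locally constant along each arc mapping to $\bgamma^i \times e^i$, and the jumps at the corners of $S$ (both those asymptotic to generators and those coming from the corners of $D_{k+1}$) combine, via an argument identical to Lipshitz's for bigons, to $2e(B)$.

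To package the computation cleanly I would, after checking the local model at a generic corner, verify the formula on a convenient spanning set of test data, e.g.\ a disjoint union of small $(k+1)$-gons representing a generator sequence (for which $\chi(S)$, $e(B)$ and the actual dimension are all computable by hand). Because $\ind$, $\chi(S)$ and $e(B)$ are all additive under juxtaposition of domains/sources, and linear in $\chi(S)$ and $e(B)$, verifying the formula on such a spanning set --- together with cross-checks at the boundary cases $k=1$ (recovering the multipointed bigon formula $g - \chi(S) + 2e(B)$) and $k=2$ (recovering the holomorphic triangle index) --- pins down the coefficient of $g$ as $(3-k)/2$ and so completes the proof.

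The main technical obstacle is the careful bookkeeping of the corner contributions: the Euler measure $e(B)$ defined in Definition \ref{eulermeasure} weights acute and obtuse corners by $\mp 1/4$, and one must check that the local Maslov-index computation at each corner of $S$ (including those at punctures asymptotic to the polygon vertices $p^{i,i+1}$, where two different Lagrangians $\bC^i$, $\bC^{i+1}$ meet) contributes exactly the matching fractional amount. Once this local correspondence is in place, the rest of the argument is an additive bookkeeping that has essentially already been done in the references cited in Section~\ref{polygonmodulispaces}.
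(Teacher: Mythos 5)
The paper states Proposition~\ref{indpoly} without proof, deferring the common generalisation Proposition~\ref{bordedpolyindex} to \cite[Lemma 4.7]{LOT:SSII}, so your argument should be compared against that reference and its antecedents (\cite{Lipshitz:cylindrical} for bigons, Sarkar's $n$-gon index paper). Your outline --- Riemann--Roch for the linearized $\bar\partial$ operator with Lagrangian boundary conditions, splitting the pullback tangent bundle into $T\Sigma$- and $TD_{k+1}$-summands, and Maslov-index bookkeeping at corners --- is the same framework, so the route is essentially the intended one.

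Two spots of imprecision. First, Riemann--Hurwitz is not what governs the $TD_{k+1}$-summand: that summand is a trivial complex line bundle over $S$, and its index contribution is the boundary Maslov index of the loop of real lines $Te^i$ along $\partial S$, whose jumps at the $(k+1)g$ punctures of $S$ asymptotic to vertices of $D_{k+1}$ produce the $(3-k)g/2$ term. The local model at each such puncture is the same as in the bigon case, but the number of punctures --- and hence the total --- scales with $k$, so the bookkeeping is not literally ``identical'' to Lipshitz's bigon computation. Second, the ``verify on a spanning set'' step does not by itself determine the $g$-coefficient: $g$ is fixed once the diagram is, so additivity under disjoint union only constrains the $\chi(S)$- and $e(B)$-coefficients at fixed $g$; one must either add a stabilization argument to vary $g$, or import the already-known $k=1$ and $k=2$ cases to pin down the $k$-independent constants. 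Neither issue is a fatal error, and your closing paragraph correctly identifies the corner bookkeeping as the crux; but the direct corner-by-corner Maslov computation in \cite[Lemma 4.7]{LOT:SSII} is self-contained and avoids both of these work-arounds.
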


Oncemore, it follows from this and the implicit function theorem that these moduli spaces are manifolds of dimension $\ind(u) + k - 2$ (the $k-2$ stems from the space $\conf(D_k)$ having moduli), though in general non-compact.

\subsection{Bordered polygons}\label{borderedpolygonmodulispaces}
In this section, we construct relevant moduli spaces of polygons for bordered multi-diagrams, generalising both of the previous two sections. For our purposes, it will be enough to consider bordered polygons that only have east asymptotics during the first edge of $D_n$, which we have called $e^1$ above.

We fix a toroidal bordered Heegaard $m$-diagram $\cD$. Supposing that $\bJ$ is a collection of consistent, admissible almost-complex structures, we again consider pairs $(u, j)$ of $J\in\conf(D_m)$ and $(j_s, J_j)$-holomorphic curve $u$ with source $S$ satisfying (M-1)--(M-8) and (BOR-1)--(BOR-3), mapping into $$(\Sigma \times D_m, C^0 \cup \cdots \cup C^{m-1}).$$

The conditions imply that the sequence of points $\vec\bx = (\bx^{0,1}, \dots, \bx^{m-1,0})$ form a sequence of generators for $\cD$. As per usual, such $(u,j)$ have a homology class $B$ in the group $\pi_2(\vec\bx)$, and so we collect these maps into moduli spaces $\freemod^B(\vec\bx; S).$

Once again, if we wish to draw attention to the family $\bJ$ in question, we include it in the notation: e.g. $\freemod^B(\vec\bx; S; \bJ)$. We often refer to a holomorphic curve $(u, j)$ by solely $u$, and hope that this does not cause undue confusion.

As in Section \ref{borderedmoduli}, there is an evaluation map at each east puncture $q$ of $S$, $\ev_q(u): \freemod^B(\vec\bx; S) \rightarrow e^1 \subset \partial D_m$. These fit together to form evaluation maps $$\ev_j: \freemod^B(\vec\bx; S) \rightarrow (e^1)^{|E_j(S)|}$$ for each $j = 1, \dots, h$, which in turn fit together to give a map
$$\ev: \freemod^B(\vec\bx; S) \rightarrow (e^1)^{|E(S)|}.$$
As such, for any partitions $P : = (P_1, \dots, P_h)$ of $E_1(S), \dots, E_h(S)$ there is once more a corresponding moduli space
$$\freemod^B(\vec\bx; S; P) : = \ev^{-1}(\triangle_{P_1} \times \cdots \times \triangle_{P_h}) \subset \freemod^B(\vec\bx; S).$$
Furthermore, ordering the parts of the $P_j$ as $\arrowP_j = (P_j^1, \dots, P_j^{k_j})$ yields a corresponding moduli space
$$\freemod^B(\vec\bx; S; \arrowP),$$
where we further require that the orderings of the punctures induced by the evaluation maps $\ev_j$ agree with that of $\arrowP_j$.

Now consider the set of sequences of Reeb chords $\arrowrho_{\boat} = \arrowrho_1, \dots, \arrowrho_k$. We will say that a map $u$ is \emph{compatible} with $\arrowrho_{\boat}$ if for each $i$ and $j$, we have that $\{\rho(q)\}_{q \in P_j^i} = \rho^i_j$. Sometimes we shall write
$$\freemod^B(\vec\bx; S; \arrowrho_{\boat})$$
for the moduli space of curves in $\freemod^B(\bx; S; \arrowP)$ which is compatible with $\arrowrho_{\boat}$.

As in subsection \ref{splicingbordered}, if we are given an $m$-splicing $\vec\sigma = \vec\sigma_1, \dots, \vec\sigma_h$, there are corresponding submanifolds $\Delta_{\vec\sigma}^1, \dots, \Delta_{\vec\sigma}^m$ of $\RR^{|E(S)|}$, and thus we may define the moduli spaces  $\freemod^B(\vec\bx ;S;P; \sigma)$ and $\freemod^B(\vec\bx, ;S;\arrowP; \sigma)$ by $\ev^{-1}(\Delta_{\sigma}^1, \dots, \Delta_{\sigma}^h)$ for the evaluation maps from $\freemod^B(\vec\bx;S;P)$ and $\freemod^B(\vec\bx;S;\arrowP)$ to $\RR^{|E(S)|}$ respectively. We also write $\freemod^B(\vec\bx, ;S;\arrowP; \vec\sigma)$ for the subset of $\freemod^B(\vec\bx, ;S;\arrowP; \sigma)$ such that the ordering upon jumping punctures induced by the evaluation maps agrees with the ordering dictated by $\vec\sigma$.

As the group of M\"{o}bius transformations acts three-transitively upon boundary points of the disc, we can assume that the greatest height of an east puncture of $S$ labelled by a jumping chord of $\cT$ is $0$. When $m \ne 2$, this does not alter the moduli spaces we study; when $m = 2$, these moduli spaces are precisely the bordered moduli spaces $\freemod^B(\bx, \by; S; \arrowrho_\boat; \vec\sigma)$ described in Section \ref{borderedmoduli}, and the effect of fixing the height to be zero is to instead consider the quotient moduli spaces $\tildemod^B(\bx, \by; S; \arrowrho_\boat; \vec\sigma)$. In general, we denote the relevant moduli space of curves with greatest height of an east puncture fixed to be zero by replacing any instances of $\freemod$ with $\tildemod$ in the above definitions.

There is a corresponding transversality result: 
\begin{prop}\label{borderedpolytransversality} There is an admissible, compatible family of almost-complex structures $\bJ$ for $\cD$, such that for every $l$-fold subdiagram of $\cD$, every source, every sequence of generators for the subdiagram, every set of sequences of Reeb chords $\arrowrho_{\boat}$ and every corresponding splicing $\vec\sigma$ the corresponding moduli spaces  $\tildemod^B(\vec\bx; S; \arrowrho_{\boat}; \vec\sigma)$ are all transversally cut out by the $\delbar$ equations. In this instance, we shall say that $\bJ$ is \emph{generic}.
\end{prop}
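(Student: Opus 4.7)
The plan is to essentially patch together the two transversality results already invoked in the paper: Proposition \ref{borderedtransversality} (LOT Proposition 5.6), which establishes the result for bordered bigons with splicing constraints, and LOT:SSII Proposition 3.8, which establishes transversality for (unconstrained) holomorphic polygons in bordered multi-diagrams. The novelty here is combining the splicing constraints of the former with the polygonal setting of the latter, and the strategy for each is already essentially identical.

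First I would establish transversality for the unconstrained universal moduli space. Fix data $(B, S, \vec\bx)$ and an $l$-fold subdiagram. Consider a Banach manifold $\cJ$ of admissible, consistent families of almost-complex structures (built, as in LOT:SSII Section 3.3, by an inductive construction over the strata of $\overline{\conf(D_l)}$ using a priori chosen strip-like ends, and cylindrical near the punctures $e^\infty_j \times D_l$). Form the parametrised moduli space $\freemod^B(\vec\bx; S; \cJ) := \{(u,j,J): J \in \cJ,\ u \in \freemod^B(\vec\bx; S; J)\}$ and show, by the usual argument, that the universal $\delbar$ operator is surjective at every pair $(u,J)$ which is somewhere injective in the $\Sigma$-factor — the required cokernel-killing perturbations are supported near an injective point of $\pi_\Sigma \circ u$ lying in the interior of $\Sigma$, away from the constraints forced by (M-5), (M-7) and (BOR-1). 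Strong boundary monotonicity (BOR-3) rules out trivial multiple-cover phenomena for the relevant components. Applying the Sard–Smale theorem to the projection $(u,j,J) \mapsto J$ yields a comeagre set $\cJ^{\mathrm{reg}}(B, S, \vec\bx) \subset \cJ$ for which $\freemod^B(\vec\bx; S; J)$ is a smooth manifold of the expected dimension.

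Next I would handle the splicing constraints. Once $\freemod^B(\vec\bx; S; J)$ is cut out transversely, the evaluation map
$$\ev = \ev_1 \times \cdots \times \ev_h : \freemod^B(\vec\bx; S; J) \longrightarrow (e^1)^{|E(S)|}$$
is smooth. For each triple $(\arrowP, \vec\sigma)$ compatible with some $\arrowrho_{\boat}$, the constrained moduli space $\tildemod^B(\vec\bx; S; \arrowrho_{\boat}; \vec\sigma)$ is the preimage under $\ev$ (after quotienting by $\RR$ using the height-zero normalisation) of the corresponding linear submanifold $\Delta_{\arrowP, \vec\sigma} \subset (e^1)^{|E(S)|}$ described in Sections \ref{splicingbordered} and \ref{borderedpolygonmodulispaces}. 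A second Sard–Smale argument on the universal moduli space — perturbing $J$ near interior points of $u$ that separate the punctures whose heights are being constrained — shows that for a further comeagre set $\cJ^{\mathrm{reg}}(B, S, \vec\bx, \arrowP, \vec\sigma) \subset \cJ^{\mathrm{reg}}(B, S, \vec\bx)$, the evaluation $\ev$ is transverse to $\Delta_{\arrowP, \vec\sigma}$. Consequently the constrained moduli space is a manifold of dimension $\ind(B, S, P, \vec\sigma) = g - \chi(S) + 2e(B) + \sum_j |\arrowrho_{\boat,j}| - \col(\vec\sigma) + (l - 2)$.

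Finally I would take a countable intersection. The set of quintuples $(B, S, \vec\bx, \arrowP, \vec\sigma)$ ranging over all $l$-fold subdiagrams of $\cD$, all sources $S$, all sequences of generators, all $\arrowrho_{\boat}$, and all splicings is countable. Hence
$$\bJ := \bigcap_{l,\,B,\,S,\,\vec\bx,\,\arrowP,\,\vec\sigma} \cJ^{\mathrm{reg}}(B, S, \vec\bx, \arrowP, \vec\sigma)$$
is comeagre, and in particular non-empty, in $\cJ$. Any $\bJ \in \bJ$ is generic in the sense of the proposition.

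The main obstacle is the \emph{consistency} condition on $\bJ$ across the strata of $\overline{\conf(D_l)}$ and across varying $l$. One cannot simply perturb $J_j$ independently for each $j \in \conf(D_l)$: the degenerations $j \to \{j_V\}_{V \in V(T)}$ require the perturbation on the open stratum to restrict correctly on the boundary. The remedy — already used in LOT:SSII — is to define $\cJ$ inductively, first fixing transverse families on the lowest-dimensional strata (the bordered bigons of Proposition \ref{borderedtransversality}) and then extending to higher strata via a neck-length parameter and standard gluing-compatible Banach norms; the Sard–Smale arguments above are carried out on this stratified $\cJ$, with perturbations supported away from the boundary strata of $\overline{\conf(D_l)}$, which suffices because the obstruction to surjectivity of $D\delbar$ lives at interior points of the curve.
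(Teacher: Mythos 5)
Your proposal is correct and fills in the details of what the paper's one-line citation to \cite[Proposition 4.18]{LOT:SSII} leaves implicit: a Sard--Smale argument on universal moduli spaces to make the curves transversally cut out, a second Sard--Smale argument to make the height-evaluation maps transverse to the diagonals $\Delta_{\arrowP,\vec\sigma}$, a countable intersection over all discrete data, and an inductive/stratified construction of the Banach manifold $\cJ$ to preserve consistency across $\partial\overline{\conf(D_l)}$. This is the same route as the cited result; the only cosmetic difference is that you cite \cite[Proposition 3.8]{LOT:SSII} where the paper cites Proposition 4.18 (the version already incorporating east-puncture partition constraints), so your argument re-proves a step the paper's citation takes for granted. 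One minor imprecision: the somewhere-injectivity needed to perturb $J$ comes primarily from (M-6) (the $D$-projection is a $g$-fold branched cover, so each component of $S$ carries a nonconstant $\pi_D$) together with (M-7) embeddedness, rather than from (BOR-3) per se, though (BOR-3) is part of what keeps the asymptotics clean enough for this to work; this does not affect the soundness of the argument.
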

\begin{proof}This is a minor modification of \cite[Proposition 4.18]{LOT:SSII}.
\end{proof}

In fact, the proof of Proposition \ref{borderedpolytransversality} contains a little more information. Namely:
\begin{prop}\label{varyingsmallfamily}Let $\mathcal{U}$ denote the space of all admissible, compatible families almost-complex structures, and suppose that $\bJ$ is a generic family of admissible, compatible almost-complex structures. For any path $\bJ_t: [0,1] \rightarrow \mathcal{U}$ which satisfies that $\bJ_0 = \bJ$, there is some $\varepsilon$ such that $\bJ_t$ is generic for all $t \in [0, \varepsilon)$.
\end{prop}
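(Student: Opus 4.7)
The plan is to extract an openness statement from the Fredholm-theoretic machinery underlying the proof of Proposition \ref{borderedpolytransversality} (and, by extension, \cite[Proposition 4.18]{LOT:SSII}). Concretely, for each fixed combinatorial datum $\Xi = (\cD', \vec\bx, B, \arrowrho_\boat, \vec\sigma)$ specifying one of the moduli spaces that appear in the definition of genericity, the universal moduli space over $\mathcal{U}$ -- parametrising pairs $(u, \bJ)$ with $u$ a solution of the $\bJ$-perturbed $\delbar$ equation with asymptotics $\Xi$ -- carries a Banach manifold structure, and its projection to $\mathcal{U}$ is Fredholm. The condition that $\bJ$ is a regular value of this projection is precisely the condition that $\tildemod^B(\vec\bx; S; \arrowrho_\boat; \vec\sigma; \bJ)$ is transversally cut out by $\delbar$. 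Since Fredholm regular values form an open set near any regular value with compact fibre, and since Gromov-Floer compactness (compare \S \ref{combs} below) controls the non-compactness of fibres by curve breaking into lower-dimensional moduli spaces, one extracts for each $\Xi$ some $\varepsilon_\Xi > 0$ so that $\bJ_t$ remains transversally cut out for the datum $\Xi$ whenever $t < \varepsilon_\Xi$.

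To assemble these into a single uniform $\varepsilon$ one uses the index formulae of Propositions \ref{indbord} and \ref{indpoly}. For any fixed bound on $\ind$ and on the total multiplicity of the domain of $B$, only finitely many $\Xi$ can yield a non-empty moduli space. The path $\bJ_t: [0,1] \to \mathcal{U}$ has compact image, so energy bounds hold uniformly along the path; this lets one filter the combinatorial data by index/energy thresholds, with only finitely many $\Xi$ relevant at each threshold. Taking the minimum of $\varepsilon_\Xi$ across the finite collection at the lowest threshold -- that needed for the $\cA_\infty$ relations, i.e. moduli spaces of dimension at most one after quotienting -- gives a positive candidate $\varepsilon$.

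The main obstacle is in promoting this per-bucket openness to a uniform statement across all combinatorial data, since a priori a countable intersection of open neighbourhoods need not contain a neighbourhood of the path parameter $0$. The resolution, essentially already present in the Sard--Smale argument for Proposition \ref{borderedpolytransversality}, is that the right inverse to the linearised $\delbar$ operator admits a uniform bound across each finite bucket; the inverse function theorem with uniform constants then yields a uniform time $\varepsilon$ during which the linearised operator remains surjective along $\bJ_t$. Once this is in place for each bucket, and one notes that higher-index buckets are automatically handled by the index-by-index openness (the surjectivity data only gets \emph{more} stable as index rises), one concludes that a single $\varepsilon$ works for all $\Xi$ simultaneously.
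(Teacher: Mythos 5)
Your overall framework --- universal moduli space over $\mathcal{U}$, Fredholm projection, per-datum openness via Gromov compactness, assembly over the combinatorial data $\Xi$ --- is the right one, and it matches what the paper gestures at: no proof is given in the paper, only a pointer to the proof of Proposition \ref{borderedpolytransversality} and a comparison with \cite[Remark 3.2.8]{McDS:jhol}, whose whole point is exactly the subtlety you identify in your final paragraph (the set of globally regular $J$ is a priori residual but need not be open, because it is a countable intersection of open conditions).

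The gap is the sentence claiming that ``higher-index buckets are automatically handled by the index-by-index openness (the surjectivity data only gets \emph{more} stable as index rises).'' This is not true. A higher Fredholm index for the linearised $\delbar$ operator says nothing about surjectivity becoming more persistent under perturbation of $J$; there are still infinitely many buckets, and the countable-intersection obstacle is not resolved by the assertion. What actually closes the gap is the finiteness you have already set up in your preceding paragraph: assuming (provincial) admissibility of the diagram and using the index formulae together with the constraint $\chi(S) = \chi_\emb(S)$, only finitely many combinatorial data $\Xi$ can support a non-empty moduli space of bounded expected dimension and bounded domain multiplicity. The paper only invokes this proposition for moduli spaces of expected dimension at most one, so ``generic'' for the purposes at hand is a \emph{finite} intersection of open conditions, and taking the minimum of the $\varepsilon_\Xi$ over that finite family gives a legitimate $\varepsilon$. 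If instead one insists on genericity in the unrestricted sense of Proposition \ref{borderedpolytransversality} (all $\Xi$ simultaneously), your argument does not yield openness, and neither the McDuff--Salamon remark nor the paper's reference chain does either; one should either restrict the claim accordingly or supply an additional uniform estimate ruling out a sequence of non-transverse $\Xi_n$ of increasing energy along $\bJ_{t_n}$ with $t_n \to 0$.
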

This should be compared with, for instance, \cite[Remark 3.2.8]{McDS:jhol}.

The common generalisation of Theorems \ref{indbord} and \ref{indpoly} is below, and proved in a very similar manner: cf. \cite[Lemma 4.7]{LOT:SSII}.

\begin{prop}\label{bordedpolyindex}Suppose that $(u,j) \in \tildemod^B(\vec\bx; S; \arrowrho_{\boat}, \vec\sigma)$ is a bordered holomorphic polygon. Then the index of the $D\delbar$ operator at $u$ is given by
$$\ind(B,S,\arrowrho_\boat, \sigma) := \frac{3-m}{2} g - \chi(S) + 2e(B) + |\arrowrho_{\boat, 1}| + \cdots + |\arrowrho_{\boat,h}| - \col(\sigma).$$
\end{prop}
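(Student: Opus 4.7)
\medskip
\noindent\textbf{Proof proposal.} The plan is to follow the standard Fredholm/Riemann--Roch strategy used to prove Proposition \ref{indbord} and Proposition \ref{indpoly}, decoupling the calculation into two stages: first compute the dimension of the larger moduli space $\tildemod^B(\vec\bx; S;\arrowrho_\boat)$ in which no cross-boundary height constraints are imposed, and then account for the codimension cut out by the splicing $\vec\sigma$.

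\emph{Stage one: the unconstrained index.} I would compute the Fredholm index of the linearized $\delbar$-operator at a curve $u \in \tildemod^B(\vec\bx; S;\arrowrho_\boat)$, viewed as a map from $S$ into $\Sigma\times D_m$ with Lagrangian boundary conditions on the cylinders $C^0,\dots, C^{m-1}$. By the same Riemann--Roch argument used in \cite[Lemma 4.7]{LOT:SSII}, the index splits into three contributions. The purely polygonal contribution is $\frac{3-m}{2}g$: when $m=2$ one recovers the strip moduli (after quotienting by $\RR$) giving the $g$ that appears in Proposition \ref{indbord}, and when there are no east punctures one recovers the $\frac{3-m}{2}g$ from Proposition \ref{indpoly}. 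The topological contribution is $-\chi(S) + 2e(B)$, where the Euler-measure term is computed exactly as in \cite[Proposition 5.8]{LOT}, keeping track of the acute and obtuse corners at each asymptotic using Definition \ref{eulermeasure}. Finally, each east puncture on $\partial\Sigma_j$ contributes $+1$ to the index because its height along $e^1$ is a free Fredholm parameter; compatibility of $(\arrowP, S)$ with $\arrowrho_\boat$ means the total number of east punctures is $\sum_j |\arrowrho_{\boat,j}|$. Combining these gives
\[
\dim \tildemod^B(\vec\bx; S;\arrowrho_\boat) \;=\; \tfrac{3-m}{2}\,g - \chi(S) + 2e(B) + \sum_{j=1}^{h}|\arrowrho_{\boat,j}|.
\]

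\emph{Stage two: the splicing codimension.} Recall from Section \ref{splicingbordered} that $\tildemod^B(\vec\bx; S;\arrowrho_\boat,\vec\sigma)$ is the preimage under the evaluation map $\ev\colon \tildemod^B(\vec\bx; S;\arrowrho_\boat)\to \RR^{|E(S)|}$ of the product of diagonals $\Delta_{\vec\sigma}^1\times\cdots\times\Delta_{\vec\sigma}^m$. For each index $i$ with $P^i_{j_1},\dots, P^i_{j_r}$ non-empty for $r\ge 2$ distinct boundary components, the condition that the heights of the corresponding jumping punctures agree is exactly $r-1$ independent linear equations on $\RR^{|E(S)|}$. But, by the definition of $\col(\vec\sigma)$ and the fact that the splicing is interleaved within each $\arrowrho_{\boat,j}$, summing $(r-1)$ over all collidable indices yields precisely $\col(\vec\sigma)$ independent constraints. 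By Proposition \ref{borderedpolytransversality}, for a sufficiently generic admissible $\bJ$ the evaluation map is transverse to each $\Delta_{\vec\sigma}^i$, so these constraints are cut out transversally and drop the expected dimension by $\col(\vec\sigma)$. Adding the two stages gives the formula.

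\emph{Main obstacle.} The technical substance is all in stage one. The Riemann--Roch computation for bordered Heegaard polygons with a mixture of interior asymptotics (at the vertices $p^{i,i+1}$) and east asymptotics (at Reeb chords) requires carefully matching the framing of the Cauchy--Riemann problem with the corner contributions to the Euler measure; I would adapt the deformation/neck-stretching argument of \cite[Lemma 4.7]{LOT:SSII}, degenerating a polygon along its spinal edges to reduce to the two known endpoint cases, namely bordered bigons (Proposition \ref{indbord}) and closed $m$-gons (Proposition \ref{indpoly}), and checking that the gluing is dimension-preserving. Once the base formula is in hand, the codimension bookkeeping in stage two is routine.
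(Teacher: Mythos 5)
Your two-stage decomposition --- compute the Fredholm index of $\tildemod^B(\vec\bx;S;\arrowrho_\boat)$ without the cross-boundary constraint, then subtract the codimension cut out by the diagonals $\Delta^i_{\vec\sigma}$ --- is the intended reading of the formula, and it matches the paper's own approach (the citation of \cite[Lemma 4.7]{LOT:SSII} together with the codimension remark closing Section~\ref{splicingbordered}).

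There is, however, a concrete gap in your stage two. Write $r_i := |\{j : P^i_j \ne \emptyset\}|$. Since the parts $P^i_j$ have at most one element, $|Q^i| = r_i$, and the diagonal $\Delta^i_{\vec\sigma}$ imposes that $r_i$ heights coincide, which is codimension $r_i - 1$ in $\RR^{|E(S)|}$. Because the $Q^i$ are disjoint, the total codimension of $\bigcap_i \Delta^i_{\vec\sigma}$ is $\sum_i (r_i - 1)$. But $\col(\vec\sigma)$ is, by Definition~\ref{splicing}, the cardinality of $\{i : r_i \ge 2\}$. These two quantities agree only when every collision is pairwise, i.e.\ $r_i \le 2$ for all $i$; if some $r_i \ge 3$ the codimension strictly exceeds $\col(\vec\sigma)$. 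Your assertion that ``summing $(r-1)$ over all collidable indices yields precisely $\col(\vec\sigma)$'' is therefore false in general, and the justification you offer --- that the splicing is interleaved within each $\arrowrho_{\boat,j}$ --- is vacuous: that condition is automatic from $|P^i_j| \le 1$ and says nothing about how many \emph{distinct} boundary components have a nonempty part at a single index $i$. To close the gap you must either restrict the proposition to splicings with only pairwise collisions, or argue that $r_i \le 2$ holds automatically for the moduli spaces where the formula is actually applied (plausible --- the $\cA_\infty$ degenerations in Theorem~\ref{evenends} are pairwise collisions --- but it needs to be said, not silently assumed).

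A smaller point on stage one: your sanity check for $m=2$ does not work as stated. The formula yields $\tfrac{1}{2}g$, and dividing by the $\RR$-translation action drops the moduli-space \emph{dimension} by one, not the coefficient of $g$. Reconciling $\tfrac{3-m}{2}g$ with the $g$ of Proposition~\ref{indbord} and the $\tfrac{3-k}{2}g$ of Proposition~\ref{indpoly} requires bookkeeping of the number of intersection points in a generator for the relevant attaching-curve families, which is precisely why the paper switches to $g' = g+h-1$ in the proof of Proposition~\ref{anchorsaresimple}; this should be tracked explicitly in your Riemann--Roch computation rather than hand-waved.
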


And, much like Proposition \ref{embeddedeuler}, we have the following:
\begin{prop}\cite[Proposition 4.9]{LOT:SSII} \label{embeddedeulerpoly}Whether or not a holomorphic curve in $\tildemod^B(\vec\bx; S; \arrowrho_{\boat}, \vec\sigma)$ is embedded is determined solely by the Euler characteristic of the source $S$.
\end{prop}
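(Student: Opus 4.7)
The plan is to adapt the self-intersection computation of \cite[Proposition 5.69]{LOT} and \cite[Proposition 4.9]{LOT:SSII} to the present setting. The embeddedness of a holomorphic map $u$ is detected by the vanishing of $\delta(u) \geq 0$, where $2\delta(u)$ equals the algebraic self-intersection number $u \cdot u'$ of $u$ with a small generic perturbation $u'$ of $u$ inside $\Sigma \times D_m$; since $\delta(u) = 0$ exactly when $u$ is an embedding, it suffices to exhibit a topological quantity $\chi_\emb$ depending only on $(B, S, \arrowrho_\boat, \vec\sigma)$ such that $u \cdot u' = 0$ iff $\chi(S) = \chi_\emb$.

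First, I would pick a perturbation $u'$ of $u$ obtained by pushing off along a nonvanishing section of the normal bundle that is compatible with the Lagrangian boundary conditions and the asymptotic behaviour at both the polygonal punctures $p^{i,i+1}$ and the east punctures in $E(S)$. As in \cite[Proposition 5.69]{LOT}, I would decompose the count $u \cdot u'$ as a sum of an interior term, a term coming from corner contributions at the vertices of $D_m$, and a term coming from the east asymptotics. The interior and corner terms are computed from $\chi(S)$, the Euler measure $e(B)$, and the combinatorics of the corners of $B$ at the generators $\bx^{i,i+1}$, in the exact same way as in the closed polygon case of \cite[Proposition 4.9]{LOT:SSII}. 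The east asymptotic contributions depend only on how the Reeb chords in $\arrowrho_\boat$ are organised into the partitions $\arrowP$, by a local model calculation at each cylindrical end.

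Next, I would check that the splicing data $\vec\sigma$ does not contribute any further terms. The splicing only imposes a closed condition on the images of the evaluation maps $\ev_q$ (forcing certain heights on $e^1$ to coincide); it neither changes the topology of $S$, nor the local model of $u$ near any puncture or boundary component, nor the homology class $B$. Thus the self-intersection count is identical to the one in the un-spliced case, and the same local corrections near Reeb chord punctures apply.

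Combining these contributions yields an expression of the form
$$u \cdot u' \;=\; \chi_\emb(B,\arrowrho_\boat) - \chi(S),$$
where $\chi_\emb(B,\arrowrho_\boat)$ is the explicit topological quantity assembled from $e(B)$, the corner data of $B$, and the Reeb chord contributions, exactly as in \cite[Proposition 4.9]{LOT:SSII}. Since $u \cdot u' \geq 0$ with equality iff $u$ is embedded, the conclusion follows.

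The main obstacle I anticipate is verifying carefully that none of the height-coincidence conditions encoded in $\vec\sigma$ (which force pairs of strip-like ends asymptotic to Reeb chords on \emph{different} boundary components $\partial \Sigma_j, \partial \Sigma_{j'}$ to lie at the same $t$-coordinate on $e^1$) produce degenerate intersections of $u$ with $u'$ that must be bookkept separately. This should be handled by placing the pushoff section $u'$ so that its asymptotic behaviour at each east puncture is genuinely transverse to the corresponding asymptotic of $u$ even when several such asymptotics occur at the same height, using the fact that the Reeb chords from different $\partial\Sigma_j$ live on disjoint circles and so their local pushoffs do not interact.
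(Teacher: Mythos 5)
Your argument is correct and faithfully reconstructs the self-intersection computation underlying the cited result, but note that the paper itself gives no proof here — it simply cites \cite[Proposition~4.9]{LOT:SSII} — and there is a shorter path to the extension you are actually responsible for. The only new ingredient relative to the cited proposition is the splicing constraint $\vec\sigma$, and since $\tildemod^B(\vec\bx; S; \arrowrho_{\boat}, \vec\sigma) \subset \tildemod^B(\vec\bx; S; \arrowrho_{\boat})$ (the splicing is merely a height-coincidence constraint on evaluation maps, i.e.\ a preimage $\ev^{-1}(\Delta_{\sigma}^1, \dots, \Delta_{\sigma}^m)$ inside the already-defined moduli space), every curve in the former is a curve in the latter, and whether a given map $u$ is an embedding is a property of $u$ alone, not of which moduli space it is viewed as belonging to. So the result for the spliced moduli space follows immediately from the un-spliced case, with no need to re-run the adjunction argument or to worry about the pushoff being transverse at coincident heights. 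Your instinct that the splicing cannot contribute new self-intersection terms is right, and your final paragraph correctly identifies the issue and dispatches it; but the cleanest phrasing is simply that the constraint cuts down the moduli space without changing any curve in it.
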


\subsubsection{Compactifying at east infinity}\label{eastcompactifying}

We now consider the effect of compactifying a toroidal bordered diagram at east infinity upon the corresponding moduli spaces.
If $\cD = (\Sigma; \bgamma^1, \dots, \bgamma^m; \bp; \bb)$ is a toroidal bordered multi-diagram, recall that there is a corresponding compactified diagram $\cD_{\bar{e}}$, where $\partial \Sigma_j$ is replaced by a single point $e^\infty_j$, and completing the arcs $\bgamma^{1,a}$ to curves $\bgamma^{1,a}_{\bar{e}}$. There is a corresponding complex structure $j_{\Sigma_{\bar{e}}}$ (resp. symplectic form) upon $\Sigma_{\bar{e}}$ which agrees with $j_\Sigma$ (resp. $\omega_\Sigma$) outside of arbitrarily small neighbourhoods of $e_j^\infty$, and is such that the compactified $\gamma^{1,a}_i$ are smooth curves (i.e. the angles between them are correct). Similarly, to every family of (generic, admissible, consistent) almost-complex structures $\bJ = \bigcup_m \bJ^m$ there is a corresponding family $\bJ_{\bar{e}} = \bigcup_m \bJ_{\bar{e}}^m$, compatible with the corresponding split symplectic form $\omega_{\Sigma_{\bar{e}}}  \times \omega_{D_n}$. Any sequence of generators $\vec\bx \in \cG(\cD)$ has an obvious counterpart in $\cD_{\bar{e}}$, which we call $\vec{\bx}'$ here but usually abuse notation and also call $\vec\bx$. Similarly each domain $B \in \pi_2(\vec\bx)$ has a corresponding domain $B_{\bar{e}} \in \pi_2(\vec\bx')$.

To any map $u: S \rightarrow X_{\cD, m} \times D_m$ there is a corresponding map $u': S \rightarrow X_{\cD_{\bar{e}}} \times D_m$. We can do slightly better, though: because of the conformal structure upon $X_{\cD_{\bar{e}}}$ near east infinity, punctures corresponding to non-jumping chords can be removed completely from $S$. To any bordered polygonal source $S$ we may associate the source $S_{\bar{e}}$ given by compactifying $S$ at its east punctures labelled by $\rho_{12}$ or $\rho_{23}$.  We then consider the space of pairs $(u, J_{\bar{e}})$ where $J_{\bar{e}} \in \bJ_{\bar{e}}$ and $u$ is a $J_{\bar{e}}$-holomorphic curve satisfying analogues of properties (M-1)--(M-8) and (BOR-1)--(BOR-3).

We denote the moduli space of such pairs in homology class $B_{\bar{e}}$ with asymptotics described by $\vec\bx = (\bx^{0,1}, \dots, \bx^{m-1,0})$ by ${\freemod^{B_{\bar{e}}}}_{\bar{e}}(\bx^{0,1}, \dots, \bx^{m-1, 0}; S_{\bar{e}}; \bJ_{\bar{e}})$.

This is closely related to the moduli space $\freemod^B(\vec\bx; S;  \bJ)$. In particular, if $u$ is a holomorphic curve in $\freemod^B(\vec\bx; S; \bJ),$ there is a corresponding map $u_{\bar{e}} \in {\freemod}^{B_{\bar{e}}}_{\bar{e}}(\vec\bx; S_{\bar{e}}; \bJ_{\bar{e}})$ which can be constructed as follows.

In a small neighbourhood of an east puncture of $S$ the map $u$ takes the form $u = (u_{\Sigma}, u_{D_n})$, as the almost-complex structure $J$ is split near east infinity. By the Schwarz reflection principle and Riemann removable singularities theorem, the map $u_\Sigma$ can be extended holomorphically across the puncture. By the uniqueness in Schwarz and Riemann's theorems, this identifies the moduli space $\freemod^B(\vec\bx; S; \bJ)$ with ${\freemod}^{B_{\bar{e}}}_{\bar{e}}(\vec\bx; S_{\bar{e}}; \bJ_{\bar{e}})$, and we denote the image of $\freemod^B(\vec\bx; S; \arrowrho_{\boat}, \vec\sigma; \bJ)$ under this identification by $${\freemod}^{B_{\bar{e}}}_{\bar{e}}(\vec\bx; S_{\bar{e}}; \arrowrho_{\boat}, \vec\sigma; \bJ_{\bar{e}}).$$

If $u \in {\freemod}^{B_{\bar{e}}}_{\bar{e}}(\vec\bx; S_{\bar{e}}; \bJ_{\bar{e}})$, then for $j = 1, \dots, h$, there is a sequence of partitions $P_j$ of $\bar{E_j}(S_{\bar{e}}): =(\pi_\Sigma \circ u)^{-1}(e^{\infty}_j)$, determined by dictating that $q, q'$ belong to the same part if and only if $\ev(q) = \ev(q')$, which may be ordered as $\arrowP_j$, where $P_j < P_j'$ if and only if for some $q \in P_j$ and $q' \in P'_j$
 ordered by $q< q'$ if and only if $\ev_j(q) < \ev(q')$.
 
 If $q \in \bar{E_j}(S_{\bar{e}})$, then there is an associated Reeb chord $\rho(q)$ defined by first choosing a small disc neighbourhood $M_j$ of $e^\infty_j$ such that $S_q : =(\pi_\Sigma \circ u)^{-1}(M_j)$ is a small half-disc neighbourhood of $q \in S$ which is disjoint from all other elements of $\bar{E_j}(S_{\bar{e}})$. The homology class of $(\pi_\Sigma \circ u)(S_q)$ is given by $B(\rho(q))$ for $\rho(q)$ some element of the torus algebra, as described in Section \ref{splayingdomains}. It follows that to $u$ is a corresponding set of sequences of algebra elements $\arrowrho(u) : =\arrowrho_1, \dots, \arrowrho_h$, given by replacing each $q \in \arrowP_j$ by $\rho(q)$.

The subspace 
$${\freemod}^{B_{\bar{e}}}_{\bar{e}}(\vec\bx; S_{\bar{e}}; \arrowrho_{\boat}, \vec\sigma; \bJ_{\bar{e}}) \subset {\freemod}^{B_{\bar{e}}}_{\bar{e}}(\vec\bx; S_{\bar{e}}; \bJ_{\bar{e}})$$
can be characterised as the set of $u$ such that $\arrowrho(u) = \arrowrho_{\boat}$, and such that the ordered partition of the punctures of $S_{\bar{e}}$ (which correspond with jumping chords in $\arrowrho_{\boat}$) induced by the evaluation maps agrees with the splaying $\vec\sigma$.

As with the moduli spaces $\freemod^B(\vec\bx; S; \arrowrho_\boat; \vec\sigma)$, we may assume that the highest height of an east puncture of $S$ is $0$ --- calling the corresponding moduli spaces by $ {\tildemod}^{B_{\bar{e}}}_{\bar{e}}(\vec\bx; S_{\bar{e}}; \bJ_{\bar{e}})$ and ${\tildemod}^{B_{\bar{e}}}_{\bar{e}}(\vec\bx; S_{\bar{e}}; \arrowrho_{\boat}, \vec\sigma; \bJ_{\bar{e}})$.

If $\arrowrho_\boat$ contains no jumping chords, then along the edge $\Sigma_{\bar{e}}\times e^1$, $u(\partial S_{\bar{e}})$ is contained in a set of $g+h-1$ cylinders which are a subset of $C^1_{\bar{e}}:= \bgamma^1_{\bar{e}} \times e^1$. It follows that:
\begin{prop}\label{eastcompactifytoclosed} If $\arrowrho_\boat$ has no jumping chords, then the moduli space $\tildemod^{B_{\bar{e}}}_{\bar{e}}(\vec\bx; S_{\bar{e}}; \arrowrho_{\boat}; \emptyset; \bJ_{\bar{e}})$ defined by counting (bordered) polygons associated with the partially splayed diagram $\cD^\partial(\varepsilon, \arrowiota)$ is identified with the moduli space $\tildemod^{B_{\bar{e}}}(\vec\bx; S_{\bar{e}} ; \bJ_{\bar{e}})$, defined by counting polygons with respect to the (not partially) splayed Heegaard multi-diagram $\cD(\varepsilon, \arrowiota)$.
\end{prop}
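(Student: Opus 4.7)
The plan is to construct an explicit bijection of moduli spaces, using as the main input the observation stated just before the proposition: with $\arrowrho_\boat$ containing no jumping chords, along the edge $\Sigma_{\bar{e}} \times e^1$ the image $u(\partial S_{\bar{e}})$ is contained in a set of $g+h-1$ cylinders forming a subset of $C^1_{\bar{e}}$. Since $\arrowrho_\boat$ is entirely non-jumping and the splicing is empty, $S_{\bar{e}}$ has no east punctures left at all (the jumping ones are absent by hypothesis, the non-jumping ones have been filled in), so both sides parametrise genuine holomorphic polygons without Reeb asymptotics and the question reduces to matching boundary conditions in the two ambient Heegaard multi-diagrams.

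Outside small neighbourhoods $N_j$ of each $e^\infty_j$, the east-compactified partially splayed surface $\Sigma_{\bar{e}}$ and the splayed surface $\Sigma(d)$ are naturally identified via the gluing map $\glue$, and under this identification the attaching curves $\bgamma^i$ for $i \neq 1$ agree literally. Inside $N_j$, the east-compactified partially splayed diagram has two closed curves passing through $e^\infty_j$ --- the closures of $\alpha^m_j$ and $\alpha^l_j$ --- whereas the splayed diagram contains only the single closed curve $\gamma_j(\delta_j)$, where $\delta_j$ is read off from the idempotent $\iota^1_j \in \arrowiota$. The generator $\bx^{0,1}$ at the root edge has its $j$th $\balpha^a$-component on $\alpha^{\delta_j}_j$, so the other completed arc is disjoint from the generators involved; by the cited observation it is also avoided by $u(\partial S_{\bar{e}}) \cap (\Sigma_{\bar{e}} \times e^1)$, so that deleting the unused completed arc from $\bgamma^1_{\bar{e}}$ does not alter the moduli space.

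After this deletion, the local pair $(N_j, \gamma_j(\delta_j))$ is biholomorphic to the handle region $(H_j, \gamma_j(\delta_j)) \subset \Sigma(d)$ via the standard local model fixed in Section \ref{diagrams}, and by condition (4) of admissibility the families $\bJ_{\bar{e}}$ and $\bJ$ are split near $e^\infty_j \times D_m$ and $H_j \times D_m$ respectively, so they can be chosen to agree under the biholomorphism. Pasting with the bulk identification yields an equivalence of Heegaard multi-diagrams together with almost-complex structures, under which generators, domains $B_{\bar{e}}$, sources $S_{\bar{e}}$, and the $\delbar$-equation correspond term by term; the resulting bijection of holomorphic curves is the claimed identification. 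The step requiring the most care is establishing that no holomorphic representative approaches the unused completed arc near the coalescence point $e^\infty_j$, where several attaching curves meet; this is controlled by Proposition \ref{constructdomains} (which forces the local multiplicity of $B_{\bar{e}}|_{N_j}$ to vanish on the side of the unused arc) together with the split form of $\bJ_{\bar{e}}$ and a standard corner analysis in the local model $(\DD, \Re \cup \Im)$ of Section \ref{splayingdomains}.
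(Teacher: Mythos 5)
Your proof is essentially the same approach as the paper's: the paper treats the proposition as an immediate consequence of the observation you quote (that with no jumping chords, $u(\partial S_{\bar{e}})$ along $e^1$ lies in a set of $g+h-1$ of the cylinders $C^1_{\bar{e}}$), and gives no separate proof environment. You correctly expand this into: compactify $S$ at the non-jumping punctures so there are no east punctures left, note the unused completed arc through each $e^\infty_j$ is avoided by the boundary of $u$ and is disjoint from the relevant generators (which all lie on $\alpha^{\delta_j}_j$ because the idempotent cannot change), delete it, and identify the result with the closed multidiagram.

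Two small imprecisions, neither fatal. First, the claim that ``the attaching curves $\bgamma^i$ for $i\neq 1$ agree literally'' overreaches slightly: the two families of curves sit at different stages of the Hamiltonian isotopy schedule (the splayed $\bgamma^1$ is $H(\balpha(d,\iota^1),\varepsilon/m)$, not literally $\balpha(d,\iota^1)$), and the matching of generators is by the nearest-point map, with the small isotopy absorbed into the almost-complex structure in the spirit of Propositions \ref{perturbingidentification} and \ref{regularisationidentification}. Second, the closing appeal to Proposition \ref{constructdomains} is not quite the right citation: what actually prevents $u(\partial S_{\bar{e}})$ from reaching the unused completed arc near $e^\infty_j$ is condition (BOR-3) (strong boundary monotonicity) together with the fact that $\arrowrho_\boat$ contains no jumping chords, so the boundary arc of $u$ in the $C^1$-cylinder over the $j$th handle never leaves $\alpha^{\delta_j}_j$; the interior of $u$ (and hence the domain $B_{\bar e}$) does in general have nonzero multiplicity on both sides of the unused arc inside the local half-disc $D(\rho_{12})$ or $D(\rho_{23})$, so ``the multiplicity vanishes on the side of the unused arc'' is not the correct mechanism. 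The conclusion you draw is right; the justification should rest on the boundary condition, not the domain.
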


\subsubsection{Fixing cut behaviour}\label{fixingcutbehaviour}

Now we consider the case where $\cD$ is a splayed or partially splayed diagram diagram  $\cD'(\varepsilon, \arrowiota)$ or  $\cD^\partial(\varepsilon, \arrowiota)$, and suppose that $B\in \dom(\cD)$ is a domain where $\vec\sigma(B) = \vec\sigma^1, \dots, \vec\sigma^k$.

If $\arrowrho(B) = ([\arrowrho_1(B)], \dots, [\arrowrho_h(B)])$, let $\widetilde{\rho_j} = (\widetilde{\rho}_j^1, \dots, \widetilde{\rho_j}^{i_j})$ be the $123$-reduced representative of $[\arrowrho_j(B)]$, and denote by $\widetilde{\rho_\anchor} := (\widetilde{\rho_1}, \dots, \widetilde{\rho_h})$  . If $S$ is a decorated $k$-source, we denote by $\rho_{123}(S, u)$ the set of punctures of $S$ which are mapped by $u$ to a point $e^{\infty, i}_j$ such that the $i$-th chord $\widetilde{\rho}^{k_i}_j$ in $\jump({\widetilde{\rho_j}})$ satisfies $\widetilde{\rho}^{k_i}_j= \rho_{123}$.

We will impose extra conditions upon the moduli spaces $\tildemod^B(\vec\bx; S; \arrowrho_{\boat}; \vec\sigma)$ based upon the behaviour of $u$ near any $q \in \rho_{123}(S, u)$. Namely, fix such a $q$ which is mapped to $e_j^{\infty,i}$. We shall say that $u$ has a \emph{cut} at $q$ if $\pi_\Sigma \circ u$ has a boundary branch point along one of the arcs $\gamma^i_j$  or $\gamma^{i+1}_j$. There are three cases:
\begin{enumerate}\item if there is a boundary branch point upon $\gamma^{i, i+1}$, we will say that $q$ is of $\rho_{12}, \rho_3$-type;
\item when there is a boundary branch point upon $\gamma^{i, i+1}$, we will say that $q$ is of $\rho_1, \rho_{23}$-type; and, finally,
\item when there is no boundary branch point we shall say that $q$ is of $\rho_{123}$-type.
\end{enumerate}

This has an equivalent description: if there is a branch point along $\gamma^{i}_j$ then there is some small disc neighbourhood $S(q)$ of $e_j^{\infty, i}$ such that when restricted to the component $S(q)$ of $u|_{S(q)}$ containing $q$, the homology class of $\pi_\Sigma \circ u(S(q))$ is modelled upon $B(\rho_3)$. Similarly, if the branch point appears upon $\gamma^{i+1}_j$, there is a corresponding $S(q)$ such that the homology class of  $\pi_\Sigma \circ u(S(q))$ is modelled upon $B(\rho_1)$. Finally, if there is no cut at $q$, every sufficiently small disc neighbourhood $M_j^i$ of $q$ satisfies that, restricted the the component $S(q)$ containing $q$, the map $\pi_\Sigma \circ u$ is modelled upon $B(\rho_{123})$.

We would like to impose conditions upon our moduli spaces that dictates which of these cases holds for each $q$. We first choose sensible notation, and then re-write the conditions in a way for which transversality and index formulae are more apparent.
\begin{defn}Fix $B \in \pi_2(\vec\bx)$ with associated sets of sequences of Reeb chords $\widetilde{\rho_\anchor}$, and fix also a splayed source $S$ with corresponding partition of $\rho_{123}(S)$ into $Q = (Q_{123}, Q_{1,23}, Q_{12,3})$.

We shall write $\tildemod^B(\vec\bx; S; \arrowrho_{\boat}, Q_{123};\vec\sigma)$ for the subset of $u \in \tildemod^B(\vec\bx; S; \arrowrho_{\boat}; \vec\sigma)$ where $\rho_{123}(S, u) = \rho_{123}(S)$ and each $q \in Q_{123}$ is of $\rho_{123}$-type (with respect to $u$). Similarly, we shall write $\cM^B(\vec\bx; S; \arrowrho_{\boat}, Q; \vec\sigma)$ for the subset of $\tildemod^B(\vec\bx; S; \arrowrho_{\boat}, Q_{123}; \vec\sigma)$ such that each $q \in Q_{1,23}$ is of $\rho_1, \rho_{23}$-type, and each $q \in Q_{12,3}$ is of $\rho_{12}, \rho_3$-type. 
\end{defn}
We often use a shorthand for these moduli spaces:
\begin{defn}Write ${\arrowrho_\anchor}(Q,u)$ for the representative for the $123$-equivalence class $[\arrowrho(B)]$ given by replacing each $\rho \in \widetilde{\rho_\anchor}$ which is equal to $\rho_{123}$ by $\rho_{12}, \rho_3$ if $q(\rho)\in Q_{12,3}$; by $\rho_1, \rho_{23}$ if $q(\rho) \in Q_{1,23}$; and, finally, leaving it as $\rho_{123}$ if $q(\rho) \in Q_{123}$.

If $\arrowrho_\anchor$ is a representative of $[\arrowrho(B)]$, define by
$$\cM^B(\bx, S, \arrowrho_\boat, \arrowrho_\anchor; \vec\sigma)$$
the moduli space
$$\cM^B(\vec\bx; S; \arrowrho_{\boat},Q; \vec\sigma),$$
where $\arrowrho_\anchor = \arrowrho_\anchor(Q,u)$. 

We call an element of $\cM^B(\bx, S, \arrowrho_\boat, \arrowrho_\anchor; \vec\sigma)$ a \emph{splayed} bordered holomorphic polygon.
\end{defn}

To ensure the corresponding moduli spaces are transversally cut out, we rephrase this condition in a way more compatible with the language of almost-complex structures (cf. \cite[Lemma 9.4]{Ozsvath-Szabo:2001}, \cite[Chapter 6]{ahlforsconformal}). First, a definition:
\begin{defn}Let $u \in \cM^B(\vec\bx; S; \arrowrho_{\boat}, Q; \vec\sigma)$, and let $q \in Q_{1,23}$ or $Q_{12, 3}$.  Denote the corresponding boundary branch point by $b$, and the arc of $\partial S$ connecting $q$ and $(\pi_\Sigma \circ u)^{-1}(b)$ by $\lambda_{q,b}$. We define the \emph{length of the cut at $q$} to be the length of the arc $\bar{\lambda}_{q,b}: = (\pi_\Sigma \circ u)(\lambda_{q,b})$ with respect to the metric induced upon $\Sigma$ by the uniformisation theorem.
\end{defn}

We focus initially upon curves where the cut behaviour near each $q \in \rho_{123}(S)$ is fixed so that either there is no cut, or such that the length of the cut at $q$ is short enough that the corresponding boundary branch point $b$ is contained in a disc neighbourhood $N^i_j$ of $e^{\infty,i}_j$ which is holomorphically identified with the unit disc in $\CC$. We also require that this disc is small enough that for each $q$, the component $S(q)$ of $(\pi_\Sigma \circ u)^{-1}(N^i_j)$ containing $q$ is a half-disc with $q$ its only boundary puncture. Denote by $(u^i_j, j')$ the restriction of $(u, j')$ to $(\pi_\Sigma \circ u)^{-1}(N^i_j)$ (we recall that we have been writing $u$ for the pair $(u, j')$ where $j' \in \conf(D_m)$).

We can modify the complex structure upon $N^i_j$ so that the angle between the arcs in $\partial B^i_j(\rho_{123})$ is $\pi$ (by composing with the map $\varphi: z \mapsto z^{2/3}$, say), and modify the (split) almost-complex structure $J|_{N^i_j} \times j'$ over $N^i_j \times D_m$ accordingly, to a corresponding almost-complex structure $J_{N^i_j}^0 \times j'$. To $u^i_j$ we may associate a corresponding $v^i_j$ with respect to this new almost-complex structure: moreover, if $u^i_j$ had no boundary branch point then this new map does not either, and so there is a corresponding holomorphic map from the half-disk $S(q)$ with boundary puncture $q$ removed to the manifold $X_{\cD, m}$ (by the removable singularities theorem and Schwarz reflection principles).

Similarly, if $u^i_j$ had a boundary branch point then so does $v^i_j$. We further modify the complex structure by composing the chart on $N^i_j$ with the Riemann mapping which takes the open disc with slit of length $t$ along $\varphi(\gamma^{i+1}_j)$ (resp. slit of length $-t$ along $\varphi(\gamma^i_j)$) to the open half-disc. These Riemann mappings yield a family of  almost-complex structures $J_{N^i_j}^t$ upon $N^i_j$ for each $t \in (-1,1)$, such that for every $J_{N^i_j}\times j'$-holomorphic curve $u^i_j$ with a cut of length $t$ along $\gamma^{i+1}_j$ (resp. length $-t$ along $\gamma^i_j$), the corresponding curve (which we also denote by $v^i_j$) with respect to $J_{N^i_j}^t \times j'$ has no cut, and thus can be completed to a holomorphic map with source given by the half-disk.

We would like to modify the almost-complex structures $J_{j'}$ to agree with this $J_{N^i_j}^t \times j'$ in a neighbourhood of $u(q)$ and then apply the transversality result of Proposition \ref{borderedpolytransversality}, but this does not make sense with respect to other components of $u^{-1}(N^i_j)$. The solution, as in \cite[Proposition 6.36]{LOT} (cf. also \cite{seidel:dehn}) is to instead consider, for $\vec{t} = \{t_q\}_{q \in \rho_{123}(S)}$, related moduli spaces of $\bJ^{\vec{t}}$- holomorphic sections $u: S \rightarrow S \times \Sigma_{\bar{e}} \times D_n$, where each $\bJ^{\vec{t}}$ is an almost-complex structure on $S \times \Sigma_{\bar{e}} \times D_n$ equal upon each $(s, z, w)$ to some $j_S \times \bJ^{\vec{t}}_s$. We choose $\bJ^{\vec{t}}_s$ so that for $s$ near any puncture $q \in \rho_{123}(S)$, the almost-complex structure $\bJ^{\vec{t}}_s|_{N_j \times D_m}$ agrees with the split almost-complex structure $J^{t_q}_{N^i_j} \times j'$ described in the previous paragraph, and away from this agrees with $\bJ$.

The proof of transversality for such families follows closely along the lines of \cite[Chapter 3]{Lipshitz:cylindrical}. As such, the moduli spaces $\tildemod^B(\vec\bx; S; \arrowrho_{\boat}, Q_{123}; \vec\sigma)$ are transversally cut out. Moreover the moduli space
$$\tildemod^B(\vec\bx; S; \arrowrho_\boat, \rho_{123}(S); \vec\sigma) \subset \tildemod^B(\vec\bx; S; \arrowrho_\boat, Q_{123}; \vec\sigma)$$
of curves with no cut at any member of $\rho_{123}(S)$ is also transversally cut out and, along the lines of Proposition \ref{varyingsmallfamily}, for some $\varepsilon$, the moduli spaces defined with respect to $\bJ^{\vec{t}}$ for $\vec{t} \in (-\varepsilon, \varepsilon)^{|\rho_{123}(S) - Q_{123}|}$ are also transversally cut out --- thus each curve $u \in \tildemod^B(\vec\bx; S; \arrowrho_\boat, \rho_{123}(S), \vec\sigma)$ has a neighbourhood in $\tildemod^B(\vec\bx; S; \arrowrho_\boat, Q_{123}; \vec\sigma)$ homeomorphic to $\tildemod^B(\vec\bx; S; \arrowrho_\boat, Q_{123}; \vec\sigma) \times (-\varepsilon, \varepsilon)^{\rho_{123}(S) - Q_{123}}$.

Suppose that $u, u' \in \tildemod^B(\vec\bx; S; \arrowrho_\boat, Q_{123}; \vec\sigma_\boat)$, and that with respect to $u$ a puncture $q \in \rho_{123}(S) - Q_{123}$ is of $\rho_{12}, \rho_3$-type, and with respect to $u'$ the same puncture is of $\rho_{1}, \rho_{23}$-type. The topology upon the moduli spaces described in Section \ref{defconvergence}  is metrizable (cf. \cite[Appendix B]{EBHWZ}), and for each $q \in \rho_{123}(S) - Q_{123}$ the length of the curve $\bar{\lambda}_{q,b}$ is continuous with respect to this metric. As such (by the intermediate value theorem), the curves $u, u'$ belong to different path-components of 
$$\tildemod^B(\vec\bx; S; \arrowrho_\boat, Q_{123}; \vec\sigma) - \tildemod^B(\vec\bx; S; \arrowrho_\boat, \rho_{123}(S); \vec\sigma).$$

It follows that:

\begin{prop}\label{indextransversalitycut}There is an admissible, compatible family of almost-complex structures $\bJ$ for $\cD$, such that for all choices of argument, the moduli spaces $\cM^B(\vec\bx; S; P, Q; \vec\sigma_\boat)$  are all transversally cut out, and of expected dimension
$$\ind(B,S, P, Q, \sigma): = \ind(B, S, P, \sigma) - \#_{123}(\arrowrho_\anchor).$$
\end{prop}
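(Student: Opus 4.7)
The plan is to assemble the proposition from the three main ingredients already in hand: (i) the unconstrained transversality of Proposition \ref{borderedpolytransversality} and index formula of Proposition \ref{bordedpolyindex}; (ii) the parameterized family $\bJ^{\vec{t}}$ and the associated no-cut transversality described in the paragraphs immediately preceding the statement; and (iii) the observation that the cut-type (rather than cut-length) is an open/closed condition decomposition on the moduli space. First I would observe that $\cM^{B}(\vec\bx; S; \arrowrho_\boat, Q; \vec\sigma)$ is an open subset of $\tildemod^{B}(\vec\bx; S; \arrowrho_\boat, Q_{123}; \vec\sigma)$: the conditions ``$q \in Q_{1,23}$ is of $\rho_1,\rho_{23}$-type'' and ``$q \in Q_{12,3}$ is of $\rho_{12},\rho_3$-type'' correspond to selecting particular path components (those of positive, resp.\ negative, cut-length, via the intermediate-value argument sketched above) within the larger space. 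Thus it suffices to establish transversality and the dimension count for $\tildemod^{B}(\vec\bx; S; \arrowrho_\boat, Q_{123}; \vec\sigma)$.

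For transversality, I would use the parameterized almost-complex structures $\bJ^{\vec{t}}$ introduced just before the proposition. For the all-no-cut locus, i.e.\ $Q = \rho_{123}(S)$ and $\vec{t}=0$, the standard Lipshitz-style transversality argument (cf.\ \cite[Chapter 3]{Lipshitz:cylindrical}) applied to the reparameterized problem shows that $\tildemod^{B}(\vec\bx; S; \arrowrho_\boat, \rho_{123}(S); \vec\sigma)$ is transversely cut out for a generic choice in $\bJ$. Imposing the no-cut condition at each $q \in \rho_{123}(S)$ is a single real codimension-one constraint (it fixes the length of the cut arc $\bar\lambda_{q,b}$ to be zero), so the expected dimension of this all-no-cut locus is
\[
\ind(B,S,P,\sigma) - |\rho_{123}(S)|.
\]
Next, by Proposition \ref{varyingsmallfamily}, this transversality persists under small variation of $\bJ^{\vec{t}}$, so the local product structure $\tildemod^{B}(\vec\bx; S; \arrowrho_\boat, \rho_{123}(S); \vec\sigma) \times (-\varepsilon,\varepsilon)^{|\rho_{123}(S) - Q_{123}|}$ described above identifies, in a neighborhood of the all-no-cut locus, the larger moduli space $\tildemod^{B}(\vec\bx; S; \arrowrho_\boat, Q_{123}; \vec\sigma)$ with a transversely cut out manifold of dimension
\[
\bigl(\ind(B,S,P,\sigma) - |\rho_{123}(S)|\bigr) + |\rho_{123}(S) - Q_{123}| = \ind(B,S,P,\sigma) - |Q_{123}|.
\]

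Finally, by construction of $\arrowrho_\anchor$ from $Q$, each puncture in $Q_{123}$ contributes a $\rho_{123}$ chord to $\arrowrho_\anchor$, while each puncture in $Q_{1,23}$ or $Q_{12,3}$ contributes a pair $(\rho_1,\rho_{23})$ or $(\rho_{12},\rho_3)$, so $|Q_{123}| = \#_{123}(\arrowrho_\anchor)$, yielding the stated dimension formula. The only delicate point in this plan is checking that the transversality of the parameterized $\bJ^{\vec{t}}$-family propagates to cover all elements of $\cM^{B}(\vec\bx; S; \arrowrho_\boat, Q; \vec\sigma)$, not just those near the all-no-cut locus; but since the open conditions selecting $Q_{1,23}$ and $Q_{12,3}$ types correspond (after the $\varphi$-reparameterization and slit-map modification) simply to the sign of $t_q$, and the $\bJ^{\vec t}$-family is defined for $t_q$ in an open interval about zero, a cover-and-patch argument together with the Baire-category-style density of generic $\bJ$ from Proposition \ref{borderedpolytransversality} delivers a single $\bJ$ working for all $\bx,\by,S,B,\arrowrho_\boat,Q,\vec\sigma$ simultaneously.
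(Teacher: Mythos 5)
Your proposal is correct and takes essentially the same route as the paper's proof: it relies on the parameterized family $\bJ^{\vec t}$ and the local product structure from the preceding paragraphs, uses the path-component observation (via continuity of cut length and the intermediate value theorem) to pass from $Q_{123}$ to the full prescribed-cut data $Q$, and computes the dimension by counting a codimension-one constraint for each member of $Q_{123}$. The paper's own argument is terser and frames the dimension count directly via the modified Euler measure (``$\rho_{123}$-type punctures are not corners at all for $\bJ^{\vec t}$''), which is an equivalent bookkeeping of the same $-|Q_{123}| = -\#_{123}(\arrowrho_\anchor)$ contribution. Your flag at the end about extending transversality away from the all-no-cut locus is the one genuine subtlety; the paper handles it implicitly by applying Lipshitz-style parametrized transversality to the full $\bJ^{\vec t}$-family of holomorphic sections rather than patching charts, but both strategies amount to the same argument once one observes (via Gromov compactness in a fixed homology class) that cut lengths are uniformly bounded so the Riemann-mapping reparametrization applies globally on each moduli space.
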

\begin{proof}Transversality is explained in the lead-up to the statement of the proposition. The index formula follows along similar lines to \cite[Section 4.1]{Lipshitz:cylindrical}: we do not count `corners' stemming from punctures of $\rho_{123}$-type, as with respect to the relevant $\bJ^{\vec{t}}$ they are not corners at all.
\end{proof}

\begin{convention}In the course of this paper, we will be considering a number of different partial splayings of a bordered Heegaard diagram, which has the potential to become confusing. By Definition \ref{splicingtoidemps}, a splicing $\vec\sigma$ of a set of sequences of Reeb chords $\arrowrho_\anchor$ determines a sequence of sets of idempotents $\iota_{\vec\sigma}$: as such, we may indicate the diagram being considered for a specific moduli space by using the notation
$$\cM^B(\bx; S; \arrowrho_\boat, \arrowrho_\anchor; \vec\sigma_\boat, \vec\sigma_\anchor)$$
to refer to the moduli space
$$\cM^B(\bx; S; \arrowrho_\boat, \arrowrho_\anchor; \vec\sigma_\boat)$$
defined using the diagram $\cD^\partial(\varepsilon, \iota_{\vec\sigma_\anchor})$ or $\cD(\varepsilon, \iota_{\vec\sigma_\anchor})$.
\end{convention}

\subsubsection{Compactifications}\label{combs}
As in \cite{LOT}, the aforementioned moduli spaces admit natural compactifications in terms of \emph{holomorphic combs}.

To discuss these, we need the notion of a holomorphic tooth: this is a map $$v: (T, \partial T) \rightarrow (\RR \times (\partial \Sigma - \bb) \times D_n, \RR \times \ba \times e^1)$$ where $T$ is a tooth source, and the codomain is endowed with the obvious split symplectic form and the split complex structure $J = j_\Sigma \times j_{D_n}$. We denote by $\cN(T)$ the moduli space of such maps satisfying:
\begin{enumerate}\item $v$ is $(j, J)$-holomorphic for some almost complex structure $j$ upon $T$
\item $v$ is proper
\item $\pi_{D} \circ v$ maps $\partial T$ to $e^1$.
\item Near each west puncture $q \in W_j(T)$ labelled by a Reeb chord $\rho$, $\lim_{z \rightarrow q}\pi_\Sigma \circ v(z)$ is $\rho \subset \{-\infty\} \times \partial\Sigma_j$.
\item Near each east puncture $q \in E_j(T)$ labelled by a Reeb chord $\rho$, $\lim_{z \rightarrow q}\pi_\Sigma \circ v(z)$ is $\rho \subset \{+\infty\} \times \partial\Sigma_j$.
\end{enumerate}

These moduli spaces can once-more be refined by requiring the evaluation maps $\ev_q := \lim_{z \rightarrow q} t \circ v(z)$ partition the east and west punctures of $T$ in a prescribed manner. Explicitly, if $P_w := (P_{w,1}, \dots, P_{w,h})$ and $P_e := (P_{e,1}, \dots, P_{e,h})$ are partitions of the sets $W(T) = (W_1(T), \dots, W_h(T))$ and $E(T) = (E_1(T), \dots, E_h(T))$ respectively, denoting
\begin{align*}\ev_{w,j} &:= \prod_{q \in W_j(T)} \ev_q: \cN(T) \rightarrow (e^1)^{|W_j(T)|} \\
\ev_{e,j} &:= \prod_{q \in E_j(T)} \ev_q: \cN(T) \rightarrow (e^1)^{|E_j(T)|};
\end{align*}
and $\ev_w := \ev_{w,1} \times \cdots \times \ev_{w,h}$ and $\ev_w := \ev_{w,1} \times \cdots \times \ev_{w,h}$, we write $\cN(T; P_w, P_e)$ for the moduli space $$(\ev_w \times \ev_w)^{-1}(\Delta_{P_{w,1}} \times \cdots \times \Delta_{P_{w,h}} \times \Delta_{P_{e,1}} \times \cdots \times \Delta_{P_{e,h}})$$ --- omitting $P_w$ from the notation if it is the discrete partition. If we are given a splicing $\sigma_e$ of the corresponding sequence of Reeb chords given by replacing each $q \in P_e$ by the Reeb chord labelling it, we can restrict these moduli spaces even further, writing $\cN(T;P_w, P_e; \sigma_e)$ for the subset  $\ev^{-1}(\Delta_{\sigma}^1, \dots, \Delta_{\sigma}^h)$ for the evaluation maps from $\cN(T;P_w, P_e; \sigma_e) \rightarrow e^1$.

Transversality is known in a simple case:
\begin{prop}\cite[Proposition 5.16]{LOT} If all components of $T$ are topological disks, then $\cN(T)$ is transversally cut out by the $\delbar$-equations for any choice of split almost-complex structure $J$.
\end{prop}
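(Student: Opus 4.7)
The plan is to exploit the splitness of $J$ together with the topological simplicity of $T$ to reduce the problem to automatic transversality for holomorphic disks in a Riemann surface.

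First, since $J = j_\Sigma \times j_{D_n}$ is split and $\partial \Sigma$ is a disjoint union of circles, the target $\RR \times \partial\Sigma \times D_n$ factors as a product of two complex curves: the disjoint union of cylinders $\RR \times \partial\Sigma$ on the one hand, and the polygon $D_n$ on the other. Any holomorphic tooth $v$ decomposes as $v = (v_1, v_2)$ where $v_1 \colon T \to \RR \times \partial\Sigma$ and $v_2 \colon T \to D_n$ are separately holomorphic, with totally real boundary conditions in each factor: $v_1(\partial T) \subset \RR \times \ba$ is sent into a union of $\RR$-fibres, while $v_2(\partial T) \subset e^1$ lands in a single arc. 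The linearization $D\delbar_v$ splits as a direct sum of Cauchy--Riemann operators on the pull-backs of the tangent bundles to the two factors, each a holomorphic line bundle over $T$ with totally real boundary conditions. Showing $D\delbar_v$ is surjective therefore reduces to showing surjectivity for each of these two operators, on each component of $T$ separately.

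Next, since each component of $T$ is a topological disk (with boundary and boundary punctures), I would invoke standard automatic transversality for Cauchy--Riemann operators on line bundles over a disk with totally real boundary conditions and non-degenerate asymptotic operators at the punctures: such operators are surjective provided the relevant Maslov index is at least $-1$ (see, e.g.\ \cite[Appendix C]{McDS:jhol}, or Wendl's automatic transversality). Each of the two factors satisfies this criterion: the boundary conditions are unions of straight arcs in a one-complex-dimensional target, so the Maslov indices of the boundary loops are non-negative, and the asymptotic behaviour at west/east punctures is controlled by Reeb chords upon the circle $\partial\Sigma_j$ (which are non-degenerate), while the behaviour at punctures labelled by edges of $D_n$ is determined by the arc $e^1$.

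The main bookkeeping obstacle is simply to check these Maslov index bounds for every combinatorial type of disk component of $T$, taking into account punctures of either labelling type; but no \emph{analytic} subtlety arises, since the targets of both factors are one-complex-dimensional and the boundary conditions are totally geodesic in the obvious model complex structures. In particular, transversality is independent of the choice of split $J$, as claimed.
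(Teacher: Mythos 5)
Your overall strategy — exploit the splitness of $J$ to split the linearized operator into two Cauchy--Riemann operators valued in one-complex-dimensional targets, then invoke automatic transversality for disk sources — is indeed the approach taken in \cite[Proposition 5.16]{LOT}, and it is also explicitly spelled out in the closely related Proposition~\ref{transversalityanchor} of the present paper. The main difference is one of packaging: the cited proof does not appeal to Wendl-style automatic transversality for punctured curves directly, but instead Schwarz-doubles both the source disk and the target to produce a pair of maps from a (punctured) $\CC P^1$ to $\CC P^1$, computes the first Chern classes of the doubled line bundles (which equal the Maslov indices of the original boundary-value problems), and then quotes \cite[Lemma 3.3.1]{McDS:jhol} to conclude vanishing of $H^1$ and hence surjectivity of $D\delbar$, with surjectivity descending to the original operator (cf.\ \cite[p.158]{hoferlizansikorav}). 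Either route is legitimate; the doubling argument is slightly more elementary in that it sidesteps the analytic framework needed to state automatic transversality in the punctured setting.

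Where your sketch is imprecise is in the sentence asserting that ``the Maslov indices of the boundary loops are non-negative.'' That claim is neither verified nor, in general, true: in the model case worked out in Proposition~\ref{transversalityanchor} (which invokes exactly this LOT result), the Maslov index of each factor is computed to be $-1$. The correct bound is $\mu \ge -1$, which is exactly the borderline allowed by the doubling argument ($c_1 = -1$ over $\CC P^1$ still gives $H^1 = 0$ by \cite[Lemma 3.3.1]{McDS:jhol}). So your criterion as stated is stricter than what actually holds. You should replace the non-negativity claim with the $\ge -1$ bound, and actually verify it for each factor; for the $\pi_D$ factor this is easy (one can check via the maximum principle that the $D_n$-component of a tooth is constant, so the Maslov index is $0$), and for the $\pi_\Sigma$ factor this is a small combinatorial calculation matching the one in Proposition~\ref{transversalityanchor}.
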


This will allow us to describe the compactifications of our moduli spaces, as in \cite[Section 5.4]{LOT}. We firstly consider small constituent pieces:
\begin{defn}A simple holomorphic $m$-comb is a pair $(u,v)$ of a bordered holomorphic polygon $(u,j) \in \freemod^B(\vec\bx; S)$ and holomorphic teeth $v \in \cN(T)$ for some bordered polygon source $S$ and tooth source $T$, together with a one-to-one correspondence between $E(S)$ and $W(T)$ that preserves labelling by Reeb chords, such that $\ev_w(v) = \ev(u)$.
\end{defn}

In more generality, we must consider curves with even more degenerate behaviour at east infinity:

\begin{defn}A holomorphic $m$-story is a sequence $U = (u, v_1, \dots, v_k)$ for some $k \ge 0$, where, for some $B$, bordered polygonal source $S$ and collection of tooth sources $T_1, \dots, T_k$:
\begin{itemize}\item $u \in \freemod^B(\vec\bx; S)$ ;
\item $v_i \in \cN(T_i)$;
\item $(u, v_1)$ is a simple holomorphic $m$-comb;
\item there is a correspondence between $E(T_i)$ and $W(T_{i+1})$ for $i = 1, \dots, k-1$ which preserves labelling by Reeb chords; and
\item $\ev_e(v_i) = \ev_w(v_{i+1})$ for every $i$.
\end{itemize}
If $k = 0$, we say that $U$ is \emph{toothless}.
\end{defn}

In full generality, we need to consider trees of holomorphic stories.

\begin{defn}A \emph{tree of holomorphic combs} is:
\begin{enumerate}\item A metric tree $(T, \rho)$ with corresponding reduced tree $\bar{T}$.
\item A generator assignment $\vec\bx(\bar{T})$ for $\bar{T}$; a tree of $\vec\bx(\bar{T})$-compatible domains $B(\bar{T})$; a comb-tree of sources $S(\bar{T})$; and a tree of admissible, generic, compatible tree of almost-complex structures $\bJ_T \subset \bJ$.
\item A set of holomorphic stories $\{U_V\}_{V \in V(\bar{T})}$, such that each $U_V$ is a $J(V)$-holomorphic $|V|$-story in the moduli space $\freemod^{B(V)}(\vec\bx(V); S(V))$, and for every non-comb vertex $V \in \bar{T}$, the story $U_V$ is toothless.
\end{enumerate}
If $|V(\bar{T})| = k$, we often say that $U$ has \emph{height $k$}, and we often refer to a heigh $k$ tree of holomorphic combs by its vertices: i.e.  $T = (u_1, \dots, u_k)$.
\end{defn}

A priori, we must allow holomorphic curves to have nodal sources to ensure compactness holds. As in \cite[Section 5.4]{LOT}, we refer to a tree of elementary holomorphic combs as \emph{nodal} or \emph{smooth} if we want to explicitly allow or forbid nodes.

Recall that to each spinal vertex $V_1, \dots, V_k$ of $\bar{T}$ we may associate its storied tree $T(V)$. Each of these has an associated domain $B(V)$, equal to the sum of the domains $\{B(V')\}_{V' \in T(V)}$, and so we may associate to any $T$-tree $U$ a domain $B$, given by the sum of all the domains of $V \in S(T)$. We can also associate to $U$ a set of well-defined Reeb chords which is given by the Reeb chords of the right-most tooth of each story in the spine.

As in \cite[Section 5.4]{LOT}, we call a component $C$ of a $T$-tree $U$ \emph{$\Sigma$-stable} (resp. $D$-stable) if $(\pi_\Sigma \circ U)|_C$ (resp. $(\pi_D \circ U)|_C$) is a stable map, and $\Sigma$-unstable (resp. $D$-unstable) otherwise. We modify the definition of convergence from \cite{LOT} to general $T$-trees of elementary combs:

\begin{defn}\label{defconvergence}We shall say that a sequence $(u_n, j_n)$ of holomorphic $n$-gons in $\Sigma_{\bar{e}} \times D_m$ \emph{converges} to a holomorphic tree $U$ if:
\begin{enumerate}\item Let $S_\Sigma$ denote the result of collapsing all $\Sigma$-unstable components of $U$. Then the map $\{\pi_\Sigma \circ u_n\}$ converges to $\pi_\Sigma \circ U|_{S_\Sigma}$, in the sense of \cite[Section 7.3]{EBHWZ}.
\item The complex structures $j_n$ on $D_m$ converge to the tree of almost complex structures $\{j_v\}_{v \in T}$ over $\{\Sigma_{\bar{e}} \times D_{|v|}\}_{v \in T}$.
\item Let $S_D$ be the result of collapsing all $D$-unstable components of $U$. Then $\{\pi_D \circ u_n\}$ converges to $\pi_D \circ U|_{S_D}$ in the sense of \cite[Section 7.3]{EBHWZ}.

\item For $n$ sufficiently large, all $u_n$ represent the same homology class $B = [U]$.\end{enumerate}
\end{defn}

As is tradition, to avoid drowning in notation, we leave the more general definition of convergence of $T$-trees of elementary holomorphic combs to the reader.

With this in hand, we make the following definition:

\begin{defn}\label{modulicompactifications}Denote by:\begin{itemize}\item $\overline{\overline{\cM}}^B(\vec\bx; S)$ the space of all trees of elementary holomorphic combs with preglued source $S$ in homology class $B$, asymptotic to the sequence of generators $\vec\bx = (\bx^{0,1}, \dots, \bx^{m-1, 0})$, compatible with the almost complex structures $\bJ$.
\item $\overline{\cM}^B(\vec\bx; S)$  the closure of the moduli space $\cM^B(\vec\bx; S)$ in $\overline{\overline{\cM}}^B(\vec\bx; S)$.

\item  $\overline{\overline{\cM}}^B(\vec\bx; S; P; \sigma)$ the space of all relevant holomorphic trees respecting the partition $P$.
\item $\overline{\cM}^B(\vec\bx; S; P)$ is the closure of $\cM^B(\vec\bx; S; P;\sigma)$ in  $\overline{\overline{\cM}}^B(\vec\bx; S; P)$.
\end{itemize}
To save on ink and patience, we observe the convention that for all other moduli spaces $\cM^B(*)$, we denote by $\overline{\cM}^B(*)$ the closure of  $\cM^B(*)$ in the corresponding moduli space $\overline{\cM}^B(\vec\bx; S; P; \sigma)$.
\end{defn}

Indeed, these are the required notions for compactifying the moduli spaces:

\begin{prop}The spaces above are compact: i.e. if $\{U_n\}$ is a tree of holomorphic polygon combs in a fixed homology class with fixed preglued topological source $S$, then $\{U_n\}$ has a subsequence which converges to a tree of holomorphic polygon combs $U$ in the same homology class.
\end{prop}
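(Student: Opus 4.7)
The plan is to mimic the SFT-style Gromov compactness arguments of \cite{LOT, LOT:SSII} carefully adapted to our setting with the extra data of splicings and cut-type partitions. The basic principle is that convergence of a sequence of holomorphic polygons is controlled separately on the $\Sigma$ and $D_m$ factors, and that each of these enjoys its own compactness theorem, which we can then splice together.

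First I would fix a sequence $\{U_n\}$ lying in one of the compactified moduli spaces of Definition \ref{modulicompactifications}. Since there are only finitely many choices of preglued source, homology class, and decoration, and since the domain $B$ controls the total energy of $\pi_\Sigma \circ u_n$ (and the polygon moduli controls the energy of $\pi_D \circ u_n$), we have uniform energy bounds. Apply SFT compactness \cite[Section 7.3]{EBHWZ} to the projections $\pi_\Sigma \circ u_n$ to obtain, after passing to a subsequence, a limiting holomorphic building in $\Sigma_{\bar e}$; simultaneously extract a limit of $j_n \in \conf(D_m)$ using the compactification of $\conf(D_m)$ by trees of polygons, and apply Gromov compactness to $\pi_D \circ u_n$ over this limiting tree. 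By the maximum principle applied to $\pi_D$ and the strong boundary monotonicity condition (BOR-3), the two limits can be matched: whenever an interior level appears in the $\Sigma$-building but no corresponding polygon level appears in the $D$-side, the limit is a holomorphic tooth living over a constant fibre of $\pi_D$, and vice versa (unstable components are collapsed in accordance with Definition \ref{defconvergence}). Combining these produces a $T$-tree of elementary holomorphic combs $U$ in the same homology class $B$, asymptotic to the same sequence of generators $\vec\bx$; this verifies conditions (1)--(4) of Definition \ref{defconvergence}.

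Next one must check that all of the decorations imposed on the various moduli spaces pass to the limit. For partitions and orderings $(\arrowP, \vec\sigma)$, the evaluation maps $\ev_q$ are continuous, so any equality or inequality of heights enforced on $U_n$ is preserved in the limit (where ``equal heights'' may degenerate to punctures on a common tooth level and strict inequalities may degenerate to equality, producing a collision $\vec\sigma(k)$ or a breaking at a level corresponding to the splay $\arrowlambda \star \arrowdelta = \arrowrho$). For the splayed moduli spaces with fixed cut data $Q = (Q_{123}, Q_{1,23}, Q_{12,3})$, I would use the re-interpretation given in Section \ref{fixingcutbehaviour}: a curve with cut at $q$ of bounded length is equivalent to a $\bJ^{\vec t}$-holomorphic section for $\vec t$ in a fixed compact set, so after passing to a subsequence $\vec t_n \to \vec t_\infty$ and the corresponding curves converge (with respect to the metric on the moduli space from \cite[Appendix B]{EBHWZ}). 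The length of the cut is a continuous function on the moduli space, so cut-type ($\rho_{1,23}$ vs.\ $\rho_{12,3}$) is preserved under convergence except when the length degenerates to zero, which corresponds to a $\rho_{123}$-type limit or to the cut escaping out to a separate story/tooth at east infinity --- both of which are permitted by the tree-of-combs compactification.

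The main technical obstacle is the bookkeeping at east infinity: exactly when a puncture $q \in \rho_{123}(S)$ sprouts into a separate story or level, one must verify that the corresponding splicing $\vec\sigma$ of $\arrowrho_\anchor$ degenerates in one of the ways described in the partial $\cA_\infty$ relation (breaking $\arrowlambda \star \arrowdelta$, multiplication $\bar\mu^i_j$, or collision $\vec\sigma(k)$) --- and, in the limit, the cut data $Q$ is distributed consistently across the resulting spinal and easterly vertices. This is essentially the content of the $\ind = 2$ ends analysis that will be used to prove the partial $\cA_\infty$ relations; here we only need to know that the decorated limit still records a well-defined $T$-tree, which follows from continuity of the evaluation, length-of-cut, and homology-class functions, together with the fact that $\bJ$ is split near $\partial \Sigma_j \times D_m$ so that Reeb asymptotics are preserved under SFT convergence. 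Finally, nodes on the source may appear in the limit (the passage from smooth to nodal trees of combs), which is accommodated by the ``nodal'' version of the target moduli space described following Definition \ref{modulicompactifications}. Putting these pieces together yields a convergent subsequence with limit in the same compactified moduli space, as required.
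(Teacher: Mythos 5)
Your proposal is correct and follows the same route as the paper, which in fact just cites \cite[Prop.\ 5.24]{LOT} (via the doubling trick of \cite[Prop.\ 5.27]{LOT}) and observes that doubling source and target still produces a symmetric cylindrical manifold; your sketch is essentially an unpacked account of what that cited argument does, supplemented with the (correct, and in the paper left implicit) observation that the evaluation maps, cut lengths, and splicing data are continuous and hence pass to the limit.
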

\begin{proof}The argument from \cite[Prop. 5.24]{LOT} carries over readily to this more general case. In particular,  \cite[Prop. 5.27]{LOT} applies to a general symmetric cylindrical almost-complex manifold, and doubling both source and $X_{\cD, m}$ is still a manifold of this type.
\end{proof}

\subsection{Holomorphic anchors and perturbations}\label{sub:anchorsperturbations}
We discuss modifying the Lagrangian cylinders used in the definition of the moduli spaces $\cM^B(\vec\bx; S; \arrowrho_\boat, \arrowrho_\anchor; \vec\sigma_\boat, \vec\sigma_\anchor)$.

\subsubsection{Interpolating cylinders}
Fix a partially splayed bordered diagram $\cD' =\cD^\partial(\varepsilon, \arrowiota)$ with attaching curves $\bgamma^0, \bgamma^1, \dots, \bgamma^m$,  and recall the east-compactified curves $\bgamma^1_{\bar{e}} = \bgamma^{1,a}_{\bar{e}} \cup \bgamma^{1,c}_{\bar{e}}$ with corresponding cylinders in the east-compactification of $X_{\cD', m}$ given by $$C^1_j: = \gamma^1_{\bar{e}} \times e^1 \subset \Sigma_{\bar{e}} \times e^1.$$ 

We can perturb these cylinders using the Hamiltonian $H$ as described in Section \ref{diagrams}. Namely, we put
$$C^1(\varepsilon) = \left\{\begin{array}{c c} H(\bgamma^{1}_{\bar{e}}, 0) \times t &\mbox{ for } t \in (-\infty, -\varepsilon/2] \\
H(\bgamma^{1}_{\bar{e}}, \frac{2t + \varepsilon}{2(m+1)})\times t & \mbox{ for } t \in [-\varepsilon/2, \varepsilon/2] \\
H(\bgamma^{1}_{\bar{e}}, \frac{\varepsilon}{m+1})\times t & \mbox{ for } t \in [\varepsilon/2, \infty) \end{array} \right.$$
(recall that we fixed an identification of a neighbourhood of $e^1$ with $\RR \times [0, \varepsilon)$).
For $t \le -\varepsilon/2$, these cylinders coincide with the cylinders $C^1 = \bgamma^1_{\bar{e}} \times e^1$, and for $t \ge \varepsilon/2$, they coincide with the cylinders $(C^1)' = H(\bgamma^1_{\bar{e}}, \frac{\varepsilon}{m+1}) \times e^1$. They are Lagrangian with respect to a perturbed symplectic form upon the east compactification of $X_{\cD', m}$, as described in \cite[Equation 3.25]{LOT:SSII}.
\subsubsection{Moduli spaces with perturbed cylinders}\label{perturbedcylinders}
Consider again a partially splayed diagram $\cD^\partial(\varepsilon, \arrowiota)$, with attaching curves labelled $\bgamma^0, \dots, \bgamma^m$. We consider the moduli space of holomorphic curves  $$\cM^{B_{\bar{e}}}_{\bar{e},\varepsilon}(\vec\bx; S_{\bar{e}}; \arrowrho_{\boat}, \arrowrho_\anchor; \vec\sigma; \bJ_{\bar{e}}),$$
which are defined by replacing the unperturbed cylinders $C^i_{\bar{e}}= C^i(\bgamma^i_{\bar{e}})$ in the definition of the east-compactified moduli space $$\cM^{B_{\bar{e}}}_{\bar{e}}(\vec\bx; S_{\bar{e}}; \arrowrho_{\boat}, \arrowrho_\anchor; \vec\sigma; \bJ_{\bar{e}})$$
with cylinders such that:

\begin{itemize}\item $C^0(\varepsilon)$ are the cylinders $\bgamma_{\bar{e}}^0 \times e^0$;
\item $C^1(\varepsilon)$ are the perturbed cylinders $C^1(\varepsilon)$; and
\item $C^i(\varepsilon)$ are the cylinders $H(\bgamma^i_{\bar{e}}, \frac{\varepsilon(m-i)}{m(m+1)}) \times e^i$ for all $i \ge 2$.
\end{itemize}

Note that if $\iota = (\iota_1, \dots, \iota_h)$ is a set of idempotents, then for the corresponding partially splayed diagram $\cD^\partial(\varepsilon, \iota \cup \arrowiota)$ with attaching curves $\bdelta^0, \bdelta^1 \dots, \bdelta^{m+1}$, we have that:
\begin{itemize}\item $C^0(\varepsilon)$ agree with $\bdelta^0_{\bar{e}} \times e^0$;
\item $C^1(\varepsilon)$ agrees with $\bdelta^1_{\bar{e}}$ for $t \le -\frac{\varepsilon}{2}$ and with $\bdelta^2_{\bar{e}}$ for $t \ge \frac{\varepsilon}{2}$; and
\item for every $i \ge 2$, $C^i(\varepsilon)$ agrees with the cylinders $\bdelta^{i+1} \times e^i$.
\end{itemize}

\begin{prop}\label{perturbingidentification}For small enough $\varepsilon$, all of the moduli spaces of the form $$\cM(\bJ_{\bar{e}}): = \cM^{B_{\bar{e}}}_{\bar{e}}(\vec\bx; S_{\bar{e}}; \arrowrho_{\boat}, \arrowrho_\anchor; \vec\sigma; \bJ_{\bar{e}})$$ are identified with the corresponding moduli space $$\cM_{\varepsilon}(\bJ_{\bar{e}}):=\cM^{B_{\bar{e}}}_{\bar{e},\varepsilon}(\vec\bx; S_{\bar{e}}; \arrowrho_{\boat}, \arrowrho_\anchor; \vec\sigma; \bJ_{\bar{e}}).$$
\end{prop}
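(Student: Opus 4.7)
My plan is to interpret the perturbed moduli spaces as a one-parameter deformation of the unperturbed ones and apply a standard cobordism argument. Concretely, let $C^i(t)$ denote the family of cylinders defined by the formulas of Subsection \ref{perturbedcylinders} with $\varepsilon$ replaced by $t \in [0, \varepsilon_0]$, and let $\cM^{\mathrm{par}}$ denote the parametric moduli space of pairs $(t, u)$ where $u$ is a $\bJ_{\bar{e}}$-holomorphic curve with boundary on the $C^i(t)$, satisfying all the asymptotic, splicing and cut conditions used to define $\cM(\bJ_{\bar{e}})$. The fibers at $t=0$ and $t=\varepsilon$ are $\cM(\bJ_{\bar{e}})$ and $\cM_\varepsilon(\bJ_{\bar{e}})$ respectively, so it suffices to show that $\cM^{\mathrm{par}} \to [0, \varepsilon_0]$ is a trivial fibration for $\varepsilon_0$ sufficiently small.

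The argument proceeds in three steps. First, parametric transversality: for generic $\bJ_{\bar{e}}$, $\cM^{\mathrm{par}}$ is a smooth manifold of dimension one greater than a given fiber. This follows from a standard universal moduli space argument as in Proposition \ref{borderedpolytransversality}, noting that $C^i(t)$ depends smoothly on $t$ and incorporating $t$ as an additional free parameter. Second, properness of the projection $\pi \colon \cM^{\mathrm{par}} \to [0, \varepsilon_0]$: given a sequence $(t_n, u_n)$ with $t_n \to t_\infty$, Gromov compactness (as adapted in Subsection \ref{combs}) yields a limiting tree of holomorphic combs $U_\infty$ with boundary on $C^i(t_\infty)$. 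A dimension count using Proposition \ref{bordedpolyindex} shows that genuine breakings correspond to strata of codimension at least one, so for generic $\bJ_{\bar{e}}$ they cannot appear as limits in top-dimensional moduli. Third, combining transversality and properness, $\pi$ is a proper submersion, hence a trivial fibration over the contractible interval $[0, \varepsilon_0]$, yielding a canonical identification between the fibers.

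The main obstacle is step two: ruling out new degenerations in the parametric family. The crucial input is that $C^i(t)$ is obtained from $C^i(0)$ by a Hamiltonian isotopy whose support is confined to a uniform neighbourhood of the cylindrical ends $e^\infty_j \times e^i$, giving uniform energy bounds across $t \in [0, \varepsilon_0]$. This rules out sphere bubbling in $\Sigma_{\bar{e}}$ (no nontrivial homology classes of spheres arise for the relevant small energies) and confines strip breakings and polygon degenerations to the strata already catalogued in Definition \ref{modulicompactifications}. By shrinking $\varepsilon_0$, any such degeneration at $t > 0$ is $C^0$-close to a degeneration at $t=0$, where the perturbed and unperturbed pictures coincide; so after fixing transversality the implicit function theorem applied fiberwise yields the required identification for all sufficiently small $\varepsilon$.
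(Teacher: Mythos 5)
Your overall strategy — interpret the perturbed moduli spaces as the $t = \varepsilon$ fiber of a parametric family, establish parametric transversality, prove properness, and conclude by trivial fibration over a contractible interval — is correct and reaches the same endpoint as the paper. However, the paper reaches it by a shorter route that you miss: instead of proving transversality for the moving-Lagrangian family directly (which requires the somewhat nonstandard moving-boundary variant of the universal moduli space argument you gesture at), the paper chooses a Hamiltonian isotopy $\phi_t$ of $X_{\cD, m}$ carrying the unperturbed cylinders $C^i_{\bar{e}}$ onto the perturbed ones $C^i_{\bar{e}}(\varepsilon/t)$, and observes that a $\bJ$-holomorphic curve with boundary on the perturbed Lagrangians is literally the same data as a $\phi_t^*\bJ$-holomorphic curve with boundary on the fixed, unperturbed Lagrangians. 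This converts your entire parametric problem into one where the Lagrangian boundary conditions are fixed and only the almost-complex structure $\bJ^t := \phi_t^*\bJ$ varies along a smooth path in the space of admissible families — exactly the hypotheses of Proposition \ref{varyingsmallfamily}, which already guarantees genericity of $\bJ^t$ for all $t$ in an initial interval $[0,T)$. Your steps one and two (parametric transversality; ruling out degenerations in a moving-Lagrangian family) are then subsumed into a single citation, and your step three closes the argument in the same way. The trade-off is clear: your approach is more self-contained and makes the geometric picture explicit, but has to re-derive moving-boundary transversality and properness from scratch; the paper's pullback trick is shorter precisely because it reduces the deformation of Lagrangians to a deformation of $\bJ$, for which all the machinery is already in place.
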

\begin{proof}For every $m \ge 2$, consider a Lagrangian isotopy $\phi_t = \phi(x, t): X_{\cD, m} \times [0,1] \rightarrow X_{\cD, m}$ which is supported in a $\varepsilon$-neighbourhood of each $\Sigma_{\bar{e}} \times e^i$, and satisfies that $\phi_t(C^i_{\bar{e}}) = C^i_{\bar{e}}(\varepsilon/t)$ for every $t \in [0,1]$.

A holomorphic curve in the moduli space $\cM_{t\varepsilon}(\bJ_{\bar{e}})$ is the same as a holomorphic curve in the moduli space $\cM(\bJ_{\bar{e}}^t)$, where $\bJ_{\bar{e}}^t$ is the family of perturbed almost-complex structures $\phi_t^*\bJ$. Thus we may identify these two moduli spaces. Moreover, the family of almost-complex structures $\bJ_{\bar{e}}^t$ constitutes a smooth path in the space of families of admissible almost-complex structures. As such, by Proposition \ref{varyingsmallfamily}, there is some $T \in (0,1)$ such that
$$\bigcup_{t \in [0, T]}\cM_{t\varepsilon}(\bJ_{\bar{e}}) = \bigcup_{t \in [0,T]}\cM(\bJ_{\bar{e}}^t)$$
provides a cobordism between $\cM(\bJ_{\bar{e}})=\cM_{0}(\bJ_{\bar{e}})$ and $\cM_{T\varepsilon}(\bJ_{\bar{e}})$, thus choosing $\varepsilon = T\varepsilon$ provides the result.
\end{proof}

\begin{rmk}
There is a similar construction for subdiagrams: if $\cD' := \cD^\partial(\varepsilon, \arrowiota)|_S$ is a subdiagram of $\cD^\partial(\varepsilon, \arrowiota)$, and $\vec\bx'$ is a set of generators for $\cD'$, there is a corresponding perturbed moduli space $$\cM^{B_{\bar{e}}}_{\bar{e}, \varepsilon}(\vec\bx'; S'_{\bar{e}}; \arrowrho_{\boat}; \bJ_{\bar{e}})$$
formed by replacing the cylinders $\{C^i: i \in S\}$ used in the definition of
$$\cM^{B_{\bar{e}}}_{\bar{e}}(\vec\bx'; S'_{\bar{e}}; \arrowrho_{\boat}; \bJ_{\bar{e}})$$
with the corresponding perturbed cylinders $\{C^{i}(\varepsilon): i \in S\}$. The proof of Proposition \ref{perturbingidentification} carries over to this case to identify these two moduli spaces.
\end{rmk}
Similarly, for regularisations of diagrams (Definition \ref{regularisation}) we also have:

\begin{prop}\label{regularisationidentification} Let $\cD|_S$ be a subdiagram of $\cD$, and $(\cD|_S)_{\text{reg}}$ be its regularisation. Then the moduli space $\cM_{\text{reg}}: =\cM_{\text{reg}}^B(\vec\bx; S; \arrowrho_\boat, \arrowrho_\anchor; \vec\sigma)$ defined using the diagram $(\cD|_S)_{\text{reg}}$ agrees with the moduli space $\cM|_S : =\cM(\vec\bx; S; \arrowrho_\boat, \arrowrho_\anchor; \vec\sigma)$ defined using $\cD|_S$.
\end{prop}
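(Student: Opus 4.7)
The plan is to mirror the proof of Proposition \ref{perturbingidentification}, with the perturbation implemented via a different Hamiltonian isotopy. For each curve $\bgamma^{i'_j}$ of the first sort in $\cD|_S$, the regularised counterpart $\bgamma^{i'_j}_{\text{reg}}$ is, by construction, Hamiltonian-isotopic to it through the standard Hamiltonian $H$ from Section \ref{diagrams}: both arise as $H(\balpha(d,\arrowiota^{i'_j}), t)$ with $t = i'_j \varepsilon/m$ and $t = j\varepsilon/m'$ respectively. Crucially, since $i'_1 < i'_2 < \dots < i'_{m'}$ and $1 < 2 < \dots < m'$, these two re-parameterisations have the same relative ordering, so there is no reshuffling of curves. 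The curves equal to $\balpha$ or $\bbeta$ are fixed by both constructions.

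First, I would build a smooth one-parameter family of Hamiltonian diffeomorphisms $\phi_s: X_{\cD|_S, k} \to X_{\cD|_S, k}$ for $s \in [0,1]$, such that $\phi_0 = \mathrm{id}$, $\phi_s$ is supported in $\varepsilon$-neighbourhoods of the cylinders $C^{i'_j} = \bgamma^{i'_j} \times e^{i'_j}$ in $\Sigma \times e^{i'_j}$, and $\phi_1(C^{i'_j}) = \bgamma^{i'_j}_{\text{reg}} \times e^{i'_j}$. As in Proposition \ref{perturbingidentification}, these cylinders are Lagrangian with respect to a perturbed symplectic form near each $e^{i'_j}$, and the construction proceeds component-by-component since the supports of perturbations near different $e^{i'_j}$ are disjoint.

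Next, observe that a holomorphic curve in $\cM_{\text{reg}}(\bJ)$ corresponds under $\phi_s^{-1}$ to a holomorphic curve in a moduli space $\cM|_S(\bJ^s)$ defined using the original cylinders $C^{i'_j}$ but with respect to the pulled-back family of almost-complex structures $\bJ^s := \phi_s^* \bJ$. The path $s \mapsto \bJ^s$ is smooth in the space of admissible almost-complex structures, and $\bJ^0 = \bJ$ is generic by assumption. Proposition \ref{varyingsmallfamily} then ensures that $\bJ^s$ remains generic for $s \in [0,\delta]$ for some $\delta > 0$. Up to reparameterising $\varepsilon$ in the definition of regularisation (i.e.\ rescaling the Hamiltonian time), we may take $\delta = 1$, and then $\bigcup_{s\in[0,1]} \cM|_S(\bJ^s) = \bigcup_{s\in[0,1]} \cM_{s\varepsilon}(\bJ)$ furnishes a smooth cobordism between $\cM|_S(\bJ)$ and $\cM_{\text{reg}}(\bJ)$ of the same dimension, hence a diffeomorphism.

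The main obstacle is verifying that the combinatorial data —- generators $\vec\bx$, Reeb chord sequences $\arrowrho_\boat$, $\arrowrho_\anchor$, and the splicings $\vec\sigma_\boat, \vec\sigma_\anchor$ —- are preserved along the cobordism. For generators, this follows from the canonical nearest-point identification, which is well-defined precisely because the isotopy is small. The Reeb chord asymptotics at east punctures are determined by the behaviour of $u$ in neighbourhoods $N_j$ of $e^\infty_j$ which (by Remark \ref{nbhdrmk} and the support of the isotopy) may be chosen disjoint from where $\phi_s$ is non-trivial, so they vary continuously. The ordering data in $\vec\sigma$ is recorded by the evaluation maps $\ev_q$, which are continuous in $s$; since each moduli space $\cM|_S(\bJ^s)$ is transversally cut out with the ordering constraint built in, no height collisions occur along the family, and the ordering is thus preserved.
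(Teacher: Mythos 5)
Your proof handles only half of what the regularisation does. Re-read the definition: $(\cD|_S)_{\text{reg}}$ is built in \emph{two} steps. First, any set $\bgamma^{i_j}$ that is a small approximation of its predecessor $\bgamma^{i_{j-1}}$ is \emph{omitted} from the subsequence, reducing the number of attaching curves from $k$ to $k' \le k$. Only then are the surviving curves re-perturbed via $H(\cdot, j\varepsilon/m')$. You address the second step carefully (the re-parameterisation with consistent ordering, the nearest-point maps, the preservation of the combinatorial data), and that part is fine and essentially matches the paper. But you never address the first step. When $k' < k$, the moduli space $\cM|_S$ consists of maps into $\Sigma \times D_k$, while $\cM_{\text{reg}}$ consists of maps into $\Sigma \times D_{k'}$: these are different target manifolds, with polygons having different numbers of boundary punctures. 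No Hamiltonian isotopy $\phi_s: X_{\cD|_S, k} \to X_{\cD|_S, k}$, however cleverly built, can identify the two, because it cannot change the number of sides of the polygon. Indeed this case does arise in the application (Proposition \ref{compcollidentifications}): there the subdiagram has a generator $\btheta^{\kappa, \kappa+3}$ that simply disappears in the regularised diagram, corresponding to a vertex of $D$ being erased.

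The paper's proof supplies precisely the missing ingredient. After perturbing so that (say) $\bgamma^{i+1}$ coincides with $\bgamma^{i}$, a curve $u' \in \cM|_S$ has boundary lying on the \emph{same} set of Lagrangian cylinders along the two adjacent edges $e^i$ and $e^{i+1}$ of $D_k$, and is asymptotic at the corner puncture $v^{i,i+1}$ to a constant (distinguished-generator) chord; one then invokes the removable singularities theorem to erase that puncture, yielding a holomorphic map from a $(k-1)$-gon. Iterating this for each dropped curve produces a genuine bijection between $\cM|_S$ and $\cM_{\text{reg}}$. This is the part of the argument your write-up needs to add: the Hamiltonian-isotopy cobordism establishes the equivalence only among diagrams with the same number of attaching curves, and the reduction in the number of polygon vertices requires a separate conformal/removable-singularities argument, not a Lagrangian isotopy.
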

\begin{proof}This follows closely along the lines of the proof of Proposition \ref{perturbingidentification} --- one perturbs the cylinders for $\cD|_S$ to agree with those for $(\cD|_S)_{\text{\reg}}$. When $\bgamma^{i}, \bgamma^{i+1}$ are small approximations of one another, this identifies $\cM|_S$ with a moduli space of curves with boundary contained in Lagrangians $\bgamma^i \times e^i$ and $\bgamma^i \times e^{i+1}$: applying the removable singularities theorem to extend any $u'$ in this moduli space to a bona fide map in $\cM_{\text{reg}}$ yields the result.
\end{proof}

\subsubsection{Holomorphic anchors and stretching the neck}
Later on, we will stretch the neck of the almost-complex structures $\bJ$ along a hypersurface, forcing degenerations of moduli spaces. In this section, we fix notation in preparation for this: although we later largely restrict possible degenerations, we begin here in full generality.

Fix an almost-complex structure $J_\anchor$ upon $\Sigma_{\bar{e}} \times \HH$, where $\HH$ denotes the upper-half plane $\{z \in \CC: \Im(z) \ge 0\}$. Consider the moduli space of maps $$u_\anchor: (S_\anchor, \partial S_\anchor) \rightarrow (\Sigma_{\bar{e}} \times \HH, C^1(\varepsilon) \times \RR),$$
where $S_\anchor$ is an anchor source.

We require these to satisfy:
\begin{enumerate}\item $u_\anchor$ is $(j, J_\anchor)$-holomorphic for some almost complex structure $j$ upon $S_\anchor$;
\item $u_\anchor$ is proper;
\item $u_\anchor$ has finite energy;
\item $u_\anchor$ is disjoint from $\bp, \bb$;
\item the map $\pi_{\HH} \circ u_\anchor$ is a branched covering;
\item $u_\anchor$ is an embedding;
\item At each puncture $q$ of $S_\anchor$ labelled by $\infty$, $\lim_{z \rightarrow q}(\pi_D \circ u)(z) = \infty$.
\item At each puncture $q$ of $S_\anchor$ labelled by a Reeb chord $\rho$, we have that upon some neighbourhood $\eta(q)$ of $q$, $(\pi_\Sigma \circ u)(\eta)$ is modelled upon $B(\rho)$.
\item For every $t \in \RR$ and each $j = 1, \dots, h$, we have that $u^{-1}(C^1_j|_{\Sigma \times \{t\}})$ consists of at most one point, and for each $j = h+1, \dots, g+h-1$, the set $u^{-1}(C^1_j|_{\Sigma \times \{t\}})$ consists of exactly one point.

\end{enumerate}
From these conditions, it follows that near each puncture $q$ of $S_\anchor$ labelled by $\infty$, the map $u_\anchor$ is asymptotic to a chord of the form $\theta^{1, 2}(q) \times \infty$, such that $\btheta^{1, 2} = \cup_q \theta^{1, 2}(q)$ is a generator for the pair $(\bgamma^1_{\bar{e}},H(\bgamma^{1}_{\bar{e}}, \frac{\varepsilon}{m+1}))$, or, more transparently, for the pair $(\bdelta^1, \bdelta^2)$ described in the previous subsection. Each such map has an associated homology class $B_\anchor$ in the relative homology group
$H_2(\Sigma_{\bar{e}} \times \HH, C^1(\varepsilon))$. We call such curves \emph{holomorphic anchors} and collect them into moduli spaces $\cM^{B_\anchor}_\anchor(\btheta^{1,2}; S_\anchor)$. We shall later see that in fact, for the moduli spaces to be non-empty, the generator $\btheta^{1,2}$ must be the distinguished generator $\btheta^{1,2}_+$. 

As with all of our other moduli spaces, we can cut these down according to various partitions of the east punctures of $S_\anchor$. If $P_\anchor$ is a set of partitions $P_\anchor = (P_{\anchor,1}, \dots, P_{\anchor,h})$, where each $P_{\anchor,j}$ is a partition of $E_j(S_\anchor)$, there is a corresponding evaluation map
$$\ev := \prod_{q \in E(S_\anchor)} \ev_q: \cM^{B_\anchor}_\anchor(\btheta^{1,2}; S_\anchor)) \rightarrow (e^1)^{|E(S_\anchor)|} ,$$

and we write $$\cM^{B_\anchor}_\anchor(\btheta^{1,2}; S_\anchor; P_\anchor)$$ for the moduli space $(\ev)^{-1}(\Delta_{P_\anchor})$. If we are further provided with a splicing $\sigma_\anchor$ of the set of punctures of $S_\anchor$ labelled with a jumping chord, we can define the moduli spaces $\cM^{B_\anchor}_\anchor(\btheta^{1,2}; S_\anchor; P_\anchor; \sigma_\anchor)$ for the subset  $\ev^{-1}(\Delta_{\sigma_\anchor}^1, \dots, \Delta_{\sigma_\anchor}^h)$, where $\ev:\cM^{B_\anchor}_\anchor(\btheta^{1,2}; S_\anchor; P_\anchor) \rightarrow e^1$.

It transpires that for particularly simple sources $S_\anchor$, the moduli spaces $\cM^{B_\anchor}_\anchor(\btheta^{1,2}; S)$ are always transversally cut out:
\begin{prop}\label{transversalityanchor}Suppose $S_\anchor$ is a disjoint union of topological discs. Then the moduli spaces $\cM_\anchor(\btheta^{1,2}, S_\anchor)$ are always transversally cut out by the $\delbar$-equations, for any choice of almost-complex structure $J_\anchor$.
\end{prop}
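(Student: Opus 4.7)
The plan is to adapt the strategy of \cite[Proposition 5.16]{LOT} to our setting, exploiting that each disc component of $S_\anchor$ can be analysed separately by an automatic transversality argument for holomorphic discs in four-manifolds.

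First I would reduce to a single disc component. Since $S_\anchor$ is a disjoint union of topological discs, the linearised Cauchy--Riemann operator $D_{u_\anchor} \delbar$ splits as a direct sum over components, and it suffices to establish surjectivity on each disc $D \subset S_\anchor$ individually. The restricted map
$$u_D : (D, \partial D) \rightarrow (\Sigma_{\bar{e}} \times \HH,\ C^1(\varepsilon) \times \RR)$$
lands in a four-dimensional target with two-dimensional Lagrangian boundary, the Lagrangian itself having the product structure dictated by the perturbed cylinder construction of Section \ref{sub:anchorsperturbations}. By condition (6) the map $u_\anchor$ (hence $u_D$) is an embedding; in particular $u_D$ is somewhere injective.

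Next I would invoke the standard automatic transversality theorem for somewhere-injective $J$-holomorphic discs with Lagrangian boundary in a four-dimensional almost-complex manifold, in the spirit of Hofer--Lizan--Sikorav and in the form adapted to the cylindrical setting in \cite[Chapter 3]{Lipshitz:cylindrical}. This result asserts that $D_{u_D} \delbar$ is automatically surjective provided a Maslov-type index bound holds on the pullback bundle with its Lagrangian boundary condition. The index is computed from the relative homology class $B_\anchor \in H_2(\Sigma_{\bar{e}} \times \HH, C^1(\varepsilon) \times \RR)$ together with contributions from the punctures: the $\infty$-puncture contributes via the corner at the distinguished intersection point of $\btheta^{1,2}$, whilst each east puncture labelled by a Reeb chord $\rho$ contributes via the local model $B(\rho)$ from Section \ref{splayingdomains}.

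The main obstacle will be verifying the Maslov-index bound uniformly across all admissible homology classes $B_\anchor$ and all east-puncture decorations. For this I would keep careful track of acute and obtuse corners in the Euler measure sense of Definition \ref{eulermeasure}, noting in particular that condition (8) restricts the local behaviour near each east puncture to be modelled on $B(\rho)$ for a single Reeb chord, ruling out the more degenerate configurations that might otherwise violate the bound. Condition (9) further constrains the boundary behaviour along $C^1(\varepsilon)$ to be that of a branched cover, which keeps the topology of $D$ tethered to the homology class and so permits a uniform verification of the bound. Once this is established, componentwise automatic transversality completes the argument, and no genericity on $J_\anchor$ is required.
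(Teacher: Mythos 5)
Your strategy — component-wise automatic transversality for discs in the spirit of Hofer--Lizan--Sikorav — is the right one, and the reduction to a single disc component and the observation that $u_D$ is somewhere injective are both fine. But there is a genuine gap at the step you yourself flag as ``the main obstacle'': you never actually compute or bound the Maslov index, you only describe a plan to track acute and obtuse corners. Without the index computation the automatic transversality theorem has no input, so the argument as written does not close.

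The paper closes this gap by a cleverer reduction that makes the index computation essentially trivial, and you should compare it to your set-up. Instead of working with the four-dimensional target $\Sigma_{\bar{e}} \times \HH$ and the Lagrangian $C^1(\varepsilon) \times \RR$, one exploits the fibred structure (the projection $\pi_\HH$ is holomorphic, the fibres are holomorphic) to write $u = (u_\Sigma, u_\HH)$ as a pair of holomorphic maps from $S_\anchor$ into the \emph{one-complex-dimensional} targets $\Sigma_{\bar{e}}$ and $\HH$ respectively. Each of these doubles across the boundary to a map from a punctured $CP^1$ to a punctured $CP^1$, and the first Chern class of each doubled map equals the Maslov index of the corresponding factor, which is visibly $-1$: each of $u_\Sigma$ and $u_\HH$ covers a small anti-clockwise disc (the corner at $\btheta^{1,2}$ and the east asymptotics modelled on $B(\rho)$ force this). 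Surjectivity of $D\delbar$ for these maps into a Riemann surface then follows from the line-bundle cohomology vanishing of \cite[Lemma~3.3.1]{McDS:jhol}, and transfers back to the original curve as in \cite[p.\,158]{hoferlizansikorav}. By contrast, your proposal to carry out a Maslov computation directly for the rank-two pullback bundle with Lagrangian boundary condition in the four-manifold is both harder and, as submitted, not actually done; you should either carry it out explicitly (it will reduce to the same $-1 + (-1)$ count via the splitting of the bundle) or adopt the paper's factor-and-double route.
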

\begin{proof}This follows along the lines of \cite[Prop 5.16]{LOT} --- namely, any $u = (u_{\Sigma}, u_{\HH})$ doubles to a pair of maps from a punctured $CP^1$ to a punctured $CP^1$. The first Chern class of these doubled maps is equal to the Maslov index of $u_{\Sigma}$ and $u_{\HH}$, which is $-1$ in both cases (it is straightforward to see that each is a small anti-clockwise disc). Thus by \cite[Lemma 3.3.1]{McDS:jhol}, $D\delbar$ is surjective for this doubled map, which implies surjectivity for the original map --- see, for instance, \cite[p.158]{hoferlizansikorav}.
\end{proof}

These anchors will arise as the result of `stretching the neck' of bordered holomorphic polygons. Explicitly, for every $m$ fix a $\varepsilon$-radius half-disc neighbourhood $\eta_{\varepsilon}^m$  of the point $\{0\} \in e^1 \subset \partial D_m$. We consider the family $\bJ^t$ obtained from $\bJ$ by inserting a neck of length $t$ along the boundary of each $\eta_{\varepsilon}^m$, as described in \cite[Section A.2]{Lipshitz:cylindrical}.

Suppose that $\ind(B, S, \arrowrho_\boat, \arrowrho_\anchor, \sigma_{\boat}) = 0$, and let $$\pmb{\cM}_{\ge T}^B(\bx; S; \arrowrho_{\boat}, \arrowrho_\anchor; \vec\sigma_\boat, \vec\sigma_\anchor; \bJ)$$ denote the set $\bigcup_{t \ge T} \cM^B(\vec\bx; S; \arrowrho_{\boat}, \arrowrho_\anchor; \vec\sigma_\boat, \vec\sigma_\anchor; \bJ^t)$. By a standard transversality argument, for generic choice of $\bJ$ and generic $T$, the moduli spaces $${\pmb{\cM}}^B_{\ge T}: = {\pmb{\cM}}^B_{\ge T}(\vec\bx; S; \arrowrho_{\boat}, \arrowrho_\anchor; \vec\sigma_\boat, \vec\sigma_\anchor; \bJ^t)$$ are one-dimensional manifolds. Moreover, for small enough $\varepsilon$, the moduli spaces
$${\pmb{\cM}}^{B_{\bar{e}}}_{\ge T, \bar{e}, \varepsilon}(\vec\bx; S_{\bar{e}}; \arrowrho_{\boat}, \arrowrho_\anchor; \vec\sigma_\boat, \vec\sigma_\anchor; \bJ^t)$$
defined to be equal to $\bigcup_{t \ge T} \cM^{B_{\bar{e}}}_{\bar{e}, \varepsilon}(\vec\bx; S_{\bar{e}}; \arrowrho_{\boat}, \arrowrho_\anchor; \vec\sigma_\boat, \vec\sigma_\anchor; \bJ^t)$
are also transversally cut out, and form one-manifolds homeomorphic to ${\pmb{\cM}}^B_{\ge T}$ (cf. Proposition \ref{perturbingidentification}). By Gromov compactness, in both instances, there is some $T$ such that for every $t>T$, each constituent moduli space $\cM^B(\vec\bx; S; \arrowrho_{\boat}, \arrowrho_\anchor; \vec\sigma_\boat, \vec\sigma_\anchor; \bJ^t)$ (resp. $ \cM^{B_{\bar{e}}}_{\bar{e}, \varepsilon}(\vec\bx; S_{\bar{e}}; \arrowrho_{\boat}, \arrowrho_\anchor; \vec\sigma_\boat, \vec\sigma_\anchor; \bJ^t)$) is transversally cut out.

The perturbed moduli spaces admit compactifications similar to the aforementioned trees of holomorphic polygons: but there is one more possible type of degeneration which we force in the neck-stretching process. This is an \emph{anchored holomorphic curve}:

\begin{defn}An \emph{anchored holomorphic polygon} is a pair $(u, u_\anchor)$ where
$$u \in \cM^B(\vec\bx,\btheta^{0,1}, \btheta^{1,2}, \dots, \btheta^{m-1,0}, \by; S; \arrowrho_\boat, \arrowrho_\anchor; \sigma_\boat, \sigma_\anchor; \bJ)$$
is a bordered holomorphic polygon and $u_\anchor \in \cM_\anchor(\btheta^{1,2}; S_\anchor; P; \sigma_\anchor)$ is a holomorphic anchor. (See Figure \ref{fig:anchoredcurve} for a schematic.)
\end{defn}

\begin{figure}[h]
\centering{
\makebox[\textwidth]{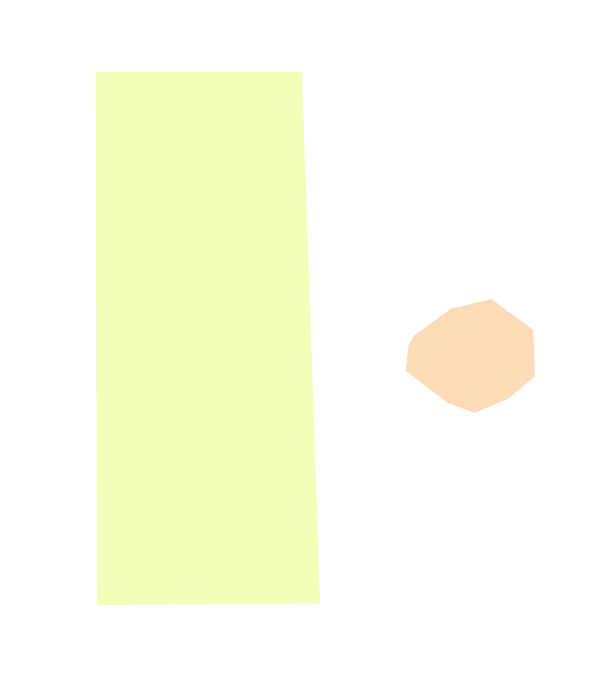}
\caption{A schematic of an anchored holomorphic polygon. The pink dot indicates $\pi_D \circ u$ of an east puncture of the source $S$.}
\label{fig:anchoredcurve}
}
\end{figure}

We write $$\overline{{\pmb{\cM}}}^{B_{\bar{e}}}_{\ge T, \bar{e}, \varepsilon}(\vec\bx; S_{\bar{e}}; \arrowrho_{\boat}, \arrowrho_\anchor; \vec\sigma_\boat, \vec\sigma_\anchor; \bJ^t)$$
for the compactification of$${\pmb{\cM}}^{B_{\bar{e}}}_{\ge T, \bar{e}, \varepsilon}(\vec\bx; S_{\bar{e}}; \arrowrho_{\boat}, \arrowrho_\anchor; \vec\sigma_\boat, \vec\sigma_\anchor; \bJ^t),$$ which can be defined in much the same way as in Defintion \ref{modulicompactifications}, but allowing for the bordered polygons in each definition to be anchored holomorphic polygons.
 
\subsection{Gluing for trees of elementary holomorphic combs and anchors}
We now quote the relevant gluing results, and then explore their consequences.

\begin{prop}\label{gluingpolys}Let $U = (u_1, u_2)$ be a height 2 tree of (bordered) holomorphic polygons where $u_1 \in \tildemod^{B_1}(\vec\bx(V_1), S_1; P_1, Q_1;\sigma_1)$ and $u_2 \in \tildemod^{B_2}(\vec\bx(V_2), S_2; P_2, Q_2; \sigma_2)$ are spinal.

Let $B = B_1 * B_2$, $S$ denote the splayed source $S_T= S_1\natural S_2$, with induced distinguished set of non-easterly punctures $Q = Q_1 \cup Q_2$, and finally let $\sigma = \sigma_1 \star \sigma_2$.

For some sufficiently small open neighbourhoods $U_1$ of $u_1$ and $U_2$ of $u_2$, there is an open neighbourhood of $(u_1, u_2)$ in $\overline{{\cM}}^{B}(\vec\bx(T), S; P_1 \star P_2, Q; \sigma)$ which is homeomorphic to $U_1 \times U_2 \times [0,1)$.\end{prop}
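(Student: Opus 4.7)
The plan is to carry out a standard gluing argument of Newton-iteration type, following closely the template of \cite[Proposition 5.30]{LOT} and \cite[Proposition A.1]{Lipshitz:cylindrical}, adapted to the polygon setting as in \cite[Section 9]{Seidel:Fukaya} and \cite[Section 4.4]{LOT:SSII}. The factor $[0,1)$ comes from a gluing parameter, where $0$ corresponds to the broken configuration $(u_1,u_2)$ and nonzero values correspond to smoothed curves. The combinatorial fact that $U$ is height-$2$ with $u_1,u_2$ both spinal means there is a single internal edge of the reduced tree $\bar T$ to be glued; the edge lies along the distinguished edge $e^0$ of the two polygons, and the matching of boundary punctures there determines the gluing.

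For the pre-gluing: choose any $(u_1',u_2',\tau)$ in a small neighbourhood $U_1\times U_2\times (0,\varepsilon)$ of $(u_1,u_2,0)$. Delete small strip neighbourhoods of the punctures in $S_1$ and $S_2$ corresponding to the edge being glued, identify the resulting boundary strips by a biholomorphism parameterised by gluing length $-\log\tau$, and use the choice of strip-like ends together with smooth cutoff of the maps $u_1',u_2'$ to build an approximately-holomorphic map $u_\tau: S_T\to X_{\cD,m}$ with source $S = S_1\natural S_2$. By the consistency of the chosen family $\bJ$ across boundary strata of $\overline{\conf(D_m)}$, the almost-complex structure $J_{j(\tau)}$ at the glued conformal structure limits to the tree of complex structures at $\tau=0$, so the pre-glued map has $\bar\partial u_\tau\to 0$ in a suitable weighted Sobolev norm.

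For the correction step: transversality of $u_1$ and $u_2$ (Proposition \ref{borderedpolytransversality} and Proposition \ref{indextransversalitycut}) implies the linearised $\bar\partial$-operators $D_{u_1}\bar\partial$ and $D_{u_2}\bar\partial$ are surjective with the right matching conditions; a standard linear gluing then shows that $D_{u_\tau}\bar\partial$ is surjective with a uniformly bounded right inverse as $\tau\to 0$. A Newton iteration (Floer's implicit function theorem, as in \cite[Proposition A.1]{Lipshitz:cylindrical}) produces a unique honest holomorphic representative $u^\sharp(u_1',u_2',\tau)$ in a small neighbourhood. Uniqueness in the iteration, together with a standard removal-of-singularities/compactness argument to rule out any other family limiting to $(u_1,u_2)$ (cf.\ \cite[Appendix A]{Lipshitz:cylindrical}), shows that the gluing map defined by $(u_1',u_2',\tau)\mapsto u^\sharp$ is a homeomorphism onto a neighbourhood of $(u_1,u_2)$ in the compactified moduli space once $U_1,U_2,\varepsilon$ are small enough.

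What remains is to check that the supplementary constraints on the compactified moduli space are preserved and parameterised correctly. The partition $P_1\star P_2$ and splicing $\sigma = \sigma_1\star\sigma_2$ are preserved tautologically: the evaluation maps $\ev_q$ at east punctures depend continuously on the glued curve and agree with those of $u_1,u_2$ outside the gluing region, so the incidence relations $\ev^{-1}(\Delta_{P_j}\cap\Delta_\sigma^i)$ remain cut out transversely for small $\tau$ by Proposition \ref{varyingsmallfamily}. The distinguished set $Q = Q_1\cup Q_2$ with its partition into $Q_{123}$ and the cut-type subsets is preserved because the $\bJ^{\vec t}$-construction from Section \ref{fixingcutbehaviour} is local near the corresponding punctures, which lie away from the gluing region; the cut lengths vary continuously and stay in the interior of their respective type-strata for small $\tau$. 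The main technical obstacle is precisely this book-keeping: namely, verifying that none of these extra constraints lowers the relative dimension of the gluing, i.e.\ that the index $\ind(B,S,P,Q,\sigma)$ equals $\ind(B_1,S_1,P_1,Q_1,\sigma_1)+\ind(B_2,S_2,P_2,Q_2,\sigma_2)+1$. This additivity follows from Proposition \ref{bordedpolyindex} together with the additivity of Euler measure, Euler characteristic (noting the gluing closes off a matched pair of punctures) and of $\col$ under $\star$, which gives the extra $+1$ corresponding to the gluing parameter and produces the $[0,1)$ factor.
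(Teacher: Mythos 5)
Your overall approach — pre-gluing plus Newton iteration, citing \cite[Proposition 5.30]{LOT} and \cite[Proposition 4.21]{LOT:SSII}, with the observation that east-puncture partitions and splicings are preserved because evaluation maps depend continuously on the glued curve — is the same as the paper's, and that part of the sketch is fine. The gap is in the handling of the cut-behaviour constraint coming from $Q$.

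You write that the cut lengths ``vary continuously and stay in the interior of their respective type-strata for small $\tau$.'' That argument works for punctures in $Q_{1,23}$ and $Q_{12,3}$, where having a cut of a fixed type is an open condition. But for $q \in Q_{123}$ the defining condition is that there is \emph{no} cut at $q$, i.e.\ the cut length is exactly zero. By Proposition \ref{indextransversalitycut} this is a codimension-one constraint — precisely the wall separating the $\rho_1,\rho_{23}$-type stratum from the $\rho_{12},\rho_3$-type stratum, not the interior of a stratum. So continuity of the cut-length function gives no control: the curves produced by the implicit function theorem near the pre-glued map $u_1 \natural_R u_2$ will, a priori, have small but generically nonzero cuts at the $Q_{123}$-punctures, and your argument does not explain why the glued family nevertheless lies in $\overline{\cM}^B(\vec\bx(T), S; P_1\star P_2, Q;\sigma)$ rather than merely in the larger space where the $Q_{123}$-constraint is dropped.

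The paper's resolution is one more step that you omit: by the discussion in Section \ref{fixingcutbehaviour} (the $\bJ^{\vec t}$-construction you cite), the IFT neighbourhood $\delbar^{-1}(0)\cap N_\delta$ is parameterised by cut length as $\tildemod^B(\vec\bx; S; \arrowrho_\boat, Q; \vec\sigma) \times (-\varepsilon,\varepsilon)^k$, and since the pre-glued curve lies at cut length zero, the triangle inequality guarantees that the zero-cut slice $\tildemod^B(\vec\bx; S; \arrowrho_\boat, Q; \vec\sigma)$ actually meets $N_\delta$; these are the curves one glues to. In other words, one does not argue that the cut stays zero, one \emph{selects} the zero-cut member of the parameterised family. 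Without this selection step your proof establishes gluing into $\overline{\tildemod}^B(\vec\bx(T),S;P_1\star P_2,Q_{123};\sigma)$ but not into the smaller space in the statement. (Your index-additivity remark at the end is consistent with this: the extra $\#_{123}$ drop in Proposition \ref{indextransversalitycut} is exactly the number of cut-length constraints that must be imposed after the IFT, not conditions that come for free.)
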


We also require gluing for two-story combs with one easterly vertex:
\begin{prop}\label{gluingeast}Let $U = (u_1, u_2)$ be a height 2 tree of (bordered) holomorphic polygons where $u_1 \in \tildemod^{B_1}(\vec\bx(V_1), S_1; P_1, Q;\sigma_1)$ is spinal and $u_2 \in \tildemod^{B_2}(\vec\bx(V_2), S_2; P_2; \sigma_2)$ is easterly.

Let $B = B_1 * B_2$, $S$ denote the splayed source $S_T=S_1\natural S_2$, with induced distinguished set of non-easterly punctures $Q'$, and finally let $\sigma = \sigma_1 \star \sigma_2$.

For some sufficiently small open neighbourhoods $U_1$ of $u_1$ and $U_2$ of $u_2$, there is an open neighbourhood of $(u_1, u_2)$ in $\overline{{\cM}}^{B}(\vec\bx(T), S; P_1 \star P_2, Q; \sigma)$ which is homeomorphic to $U_1 \times U_2 \times [0,1)$.\end{prop}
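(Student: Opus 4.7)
The proof parallels Proposition \ref{gluingpolys}: standard Floer-theoretic pregluing at a matched polygon corner, followed by Newton iteration. Because $V_2$ is easterly, the edge of $\bar{T}$ joining $V_1$ and $V_2$ carries a label $(i,j)$ with $i, j \geq 1$, so the matching at the node is at a transverse interior intersection point $\bx^{i,j} \in \bgamma^i \cap \bgamma^j$ shared between a corner of $D_{|V_1|}$ and the root corner of $D_{|V_2|}$. The ``easterly'' adjective in this paper's terminology refers to the tree-level position of $V_2$ (having no $0$-labelled flag), not to a degeneration at east infinity; the matching is an interior Floer matching between intersection points, while the Reeb-chord asymptotics carried by $u_2$'s own east punctures simply contribute extra east punctures to $S_1 \natural S_2$, absorbed into the concatenations $P_1 \star P_2$ and $\sigma_1 \star \sigma_2$. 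Since easterly sources in this framework carry empty $\rho_{123}$-data, the induced distinguished set $Q'$ on $S_1 \natural S_2$ coincides with $Q$ inherited from the $V_1$-side.

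Using the fixed strip-like ends near the matched corners, I introduce a gluing parameter $R \gg 0$, build a glued polygon $D_R \in \conf(D_{|V_1|+|V_2|-2})$ (lying near the boundary stratum of $\overline{\conf}$ indexed by $\bar{T}$), and preglue $u_1, u_2$ to an approximately holomorphic map $u_R : S_1 \natural S_2 \to \Sigma \times D_R$. Morse--Bott exponential convergence of $u_i$ to $\bx^{i,j}$ at the transverse matched corner gives $\|\bar\partial u_R\|_{L^p} \leq C e^{-\delta R}$. Transversality at $u_1, u_2$ (built into the hypothesis via Propositions \ref{borderedpolytransversality} and \ref{indextransversalitycut}), combined with the transverse intersection of the evaluation maps with $\Delta_{P_i}, \Delta_{\sigma_i}$ and the cut-type transversality (via the parametrised family $\bJ^{\vec t}$ from Section \ref{fixingcutbehaviour}), furnishes right inverses for each constrained linearised operator. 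A standard cutoff construction (cf.\ \cite[Prop.~5.30]{LOT}, \cite[Ch.~10]{Seidel:Fukaya}) assembles these into a right inverse $Q_R$ for $D_{u_R}\bar\partial$ restricted to the constrained space, with norm bounded uniformly in $R$; the contraction mapping principle then produces, for each $R$ sufficiently large, a unique nearby genuine holomorphic solution $u_R^\sharp$.

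The glued curve $u_R^\sharp$ lies in $\overline{\cM}^B(\vec\bx(T); S; P_1 \star P_2, Q; \sigma_1 \star \sigma_2)$ by direct verification: asymptotics at non-node corners are inherited from $u_1, u_2$; each east puncture of $S_2$ evaluates on an edge of $D_{|V_2|}$ whose interior survives intact in $D_R$ (corner smoothing is localised near $p^{i,j}$), so the ordered concatenated partition and splicing are realised with the correct relative ordering; and the cut-type at each $q \in Q$ is preserved since the pregluing deformation is supported in a neighbourhood of the matched corner, disjoint from $\rho_{123}(S_1)$. Parametrising by $t = e^{-\delta R} \in [0, 1)$ and extending by $(u_1', u_2', 0) \mapsto (u_1', u_2')$, the assignment $(u_1', u_2', t) \mapsto u_{R(t)}^\sharp$ is the required local homeomorphism $U_1 \times U_2 \times [0, 1) \to \overline{\cM}^B$ near $(u_1, u_2)$; injectivity and surjectivity onto a neighbourhood follow from uniqueness in the contraction mapping together with Gromov compactness. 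The principal technical obstacle is the uniform right-inverse bound under all the codimension constraints simultaneously (the $\Delta_{P_i}$, $\Delta_{\sigma_i}$, and the cut-type conditions defining $Q$): one runs the standard cutoff-and-iterate construction on the constrained operator, using the evaluation-map transversality on each side to bound each constituent right inverse, and then checks that the assembly preserves compatibility with all constraints at once---everything else is formal once this analytical core is in place.
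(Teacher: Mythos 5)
Your overall strategy is the same as the paper's: Proposition \ref{gluingeast} is proved there by invoking the standard pregluing-plus-Newton-iteration results (\cite[Proposition 5.30]{LOT}, \cite[Proposition 4.21]{LOT:SSII}), with splicings treated exactly like the other height constraints, and your reading of ``easterly'' as a tree-level condition (no flag labelled with a $0$, so the node is an ordinary transverse matching at a generator of $(\bgamma^i,\bgamma^j)$ with $i,j\ge 1$, and $Q$ is carried entirely by the spinal side) is correct. The analytic core you describe --- exponential decay at the matched corner, uniformly bounded right inverses, contraction mapping, identification of asymptotics and of the concatenated data $P_1\star P_2$, $\sigma_1\star\sigma_2$ --- is exactly what the cited gluing results supply. (One small imprecision: the two edges adjacent to the smoothed corner do not ``survive intact''; each is split between the two polygons in the limit, so when $u_2$ carries east punctures they live on such a half-edge. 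The orderings still concatenate correctly, but not for the reason you give.)

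The genuine gap is your treatment of the prescribed-cut data $Q$. You justify preservation of cut-type by saying the pregluing deformation is supported near the matched corner, away from $\rho_{123}(S_1)$. Locality of the \emph{pregluing} is irrelevant: the correction produced by the contraction mapping is only globally small, not locally supported, and ``$q$ is of $\rho_{123}$-type'' (no boundary branch point) is a closed, codimension-one condition inside the family of curves allowed small cuts --- this is exactly the $-\#_{123}(\arrowrho_\anchor)$ term in the index formula of Proposition \ref{indextransversalitycut}. So the genuine solution $u_R^\sharp$ produced by Newton iteration in the unconstrained problem will a priori acquire small nonzero cuts at the punctures of $Q$, and nothing in your argument rules this out; indeed if cut-type were preserved automatically under small perturbation it would be an open condition, contradicting the dimension count. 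The paper closes this by parameterising the solution set near the preglued map by cut lengths, via the family $\bJ^{\vec t}$ of Section \ref{fixingcutbehaviour}, as (zero-cut moduli)$\times(-\varepsilon,\varepsilon)^k$, and then locating the zero-cut stratum inside the gluing neighbourhood (the triangle-inequality step in the proof of Propositions \ref{gluingpolys}--\ref{gluinganchored}); equivalently one may run the entire gluing in the reformulated problem in which each $q\in Q$ is a removable singularity of a $\bJ^{\vec 0}$-holomorphic half-disc, so that no cut can appear by construction. You cite the $\bJ^{\vec t}$ family when asserting transversality of the constrained operator, but you never actually use it at the step where it is needed, and as written that step fails.
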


We also require gluing for simple combs:
\begin{prop}\label{gluingcomb}Let $(u,v)$ be a simple $m$-comb, with $u \in  \tildemod^B(\vec\bx; S; Q; J)$, and $v \in \cN(T, P_e; \sigma)$, and suppose the corresponding moduli spaces are transversally cut out at $u$ and $v$ respectively, and the evaluation maps $\ev: \tildemod^B(\vec\bx; S; Q; J) \rightarrow e^1$ and $\ev_w: \cN(T; P_e; \sigma) \rightarrow e^1$ are transverse at $(u,v)$.

If we denote by $S' = S \natural T$, and $Q'$ the corresponding set of distinguished non-easterly punctures induced by $S$, then for sufficiently small open neighbourhoods $U_u$ of $u$ and $U_v$ of $v$, there is an open neighbourhood of $(u,v)$ in $\overline{\cM}^B(\vec\bx; S'; P, Q; \sigma)$ homeomorphic to $(U_u \times_{\ev} U_v) \times [0,1)$.\end{prop}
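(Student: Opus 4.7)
The plan is to prove this gluing statement by the standard pre-gluing plus Newton iteration argument, adapted from \cite[Proposition 5.30]{LOT} and \cite[Appendix A]{Lipshitz:cylindrical}, taking care that the splicing condition $\sigma$ and the cut-fixing condition at $Q$ are preserved. First, for each sufficiently large gluing parameter $r > 0$, I would construct a pre-glued approximately holomorphic map $u \natural_r v \colon S \natural_r T \to \Sigma \times D_m$ by the usual cut-off and splicing procedure: delete small neighbourhoods of the corresponding east punctures of $u$ and west punctures of $v$, identify the resulting ends via strips of length $r$, and interpolate the two maps on the overlap region using appropriate cut-off functions (this is straightforward because near east infinity the almost-complex structure is split and $v$ is cylindrical in the $\mathbb R$-direction of $\mathbb R \times \partial \Sigma_j \times D_m$).

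Second, I would set up the nonlinear $\bar\partial$-operator as a section of a Banach bundle over an appropriate Banach manifold of maps with weighted Sobolev norms (with small exponential weight at the newly-glued strip neck), and show that the linearisation $D_{u \natural_r v}\bar\partial$ is uniformly invertible on the orthogonal complement to its approximate kernel for $r$ large. Here the transversality hypotheses on $u$ and $v$ give invertibility of the individual linearisations $D_u\bar\partial$ and $D_v\bar\partial$, and the transversality of the fibre product $\ev \pitchfork \ev_w$ identifies the approximate kernel with $T_u U_u \oplus_{\ev} T_v U_v$. A standard Newton iteration (or contraction mapping argument) then produces, for each $r$ large, a unique genuine holomorphic map $u \natural_r^{\mathrm{true}} v$ close to $u \natural_r v$, varying smoothly with $u$, $v$, and $r$. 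This yields the injective continuous map $(U_u \times_{\ev} U_v) \times [0,1) \hookrightarrow \overline{\cM}^B(\vec\bx; S'; P, Q; \sigma)$, where the parameter $[0,1)$ corresponds to $r \in [R, \infty]$ with $r = \infty$ recovering $(u,v)$.

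To verify that the splicing $\sigma$ and the fixed-cut condition at $Q$ are preserved, note that $\sigma$ records relative heights of punctures on $e^1$: as $r \to \infty$ the heights of glued pairs coalesce at the same limiting value, and by continuity of the evaluation maps the induced splicing on $u \natural_r^{\mathrm{true}} v$ agrees with $\sigma_1 \star \sigma_2$ (where the gluing strip appears as part of a single collapsing part). For $Q$, the cut-behaviour is a local condition near the punctures in $\rho_{123}(S)$, which live away from the glued neck, so the modified almost-complex structures $\mathbf J^{\vec t}$ from Section \ref{fixingcutbehaviour} can be used unchanged, and a continuous family of cut-lengths is inherited by the glued curve.

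Finally, for surjectivity, I would invoke Gromov compactness: any sequence in $\overline{\cM}^B(\vec\bx; S'; P, Q; \sigma)$ converging to the simple comb $(u,v)$ must, for all but finitely many terms, be of the form $u \natural_{r_n}^{\mathrm{true}} v$ for some $r_n \to \infty$, by the uniqueness clause of the implicit function theorem applied in the same weighted Sobolev setting. The main obstacle I expect is the analytical book-keeping for the weighted Sobolev estimates and the uniform invertibility of the linearised operator across the neck, which is where all the work in the standard references is concentrated; since the boundary conditions here (a mixture of compact Lagrangians $C^i$, punctured arcs, and the fixed-cut modifications) are all modelled locally on cases already handled in \cite{Lipshitz:cylindrical} and \cite{LOT, LOT:SSII}, this should reduce to patching together existing estimates rather than developing new analysis.
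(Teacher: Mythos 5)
Your proposal follows essentially the same route as the paper: the paper proves Propositions \ref{gluingpolys}--\ref{gluinganchored} together by citing the pre-gluing plus implicit-function-theorem machinery of \cite[Proposition 5.30]{LOT} and \cite[Proposition 4.21]{LOT:SSII}, noting that the splicing constraints behave no differently from the usual east-puncture partitions, and then separately addressing the cut-fixing condition at $Q$.

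Two small imprecisions are worth flagging. First, in your paragraph on the splicing condition you refer to ``$\sigma_1 \star \sigma_2$'', but for the simple-comb gluing there is no star decomposition: the east punctures of $T$ become the east punctures of $S'$ outright, and the same $\sigma$ carries over unchanged. The star decomposition is relevant for Propositions \ref{gluingpolys} and \ref{gluingeast}, which the paper treats in the same proof, so this looks like a carry-over. Second, and more substantively, your sentence ``a continuous family of cut-lengths is inherited by the glued curve'' elides the point the paper makes explicit: the Newton iteration produces a manifold of genuine holomorphic curves near $u \natural_R v$, but these curves \emph{a priori} have small \emph{nonzero} cuts at the punctures in $Q$. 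The pre-glued map has zero cut, but there is no reason the honest solutions the IFT produces preserve this exactly. The paper addresses this by invoking the parameterisation of nearby solutions as $\tildemod^B(\cdot\,; Q; \cdot) \times (-\varepsilon,\varepsilon)^k$ from Section \ref{fixingcutbehaviour}, together with a triangle-inequality argument to show the zero-cut slice still meets the $\delta$-ball produced by the implicit function theorem. Your proposal gestures at the right mechanism (using the $\bJ^{\vec t}$ structures) but stops short of closing this loop; without it, the existence of solutions with the prescribed cut behaviour near $(u,v)$ is not actually established.
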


Finally, we require the following result for anchored holomorphic curves:
\begin{prop}\label{gluinganchored}Let $u'$ be a splayed holomorphic polygon in $$\cM_{\bar{e}}^{B'}(\bx, \vec\btheta_+(k), \by; S'; P', Q'; \sigma'; \bJ),$$ and $u_\anchor \in  \cM^{B_\anchor}_\anchor(\btheta^{1,2}; S_\anchor; P_\anchor; \sigma_\anchor; J_\anchor)$ a holomorphic anchor, and suppose that these moduli spaces are transversally cut out at $u, u_\anchor$ respectively.

Denote by $B = B' * B_\anchor$, $S$ the surface formed by gluing $S'$ and $S_\anchor$ at punctures labelled by $v^{1,2}$, $P = P' \star P_\anchor$, $\sigma = \sigma' \star \sigma_\anchor$, and $Q$ the set of distinguished non-east punctures of $S$ induced by $Q'$. Then, for sufficiently small open neighbourhoods $U$ and $U_\anchor$ containing $u$ and $u_\anchor$ respectively, there is an open neighbourhood of $(u, u_\anchor)$ in the moduli space $\overline{{\pmb{\cM}}}_{\ge T}(\bx, \vec\btheta_+(k-1), \by; S; P, Q; \sigma)$ homeomorphic to $U \times U_\anchor \times [0,1)$.
\end{prop}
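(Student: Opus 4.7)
The plan is to apply a standard SFT-style gluing argument of the sort developed in \cite[Appendix A]{Lipshitz:cylindrical} and adapted in \cite[Section 5.4]{LOT}, \cite[Section 3]{LOT:SSII}. The heart of the matter is that inserting a neck of length $t$ along the boundary of the half-disc $\eta_\varepsilon^m$ near the point $0 \in e^1 \subset \partial D_m$ forces bordered polygons to break off an anchor component over the upper half-plane $\HH$; we want to reverse this degeneration. Fix $u'$ and $u_\anchor$ as in the statement, both regular in their respective moduli spaces, and having matching asymptotic constraints at the shared puncture of $S'$ and $S_\anchor$ labelled by the distinguished generator $\btheta^{1,2}_+$ and by the Reeb chords in $\jump(\arrowrho_\anchor)$ at the appropriate heights dictated by $\sigma_\anchor$.

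First, I would construct the \emph{preglued approximate solutions}. For each $R$ sufficiently large (corresponding to neck length $t = t(R)$), form the preglued source $S_R$ by cutting off the puncture of $S'$ labelled $v^{1,2}$ and the puncture of $S_\anchor$ labelled $\infty$ at neck-length parameter $R$, and identifying the resulting strip-like ends. Using the product structure of $J$ and $J_\anchor$ near $\btheta^{1,2}_+ \times \{v^{1,2}\}$ and the cylindrical asymptotic behaviour at east infinity of $u_\anchor$, one defines a map $u_R: S_R \to \Sigma_{\bar{e}} \times D_m^{(t)}$ (where $D_m^{(t)}$ denotes the neck-stretched disc) by interpolating between $u'$ and $u_\anchor$ on the neck via cut-off functions, exactly as in \cite[Appendix A.2]{Lipshitz:cylindrical}. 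The exponential convergence at strip-like ends, together with the split almost-complex structure in the neck region, gives the standard $L^p_1$-estimate on the $\delbar$-error $\|\delbar u_R\|_{L^p} \le Ce^{-\delta R}$ for some $\delta > 0$.

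Next, I would produce genuine solutions by Newton iteration. Transversality of $u'$ in $\cM_{\bar{e}}^{B'}(\bx, \vec\btheta_+(k), \by; S'; P', Q'; \sigma'; \bJ)$ and of $u_\anchor$ in $\cM^{B_\anchor}_\anchor(\btheta^{1,2}; S_\anchor; P_\anchor; \sigma_\anchor; J_\anchor)$ supplies right inverses $Q_1$ for $D\delbar_{u'}$ and $Q_2$ for $D\delbar_{u_\anchor}$ that respect the evaluation-map constraints encoded by $P', Q', \sigma', P_\anchor, \sigma_\anchor$. The matching of asymptotics at $\btheta^{1,2}_+$ and the height constraints at the jumping punctures of $S_\anchor$ means these right inverses glue, via a pre-gluing of kernels and co-kernels with exponentially weighted Sobolev norms, to a uniformly bounded right inverse $Q_R$ for $D\delbar_{u_R}$ whose operator norm is independent of $R$. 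The standard Picard lemma then produces, for every sufficiently large $R$, a unique $\delbar$-holomorphic perturbation $\widetilde{u}_R$ in a small $L^p_1$-ball around $u_R$ that lies in the relevant cut-down moduli space. Re-parametrising $R \in [R_0, \infty)$ by $s = e^{-R} \in [0, \varepsilon')$ gives a continuous injection of $U \times U_\anchor \times [0, \varepsilon')$ into $\overline{\pmb{\cM}}_{\ge T}$ whose image is open, with $s = 0$ corresponding to the broken configuration $(u, u_\anchor)$.

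To promote this to a homeomorphism onto a neighbourhood, one needs the reverse direction: every sequence of smooth curves in $\pmb{\cM}_{\ge T}$ accumulating on $(u, u_\anchor)$ must, for $R$ large enough, lie in the image of the gluing map. This is the usual surjectivity of gluing and follows from Gromov compactness together with uniqueness in Picard: any nearby solution $\widetilde{u}$ has small $L^p_1$-distance to some preglued $u_R$ (by the convergence definition of Section \ref{defconvergence}), and hence must coincide with the Picard-produced solution by uniqueness in the small ball. The main technical obstacle, as usual in SFT gluing, is the construction of the uniformly bounded right inverse $Q_R$ — one must choose the exponential weight $\delta$ small enough to avoid the asymptotic operator's spectrum at $\btheta^{1,2}_+$ but large enough that the cut-off functions used in pre-gluing kernels do not introduce uncontrolled error. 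Once this analytic step is in place, the remaining verifications (compatibility with the height and cut constraints encoded by $P, Q, \sigma$, and the identification of $Q$ with the distinguished set inherited from $Q'$) are immediate from the definitions, since none of these constraints are disturbed by the neck-stretching localised near $v^{1,2}$.
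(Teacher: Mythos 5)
Your overall approach matches the paper's: both rely on the standard SFT gluing scheme (preglue, establish a uniformly bounded right inverse, apply the Picard/implicit-function argument, then prove uniqueness and surjectivity to upgrade an injection to a homeomorphism onto a neighbourhood), and the paper explicitly defers to \cite[Proposition 5.30]{LOT} and \cite[Proposition 4.21]{LOT:SSII} for the analytic machinery, as you do. Your remark that the splicing/height constraints behave no differently than other east-puncture partitions, and that the anchor's asymptotics at $\btheta^{1,2}_+$ and at east infinity control the pregluing, is exactly what the paper records.

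However, your treatment of the cut constraint $Q$ has a gap. You first list $Q'$ among the ``evaluation-map constraints'' respected by the right inverse, and then conclude that the compatibility with $Q$ is ``immediate from the definitions, since none of these constraints are disturbed by the neck-stretching localised near $v^{1,2}$.'' This is not the right mechanism and the paper is careful about precisely this point. The cut constraint is a branch-point condition at the punctures in $\rho_{123}(S)$, not an evaluation-map constraint, and the Newton iteration does not produce curves with exactly zero cut: the preglued maps have zero cut, but the holomorphic curves found in the $L^p_1$-ball $N_\delta$ around them a priori have small nonzero cuts. The paper's argument (sketched for Proposition \ref{gluingpolys} and applied mutatis mutandis here) is that, using the family $\bJ^{\vec t}$ constructed in Section \ref{fixingcutbehaviour}, the set $\delbar^{-1}(0)\cap N_\delta$ can be parameterised by cut length as $\tildemod^B(\vec\bx; S; \arrowrho_\boat, Q; \vec\sigma) \times (-\varepsilon, \varepsilon)^{k}$; one then invokes the triangle inequality to see that the zero-cut slice $\tildemod^B(\vec\bx; S; \arrowrho_\boat, Q; \vec\sigma)$ actually meets $N_\delta$, furnishing curves with the correct cut behaviour near the preglued map. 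Without this step, your argument produces curves close to $(u', u_\anchor)$ but does not locate them in the correct cut-constrained moduli space. The rest of the proposal is sound.
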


\begin{proof}[Proof of propositions \ref{gluingpolys}, \ref{gluingeast} \ref{gluingcomb}, and \ref{gluinganchored}]A straightforward combination of \cite[Proposition 5.30]{LOT} and \cite[Proposition 4.21]{LOT:SSII} gives the first three of these results when dropping the relevant instances of the letter $Q$ from the statements, and the last follows similarly. In each case, as the punctures of each relevant source are mapped either to east infinity or to some boundary puncture of the relevant disc, the fact that the manifolds $X_{\cD, m}$ have two types of infinity is irrelevant, so the result follows by standard gluing techniques (the provision of splicings behaves no differently than the other partitions of east punctures). To include the provision of cut behaviour, one sees that the nearby families of holomorphic curves constructed by the implicit function theorem in the proof of \cite[Proposition 5.30]{LOT} and \cite[Proposition 4.21]{LOT:SSII} must have small cuts near the relevant punctures themselves --- thus, by the discussion in Section \ref{fixingcutbehaviour} there are nearby families of curves with no cut at these punctures.

For example, for Proposition \ref{gluingpolys}, one constructs pre-glued maps $u_1 \natural_R u_2  $ with sources $S_1 \natural S_2$ which converge to the two-story building. One shows that $D\delbar(u_1 \natural_R u_2) \rightarrow 0$ as $R \rightarrow \infty$, and applies the implicit function theorem to find a neighbourhood $N_\delta = N_\delta(u_1 \natural_R u_2)$ such that the set of zeroes of the $\delbar$-equations in $N_\delta$ form a smooth manifold of the correct dimension. As such, choosing $R$ sufficiently large finds families of solutions to the $\delbar$-equations near to $u_1 \natural_R u_2$.

If $S_1$ and $S_2$ have distinguished sets of punctures $Q_1$ and $Q_2$ respectively, where $Q_1 \cup Q_2 = Q$, it follows that the pre-glued maps $u_1 \natural_R u_2$ have no cut at the corresponding punctures of $S_1 \natural S_2$. As such, any holomorphic curve in the neighbourhood $N_\delta$ has (a priori) at least a very small cut at the relevant punctures. By the discussion in Section \ref{fixingcutbehaviour}, it follows that the set $\delbar^{-1}(0) \cap N_\delta$ can be parameterised by cut length as $\tildemod^B(\vec\bx; S; \arrowrho_\boat, Q; \vec\sigma) \times (-\varepsilon, \varepsilon)^{k}$ for some $k$. It follows by the triangle inequality that the set $\tildemod^B(\vec\bx; S; \arrowrho_\boat, Q; \vec\sigma)$ intersects $B_\delta$, and thus provides a family of holomorphic curves with the correct cut behaviour near to $u_1 \natural_R u_2$ and thus near to $(u_1, u_2)$, as in \cite[Section A.4]{Lipshitz:cylindrical} (cf. also \cite[Section 5.3.3]{bourgeois:morse}).
\end{proof}

\subsection{Restrictions upon easterly curves and the boundary of one-dimensional moduli spaces}\label{easterlyrestrictions}
In this section, we prove the main result which we shall use to show that maps we define in the next section satisfy the partial $\cA_\infty$ relations. We will need to understand the modulo-two count of the ends of moduli spaces of expected dimension 1 in more detail. As in \cite[Definition 5.58]{LOT}, we give names to the types of ends that occur. Some of these are illustrated in Figures \ref{fig:spinal}, \ref{fig:mixed} and \ref{fig:compcoll} --- recall that we distinguished some special types of vertex of a tree in Definition \ref{specialvertices}.

\begin{figure}[h]
\centering{
\makebox[\textwidth]{\scalebox{0.5}{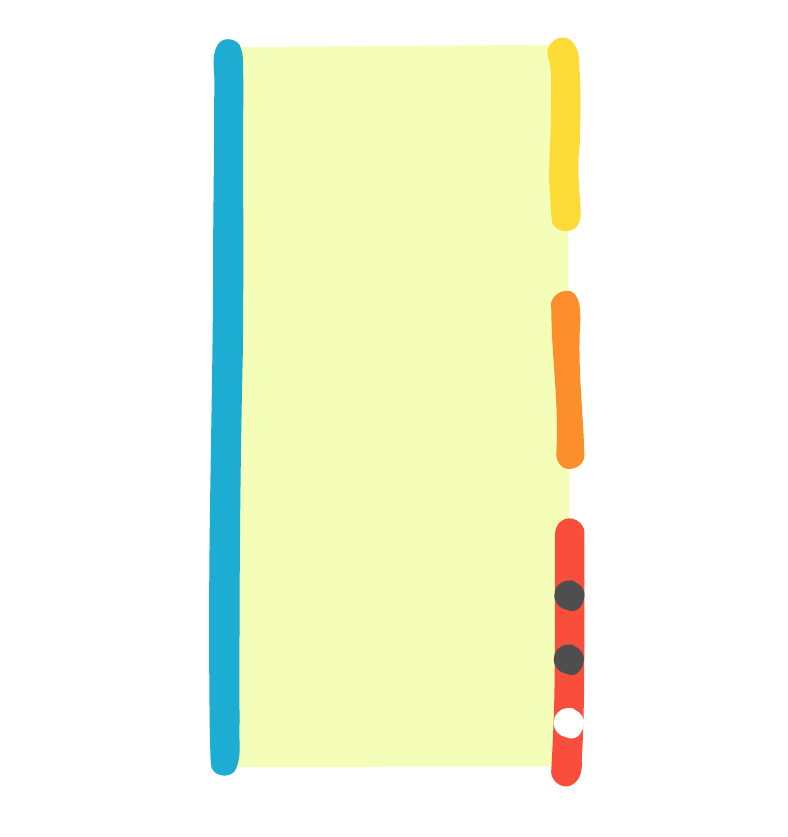 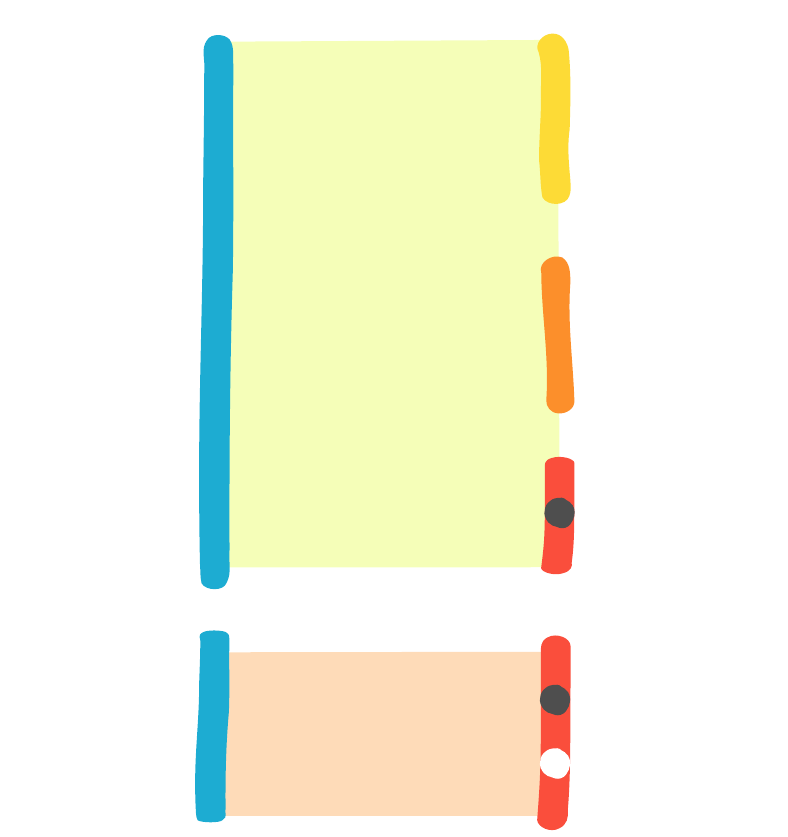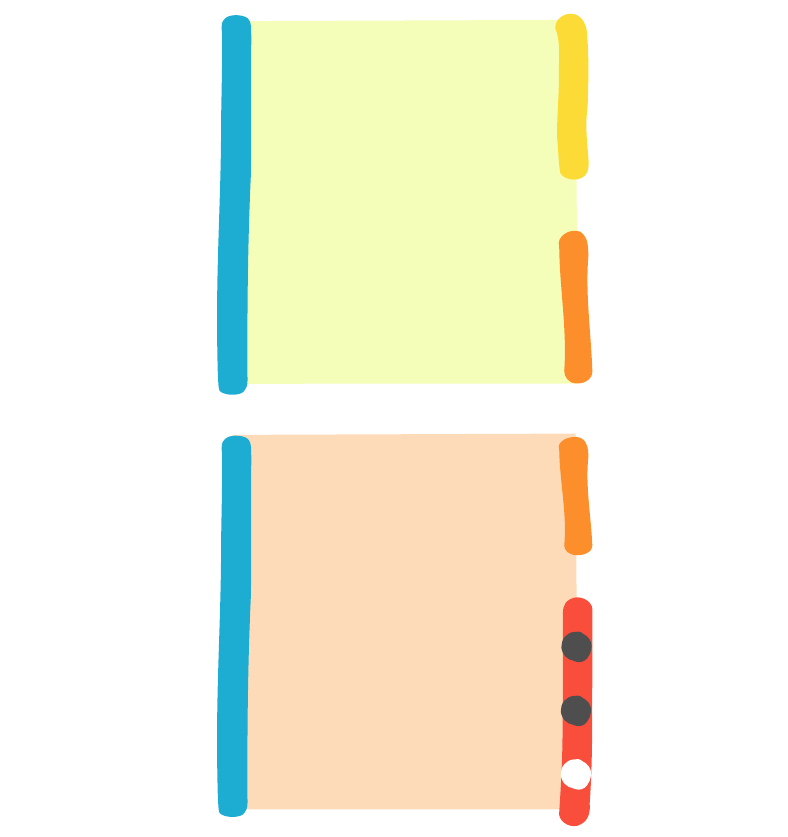}}
\caption{A schematic picture of a one-dimensional moduli space (left), with two types of spinal ends shown (centre and right). The breaks in the boundary indicate punctures of $D_n$, whereas the dots indicate east punctures --- with punctures corresponding to one east infinity shown in grey, and a puncture corresponding to another shown in white.}
\label{fig:spinal}
}
\end{figure}

\begin{figure}[h]
\centering{
\makebox[\textwidth]{\scalebox{0.5}{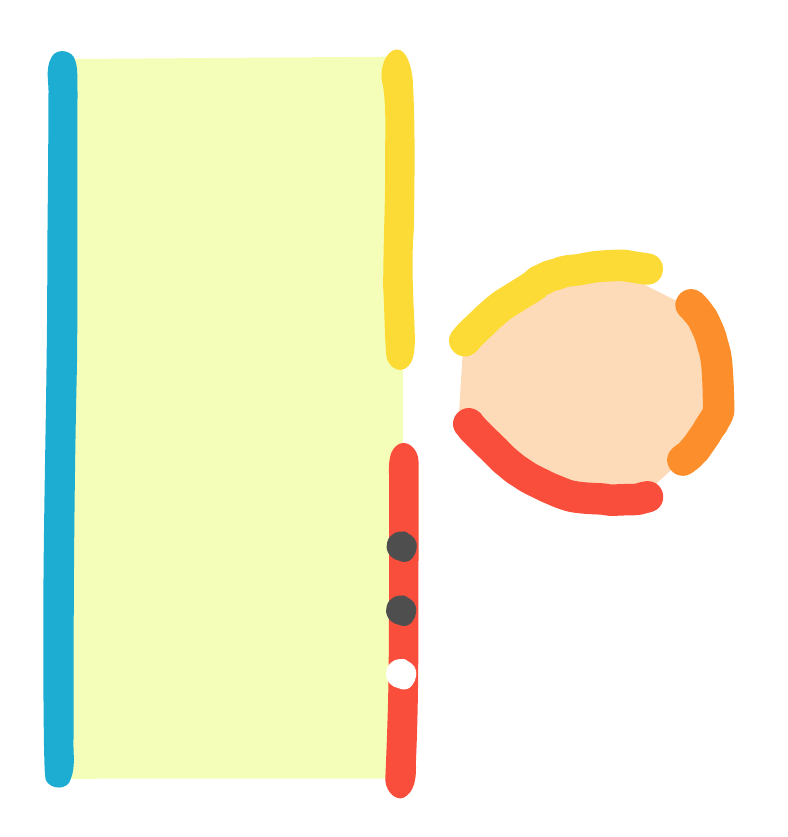 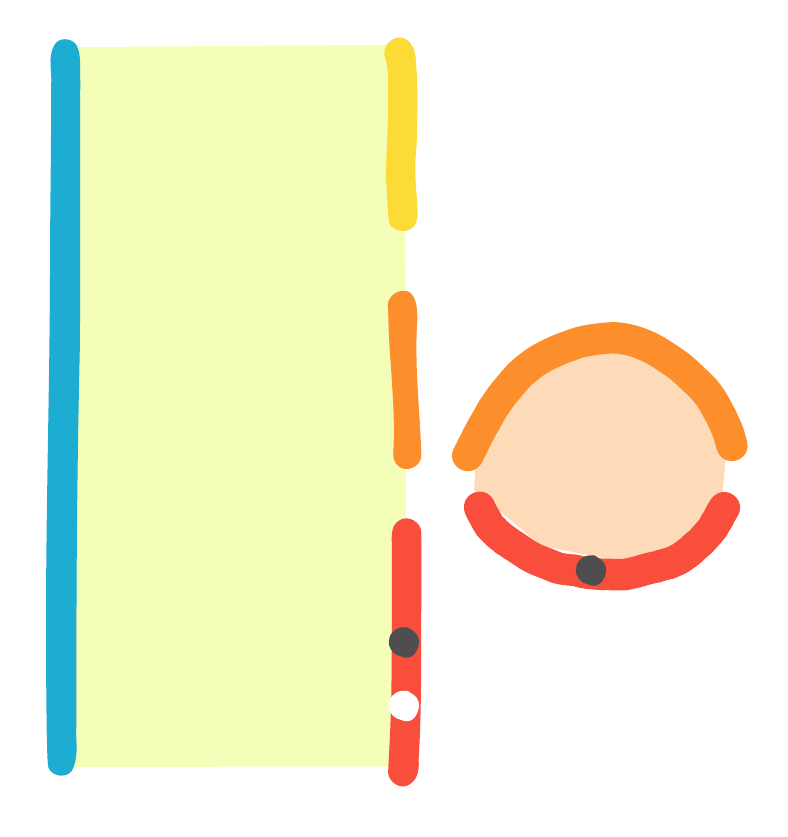}}
\caption{A schematic picture of two types of mixed ends of the one-dimensional moduli space on the left of Figure \ref{fig:spinal}.}
\label{fig:mixed}
}
\end{figure}

\begin{figure}[h]
\centering{
\makebox[\textwidth]{\scalebox{0.5}{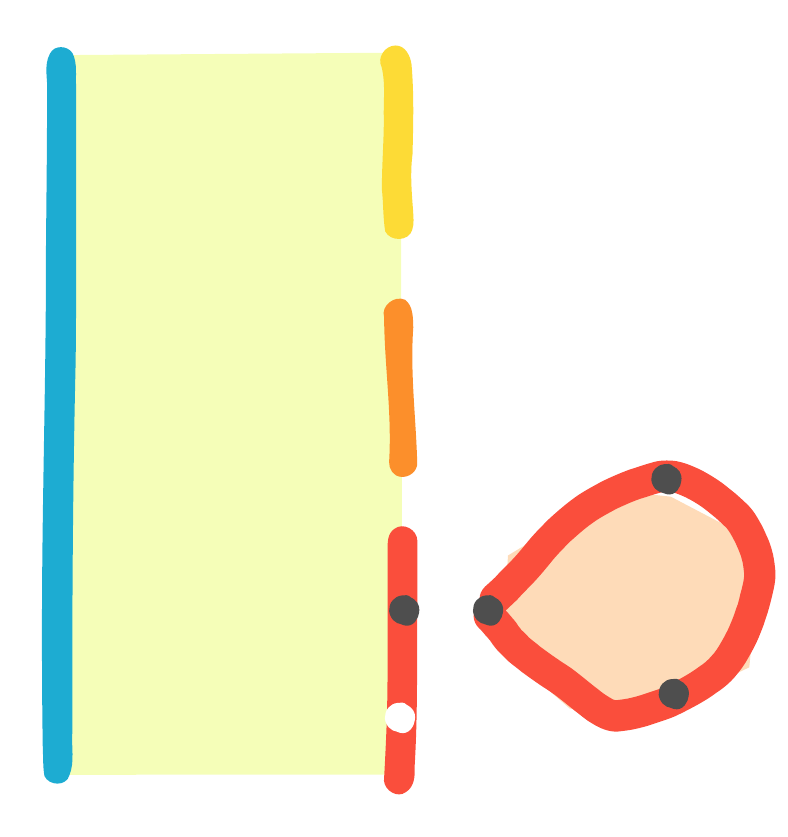 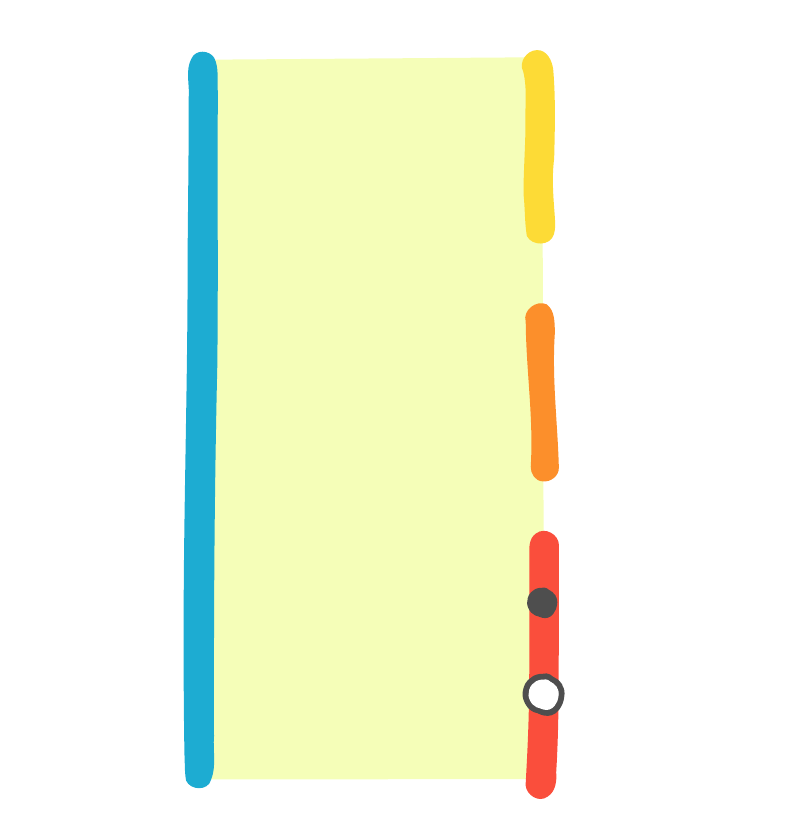}}
\caption{A schematic picture of a composition end (left) and a collision end (right) of the moduli space on the left of Figure \ref{fig:spinal}.}
\label{fig:compcoll}
}
\end{figure}

\begin{defn}\label{possibleends}Let $\cM : = \cM^B(\bx, \vec\btheta_+(k), \by; S; \arrowrho_\boat, \arrowrho_\anchor; \sigma_\boat, \sigma_\anchor)$ be a one-dimensional moduli space. Then:
\begin{enumerate}\item A \emph{spinal two-story end} of $\cM$ is a height two spinal tree of prescribed-cut polygons $(u, v)$, where $$u \in \cM^{B_1}(\bx, \vec\btheta_+, \bw; S_1; \arrowrho_\boat^1, \arrowrho_\anchor^1; \sigma_\boat^1, \sigma_\anchor^1)$$ and $$v \in \cM^{B_2}(\bw, \vec\btheta_+, \by; S_2; \arrowrho_\boat^2, \arrowrho_\anchor^2; \sigma_\boat^2, \sigma_\anchor^2)$$ are spinal, and such that $B_1 \in \pi_2^{\vec\btheta_+}(\bx, \bw)$ and $B_2 \in \pi_2^{\vec\btheta_+}(\bw, \bz)$ are such that $B_1 * B_2 = B$; $S_1 \natural S_2 = S$;  $\arrowrho^1_\boat \star \arrowrho_\boat^2 = \arrowrho_\boat$; $\arrowrho_\anchor^1 \star \arrowrho_\boat^2 = \arrowrho_\anchor$; $\sigma_\boat^1 \star \sigma_\boat^2 = \sigma_\boat$; and $\sigma_\anchor^1 \star \sigma_\anchor^2 = \sigma_\anchor$.

\item A \emph{mixed two-story end} of $\cM$ is a height two tree $(u,v)$ such that $$u \in \cM^{B'}(\bx, \btheta_+^{1,2}, \dots, \btheta_+^{\kappa-1, \kappa}, \btheta^{\kappa, \kappa+\lambda}, \btheta_+^{\kappa+\lambda, \kappa + \lambda+1}, \dots, \btheta_+^{k-1, k}, \by; S'; \arrowrho'_\boat, \arrowrho_\anchor; \sigma'_\boat, \sigma_\anchor)$$ is spinal and $$v \in \cM^{B_E}(\btheta_+^{\kappa, \kappa+1}, \dots, \btheta_+^{\kappa+\lambda-1, \kappa+\lambda}, \btheta^{\kappa+\lambda, \kappa}; S_E; \arrowrho_{E})$$ is easterly, and such that $B' * B_E = B$ and $S' \natural S_E = S$.
\item An \emph{$(i,j)$-composition end} is an element of $\cM^B(\bx, \vec\btheta_+(k), \by; S'; \arrowrho_\boat', \arrowrho_\anchor; \sigma_\boat', \sigma_\anchor)$, where $\arrowrho_{\boat}'$ is given by $\bar{\mu}^i_j(\arrowrho_\boat)$ for some $i, j$; $\sigma'_\boat$ is given by $\sigma^i_j$; and $S'$ is obtained from $S$ by contracting the arc of $\partial S$ connecting the punctures corresponding with $\rho^i_j$ and $\rho^{i+1}_j$, and labelling the corresponding puncture by $\rho^i_j\cdot \rho^{i+1}_j$.

\item A \emph{$\kappa$-collision end} is an element of $\cM^B(\bx, \vec\btheta_+(k), \by; S; \arrowrho_\boat, \arrowrho_\anchor; \sigma_\boat(\kappa), \sigma_\anchor)$.

\item A \emph{cut-vanishing end} of $\cM$ is a prescribed-cut polygon $u$ in the moduli space $\cM^B(\bx, \vec\btheta_+(k), \by; S'; \arrowrho'_\boat, \arrowrho'_\anchor; \sigma'_\boat, \sigma'_\anchor)$, such that $\arrowrho'_\boat \star \arrowrho'_\anchor$ is a $123$-reduction of $\arrowrho_\boat \star \arrowrho_\anchor$ formed by replacing an adjacent pair $\rho_1, \rho_{23}$ or $\rho_{12}, \rho_3$ with $\rho_{123}$, where $\rho_1$ (resp. $\rho_3$) is an element of $\arrowrho_{\anchor}$. Here, $S'$ is given by contracting the arc in $\partial S$ between the punctures corresponding with the $\rho_1, \rho_{23}$ (resp. $\rho_{12}, \rho_3$) and labelling the resultant puncture by $\rho_{123}$.
\end{enumerate}
\end{defn}
(The condition on a cut-vanishing end essentially is the same as $\arrowrho'_\anchor$ being a $123$-reduction of $\arrowrho_\anchor$, except there is a possibility for the first chord of $\arrowrho_{\anchor, j}$ to have been $\rho_3$ and the final chord of $\arrowrho_{\boat, j}$ to have been $\rho_{12}$.)

With this in mind, we state the first main theorem of this section:
\begin{thm}\label{evenends}Suppose that $$\cM = \cM^B(\bx, \vec\btheta_+(k), \by; S; \arrowrho_\boat, \arrowrho_\anchor; \sigma_\boat, \sigma_\anchor)$$ is one-dimensional. Then the total number of spinal two-story ends of $\cM$, mixed two-story ends of $\cM$, $(i, j)$-composition ends of $\cM$, $\kappa$-collision ends of $\cM$, and cut-vanishing ends of $\cM$ is even.
\end{thm}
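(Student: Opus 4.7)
The plan is to identify $\overline{\cM}$ as a compact $1$-manifold with boundary and enumerate its boundary via Gromov compactness, showing each boundary point matches exactly one of the five end types in Definition \ref{possibleends}. The parity of the boundary of a compact $1$-manifold then yields the claim.

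First, I would observe that by the index formula (Proposition \ref{bordedpolyindex}) together with the transversality result Proposition \ref{indextransversalitycut}, for a sufficiently generic $\bJ$ the moduli space $\cM$ is a smooth $1$-manifold; the compactification $\overline{\cM}$ described in Section \ref{combs} is a compact topological space whose boundary $\partial \overline{\cM}$ consists of the codimension-$1$ degenerations of elements of $\cM$. By Gromov compactness adapted to trees of holomorphic combs with prescribed cut data, any sequence in $\cM$ has a subsequence converging to such a tree, and the total index defect is $1$ at each boundary stratum.

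Next, I would match each codimension-$1$ degeneration with one of the five listed end types. A height-$2$ breaking with both components spinal is a spinal two-story end, and Proposition \ref{gluingpolys} shows that such a configuration has a single neighbourhood $[0,1)$ inside $\overline{\cM}$; if one vertex is easterly, Proposition \ref{gluingeast} produces a mixed two-story end. A degeneration into a simple comb, by the conservation of Reeb asymptotics and the index drop formula $\ind(B,S,P,Q,\sigma) = g - \chi(S) + 2e(B) + \sum|\arrowrho_{\boat,j}| - \col(\sigma) - \#_{123}(\arrowrho_\anchor)$, must correspond either to a composition of two adjacent Reeb chords (yielding an $(i,j)$-composition end via Proposition \ref{gluingcomb}) or to the coincidence of heights of two jumping punctures from distinct boundary components (yielding a $\kappa$-collision end). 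Finally, the local product structure near curves with prescribed cut behaviour (Section \ref{fixingcutbehaviour}) parameterises $\cM$ near a $q \in \rho_{123}(S) \setminus Q_{123}$ by the cut length $t_q \in (-\varepsilon,\varepsilon)$; as $t_q$ approaches the critical value at which the boundary branch point meets the adjacent jumping puncture, the two punctures of $S$ collapse into one labelled $\rho_{123}$, producing exactly a cut-vanishing end.

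To conclude, I would rule out other potential degenerations. Sphere bubbles are excluded since $\pi_D \circ u$ is a branched cover and the almost complex structures $J_j$ are consistent and adjusted to a split symplectic form; disc bubbles off the Lagrangian cylinders $C^i$ do not contribute in codimension $1$ under generic $\bJ$ (cf. \cite[Section 5.4]{LOT}); nodal source degenerations are generically absent; and spontaneous holomorphic anchors do not arise since we have not yet stretched the neck (these only appear under the deformation of $\bJ$ to $\bJ^t$ of Section \ref{sub:anchorsperturbations}). The main obstacle I expect is the careful bookkeeping across the five end types: one must verify that each end in the neighbourhood description $U \times [0,1)$ produced by the gluing propositions is counted with multiplicity one and no end is accidentally double-counted when splicings, partitions, and cut data are merged. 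In particular, one must ensure that the assignments $\vec\sigma_\boat \star \vec\sigma_\anchor$ and the decomposition $\arrowrho'_\anchor \star \arrowrho'_\boat$ in the cut-vanishing case respect the $123$-reduction convention, so that the end lies honestly in the stated moduli space. Once these consistency checks are in place, parity of $|\partial \overline{\cM}|$ yields the theorem.
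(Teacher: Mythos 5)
Your proposal follows the same overall strategy as the paper: compactify $\cM$, classify the codimension-one degenerations into the five types of Definition~\ref{possibleends}, and appeal to the parity of boundary points of a compact $1$-manifold. You correctly identify the matching between gluing results and end types, and you correctly note that holomorphic anchors do not appear here since the neck has not been stretched.

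Two points remain genuine gaps that your ``consistency checks'' remark gestures at but does not resolve. First, the paper does not count $\partial\overline{\cM}$ directly but rather a \emph{cropped} moduli space $\overline{\cM}_{\text{cropped}}$, following the truncation procedure from \cite[Theorem~5.61]{LOT}, near composition and collision ends: the Gromov compactification of Section~\ref{combs} alone does not immediately yield a compact $1$-manifold with boundary near such strata, and the cropping is what repackages the simple-comb and height-coincidence degenerations as honest boundary points. Your statement that ``$\partial\overline{\cM}$ consists of the codimension-$1$ degenerations'' glosses over precisely the step that makes the parity count valid. Second, and more substantively, you must \emph{rule out} boundary contributions that are not on the list in Definition~\ref{possibleends}: in particular, ends where two cut lengths degenerate to zero simultaneously, or where a cut vanishes at the same moment as some other degeneration. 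The paper does this via the index formula of Proposition~\ref{indextransversalitycut}: each vanishing cut lowers the expected dimension by one, so a simultaneous double degeneration would live in a moduli space of negative expected dimension, hence is empty for generic $\bJ$. Without this index argument, a boundary point of the second kind could exist, it would not match any of the five listed end types, and the conclusion would fail. Your proof sketch should make this exclusion explicit rather than folding it into unresolved ``bookkeeping.''
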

\begin{proof}This follows readily along the same lines as the proof of \cite[Theorem 5.61]{LOT}. In particular, trees of height greater than two are ruled out in a very similar way to \cite[Proposition 5.43]{LOT}. In our situation, however, there are two types of two-story ends. Height one combs still degenerate, but due to the simplicity of the diagram near the boundary, join or shuffle curves can not appear, just sequences of split curves (corresponding with composition and collision ends of $\cM$). \emph{A priori}, nodal curves can still degenerate, but are ruled out along the lines of \cite[Section 5.6.3]{LOT}.

One may truncate $\overline{\cM}$ near composition and collision ends, as in the proof of  \cite[Theorem 5.61]{LOT} to give a moduli space $\overline{\cM}_{\text{cropped}}$ which is a compact 1-manifold with boundary. This has spinal two-story ends if the complex structure on $D_{k+2}$ degenerates along an arc with one end on the edge $e^0$, mixed two-story ends if an arc without an end upon $e^0$ degenerates, a composition or collision end if a holomorphic tooth degenerates at east infinity, and finally a cut-vanishing end if the length of a cut at some corner of $\rho_{123}(S)$ degenerates to zero. Ends where more than one cut degenerates, or a cut degenerates at the same time as another type of degeneration, are ruled out by the index formula in Proposition \ref{indextransversalitycut} --- each degenerated length subtracts one from the expected dimension, thus if more than one cut degenerated the corresponding moduli spaces would have negative dimension and so be empty.
\end{proof}

We now examine the types of mixed two-story ends which may occur. Fix a partially splayed diagram $\cD = \cD'(\varepsilon, \arrowiota)$ with attaching curves $\bgamma^0, \bgamma^1, \dots, \bgamma^m$, where we recall that $\bgamma^1$ splits as $\bgamma^1 = \bgamma^{1,a} \cup \bgamma^{1,c}$. Fix also a subdiagram $\cD|_S$ where $S$ is a contiguous subsequence of $\{1, \dots, m\}$. For clarity's sake, re-label the attaching curves of this subdiagram by $\bdelta^1, \dots, \bdelta^{\lambda}$, and the corresponding sequence of generators by $(\btheta_+^{\kappa, \kappa+1}, \dots, \btheta_+^{\kappa+\lambda-1, \kappa+\lambda}) := \vec\btheta_+(\lambda)$ and $\btheta = \btheta^{\kappa+\lambda, \kappa} \in \cG(\bdelta^\lambda, \bdelta^1)$. We call the resultant generator for $\cD|_S$ by $\vec\btheta(\lambda) : = (\vec\btheta_+(\lambda), \btheta)$.
We first restrict $B_E$ to some fairly manageable possibilities:

\begin{prop}\label{easterlyinapprox}Let $B_E \in \pi_2(\vec\btheta(\lambda)) \subset \dom(\cD|_S)$. Then it is supported within the approximation region for the curves $\bdelta^1, \dots, \bdelta^\lambda$.
\end{prop}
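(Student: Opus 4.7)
My plan rests on two key observations.

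First, a topological reduction. By construction, the curves $\bdelta^1, \ldots, \bdelta^\lambda$ are small approximations of one another, defined via a Hamiltonian isotopy supported in a tubular neighbourhood --- this neighbourhood is, by definition, the approximation region $N$. Outside $N$, all $\bdelta^i$ coincide with a common reference set of curves $\bgamma$. Moreover, every generator in the cyclic sequence $\vec\btheta(\lambda) = (\vec\btheta_+(\lambda), \btheta^{\lambda,1})$ is an intersection of \emph{distinct} approximating curves $\bdelta^i$ and $\bdelta^{i+1}$, and any such intersection must lie inside $N$. Consequently every corner of $B_E$ sits inside $N$, and on $\bar\Sigma \setminus N$ the boundary $\partial B_E$ has no corners and is supported in $\bgamma|_{\bar\Sigma \setminus N}$.

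Second, I would show that the restriction of $B_E$ to $\bar\Sigma \setminus N$ vanishes. Since $\partial B_E|_{\bar\Sigma \setminus N}$ has no corners, it is a $1$-cycle in $\bgamma|_{\bar\Sigma \setminus N}$; hence, on each region $R$ of $\Sigma \setminus \bgamma$ outside $N$, the multiplicity of $B_E$ takes a single constant value $n_R$. The goal is to show $n_R = 0$ for every such $R$. To this end, I would analyse each connected component of $N$ individually: each such component is a tubular neighbourhood of a component of $\bgamma$ (an annulus in the closed case, a rectangle in the arc case) within which the approximating curves $\bdelta^i$ cross transversally. Starting from the multiplicity $n_R$ on one outer side and crossing the approximating curves one by one, the $\btheta_+^{i,i+1}$-corner conditions (which are the corner conditions at the \emph{top-graded} intersection points of two approximating curves) force the net jump to be zero, so the outer multiplicities on opposite sides of every component of $N$ must agree. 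Combining this with the connectedness of $\bar\Sigma \setminus N$ (modulo the approximation region), and with the standard boundary conditions on $\partial\bar\Sigma$ or the basepoints that every region of $\Sigma \setminus \bgamma$ meets, yields that all $n_R$ vanish simultaneously.

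The main obstacle I anticipate is handling the ``closing'' generator $\btheta = \btheta^{\lambda,1}$, which is not required to be of $\btheta_+$-type, uniformly with the propagation argument at the other corners; in the $\btheta_-$-case the local corner condition is different and must be accommodated carefully within the same multiplicity-tracking framework. I would likely bypass much of this case analysis by recasting the argument in cohomological form: the excision/Mayer--Vietoris sequence for the decomposition $\bar\Sigma = N \cup (\bar\Sigma \setminus N)$ identifies the kernel of the restriction $H_2(\bar\Sigma, \bdelta \cup \partial\bar\Sigma) \to H_2(\bar\Sigma \setminus N, \bgamma|_{\bar\Sigma \setminus N} \cup \partial \bar\Sigma)$ with classes supported in $N$, and the corner constraints show that $[B_E]$ lies in this kernel, irrespective of which intersection point represents $\btheta$.
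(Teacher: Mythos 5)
Your approach is essentially the same as the paper's, and it is correct in spirit, but you have introduced two unnecessary complications.

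The paper's proof is shorter: apply the ``taking multiplicities away from the approximation region'' construction to $B_E$ to obtain a domain $B_E'$ in the bordered diagram $\border\cD'$. By construction $\partial B_E'$ is contained solely in $\balpha$ (the $\bbeta$ curves do not appear in the subdiagram $\cD|_S$), and $B_E'$ has multiplicity zero at the $\bb$-basepoints. Homological linear independence --- item (4) in Definition~\ref{bordereddiagrams}, which demands that every region of $\Sigma - \balpha$ meets a basepoint $b_i$ --- then forces $B_E' = 0$, which is precisely the claim that $B_E$ is supported in the approximation region. Your final sentence of step two appeals to exactly this condition, so you do land in the right place.

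The propagation argument across $N$ (step three) is the first complication that you do not need. You are attempting to show that the multiplicities of $B_E$ on the two outer sides of each component of $N$ agree, so as to conclude that the outer part of $B_E$ has no boundary at all. But that equality is \emph{not} needed for the argument; even if the outer multiplicities differed, the resulting $B_E'$ would be a perfectly valid domain whose boundary is a non-trivial linear combination of $\balpha$-curves --- and homological linear independence rules that out directly, since it forces every region multiplicity (hence every boundary multiplicity) to vanish. The same remark dissolves the worry you raise about the ``closing'' corner $\btheta = \btheta^{\lambda,1}$ possibly not being of $\btheta_+$-type: no matter where that corner sits, the outer multiplicities of $B_E'$ are killed by the basepoint condition, so there is nothing to track. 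The Mayer--Vietoris detour you propose at the end is therefore also unneeded.

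One small factual correction: the statement ``outside $N$, all $\bdelta^i$ coincide with a common reference set of curves $\bgamma$'' is not quite right. The approximation region for $\bdelta^1, \dots, \bdelta^\lambda$ \emph{contains} all of those curves (it is $\cup_{t} H(\bdelta^1, t)$), so outside $N$ there are no $\bdelta^i$-curves at all, and $B_E$ simply has no boundary there. The correct picture is the paper's: outside $N$ the multiplicities of $B_E$ descend to a well-defined domain on $(\Sigma, \balpha)$ via the $\phi$-construction, and that domain is the thing constrained by homological linear independence.
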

\begin{proof}This follows from the homological linear independence of the bordered diagram $\border\cD'$. The corresponding domain $B_E' \in \dom(\border\cD')$ given by taking multiplicities away from the approximation region has boundary contained solely in $\balpha$ and multiplicity zero at $\bb$ --- contradicting homological linear independence unless all multiplicities away from the approximation region are zero.
\end{proof}

It follows from this that the domain $B_E$ decomposes as a sum $B_E = B_1 + \cdots + B_{g+h-1}$, where each $B_j$ is supported in the approximation region for $\delta^i_j, \delta^{i+1}_j$.

The next step is to separate the curves into the more-widely-studied case where they are each approximations of one another, and the case where they change in a given index. As such, we let
$$\Gamma : = \{j \in \{1, \dots, h\} : \jump(\arrowrho_{E, j}) = \emptyset \text{\ and\ } \delta^1_j \approx \cdots \approx \delta^{l}_j \} \cup \{h+1, \dots, g+h-1\},$$
and $\overline{\Gamma}: = \{1, \dots, h\} - (\Gamma \cap \{1, \dots, h\})$.

We now show that the domains of the less-widely-studied case are supported within the handles.
\begin{prop}Suppose that $B_E \in \pi_2(\vec\btheta(\lambda))$, and let $j \in \overline{\Gamma}$. Then the domain $B_j$ is supported within the handle $\handle_j$.
\end{prop}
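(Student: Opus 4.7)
My approach is to analyze the restriction $B_j^{\mathrm{out}} := B_j|_{\Sigma(d) \setminus \handle_j}$ and show it vanishes. Outside the handle $\handle_j$ the meridional $\delta^i_j$ curves are entirely absent, since $\alpha_{\handle_j}(m) = S^1 \times \{0\}$ lies inside $\handle_j$, while the longitudinal $\delta^i_j$ (those with $\iota^i_j = \iota_l$) appear outside the handle as mutual small approximations of $\lambda_{w_j, z_j}$. By the preceding proposition, $B_j^{\mathrm{out}}$ is supported in the outer portion of the approximation region, which forms a thin strip around $\lambda_{w_j, z_j}$ containing $r$ parallel longitudinal arcs, where $r := |\{i : \iota^i_j = \iota_l\}|$.

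I would first verify that $r < \lambda$ for $j \in \overline{\Gamma}$: if not all the $\delta^i_j$ are small approximations of one another then some is meridional and $r < \lambda$ immediately, while if $\jump(\arrowrho_{E,j}) \ne \emptyset$ then any jumping Reeb chord has endpoints on $\alpha$-arcs of distinct idempotent types, forcing both types among the $\delta^i_j$, and again $r < \lambda$. In either case, the Reeb chord boundary contributions of $B_j$ lie on $\partial \Sigma_j \subset \handle_j$ and so do not appear in $\partial B_j^{\mathrm{out}}$. Meanwhile, any corner of $B_j$ involving a meridional $\delta^i_j$ is located at the intersection point $e^{\infty, i}_j \in \handle_j$. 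Consequently the only corners contributing to $\partial B_j^{\mathrm{out}}$ are the $\btheta^{i, i+1}_{+,j}$ for which both $\delta^i_j$ and $\delta^{i+1}_j$ are longitudinal.

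With the outer strip modelled as a rectangle traversed by the $r$ longitudinal arcs, denote the multiplicities of $B_j^{\mathrm{out}}$ in the $r+1$ regions by $m_0, \dots, m_r$. The outermost values satisfy $m_0 = m_r = 0$ since those regions lie outside the approximation region. As $r < \lambda$, the longitudinal index set $\{i_1 < \cdots < i_r\}$ does not fill $\{1, \dots, \lambda\}$, so there is some $k$ with $i_{k+1} > i_k + 1$ (or symmetrically $i_k > i_{k-1} + 1$), meaning $\delta^{i_k}_j$ has no corner in the outer strip with one of its longitudinal neighbors. Using the standard fact that the multiplicities of a domain can change across an arc only at its corners, together with an inductive propagation of the vanishing boundary condition inward from the outer regions, I expect to conclude that all $m_k = 0$, so that $B_j^{\mathrm{out}} = 0$.

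The main obstacle will be configurations where the longitudinal indices form multiple maximal contiguous blocks of size $\ge 2$, so that within each block every consecutive pair \emph{does} contribute a corner in the outer strip and the block-internal multiplicities can a priori vary. The absence of corners across the boundaries between adjacent blocks, combined with the vanishing of the multiplicities in the flanking outer regions, should allow a block-by-block induction to carry through once the corner contributions are tracked carefully.
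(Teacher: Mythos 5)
Your approach follows the paper's in spirit --- both argue from where the corners of $B_j$ can lie, together with the thin-strip structure of the approximation region outside the handle --- but you attempt a more explicit multiplicity-propagation argument, and this has unresolved gaps.

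The most concrete problem is the claim that $\jump(\arrowrho_{E,j}) \ne \emptyset$ forces $r < \lambda$. A jumping Reeb chord ends on the two arcs $\alpha^m_j$, $\alpha^l_j$ of $\bdelta^1_j$ (this is only relevant when $\kappa = 1$, so that the easterly vertex touches $\balpha$); it constrains the boundary behaviour of $B_j$ along $\partial \Sigma_j$, but it does not by itself put a meridional curve among $\delta^2_j, \dots, \delta^\lambda_j$. If all of those happen to be longitudinal, the ``gap in the index set'' on which your propagation depends may not exist. The mechanism the paper uses for this case is that the jumping chord forces $B_j$ to have a boundary arc along $\partial\Sigma_j$ --- the ``corner at $e^\infty_j$'' --- which lies inside $\handle_j$, and it is this constraint (rather than an inference that $r < \lambda$) that does the work. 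You should replace your case (2) with that observation.

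Independent of case (2), the propagation itself is not closed off, as you acknowledge. One vanishing condition $m_0 = m_r = 0$ together with a single gap does not automatically force all $m_k = 0$: a profile such as $(0,1,0,\dots,0)$ must be ruled out by tracking both the positions and the types (acute versus obtuse) of the $\theta_+$-corners along the arcs, and this is precisely the work you defer. You also do not account for the free generator $\btheta^{\kappa+\lambda,\kappa}_j \in \cG(\bdelta^\lambda_j, \bdelta^1_j)$; when both $\delta^\lambda_j$ and $\delta^1_j$ are longitudinal this can sit at their $\theta_+$-type intersection in the outer strip, introducing a corner not between strip-adjacent arcs that your ``consecutive index'' analysis misses. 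The paper's argument is a corner-location contradiction rather than a multiplicity computation: for $j \in \overline\Gamma$ the domain is required to have a corner at some $e^{\infty,i}_j$ or a boundary piece at $e^\infty_j$, both of which lie inside $\handle_j$, while a domain supported in the approximation region but outside the handle can only have corners at $\theta_+$-type intersections of approximating arcs. Reformulating your argument along those lines would close the gaps you've flagged.
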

\begin{proof}Any domain $B_j$ not supported within $\handle_j$ but supported within the approximation region can only have corners upon points of the form $\theta^{i_k, i_{k+1}}_{+,j}$ where $\delta_j^{i_k} \approx \delta_j^{i_{k+1}}$. As $j \in \overline{\Gamma}$, either one co-ordinate $\theta(\lambda)^i_j$ is equal to some $e^{\infty, i}_j$, or $B_j$ is required to have a corner at $e^\infty_j$ --- neither of which are of this form.
\end{proof}

For each $j \in \Gamma$, then, if $\delta^{1}$ is a set of $2h$ arcs, any map $u \in \cM^{B_E}(\btheta(\lambda); S_E; \arrowrho_E)$ has boundary in just one of the cylinders $\delta^{1,m}_j \times e^1$ or $\delta^{1,l}_j \times e^1$. We label the corresponding curve $\delta^1_{j}$. Let $\dot{\Sigma_\Gamma}$ denote the surface given by removing the handles $\cup_{j \in \overline{\Gamma}} \handle_j$ from the east-compactified splayed surface $\Sigma(\varepsilon, \arrowiota)_{\bar{e}}$, and $\Sigma_\Gamma$ the result of collapsing the remaining boundary components $\handle_{L,j}$ and $\handle_{R, j}$ in $\dot{\Sigma_\Gamma}$ to $2|\overline{\Gamma}|$ single points $\bp_{\overline{\Gamma}}$. This contains the curves $\bdelta^i_{\Gamma} = \{\delta^i_{\bar{e},j}: j \in \Gamma\},$ which approximate one another and base-points $\bp_{\Gamma}: = \{p_j: j \in \Gamma\}$ and thus we form the closed multi-diagram $\cD_{{\Gamma}}$ given by the tuple $(\Sigma_\Gamma; \bdelta^1_{\Gamma}, \dots, \bdelta^l_{\Gamma}; \bp_{\Gamma} \cup \bp_{\overline{\Gamma}})$.

It follows from the above proposition that for $j \in \Gamma$ the domains $B_{j}$ are contained in $\dot\Sigma_\Gamma$, and thus have a corresponding domain $B_\Gamma \in \dom(\cD_{\Gamma})$, given by the sum of their images in $\Sigma_\Gamma$. Moreover, the domain $B_\Gamma$ lives in the homology group $\pi_2(\vec\btheta(\lambda)|_{\Gamma})$, where $\vec\btheta(\lambda)|_{\Gamma}$ is the restriction of $\vec\btheta(\lambda)$ to the curves $\bdelta^1_{\Gamma}, \dots, \bdelta^\lambda_{\Gamma}$.

 Considering the surface $\dot{\Sigma_\Gamma}$ as a subset of both surfaces $\Sigma(\varepsilon, \arrowiota)_{\bar{e}}$ and $\Sigma_\Gamma$, we choose a family of almost-complex structures $\bJ'$ upon $\Sigma_\Gamma \times D_l$ such that their restriction to $\dot{\Sigma}_\Gamma \times D_l$ agrees with the restriction of $\bJ$.

For every $j \in \overline{\Gamma}$, we identify the handle $\handle_j$ with the punctured torus $\Sigma_j  : = T^2 - e^\infty_j$ formed by gluing the ends $\handle_{L,j}$ and $\handle_{R,j}$ (more properly, we identify the handle with its end circles removed with an open subset of the torus). The curves and arcs $\delta^1_j \cap \handle_j, \dots, \delta^\lambda_j \cap \handle_j$ have natural images in $T^2$, for which we abuse notation and call $\delta^1_j, \dots, \delta^\lambda_j$. Together with the image of the base-point $b_j$, the tuple $\cD_j : = (\Sigma_j; \delta^1_j, \dots, \delta^\lambda_j; b_j)$ forms a toroidal bordered Heegaard multi-diagram. The domain $B_j$ has a natural image in $\dom(\cD_j)$ which we also denote by $B_j$, and lives in the group $\pi_2(\theta(\lambda)^1_j, \dots, \theta(\lambda)^\lambda_j)$. We choose a family of almost-complex structures $\bJ_j$ upon $\Sigma_j \times D_\lambda$ which agree with the almost-complex structure $\bJ$ upon $\handle_j \times D_\lambda$.

It follows that the moduli space $\cM^{B_E}(\vec\btheta(\lambda); S_E; \arrowrho_\boat)$ splits as a fibred product
$$\cM^{B_E}(\vec\btheta(\lambda); S_E; \arrowrho_E) = \cM_\Gamma \times_{\conf(D_\lambda)} \cM_1 \times_{\conf(D_\lambda)} \dots \times_{\conf(D_\lambda)} \cM_{|\overline{\Gamma}|},$$
where $\cM_\Gamma = \cM^{B_\Gamma}(\vec\btheta(\lambda)|_{\Gamma}; S_\Gamma; \bJ')$ is the moduli space defined by counting holomorphic polygons according to $\cD_{\Gamma}$ as defined in Section \ref{polygonmodulispaces}, and each $\cM_j$ is the moduli space $\cM^{B_j}(\theta(\lambda)^{1,2}_j, \dots, \theta(\lambda)^{\lambda,1}_j; S_j; \arrowrho_{\boat, j}; \bJ_j)$ defined by counting bordered polygons according to $\cD_j$, as defined in Section \ref{borderedpolygonmodulispaces}.

\begin{prop}\label{projectionmodulis}There is a unique homology class $D_\Gamma \in \dom(\cD_\Gamma)$ which carries a holomorphic representative in the moduli space $\cM_\Gamma$. This lives in the homology group $\pi_2(\vec\btheta_+(\lambda)|_\Gamma)$. Moreover, the projection of the moduli space $\cM_\Gamma$ onto $\conf(D_\lambda)$ has degree one for this homology class: thus $\cM^{B_E}(\btheta(\lambda); S_E; \arrowrho_\boat)$ is non-empty only if the corresponding $B_{\Gamma} = D_\Gamma$, and $\vec\btheta(\lambda)|_\Gamma = \vec\btheta_+(\lambda)|_\Gamma$. In this case, $\cM^{B_E}(\vec\btheta(\lambda); S_E; \arrowrho_E)$ has the same number of elements as $$\cM_1 \times_{\conf(D_\lambda)} \dots \times_{\conf(D_\lambda)} \cM_{|\overline{\Gamma}|}.$$
\end{prop}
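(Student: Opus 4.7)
The plan is to reduce this to the standard small polygon count in a collection of Hamiltonian perturbations of a single set of attaching circles. The diagram $\cD_\Gamma = (\Sigma_\Gamma; \bdelta^1_\Gamma, \dots, \bdelta^\lambda_\Gamma; \bp_\Gamma \cup \bp_{\overline\Gamma})$ has attaching curves which, by construction, are $\varepsilon/m$-small Hamiltonian perturbations of a single set $\balpha_\Gamma$ of $g+h-1-|\overline\Gamma|$ simple closed curves; there is a distinguished top generator $\vec\btheta_+|_\Gamma$ arising from our choice $\theta_{+,j}^{i,i+1}$ in each pair $\delta^i_j \cap \delta^{i+1}_j$. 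So the statement is a model computation about counts of holomorphic $\lambda$-gons in a diagram whose attaching curves are all small perturbations of one another, pinned at $\bp_{\overline\Gamma}$.

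First, I would identify the unique candidate domain $D_\Gamma$. For $\varepsilon$ sufficiently small, every region of $\Sigma_\Gamma - (\bdelta^1_\Gamma \cup \cdots \cup \bdelta^\lambda_\Gamma)$ outside the approximation region meets a base-point of $\bp_\Gamma \cup \bp_{\overline\Gamma}$, so by the analogue of Proposition~\ref{easterlyinapprox}, any domain carrying a positive holomorphic representative is supported in the approximation region. A direct inspection using the Euler-measure formula of Definition~\ref{eulermeasure} together with the index formula of Proposition~\ref{indpoly} shows the unique such domain of index $\lambda-2$ with positive multiplicities is the ``small $\lambda$-gon'' $D_\Gamma$ consisting of one small triangle (for $\lambda=3$) or inductively constructed small polygon attached to each pair of curves $\delta^i_j, \delta^{i+1}_j$; its corners are forced to be the canonical $\theta^{i,i+1}_{+,j}$. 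This forces both $\vec\btheta(\lambda)|_\Gamma = \vec\btheta_+(\lambda)|_\Gamma$ and $B_\Gamma = D_\Gamma$ whenever the moduli space is non-empty.

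Next, I would establish the degree-one statement. Here I would use a degeneration/neck-stretching argument in the spirit of Ozsv\'ath--Szab\'o's original triangle computation (cf.\ also \cite{LOT:SSII} \S 5 and \cite[Section 6.2]{auroux:fukaya}): as $\varepsilon \to 0$ the surface $\Sigma_\Gamma$ together with its almost-complex structure rescales near the approximation region, and by the area bound any holomorphic representative of $D_\Gamma$ is forced into this arbitrarily thin neighbourhood. In this thin-neighbourhood limit, the moduli space $\cM_\Gamma$ fibres over $\conf(D_\lambda)$ and the fibre at each $j \in \conf(D_\lambda)$ is modelled on a product of small polygon moduli spaces, one per circle $\alpha_{\Gamma, k}$; the standard model computation (essentially, the Morse-theoretic small-perturbation lemma) shows each such factor has exactly one transverse element and the product projects with degree one. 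Consistency of the family $\bJ'$ together with Propositions~\ref{borderedtransversality} and~\ref{perturbingidentification} imply the degree is independent of $\varepsilon$, giving degree one for our chosen $\varepsilon$.

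The final assertion about $\cM^{B_E}(\vec\btheta(\lambda); S_E; \arrowrho_E)$ then follows from the fibred-product decomposition displayed just before the proposition: since the projection $\cM_\Gamma \to \conf(D_\lambda)$ has degree one, for each point of $\conf(D_\lambda)$ there is a unique element contributed by the $\Gamma$-factor, so the fibred product simplifies to the stated product of bordered moduli spaces. The main technical obstacle I expect is the degeneration argument in the middle step: one must check transversality survives the rescaling (handled by the neck-stretching transversality technology already invoked for Proposition~\ref{borderedpolytransversality}) and that no extra components can bubble off as $\varepsilon \to 0$, which is where homological linear independence of $\bp_\Gamma \cup \bp_{\overline\Gamma}$ and the positivity of domain multiplicities are used decisively.
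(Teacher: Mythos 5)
Your proof takes a genuinely different route from the paper: the paper does not prove the proposition at all, but simply cites existing results in the literature (Lemma~3.50 of \cite{LOT:SSII} and Lemma~6.17 of \cite{Manolescu:linksurgeries}), whereas you reconstruct the underlying argument from scratch. Both are legitimate, and your sketch hits the structure actually used in those references: (i) pin down the candidate domain by forcing positive multiplicities into the thin approximation region, (ii) prove the degree-one count for that domain by a degeneration argument pinned at the distinguished generators, and (iii) feed the result into the fibred-product decomposition. This buys you a more self-contained exposition at the cost of redoing a standard model computation; the paper's citation buys brevity.

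Two small caveats worth flagging if you were to flesh this out. First, your appeal to an ``analogue of Proposition~\ref{easterlyinapprox}'' for $\cD_\Gamma$ is unnecessary as stated: the domain $B_\Gamma$ is obtained by restriction from $B_E$, and Proposition~\ref{easterlyinapprox} already places $B_E$ (hence $B_\Gamma$) in the approximation region. What you do still need is that \emph{every} holomorphic representative in $\cM_\Gamma$ --- not just those arising as restrictions --- has domain in the approximation region, which follows from the fact that (for $\varepsilon$ small) every region of $\Sigma_\Gamma - \bigcup_i\bdelta^i_\Gamma$ outside the approximation region meets one of the base-points $\bp_\Gamma \cup \bp_{\overline\Gamma}$. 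Second, your assertion that the small $\lambda$-gon has ``index $\lambda-2$'' does not line up cleanly with the dimension counting implicit in the statement: for the projection $\cM_\Gamma \to \conf(D_\lambda)$ to have a well-defined degree one, $\cM_\Gamma$ must be a $(\lambda-3)$-manifold, and the reference to Proposition~\ref{borderedtransversality} (which concerns bordered bigons, not multi-diagram polygons) should be to the closed multi-diagram transversality statement quoted from \cite[Proposition~3.8]{LOT:SSII}. These are conventions/citation slips rather than logical gaps; the argument itself is sound.
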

\begin{proof}This is well-known: see, for example, \cite[Lemma 3.50]{LOT:SSII} or \cite[Lemma 6.17]{Manolescu:linksurgeries}.\end{proof}

\begin{prop}\label{triangledomains}Let $B_j, S_j, \arrowrho_{\boat, j}$ be such that $\ind(B_j, S_j, \arrowrho_{\boat,j}) = 2-\lambda$. For each $j$, the moduli space $\cM^{B_j}(\theta(\lambda)^{1,2}_j, \dots, \theta(\lambda)^{\lambda,1}_j; S_j; \arrowrho_{\boat, j}; \bJ_j)$ (of expected dimension zero) is nonzero only if either:\begin{itemize} \item $\lambda = 2$ and $\arrowrho_{\boat,j}$ contains a single jumping chord, or \item$\lambda = 3$ and $\arrowrho_{\boat, j}$ is empty. \end{itemize} In these cases there is only a single domain for which the moduli space is non-empty, and in these cases it contains a unique element.
\end{prop}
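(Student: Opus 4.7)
The plan is to reduce the claim to a model computation inside the handle $\handle_j$ and then classify all positive domains compatible with the index constraint $\ind = 2-\lambda$. First, by the discussion immediately preceding the statement together with Proposition~\ref{easterlyinapprox}, any domain $B_j \in \dom(\cD_j)$ carrying a holomorphic representative must be supported in the portion of the handle where the curves $\delta^i_j$ are either $\varepsilon$-small Hamiltonian perturbations of one another or (when $\delta^1_j$ is the set of arcs) approximations of the original $\alpha$-arcs. The generators $\theta(\lambda)^{i,i+1}_j$ are the canonical nearest-point intersections, all of which cluster within $O(\varepsilon)$-neighbourhoods of the intersection points of the original curves. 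Consequently any positive domain with all corners at $\vec\btheta_+(\lambda)$-intersections has total area $O(\varepsilon)$ unless its boundary uses one of the $e^{\infty,i}_j$ punctures, which occurs exactly when $\arrowrho_{\boat,j}$ contains a jumping chord.

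Next, I would combine the bordered polygon index formula (Proposition~\ref{bordedpolyindex}), applied with $g=1$, $h=1$, and $\col(\sigma)=0$ in $\cD_j$, with positivity of domains. A positive domain with all corners acute at $\theta_+$-intersections has Euler measure determined by its elementary combinatorics, and the equation $\ind(B_j,S_j,\arrowrho_{\boat,j})=2-\lambda$ together with the area bound above yields a short enumeration of possibilities. The trivial domain $B_j\equiv 0$ is unstable and so is excluded from $\cM$; any non-trivial positive domain concentrated in the $\varepsilon$-approximation region must be either the small triangle spanned by the three nearest-point intersections ($\lambda=3$ with $\arrowrho_{\boat,j}=\emptyset$), or a bigon threading one puncture $e^{\infty,i}_j$ with boundary tracing a single jumping Reeb chord ($\lambda=2$ with $\jump(\arrowrho_{\boat,j})$ of length one).

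In the $\lambda=3$ case, the domain is the standard small triangle, which by a classical Riemann mapping argument (cf.\ \cite{Ozsvath-Szabo:2001} and \cite{Lipshitz:cylindrical}) carries a unique embedded holomorphic representative with source $D_3$. In the $\lambda=2$ case, the domain is the unique bigon wrapping the puncture $e^\infty_j$ in the direction prescribed by the jumping chord; this is exactly the bigon arising in the identity type-$DD$ bimodule computation for the parameterised torus \cite{LOT}, and a direct uniformisation argument (or appeal to that computation) identifies it with a single holomorphic representative.

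The main obstacle is ruling out contributions for $\lambda\geq 4$, and for $\lambda=2$ with Reeb-chord sequences other than a single jumping chord. In these cases the expected dimension $2-\lambda\leq -2$, combined with the area bound and the fact that nearest-point intersections in the approximation region are all acute, forces a positivity or combinatorial contradiction: no positive domain with solely acute corners at $\vec\btheta_+(\lambda)$-intersections and the requisite Euler measure exists. This case analysis is the technical crux of the argument, and corresponds precisely to the vanishing of the higher $\cA_\infty$ structure maps in the identity type-$DD$ bimodule of the torus.
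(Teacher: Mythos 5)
There is a genuine gap, and it centers on how you rule out $\lambda \geq 4$. You write that ``the expected dimension $2-\lambda \leq -2$'' forces a contradiction, but this conflates the \emph{index} with the \emph{expected dimension of the moduli space}: the constraint $\ind(B_j,S_j,\arrowrho_{\boat,j}) = 2-\lambda$ makes the moduli space zero-dimensional (the paper explicitly parenthesizes this), so no dimension-negativity argument applies directly. More seriously, the positivity claim --- that no positive domain with the requisite corners exists for $\lambda \geq 4$ --- is false as stated: for odd $\lambda > 3$, the paper's proof exhibits a unique, positive, embedded $\lambda$-gon in $\pi_2(\theta(\lambda)^{1,2}_j, \dots, \theta(\lambda)^{\lambda,1}_j)$ (with $\tfrac{\lambda+3}{2}$ acute and $\tfrac{\lambda-3}{2}$ obtuse corners). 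The obstruction is not positivity but an Euler-measure/index computation showing this domain's index disagrees with $2-\lambda$ unless $\lambda = 3$. Your ``positivity or combinatorial contradiction'' does not distinguish this case from the even-$\lambda$ case (where $\pi_2$ genuinely is empty), and without the explicit index bookkeeping the argument does not close.

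You also omit two steps the paper uses to set up that index computation. First, the corner $\theta(\lambda)^{\lambda,1}_j \in \delta^{\lambda}_j \cap \delta^1_j$ is \emph{not} a distinguished nearest-point generator (unlike the $\theta_+^{i,i+1}$ for $i < \lambda$); the proof must examine both choices and show only one is compatible with a non-empty $\pi_2$ of the right index. Second, and more substantively, the paper first rules out configurations in which two consecutive $\delta^i_j, \delta^{i+1}_j$ are small approximations of one another, by gluing a holomorphic anchor (Propositions~\ref{gluinganchored} and \ref{anchorsawayfromedge}) to land in a moduli space of strictly negative expected dimension --- only after this reduction does one know the curves alternate between transversal intersection and approximation, which is what makes the subsequent enumeration of domains finite. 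Your ``area $O(\varepsilon)$'' heuristic could plausibly be sharpened into a rigorous substitute for this gluing step, but as written it does not engage with the alternating structure and so does not yield the enumeration needed. The $\lambda=2,3$ model computations you invoke are consistent with the paper's Figure~\ref{fig:eastpieces}, so that part is fine.
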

\begin{proof}
We assume that $\arrowrho_{\boat,j}$ is empty --- the case where it is non-empty follows similarly, requiring only the minor modification of changing which index formulae are used. Suppose initially that $l>3$.

If, for some (non-cyclically) consecutive $i, i+1$, the curves $\delta_j^i, \delta_j^{i+1}$ are approximations of one another, then by Propositions \ref{gluinganchored} and \cite[Lemma 11.8]{Lipshitz:cylindrical} (cf. Proposition \ref{anchorsawayfromedge} below), there is a suitable holomorphic anchor which may be glued to any curve $u \in \cM^{B_j}(\theta(\lambda)^{1,2}_j, \dots, \theta(\lambda)^{\lambda,1}_j; S_j; \arrowrho_{\boat, j})$ of index $2-\lambda$ to yield an element $u'$ of the moduli space $$\cM' = \cM^{B'_j}(\theta(\lambda)^{1,2}_j, \dots, \theta(\lambda)^{i-1,i}, \theta(l)^{i+1, i+2}, \dots, \theta(\lambda)^{\lambda,1}_j; S'_j; \arrowrho_{\boat, j})$$ where the domain $B_j'$ has Euler measure $e(B_j') = e(B_j) - 1/4$. It follows from the index formula that $$\ind(B_j', S_j', \arrowrho_{\boat,j}) = \frac{3-(\lambda-1)}{2} - \chi(S_j') + 2e(B_j') = \ind(B_j,S_j, \arrowrho_{\boat,j}),$$
which implies that the moduli space $\cM'$ has negative expected dimension. As we are working with a generic family of almost-complex structures, this implies that such a $u'$ (and thus such a $u$) does not exist --- thus if any (non-cyclically) adjacent $\delta_j^i, \delta_j^{i+1}$ approximate one another, the corresponding moduli space is empty.

So we may assume that the family $\delta_j^1, \dots, \delta_j^{\lambda}$ satisfies:
\begin{enumerate}
\item For every $i = 1, \dots, \lambda-1$, $\delta_j^i$ and $\delta_j^{i+1}$ intersect transversally in a single point.
\item For every $i = 1, \dots, \lambda-2$, the curves $\delta_j^i, \delta_j^{i+2}$ are small approximations of one another.
\item If $\lambda$ is even, then the curves $\delta^\lambda_j, \delta^1_j$ intersect transversally in a single point; if $\lambda$ is odd, then the curves $\delta^\lambda_j, \delta^1_j$ are small approximations of one another.
\end{enumerate}

\begin{figure}[h]
\centering{
\makebox[\textwidth]{\scalebox{0.7}{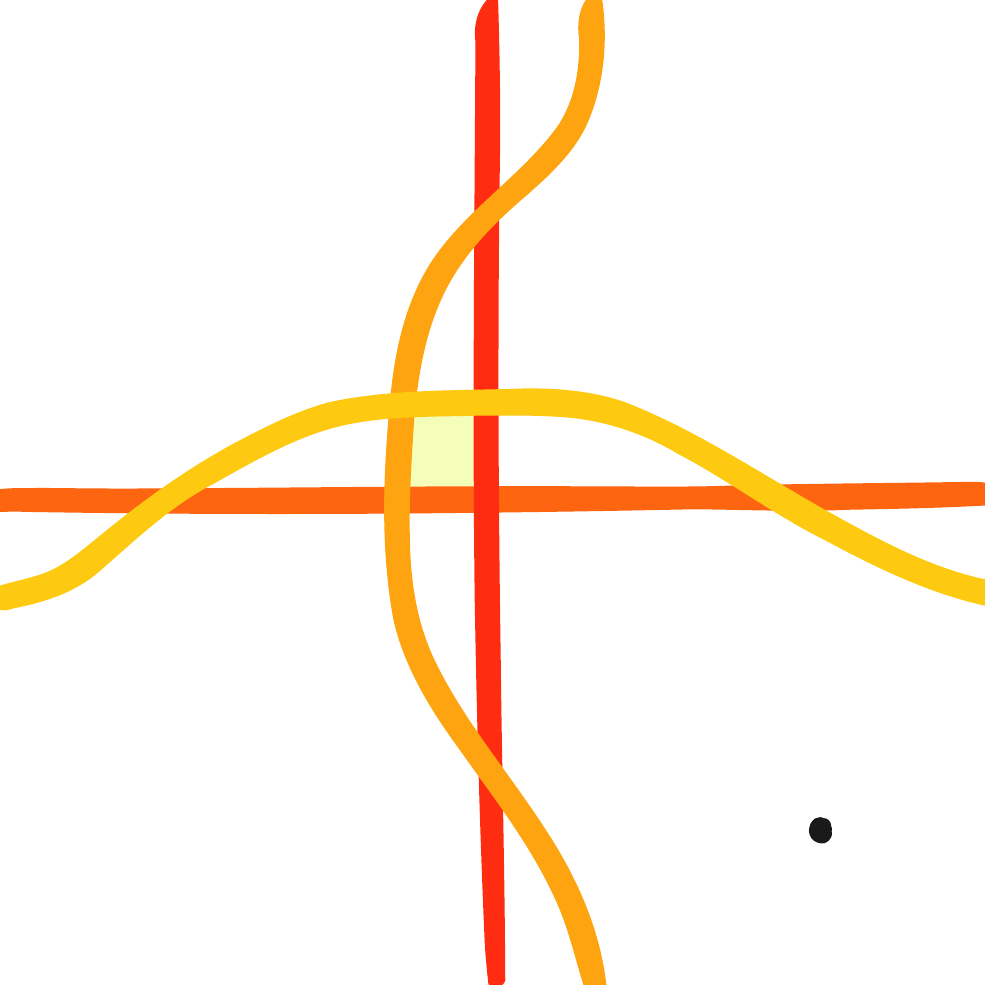}\quad \quad \scalebox{0.7}{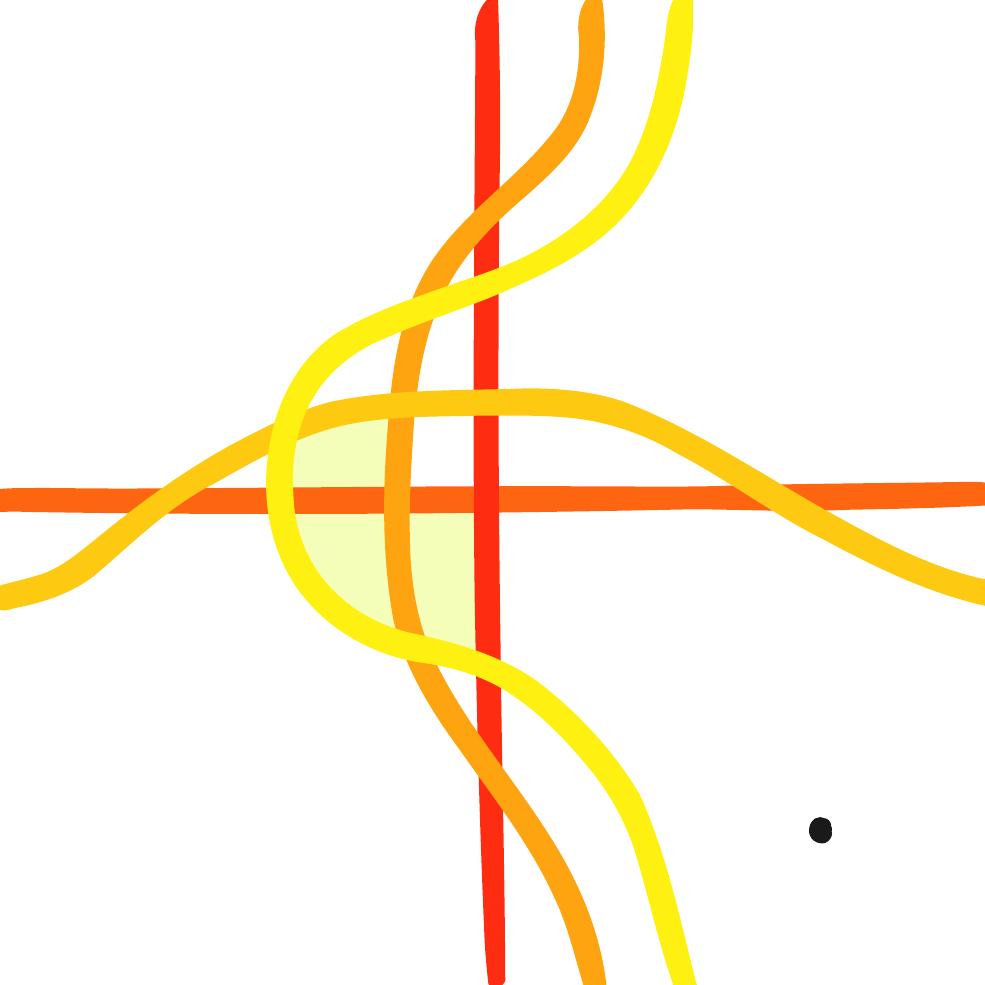}} \caption{Ruling out $\lambda > 3$. On the left is shown the case where $l=4$, and the only possible domain connecting the intersection points shown has the incorrect orientation. On the right is shown the case where $\lambda = 5$: the domain shown has index $-3$, thus the corresponding moduli space is of negative expected dimension.}
\label{fig:rulingouteasterly}
}
\end{figure}

If $\lambda$ is even, then it follows that $\theta_j \in \delta^l_j \cap \delta^1_j$ is uniquely determined and it is straightforward to see that the corresponding homology group is empty (see Figure \ref{fig:rulingouteasterly} for a picture when $l = 4$).

If $l > 3$ is odd, then $\theta_j$ is also uniquely determined for non-empty moduli spaces:  if $\theta_j = \theta_{j,+}$ then a similar gluing argument to above shows that the corresponding moduli space is empty. We can see that for general odd $l$, there is a unique element of $\pi_2(\theta(\lambda)^{1,2}_j, \dots, \theta(\lambda)^{\lambda,1}_j)$ --- an embedded $\lambda$-gon with $\frac{\lambda+3}{2}$ acute corners and $\frac{\lambda - 3}{2}$ obtuse cotners. It follows from the index formula that the expected dimension of $\cM_{\bar{e}}^{B_j}((\theta(\lambda)^{1,2}_j, \dots, \theta(l)^{\lambda,1}_j; S_j)$ is equal to $\frac{3-\lambda}{2}$. As such, this domain has index $3-\lambda$ if and only if $\lambda = 3$, as claimed in the proposition. (See Figure \ref{fig:rulingouteasterly} for the case where $\lambda = 5$.)

There are six different possible configurations for the curves, with corresponding unique domain --- as shown in Figure \ref{fig:eastpieces}.

\begin{figure}[h]
\centering{
\makebox[\textwidth]{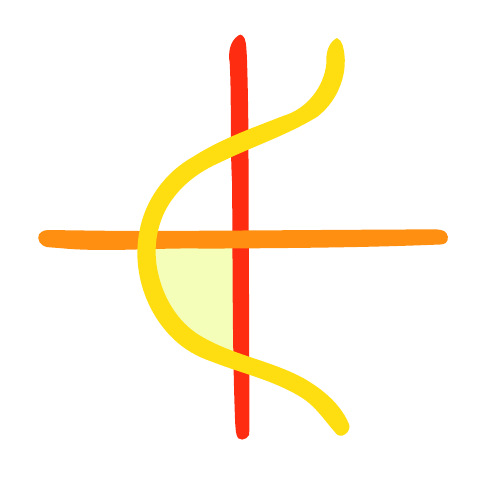\quad 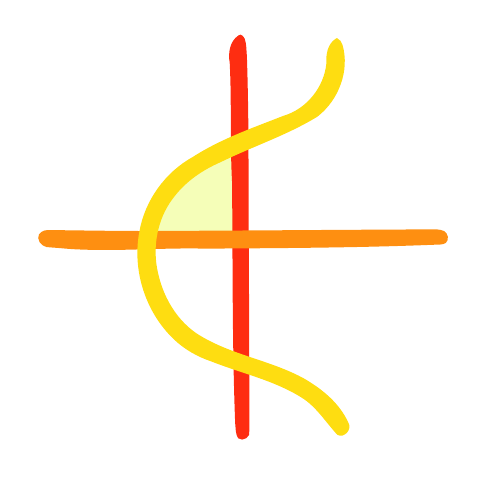 \quad 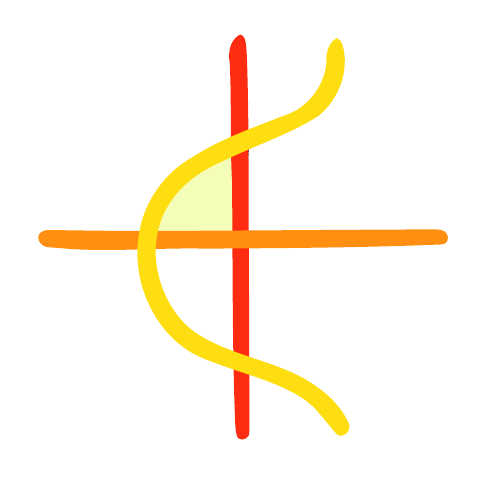 } \caption{Three possible configurations for the curves, and the corresponding domains are shown (assuming that $l = 3$) --- the other three are obtained by rotating these diagrams by $\pi/2$ clockwise. On the left is a \emph{$(\rho_1, \rho_2)$-composition piece}, and on the right are two \emph{collision pieces}, a \emph{pre-collision} and \emph{post-collision} respectively. Rotating these gives a \emph{$(\rho_2, \rho_3)$-composition piece} and two collision pieces respectively.}
\label{fig:eastpieces}
}
\end{figure}

\end{proof}

We give things names:

\begin{defn}We separate the possible domains near $e^\infty_j$ described in Proposition \ref{triangledomains} into types, as indicated in Figure \ref{fig:eastpieces}: \emph{$(\rho_1, \rho_2)$-composition pieces},  \emph{$(\rho_2, \rho_3)$-composition pieces} and \emph{collision pieces}.

Suppose that $\lambda = 3$ and $B_E \in \pi_2(\vec\btheta(\lambda))$ is a domain with no Reeb chords. We say $B_E$ is a \emph{$(\rho_1, \rho_2, j)$-composition domain} (resp. \emph{$(\rho_2, \rho_3, j)$-composition domain}) if the set $\overline{\Gamma}$ has two elements $j, j'$, and $B_j$ is a $(\rho_1, \rho_2)$-composition piece (resp. $(\rho_2, \rho_3)$-composition piece) and $B_{j'}$ is a collision piece.

Similarly, we say $B_E$ is a \emph{collision domain} if $\overline{\Gamma}$ has two elements $j, j'$ and $B_j$ and $B_{j'}$ are both collision pieces.

A similar definition may be made for the case $\lambda=2$, where the corresponding domain has at most one jumping Reeb chord upon each boundary component.
\end{defn}
It follows almost immediately that these are all of the possibilities for easterly domains:
\begin{prop}\label{easterlyoptions}Suppose $B_E$ is such that $\cM^{B_E}(\vec\btheta(\lambda); S; \arrowrho_\boat)$ is non-empty. Then $\lambda = 3$ or $\lambda = 2$, and either:
\begin{itemize}\item $B_E$ is a $(\rho_1, \rho_2, j)$-composition domain for some $j$, and $\theta^{\kappa, \kappa+\lambda}_j = \theta^{\kappa,\kappa+\lambda}_{-, j}$ and $\theta^{\kappa,\kappa+\lambda}_{j+1} = \theta^{\kappa,\kappa+\lambda}_{+, j+1}$, or
\item$B_E$ is a $(\rho_2, \rho_3, j)$-composition domain for some $j$, and $\theta^{\kappa,\kappa+\lambda}_j = \theta^{\kappa,\kappa+\lambda}_{-, j}$ and $\theta^{\kappa,\kappa+\lambda}_{j+1} = \theta^{\kappa,\kappa+\lambda}_{+, j+1}$, or
\item$B_E$ is a collision domain, and we have that $\theta^{\kappa,\kappa+\lambda}_j = \theta^{\kappa,\kappa+\lambda}_{+, j}$ and $\theta^{\kappa,\kappa+\lambda}_{j+1} = \theta^{\kappa,\kappa+\lambda}_{+, j+1}$ for some $j$.
\end{itemize}
(and similar for $l=2$).
In these cases, $\cM^{B_E}(\vec\btheta(\lambda); S; \arrowrho_\boat)$ contains a unique holomorphic curve.
\end{prop}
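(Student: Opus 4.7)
The plan is to combine the fibred product decomposition constructed in the paragraphs preceding the statement with the two immediately preceding propositions (Proposition \ref{projectionmodulis} and Proposition \ref{triangledomains}), and then carry out a case analysis on the possible configurations at each handle $\handle_j$ with $j \in \overline{\Gamma}$.

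First, I would invoke Proposition \ref{easterlyinapprox} to split $B_E$ as $B_\Gamma + \sum_{j \in \overline{\Gamma}} B_j$ and write
\[
\cM^{B_E}(\vec\btheta(\lambda); S_E; \arrowrho_E) \;=\; \cM_\Gamma \times_{\conf(D_\lambda)} \prod_{j \in \overline{\Gamma}} \cM_j.
\]
Applying Proposition \ref{projectionmodulis} to the $\Gamma$-factor immediately forces $B_\Gamma = D_\Gamma$ and $\vec\btheta(\lambda)|_\Gamma = \vec\btheta_+(\lambda)|_\Gamma$; in particular $\theta^{\kappa+\lambda,\kappa}_j = \theta^{\kappa+\lambda,\kappa}_{+,j}$ for every $j \in \Gamma$. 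Since the projection $\cM_\Gamma \to \conf(D_\lambda)$ is of degree one, non-emptiness of the fibred product reduces to non-emptiness of each $\cM_j$ individually, and uniqueness of the curve in $\cM^{B_E}$ will follow from uniqueness in each factor.

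Next, I would apply Proposition \ref{triangledomains} to each $j \in \overline\Gamma$ in turn. This restricts $\lambda$ to $\{2,3\}$, and in each case pins down $B_j$ uniquely as one of the six local pieces enumerated in Figure \ref{fig:eastpieces}: either a $(\rho_1,\rho_2)$- or $(\rho_2,\rho_3)$-composition piece, or one of the two collision pieces. A composition piece forces the $\theta^{\kappa+\lambda,\kappa}_j$-corner to be $\theta^{\kappa+\lambda,\kappa}_{-,j}$ and contributes a jumping Reeb chord to $\arrowrho_{E,j}$; a collision piece forces the corner to be $\theta^{\kappa+\lambda,\kappa}_{+,j}$ and contributes no Reeb chord. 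This already supplies both the case trichotomy of the proposition and the prescribed values of the distinguished generators on components $j, j+1$, up to the final combinatorial step.

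The last and most delicate step is to show $|\overline\Gamma| = 2$. The idea is that an index $j$ belongs to $\overline\Gamma$ precisely when either a jumping chord lies in $\arrowrho_{E,j}$ or the sequence $\delta^1_j,\dots,\delta^\lambda_j$ changes idempotent. Since $\bdelta^1$ and $\bdelta^\lambda$ are related by a chain of small approximations away from these indices, the idempotent sequence $(\iota^1,\dots,\iota^\lambda)$ must close up consistently: any change of meridional/longitudinal type at some $j$ must be balanced by a compensating change at another index. Combined with the fact that each piece in Figure \ref{fig:eastpieces} is supported in a single handle and has a specific local idempotent pattern, this forces precisely two elements of $\overline\Gamma$, and the admissible pairings are exactly the three listed. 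I expect this combinatorial cyclic-consistency argument to be the main obstacle; once it is in hand, the three cases of the proposition together with the stated values of the corner generators follow by direct inspection of Figure \ref{fig:eastpieces}, and uniqueness of the holomorphic representative is inherited from uniqueness in each factor of the fibred product.
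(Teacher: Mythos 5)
Your reduction via the fibred product decomposition and Propositions \ref{projectionmodulis} and \ref{triangledomains} matches the paper's approach exactly; those steps are fine. The gap is in your final step, where you try to bound $|\overline{\Gamma}|$.

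You propose a ``cyclic consistency'' argument: that the idempotent sequence $(\iota^1,\dots,\iota^\lambda)$ must close up, so any meridional/longitudinal change at some $j$ is ``balanced by a compensating change at another index.'' This does not work. Closure of the cyclic sequence $\delta^1_j,\dots,\delta^\lambda_j,\delta^1_j$ is a \emph{per-component} parity constraint: for each fixed $j$ the number of type-changes around the cycle is even. It says nothing about how many distinct indices $j$ can exhibit a change, and it certainly does not force $|\overline{\Gamma}|\le 2$. There is no ``transfer'' of idempotent changes between different boundary components $j$ and $j'$; each $j$ closes up independently.

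The ingredient you are missing is the standing hypothesis, implicit from the setting of Theorem \ref{evenends} and Proposition \ref{compcollidentifications}, that the splicing $\vec\sigma_\boat\star\vec\sigma_\anchor$ of the ambient moduli space is \emph{interleaved}. Interleaving is a per-column constraint: for each step $i\to i+1$ exactly one $j$ has $P^{i}_j$ non-empty, i.e.\ exactly one $j$ with $\delta^{i}_j\not\approx\delta^{i+1}_j$. Since $\lambda\le 3$ there are at most two such steps, giving $|\overline{\Gamma}|\le 2$ immediately. The paper then reads off the case trichotomy directly: if the step-$1$ change and step-$2$ change occur at the same $j$ one gets a composition piece there and a collision piece at the other non-trivial index; if they occur at distinct $j,j'$ one gets a pre-collision piece at one and a post-collision piece at the other. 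The values of the corner generators $\theta^{\kappa,\kappa+\lambda}$ and the uniqueness of the holomorphic representative then follow as you say, from Proposition \ref{triangledomains} and the degree-one projection of Proposition \ref{projectionmodulis}. So your overall architecture is right, but you must replace the cyclic-consistency step with an appeal to the interleaving hypothesis; without it the bound $|\overline{\Gamma}|\le 2$ has no proof.
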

\begin{proof}By Proposition \ref{projectionmodulis}, the domain is uniquely determined by its restriction to handles $\handle_j$ where $j \in \overline{\Gamma}$, and by Proposition \ref{triangledomains} we know that $\lambda = 3$ or $\lambda = 2$.

As we assumed $\sigma_\boat \star \sigma_\anchor$ was interleaved, the set $\overline{\Gamma}$ having more than two elements is impossible: only one curve in $\bdelta^2$ is not an approximation of one in $\bdelta^1$, and only one curve in $\bdelta^3$ is not an approximation of one in $\bdelta^2$. As such, we only need check the possibilities for $B_j$ near each element $j, j'$ of $\overline{\Gamma}$. There are two cases which may occur for this: either (without loss of generality), $B_j$ is a composition piece and $B_{j'}$ is forced to be a collision piece (else too many curves from $\bdelta^1$ are not approximations of curves from $\bdelta^2$); or $B_j$ is a pre-collision piece, forcing $B_{j'}$ to be a post-collision piece (for the same reason). The case where one piece is a composition piece splits depending upon whether $\delta^1_j$ is meridional or longitudinal.

A similar argument applies for the case where $\lambda=2$.
\end{proof}

We use this to give names to types of mixed two-story trees of holomorphic curves:
\begin{defn}\label{mixednames}Let $$\cM : = \cM^B(\bx, \vec\btheta_+(k), \by; S; \arrowrho_\boat, \arrowrho_\anchor; \sigma_\boat, \sigma_\anchor)$$ be a one-dimensional moduli space, and $U = (u,v)$ be a height two tree of prescribed-cut polygons in a mixed end of the boundary of $\overline{\cM}$, where $$u \in \cM^{B'}(\bx, \vec\btheta_+(k-\lambda), \bw; S'; \arrowrho_\boat, \arrowrho'_\anchor; \sigma_\boat, \sigma'_\anchor)$$ is spinal and $v \in \cM^{B_E}(\vec\btheta(
\lambda); S_E; \arrowrho_{E})$ is easterly.

We shall say that $U$ is:
\begin{itemize}\item an $(\kappa,j)$-composition end if $B_E$ is a $(\rho_1, \rho_2, j)$-composition or $(\rho_2, \rho_3,j)$-composition domain; or
\item a $\kappa$-collision end if $B_E$ is a collision domain.
\end{itemize}
\end{defn}
By Proposition \ref{easterlyoptions}, any mixed $U \in \partial M$ is either an $(i,j)$-composition end or a $k$-collision end.

We conclude this section by identifying composition and collision ends with recognisable moduli spaces:
\begin{prop}\label{compcollidentifications}Suppose that
\begin{align*}\cM'  &:= \cM^{B'}(\bx, \btheta_+^{1,2}, \dots, \btheta_+^{\kappa-1, \kappa}, \btheta^{\kappa, \kappa+3}, \btheta_+^{\kappa+3, \kappa + 4}, \dots, \btheta_+^{k-1, k}, \by; S'; \arrowrho_\boat, \arrowrho'_\anchor; \sigma_\boat, \sigma'_\anchor)\\
\cM_E  &:=\cM^{B_E}(\btheta_+^{\kappa, \kappa+1}, \btheta_+^{\kappa+1, \kappa+2}, \btheta^{\kappa+2, \kappa}; S_E; \arrowrho_{E}),\end{align*}
 are such that $\cM' \times \cM_E \subset \partial \overline{\cM}$. 

If $B_E$ is a $(\rho_1, \rho_2, j)$-composition domain (resp. $(\rho_2, \rho_3,j)$-composition domain), then $\cM' \times \cM_E$ contains the same number of elements modulo two as the moduli space  $$\cM_{\text{comp}}: =\cM^{B'}(\bx, \vec\btheta_+(k-1), \by; S'; \arrowrho_\boat, \bar{\mu}^\kappa_j(\arrowrho_\anchor); \sigma_\boat; \sigma_\anchor, (\sigma_\boat)^\kappa_j).$$

Similarly, if $B_E$ is a collision domain, then the moduli space $\cM' \times \cM_E$ contains the same number of elements modulo two as
$$\cM_{\text{coll}}:=\cM^{B'}(\bx, \vec\btheta_+(k-1), \by; S'; \arrowrho_\boat, \arrowrho_\anchor; \sigma_\boat, \sigma_\anchor(\kappa)).$$
\end{prop}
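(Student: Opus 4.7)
My plan has two main steps. First, I would invoke Proposition \ref{easterlyoptions}, which guarantees that in both the composition and collision cases, the moduli space $\cM_E$ of easterly pieces contains exactly one holomorphic representative. Consequently the cardinality of $\cM' \times \cM_E$ agrees with that of $\cM'$ modulo two, and the problem reduces to exhibiting a bijection (or at worst an identification modulo two) between $\cM'$ and either $\cM_{\text{comp}}$ or $\cM_{\text{coll}}$.

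For the composition case, I would construct the bijection $\cM' \to \cM_{\text{comp}}$ at the level of domains via the map $\phi$ of ``taking multiplicities away from the approximation region'' introduced at the end of Section \ref{splayingdomains}. Concretely, a curve $u \in \cM'$ has a domain $B'$ whose restriction to each neighbourhood $N_j$ of $e^\infty_j$ with $j \in \overline{\Gamma}$ is exactly a composition piece or collision piece in the sense of Proposition \ref{triangledomains}; subtracting these local contributions (i.e.\ applying $\phi$ with respect to the approximation region of $\bgamma^{\kappa+1}, \bgamma^{\kappa+2}$) yields a domain in the corresponding partially splayed diagram with $\kappa+1$ and $\kappa+2$ collapsed, and the $\theta$-constraints recorded in Proposition \ref{easterlyoptions} are precisely what is needed for this $\phi(B')$ to be a valid domain with the composed Reeb chord data $\bar{\mu}^\kappa_j(\arrowrho_\anchor)$. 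Combined with Proposition \ref{regularisationidentification}, which identifies the moduli spaces associated to the subdiagram and its regularisation, this produces an element of $\cM_{\text{comp}}$; the inverse map is constructed by splaying a composition piece back into the domain in a uniquely determined way.

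For the collision case the argument is structurally identical, with the collision domain replacing the composition domain: subtracting the unique pair of collision pieces from a domain $B' \in \cM'$ produces a domain whose splicing data is precisely the collision $\sigma_\anchor(\kappa)$ of the original splicing, and again $\phi$ together with Proposition \ref{regularisationidentification} yields an element of $\cM_{\text{coll}}$. In both cases, the bijection on domain data lifts to a bijection on moduli spaces because transversality holds on both sides (Proposition \ref{indextransversalitycut}) and the index formulae of Proposition \ref{indextransversalitycut} evaluate to zero on both sides, in accord with the fact that each drop in the total number of corners is compensated by the removal of Reeb chord punctures (composition case) or the appearance of a $\col$-term (collision case).

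The main obstacle will be verifying that this domain-level identification actually promotes to a bijection of moduli spaces rather than merely of homology classes. The crux is that, having removed the composition/collision piece, the resulting curve is still holomorphic for a generic almost-complex structure: this is essentially a gluing-undoing argument, and I would extract it from the gluing result of Proposition \ref{gluingeast} together with the uniqueness clause in Proposition \ref{easterlyoptions}. Specifically, Proposition \ref{gluingeast} gives a local homeomorphism near the mixed end, which combined with the uniqueness of $v \in \cM_E$ shows that each element of $\cM'$ is the limit of a unique one-parameter family in $\overline{\cM}$; matching this family up with a one-parameter family limiting to an element of $\cM_{\text{comp}}$ (respectively $\cM_{\text{coll}}$) via the continuity of the corresponding domains under neck-stretching gives the desired bijection on mod-two counts.
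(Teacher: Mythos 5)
Your high-level structure matches the paper's: reduce to $\cM'$ via the degree-one projection from Proposition~\ref{easterlyoptions}, then identify $\cM'$ with the target moduli space using Proposition~\ref{regularisationidentification}. That much is right. But you have misidentified where the actual work lies, and the worry you flag as the ``main obstacle'' is not the real one.

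You write that the crux is ``verifying that this domain-level identification actually promotes to a bijection of moduli spaces rather than merely of homology classes,'' and propose resolving this by a gluing-undoing argument via Proposition~\ref{gluingeast} and neck-stretching. This is unnecessary: Proposition~\ref{regularisationidentification} \emph{already} provides an identification of moduli spaces, not merely of domains, because the diagram defining $\cM_{\text{comp}}$ (resp.\ $\cM_{\text{coll}}$) is precisely the regularisation of the diagram defining $\cM'$ --- the same domain $B'$ and the same source $S'$ appear in both statements, and the Lagrangian isotopy underlying regularisation handles holomorphicity. No curve is ``removed'' and nothing is reglued; the easterly piece has already broken off at the level of the two-story building, so $\cM'$ is what remains, and the identification is a change of Hamiltonian perturbation, not a geometric surgery. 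Your neck-stretching language in the final paragraph confuses this with the entirely separate argument in Section~5.6 (anchored curves), which is not in play here.

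What you instead gloss over with ``the $\theta$-constraints \ldots\ are precisely what is needed'' is the actual substance of the proof: after the regularisation identification, one must verify that the \emph{labelling data} --- the Reeb chord sequences $\arrowrho_\anchor$ versus $\bar{\mu}^\kappa_j(\arrowrho_\anchor)$ and the splicings $\sigma'_\anchor$ versus $\sigma_\anchor$, $(\sigma_\boat)^\kappa_j$ --- match. This requires tracking how the decomposition of $B'$ into a sequence of Reeb domains (in the sense of Proposition~\ref{constructdomains}) changes when $B'$ is viewed in the regularised diagram, including the explicit local computation at $e^\infty_j$ showing that the $(\rho_1,\rho_2)$-composition piece forces $B_j^\kappa = B_j^\kappa(\rho_1)$, $B_j^{\kappa+1} = B_j^{\kappa+1}(\rho_2)$, and $D_j^{\kappa+1}=D_j^{\kappa+1}(\emptyset)$, from which the composed sequence of Reeb domains for the regularised diagram follows. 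Without this bookkeeping, you have not shown that the subset of the regularised moduli space identified with $\cM'$ is precisely $\cM_{\text{comp}}$ rather than some other moduli space carrying different asymptotic labels. This is the gap.
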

\begin{proof}By Proposition \ref{easterlyoptions}, the projection of each $\cM' \times \cM_E$ onto $\cM'$ has degree one, hence we only have to show is that, in each case, $\cM'$ has the same number of elements as the moduli spaces in question.

By Proposition \ref{regularisationidentification}, as the diagram $\cD_{\text{comp}}$ used for the definition of $\cM_{\text{comp}}$ (resp. $\cD_{\text{coll}}$ and $\cM_{\text{coll}}$) is the regularisation of the diagram $\cD'$ used in the definition of $\cM'$, we know that if $B_E$ is a composition domain, $\cM'$ is identified with a subset of the moduli space 
$$\cM^{B'}(\bx, \btheta_+^{1,2}, \dots, \btheta_+^{\kappa-1, \kappa}, \btheta_+^{\kappa+3, \kappa + 4}, \dots, \btheta_+^{k-1, k}, \by; S'; \arrowrho_\boat, Q; (\sigma_\boat)^\kappa_j),$$
 and it remains to be seen that this subset is precisely  $\cM_{\text{comp}}$.
 
 Assume first that $B_E$ is a $(\rho_1, \rho_2,j)$-composition domain, and suppose that for each $j' = 1, \dots, h$, the sequences
 $$R_{j'}:= (D_{j'}^1,B_{j'}^1,D_{j'}^2,B_{j'}^2, \dots,D_{j'}^k, B_{j'}^k , D_{j'}^{k+1})$$
 form a sequence of Reeb domains for $B$. For every $j' \ne j$, the corresponding domain $B'_{j'}$ (as discussed in Section \ref{splayingdomains}) is equal to $B_{j'}$ --- it follows that in this case $R_{j'}$ is also a sequence of Reeb domains for $B_j$. For $j' = j$, we have that $B_{j}^\kappa$ decomposes as the sum $C_j^{\kappa} + \hat{B}_j^\kappa$, where $C_j^\kappa$ is a $(\rho_1, \rho_2)$-composition domain. It follows that, by definition, $B_j^\kappa = B_j^\kappa(\rho_1)$, and that $\hat{B}_j^\kappa =  B_j^\kappa(\rho_1) - C_j^\kappa$ has multiplicity one in the south-west region adjacent to $e^{\infty, \kappa+1}_j$.
 
The domain $B'$ has no corner at the intersection between $\gamma^{\kappa}_j$ and $\gamma^{\kappa+1}_j$ --- it has the same multiplicity upon either side of $\gamma_j^{\kappa+1}$. As such, $\hat{B}_j^\kappa + B_j^{\kappa+1}$ has no corners at $e^{\infty, \kappa+1}_j$ and so $ B_j^{\kappa+1} = B_j^{\kappa+1}(\rho_2)$ --- and we have that $D_j^{\kappa+1} = D_j^{\kappa+1}(\emptyset)$. It follows (by comparing multiplicities away from the approximation region between $\gamma^{\kappa}_j$ and $\gamma^{\kappa+2}_j$) that if $D^i_j := n_i B^i(\rho_i)$, $B^i_j : = B^i(\rho'_i)$ and we denote by
$$E^i_j: = \left\{\begin{array}{r l}n_i B^i(\rho_i), & \text{ for } i = 1, \dots, \kappa-1 \\
(n_{\kappa} + n_{\kappa+2} + 1) B^i(\rho_i), & \text{ for } i = \kappa\\
n_{i+2}  B^i(\rho_{i+2}), & \text{ for } i = \kappa + 1, \dots, k-1 \end{array} \right. $$
and
$$F^i_j: = \left\{\begin{array}{r l}B^i(\rho'_i), & \text{ for } i = 1, \dots, \kappa-1 \\
 B^i(\rho'_{i+2}), & \text{ for } i = \kappa, \dots, k-2 \end{array} \right. ,$$ 
then
$$(E_j^1,F_j^1,E_j^2, \dots, E_j^{k-2}, F_j^{k-2}, E_j^{k-1})$$
is a sequence of Reeb domains for $B'_j$ and the result follows. Similar holds for $B_E$ a $(\rho_2, \rho_3, j)$-composition domain.

If $B_E$ is a collision domain then it is completely supported within the approximation regions between $\bgamma^l, \bgamma_k^{l+1}$. As such, subtracting the domain $B_E$ has no effect upon the sequence $\arrowrho_\anchor(B)$, and thus $\arrowrho_\anchor(B'') = \arrowrho_\anchor(B)$. The jumping chords previously in partitions $P^{\kappa}_j$ and $P^{\kappa+1}_{j'}$ are now determined by corners at the copies of east infinity $e^{\kappa}_j, e^{\kappa}_{j'}$, and thus the corresponding partition is given by $\sigma_\anchor(\kappa)$, as required.
\end{proof}

\subsection{Behaviour of holomorphic anchors and stretching the neck}

In this section, we will try and understand the moduli spaces $\cM^{B_\anchor}(\btheta^{1,2}; S_\anchor)$.
\begin{prop}\label{anchorsaresimple}Suppose that $(u_n) \in  {\pmb{\cM}}^{B_{\bar{e}}}_{\ge T, \bar{e}, \varepsilon}(\vec\bx; S_{\bar{e}}; P, Q ; \sigma_\boat, \sigma_\anchor; \bJ^t),$ is a sequence of splayed holomorphic polygons, converging to some two-story building $$(u', u_\anchor) \in \cM^{B'}(\bx, \btheta^{1,2}, \dots, \btheta^{m-1,0}, \by; S'; P', Q', \sigma_\boat', \vec\sigma_\anchor'; \bJ) \times \cM^{B_\anchor}_\anchor(\vec\btheta(m_\anchor); S_\anchor; P_\anchor, \sigma).$$
Then $u_\anchor$ is a holomorphic anchor.
\end{prop}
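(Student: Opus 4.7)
My plan is to verify the nine defining conditions for a holomorphic anchor one by one for the limit $u_\anchor$. The SFT-style neck-stretching compactness of \cite[Appendix A]{Lipshitz:cylindrical} and \cite{EBHWZ}, applied to the family $\bJ^t$ obtained by inserting a neck of length $t$ along $\partial\eta^m_\varepsilon$, yields that as $t\to\infty$ the portion of $u_n$ mapping into the stretched neck converges, after reparametrization, to a $(j,J_\anchor)$-holomorphic map $u_\anchor\colon S_\anchor \to \Sigma_{\bar{e}}\times\HH$. Here $\HH$ arises as the limit of the stretched half-disc neighbourhoods of $\{0\}\in e^1$, and $J_\anchor$ is the limit of the restrictions of $\bJ^t$ to the neck. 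This directly supplies conditions (1)--(3) and (5): holomorphicity, properness, finite energy, and the branched-cover property of $\pi_\HH \circ u_\anchor$, the last because each $\pi_D\circ u_n$ is a $g$-fold branched cover whose degree is distributed across the source splitting $S = S' \natural S_\anchor$.

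Conditions (4), (8), and (9) --- disjointness from $\bp\cup\bb$, Reeb-chord asymptotics at east punctures of $S_\anchor$, and strong boundary monotonicity --- are closed conditions that pass directly to the SFT limit: each east puncture of $S_\anchor$ is inherited from an east puncture of some $S_n$ with identical local behaviour modelled on $B(\rho)$, and the boundary monotonicity constraints on $u_n^{-1}(C^1_j\cap (\Sigma\times\{t\}))$ carry over to $u_\anchor^{-1}(C^1_j\cap (\Sigma\times\{t\}))$ in $\Sigma_{\bar{e}}\times\HH$. Condition (7) --- the asymptotic of $\pi_\HH\circ u_\anchor$ to $\infty$ at certain boundary punctures --- is generated precisely by the neck-stretching itself: the arc of $\partial\eta^m_\varepsilon$ is translated infinitely far in the neck direction, producing punctures of $S_\anchor$ whose $\HH$-coordinate diverges and whose $\Sigma$-coordinate converges to an intersection tuple in $\bgamma^1_{\bar{e}}\cap H(\bgamma^1_{\bar{e}},\frac{\varepsilon}{m+1})$, i.e.\ to a generator $\btheta^{1,2}$ for $(\bdelta^1,\bdelta^2)$.

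The main obstacle is condition (6), embeddedness of $u_\anchor$, since embedded curves can in principle degenerate to multiple covers under Gromov/SFT convergence. This is controlled via Proposition~\ref{embeddedeulerpoly}: under the splitting $S = S'\natural S_\anchor$ and $B = B'*B_\anchor$ witnessed by the two-story convergence, the Euler characteristics add, and since each $u_n$ is embedded with $\chi(S_n) = \chi(S) = \chi_\emb(S)$, both $u'$ and $u_\anchor$ must realize $\chi_\emb$ on their respective sources, ruling out a multiply-covered limit. The only remaining possibility --- that $u_\anchor$ further breaks into a tower of symplectization levels in the neck --- is excluded by the hypothesis that the convergence is to a genuine two-story building: any additional stretched-neck level would appear as an explicit extra factor in the limit decomposition. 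Hence $S_\anchor$ is a single anchor source and $u_\anchor$ verifies all nine defining conditions, so it is a holomorphic anchor.
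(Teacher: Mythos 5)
Your proof verifies the nine defining conditions for a holomorphic anchor, treating these as the content of the proposition. But that misidentifies what actually needs to be shown. The hypothesis already places $u_\anchor$ in a moduli space $\cM^{B_\anchor}_\anchor(\vec\btheta(m_\anchor); S_\anchor; P_\anchor, \sigma)$ where the piece that breaks off near the neck is a priori an $m_\anchor$-gon for some $m_\anchor \ge 1$; the conformal structure on $D_m$ can simultaneously degenerate in $\conf(D_m)$ as the neck length $t\to\infty$, so the neck-side piece need not be a 1-gon mapping to $\Sigma_{\bar e}\times\HH$. The claim to be proved is precisely $m_\anchor = 1$. Your opening sentence, asserting that ``the portion of $u_n$ mapping into the stretched neck converges... to a $(j,J_\anchor)$-holomorphic map $u_\anchor\colon S_\anchor\to\Sigma_{\bar e}\times\HH$'' begs this question: it states the desired conclusion as if SFT compactness delivered it, when in fact SFT compactness only yields a building whose neck-side piece could be a higher polygon. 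Your later remark that further degeneration ``is excluded by the hypothesis that the convergence is to a genuine two-story building'' rules out additional \emph{stories}, but does nothing to constrain the number of \emph{sides} $m_\anchor$ of the single neck-side story.

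The paper's proof is a dimension count. Writing $\ind' = \ind(B',S',P',Q',\sigma_\boat')$ and defining $\ind_\anchor$ as the remainder, it observes $\chi(S')+\chi(S_\anchor)-g' = \chi(S)$ and $m_\anchor+m'-1 = m$, expands $\ind(B,S,P,Q,\sigma_\boat) = 2-m$ using these relations, identifies a non-simple (i.e.\ $m_\anchor > 1$) $u_\anchor$ via Proposition~\ref{perturbingidentification} with a bordered polygon of index $\ind_\anchor$, and then bounds $\ind_\anchor \le 1-m_\anchor$ using $\ind' \ge 2-m'$. For a generic family $\bJ$ this forces the moduli space containing $u_\anchor$ to have negative expected dimension whenever $m_\anchor > 1$, so it is empty and such limits do not occur. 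Nothing in your argument reproduces this computation, and without it the conclusion $m_\anchor = 1$ is unsupported. (Your invocation of Proposition~\ref{embeddedeulerpoly} addresses a different worry, multiple covering, which is not the obstruction in play; embeddedness being detected by $\chi(S)$ is orthogonal to whether the $D$-level degenerated into a higher polygon at the anchor side.)
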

\begin{proof}For notational convenience, we let $g' = g+h-1$. We note that $\chi(S') + \chi(S_\anchor) - g' = \chi(S)$ and $m_\anchor + m' - 1 = m$, so $5-m = 6 - (m_\anchor + m')$. Denoting by $e(B_\anchor) := e(B) - e(B')$, $\col_\anchor:= \col(\sigma_\boat) - \col(\sigma_\boat')$ and $\#_\anchor := |Q| - |Q'|$, we have that

\begin{align*}\ind(B,S,P, Q, \sigma_\boat) =& \frac{3-m}{2}g' - \chi(S) + 2e(B) + |P| - \col(\sigma_\boat) - |Q|\\
=& \frac{3-m}{2}g' -(\chi(S') + \chi(S_\anchor) - g') + 2e(B) + |P| - \col(\sigma_\boat) - |Q|\\
=& \frac{5-m}{2}g' -\chi(S') - \chi(S_\anchor) + 2e(B) + |P| - \col(\sigma_\boat) - |Q|\\
=& \frac{3-m'}{2}g' -\chi(S') + 2e(B') + |P'| - \col(\sigma_\boat') -|Q'|\\
&+ \frac{3-m_\anchor}{2}g' - \chi(S_\anchor) + 2e(B_\anchor) + |P_\anchor| - \col_\anchor - \#_\anchor\\
=&\ind(B', S', P', Q', \sigma_\boat')\\
&+ \frac{3-m_\anchor}{2}g' - \chi(S_\anchor) + 2e(B_\anchor) + |P_\anchor| - \col_\anchor - \#_\anchor
\end{align*}
Denote by $\ind'$ the first term in this final sum, and $\ind_\anchor$ the remainder. When $u_\anchor$ is a non-simple anchor, by Proposition \ref{perturbingidentification} there is a corresponding bordered holomorphic polygon $u_\anchor'$ which lives in a moduli space of  curves of index $\ind_\anchor$.

It follows that $2-m - \ind' = \ind_\anchor$. Noting that $\ind' \ge 2-m'$, it follows that $\ind_\anchor \le 2-m - (2-m') = m' - m = 1 - m_\anchor$, and so $u_\anchor'$ lives in a moduli space of negative expected dimension, so thus does not exist --- and nor does $u_\anchor$. Therefore, $u_\anchor$ is a simple holomorphic anchor as required.\end{proof}

\begin{prop}
The domain $B_\anchor$ is supported within the approximation region for the curves, and $B'$ splays $B$.
\end{prop}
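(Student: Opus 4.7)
The plan is to prove the two claims separately, with the first being the main content.

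For the first claim that $B_\anchor$ is supported in the approximation region, I would argue along the lines of Proposition~\ref{easterlyinapprox}. By Proposition~\ref{anchorsaresimple}, $u_\anchor$ is a simple holomorphic anchor, so its image under $\pi_\Sigma$ lies in $\Sigma_{\bar e}$ with boundary contained in the cylinders $C^1(\varepsilon)$. These cylinders interpolate between $\bgamma^1_{\bar e}$ (at $t \le -\varepsilon/2$) and the Hamiltonian-perturbed copy $H(\bgamma^1_{\bar e}, \varepsilon/(m+1))$ (at $t \ge \varepsilon/2$), so the boundary of $\pi_\Sigma \circ u_\anchor$ is contained in $\bgamma^1_{\bar e} \cup H(\bgamma^1_{\bar e}, \varepsilon/(m+1))$.

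Next, I would apply the operation of taking multiplicities away from the approximation region: by choosing the basepoints $\bb$ just outside the approximation region (which we may assume by Remark~\ref{nbhdrmk}, since the approximation region lies in a small neighbourhood of $e^\infty_j$ disjoint from $\bbeta$), the resulting domain $\phi(B_\anchor)$ lives in the closed east-compactified less-splayed diagram of $\border\cD'$. Its boundary is contained entirely in $\balpha$ (since outside the approximation region the two families of curves coincide), it has multiplicity zero at $\bz$ (since $u_\anchor$ is disjoint from $\bp$), and it has multiplicity zero at $\bb$ by construction. This makes $\phi(B_\anchor)$ a periodic domain for $\border\cD'$ with all non-negative multiplicities, and by the homological linear independence condition (4) of Definition~\ref{bordereddiagrams}, it must be identically zero. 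Hence $B_\anchor$ is supported entirely within the approximation region.

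For the second claim, I would argue as follows. The two-story degeneration from the neck-stretching means that, as relative homology classes, $B$ and $B' + B_\anchor$ agree after appropriate identification of the generators under the nearest-point map (this is the homology-class continuity condition~(4) in Definition~\ref{defconvergence}). Since $B_\anchor$ is supported in the approximation region, $B$ and $B'$ have the same multiplicities at every basepoint lying outside the approximation region. Because $\phi(B')$ is characterised by the property of having the same multiplicities as $B'$ at each such basepoint, we conclude $\phi(B') = B$ --- which is exactly the statement that $B'$ splays $B$.

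The main obstacle is the bookkeeping in the first step: verifying that one may place the basepoints $\bb$ outside the approximation region (so that $\phi(B_\anchor)$ truly has multiplicity zero at $\bb$) and that the resulting domain genuinely has its boundary in $\balpha$ rather than some combination of $\balpha$ and its perturbation. Once this is set up carefully, the homological linear independence argument is essentially the same as in Proposition~\ref{easterlyinapprox}, and the second claim then follows almost tautologically from the definition of splaying.
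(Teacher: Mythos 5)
Your proof is correct and takes essentially the same approach as the paper: the first half is argued by analogy with Proposition~\ref{easterlyinapprox} using homological linear independence of $\border\cD'$, and the second half by comparing multiplicities of $B$ and $B'$ away from the approximation region. The paper phrases the second half directly in terms of the multiplicities of the converging sequence $u_n$ at fixed points being locally constant, rather than via the homology-class identity $B = B' * B_\anchor$, but this is an equivalent packaging of your argument.
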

\begin{proof}The first half of the statement follows along similar lines to Proposition \ref{easterlyinapprox}. If we fix points away from the approximation region between $\bgamma^1$ and $\bgamma^2$ in the diagram corresponding with the moduli space for $u'$, then each curve $u_n$ has the same multiplicity at these points and thus so does $u'$ --- so $B'$ has the same multiplicity as $B$ everywhere away from this approximation region, which is precisely the definition of $B'$ splaying $B$.\end{proof}

We need a lemma about the acute corners of anchored curves:
\begin{lem}Suppose that  $$(u', u_\anchor) \in \cM^{B'}(\bx, \btheta^{1,2}, \dots, \btheta^{m-1,0}, \by; S'; P', Q', \sigma_\boat', \vec\sigma_\anchor'; \bJ) \times \cM^{B_\anchor}_\anchor(\btheta^{1,2}; S_\anchor; P_\anchor, \sigma)$$ is an anchored holomorphic curve in the boundary of the moduli space $$ {\pmb{\cM}}^{B_{\bar{e}}}_{\ge T, \bar{e}, \varepsilon}(\vec\bx; S_{\bar{e}}; P, Q ; \sigma_\boat, \sigma_\anchor).$$ If $A$ denotes the number of acute corners of $B'$ at the points $e^{\infty, 1}$, then the difference between the number of acute corners of $B$ and $B'$ satisfies $$\ac(B') - \ac(B)  \ge A - 2(|P| - |P'|),$$
with equality holding only if every component of $S_\anchor$ has at most one east puncture.
\end{lem}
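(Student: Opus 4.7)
The plan is to use the structure of the anchor $u_\anchor$ to directly count corners, separating contributions coming from outside and inside the approximation region between $\bdelta^1$ and $\bdelta^2$. Since $B'$ splays $B$, the two domains have identical multiplicities in every region of $\Sigma$ that does not lie in this approximation region. Accordingly, every corner of $B'$ at a point outside the approximation region matches a corner of $B$ at the corresponding point, with the same acuteness. Thus, outside the approximation region, $B$ and $B'$ contribute identically to their acute corner counts, and the inequality reduces to an analysis inside the approximation region.

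Inside the approximation region, the acute corners of $B'$ are precisely those at the generator $\btheta^{1,2}_+$. By the definition of $\btheta_+$, every corner of $B'$ at $\btheta^{1,2}_+$ is acute: of these, exactly $A$ occur at the points $e^{\infty,1}_j$, and the remainder occur at intersection points inside the approximation region proper. When we pass from $B'$ to $B$, none of these corners persist, because $\btheta^{1,2}$ is no longer among the generator data. The question is therefore: how many \emph{new} acute corners of $B$ can appear in the approximation region in exchange?

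The anchor $u_\anchor$ precisely records this transition. By Proposition~\ref{anchorsaresimple} each component of $S_\anchor$ is a topological disc, and by the local model for holomorphic anchors each such disc has exactly one acute corner of $B_\anchor$ at $\btheta^{1,2}_+$ for each $\infty$-puncture (anti-clockwise small discs). Hence the $\infty$-punctures of $S_\anchor$ are in bijection with the acute corners of $B'$ at $\btheta^{1,2}_+$ which lie in the approximation region. The east punctures of $S_\anchor$, of which there are $|P| - |P'|$, contribute Reeb-chord boundary that is reassigned to the interior of the diagram after collapsing the approximation region. The key claim is that each east puncture of $u_\anchor$ creates at most $2$ new acute corners of $B$. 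This follows from a direct local analysis: near each east puncture, $\pi_\Sigma \circ u_\anchor$ is modelled on one of $B(\rho_1), B(\rho_2), B(\rho_3), B(\rho_{12}), B(\rho_{23})$ or $B(\rho_{123})$ as classified in Section~\ref{splayingdomains}, and in each case at most two new corners (acute or obtuse) of $B$ appear at the corresponding collapsed region.

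Applying this bound componentwise and summing gives $\ac(B) \leq \ac(B') - A + 2(|P| - |P'|)$, which rearranges to the desired inequality. Equality is attained only when each component of $S_\anchor$ realises the maximum of two new acute corners per east puncture, which forces each component to have at most one east puncture — otherwise the $\infty$-punctures and east punctures of a single component share boundary arcs, preventing the local maximum from being achieved independently at each east puncture. The main obstacle is the case-by-case local analysis at an east puncture: one must verify the upper bound of $2$ in each of the six local models and check that the extremal configurations are precisely those in which the component has a single east puncture.
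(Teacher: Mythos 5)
There is a genuine gap, and a circularity problem. First, the circularity: you invoke Proposition~\ref{anchorsaresimple} to conclude that each component of $S_\anchor$ is a topological disc, but that proposition only establishes that $u_\anchor$ is a holomorphic anchor. The fact that $S_\anchor$ is a disjoint union of topological discs with $|E_j(S_\anchor)| \le 1$ is the content of Proposition~\ref{simpleanchorssimpler}, which comes \emph{after} this lemma and uses it as an input. Your proof therefore assumes the very structure result that this lemma is needed to establish, and the bijection you assert between $\infty$-punctures of $S_\anchor$ and acute corners of $B'$ at $\btheta^{1,2}_+$ is also not available at this stage (and is in any case not quite right: there is an $\infty$-puncture over every $\theta^{1,2}_{+,j}$, not just over those where $B'$ has an acute corner).

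Second, the decomposition you choose conflates the two kinds of acute corners. The paper separates acute corners into \emph{main} corners (at non-east punctures of the source) and \emph{east} corners (at the intersections of arcs in $\bgamma^1$ with $\partial\Sigma$). Your ``inside vs.\ outside the approximation region'' split treats the east corners as though they were all concentrated at $\btheta^{1,2}_+$, but the east acute corners of $B$ and $B'$ live at $\partial\Sigma$ and their counts differ precisely because $B = B' + B_\anchor$ has different multiplicities near $e^\infty_j$. The quantitative heart of the lemma is the bound $A_E - A'_E \le 2|E(S_\anchor)|$ on the change in east acute corners. You assert this as your ``key claim'' and explicitly defer its justification as ``the main obstacle.'' The paper resolves it without case analysis: it expresses $A_E$ (and $A'_E$, $A_{\anchor,E}$) as an explicit multiplicity sum of the form $n_{p_1} + |n_{p_1}-n_{p_2}| + |n_{p_2}-n_{p_3}| + n_{p_3}$ over the quadrants of a neighbourhood of $e^\infty_j$, applies the triangle inequality to $B = B' + B_\anchor$ to get $A_E \le A'_E + A_{\anchor,E}$, and then bounds $A_{\anchor,E} \le 2|E_j(S_\anchor)|$ directly from the formula, with equality forcing $|E_j(S_\anchor)| = 1$. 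Your case-by-case local analysis would need to reproduce exactly this subadditivity, and without it the inequality (and in particular the equality case, which you argue only heuristically) is not established. The plan is salvageable if you replace the appeal to Proposition~\ref{anchorsaresimple} by the correct main/east corner split and carry out the multiplicity computation; as written, the proof does not close.
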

\begin{proof}The acute corners of $B$ and $B'$ consist of two types: \emph{east acute} corners and \emph{main acute} corners. Here, a main acute corner is an acute corner corresponding with a non-east puncture of $S$ (resp. $S'$), and east acute corners correspond with intersections of arcs in $\bgamma^1$ with $\partial \Sigma$. The main corners of $B'$ split into two sets: those $A$ at $e^{\infty,1}$ and all others. As $B'$ approximates $B$, it follows that the number of main acute corners of $B'$ not at $e^{\infty,1}$ is precisely the number of main acute corners of $B$. As such, $\ac(B') - \ac(B) = A + A_E' - A_E$, where $A'_E$ and $A_E$ are the number of easterly acute corners of $B'$ and $B$ respectively.

The quantities $A_E$ and $A'_E$ may be read from the multiplicity of $B$ and $B'$ in a neighbourhood of each $e^{\infty}_j$. Indeed, each puncture in $E_j(S)$ (resp. $S'$) contributes two acute corners (at the start and end of the corresponding Reeb chord) unless there is some other puncture labelled with the subsequent chord. If we choose a small neighbourhood $N_j$ of east infinity and four points $p_1, p_2, p_3$ in the corresponding regions of $N_j - \bgamma^1_j$, this translates as
\begin{align*}A_E &= n_{p_1}(B) + |n_{p_1}(B) - n_{p_2}(B)| + |n_{p_2}(B) - n_{p_3}(B)| + n_{p_3}(B)\\
A'_E &=  n_{p_1}(B') + |n_{p_1}(B') - n_{p_2}(B')| + |n_{p_2}(B') - n_{p_3}(B')| + n_{p_3}(B').
\end{align*}
We extend this to $B_\anchor$, setting
$$A_{\anchor,E} :=  n_{p_1}(B_\anchor) + |n_{p_1}(B_\anchor) - n_{p_2}(B_\anchor)| + |n_{p_2}(B_\anchor) - n_{p_3}(B_\anchor)| + n_{p_3}(B_\anchor).$$
This final quantity is less than or equal to $2\cdot |E_j(S_\anchor)|$, with equality holding only if $|E_j(S_\anchor)| = 1$. By the triangle inequality, $A'_E + A_{\anchor, E} \le A_E$ and hence $A_E - A'_E \le A_{\anchor,E} \le 2\cdot |E_j(S_\anchor)|$, with equality holding only if $|E_j(S_\anchor)|=1$.

Summing over all $j$, it follows that
$$\ac(B') - \ac(B) = A + A'_E - A_E \ge A - 2(|P| - |P'|)$$
as required: with equality only if $|E_j(S_\anchor)| = 1$ for each $j$.

\end{proof}

\begin{prop}\label{simpleanchorssimpler}Suppose that $u_n \in  {\pmb{\cM}}^B_{\ge T}(\bx; S; P, Q; \sigma_\boat, \vec\sigma_\anchor)$ is a sequence of splayed holomorphic polygons, converging to some anchored holomorphic curve  $$(u', u_\anchor) \in \cM^{B'}(\bx, \btheta^{1,2}, \dots, \btheta^{m-1,0}, \by; S'; P', Q', \sigma_\boat', \sigma_\anchor'; \bJ) \times \cM^{B_\anchor}_\anchor(\btheta^{1,2}; S_\anchor; P_\anchor, \sigma).$$
Then $S_\anchor$ a disjoint union of topological discs, where $|E_j(S_\anchor)| \le 1$ for every $j$, and either $A = 1$ and $Q'$ consists of the punctures of $S'$ induced by $Q$, or $A = 0$ and $Q'$ consists of the punctures of $S'$ induced by $Q$, together with a single puncture labelled by $v^{1,2}$.
\end{prop}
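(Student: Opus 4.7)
The plan has three main steps, exploiting the index identity from the proof of Proposition \ref{anchorsaresimple} together with the acute-corner inequality of the lemma preceding this proposition.

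Because $u_n$ lies in the one-dimensional moduli space $\pmb{\cM}^B_{\ge T}$ and converges to the boundary point $(u', u_\anchor)$, the gluing result in Proposition \ref{gluinganchored} describes a neighbourhood of this point as $U \times U_\anchor \times [0,1)$. The $[0,1)$ factor accounts for the neck-stretching parameter, so we must have $\dim U = \dim U_\anchor = 0$; both factors are nonempty moduli spaces of nonnegative expected dimension. Using the index decomposition $\ind(B, S, P, Q, \sigma_\boat) = \ind(u') + \ind_\anchor$ computed in the proof of Proposition \ref{anchorsaresimple}, this gives $\ind_\anchor = 0$ and $\ind(u') = 0$.

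The second step is to deduce the topology of $S_\anchor$. The map $\pi_\HH \circ u_\anchor$ is a branched covering of $\HH$ whose single asymptotic at $\infty$ is the $g'$-point generator $\btheta^{1,2}_+$, so $\chi(S_\anchor) \le g'$ with equality exactly when $S_\anchor$ is a disjoint union of topological discs. The Euler measure $e(B_\anchor)$ is non-negative, because $B_\anchor$ is supported in the approximation region and decomposes into local pieces (half-discs and bigons) of non-negative Euler measure. The acute-corner lemma preceding the proposition rewrites $2e(B_\anchor)$ in terms of $\ac(B_\anchor) \ge A$ plus further non-negative contributions, with equality occurring iff $|E_j(S_\anchor)| \le 1$ for each $j$. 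Substituting these bounds into
$$0 = \ind_\anchor = g' - \chi(S_\anchor) + 2e(B_\anchor) + |P_\anchor| - \col_\anchor - \#_\anchor$$
forces equality in every inequality. This yields the two topological claims: $S_\anchor$ is a disjoint union of topological discs and $|E_j(S_\anchor)| \le 1$ for all $j$.

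Finally, to distinguish the two cases for $(A, Q')$: the punctures of $Q'$ either correspond to punctures of $Q$ already present in $S$ (the induced ones) or arise anew from the degeneration at $v^{1,2}$, while $A$ records the acute corners of $B'$ at $e^{\infty,1}$ that have no counterpart in $B$. Because $\ind(u')=0$ is a tight constraint and we have already exhausted the index budget in the previous step, at most one new feature at $v^{1,2}$ is permitted: either one extra acute corner (giving $A=1$ and $Q'$ equal to the induced punctures) or one extra $\rho_{123}$-type puncture (giving $A=0$ and $Q'$ containing an additional puncture labelled $v^{1,2}$). The main obstacle will be to rule out mixed or more complicated configurations at $v^{1,2}$, and to verify that each alternative scenario contributes a strict extra penalty in either $\ind_\anchor$ (via $\ac(B_\anchor)$) or $\ind(u')$ (via $\ac(B')$), contradicting the vanishing-index constraints obtained in the first step.
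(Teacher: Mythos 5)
You try to short-circuit the argument by invoking the gluing result (Proposition~\ref{gluinganchored}) to conclude that both $\ind_\anchor = 0$ and $\ind(u') = 0$, and then declaring that every inequality in the Euler-measure and index decomposition must be an equality. This has two genuine gaps.

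First, Proposition~\ref{gluinganchored} explicitly requires the anchor moduli space to be transversally cut out at $u_\anchor$, and the transversality result available for anchors (Proposition~\ref{transversalityanchor}) only applies once $S_\anchor$ is known to be a disjoint union of topological discs --- which is precisely the first thing the present proposition asks you to establish. Invoking the gluing result at the very start of the argument is therefore circular. The paper's proof avoids this by working purely from the algebraic index decomposition and the Euler-measure inequalities: the only transversality input is $\ind' \ge 2 - (m+1) = 1-m$, which comes from the $u'$ side alone and is already established (Proposition~\ref{indextransversalitycut}).

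Second, and more seriously, $\ind_\anchor = 0$ is simply not correct: the $A=1$ alternative in the statement you are proving is exactly the case $\ind_\anchor = 1$. The bound from $\ind' \ge 1-m$ only gives $\ind_\anchor \le 1$, and combining with the non-negativity of $g' - \chi(S_\anchor)$, $-\col_\anchor$, $-\#_\anchor$ together with the acute-corner lemma yields $0 \le \ind_\anchor \le 1$. If you insist on $\ind_\anchor = 0$, all the inequalities degenerate to equalities and in particular $A = 0$ in every case, which contradicts the statement. The paper instead tracks which single non-negative contribution to $\ind_\anchor$ is allowed to equal $1$, and then eliminates the unwanted subcases separately: $-\col_\anchor = 1$ is ruled out by a gluing-to-negative-index argument (legitimate there, since disc-ness is already in hand), and a remaining subcase is killed by a parity argument for the acute-corner equation rather than an index estimate. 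Your final paragraph gestures at these eliminations, but once the vanishing-index premise is removed they become the main content of the proof and have to be supplied explicitly.
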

\begin{proof}Let $g' = g+h-1$, $e(B_\anchor) := e(B) - e(B')$, $\col_\anchor:= \col(\sigma_\boat) - \col(\sigma_\boat')$ and $\#_\anchor := |Q| - |Q'|$ as in the proof of Proposition \ref{anchorsaresimple}, and observe that the quantities $\col_\anchor$ and $\#_\anchor$ are negative. Note also that $\chi(S_\anchor) \le g'$, with equality if and only if $S_\anchor$ is a disjoint union of topological disks.

The argument in Proposition \ref{anchorsaresimple} tells us that
$\ind(B,S,P, Q, \sigma_\boat) - \ind'$ is equal to 
\begin{equation}\label{equationindexanchor}\frac{3-m_\anchor}{2}g' - \chi(S_\anchor) + 2e(B_\anchor) + |P_\anchor| - \col_\anchor - \#_\anchor.\end{equation}

As $u_\anchor$ is an anchor, it follows that $\ind' \ge 2-(m+1) = 1-m$. By assumption, $\ind(B,S,P, Q, \sigma_\boat) = 2-m$ and so the formula (\ref{equationindexanchor}) is less than or equal to $1$.

We examine the quantity $2(e(B) - e(B')) = 2e(B_\anchor)$. It is clear that $\chi(B) = \chi(B')$, and thus $2e(B_\anchor) = \frac{1}{2}\left(\ob(B) - \ob(B') -\ac(B) + \ac(B')\right)$. If we let $A$ denote the number of acute corners at the vertices $e^{\infty,1}_1, \dots, e^{\infty,1}_h$, then as no corners at east infinity are obtuse we can see that $\ob(B) - \ob(B') = (1-g + (A - h)) = -g' + A$. By the previous lemma, we know that $\ac(B') - \ac(B) \ge A - 2(|P| - |P'|)$ and as such $2(e(B) - e(B')) \ge -g' + A - |P_\anchor|$. Hence
$$1 \ge \frac{3-m_\anchor}{2}g' - \chi(S_\anchor) + 2e(B_\anchor) + |P_\anchor| - \col_\anchor - \#_\anchor \ge 0$$
and at most one one of the terms $A, g' - \chi(S_\anchor), \col_\anchor$ or $\#_\anchor$ is positive (and equal to one).

Assume first that $\chi(S_\anchor) = g'$, so that $S_\anchor$ is a disjoint union of topological disks. There are a series of cases for this:
\begin{itemize}\item If $A = 1$, then $\col_\anchor = \#_\anchor = 0$ (so $|Q| = |Q'|$) and each component of $S_\anchor$ has at most one east puncture.
\item If $A = 0$, then either:\begin{itemize}\item $
\#_\anchor = 1$, so that $|Q| = |Q'| - 1$, $\col_\anchor = 0$, and each component of $S_\anchor$ has at most one east puncture; or
\item $\col_\anchor = 1$,  $\#_\anchor = 0$ and each component of $S_\anchor$ has at most one east puncture: this does not occur as gluing $(u', u_\anchor)$ gives a nearby curve in some ${\pmb{\cM}}^B(\vec\bx; S; P, Q; \sigma'_\boat, \vec\sigma_\anchor; \bJ^t)$, where $\col(\sigma'_\boat) > 0$ --- such moduli spaces have negative expected dimension.
\item Or, finally, $A_E - A_{E'} = 2(|P| - |P|) -1$: this does not occur as the left hand side of this equation is even.
\end{itemize}
\end{itemize}
If $\chi(S_\anchor) = g'-1$, then one component of $S_\anchor$ is a topological annulus without an `$\infty$' puncture on one boundary component. Any nearby curve $u$ in ${\pmb{\cM}}^B(\bx; S; P, Q; \sigma'_\boat, \vec\sigma_\anchor; \bJ^t)$ has a boundary component with no non-east puncture upon it, so that $\pi_D \circ u$ is constant on this component or violates the maximum modulus principle --- hence this case does not occur (compare \cite[Proposition 5.43]{LOT}). \end{proof}

Proposition \ref{anchorsaresimple} tells us that if $(u',u_\anchor) \in \partial \overline{{\pmb{\cM}}}^B_{\ge T}(\bx; S; \arrowrho_{\boat}, \arrowrho_\anchor; \vec\sigma_\boat, \vec\sigma_\anchor, \bJ^t)$, each component of the source $S_\anchor$ has at most one puncture which is mapped to $e^\infty_j$ for each $j$. By definition, each of these punctures is labelled by some element of the torus algebra $\rho_j$, and we shall write $\cM^{B_\anchor}(\btheta^{1,2}; S_\anchor; \arrowrho, \sigma)$ to refer to the moduli space of curves in $\cM^{B_\anchor}_\anchor(\btheta^{1,2}; S_\anchor); P_\anchor, \sigma)$  where $P_\anchor$ is the discrete partition and $\arrowrho = \{\rho_j : \rho_j \in E_j(S_\anchor)\}$.

In a similar manner to Section \ref{easterlyrestrictions}, we can define
$$\overline{\Gamma} = \{j \in 1, \dots, h: E_j(S_\anchor) \ne \emptyset\}$$
and show that the moduli space $\cM^{B_\anchor}(\btheta^{1,2}; S_\anchor; \arrowrho, \sigma)$ splits as a product
$$\cM^{B_\anchor}(\btheta^{1,2}; S_\anchor; \arrowrho, \sigma) = \cM_\anchor \times \cM_{\anchor,1} \times \cdots \times \cM_{\anchor, |\overline{\Gamma}|}.$$
Here $\cM_\anchor$ is a moduli space of one-gons in a closed Heegaard multi-diagram $\cD'_\anchor$, and each $\cM_{\anchor, j}$ is a moduli space of one-gons defined using a bordered multi-diagram $\cD_{\anchor, j} = (\Sigma_{\anchor, j}; \gamma^1_j; b_j)$, where $\Sigma_{\anchor, j}$ is a torus.

\begin{prop}\label{anchorsawayfromedge}There is a unique homology class $B_{\anchor}'$ which carries a holomorphic representative in the moduli space $\cM_\anchor$. Moreover, $B_\anchor' \in \pi_2(\btheta^{1,2}_{\text{res},+})$, where $\btheta^{1,2}_{\text{res},+}$ is the restriction of the distinguished generator $\btheta^{1,2}_+$ to the diagram $\cD'_\anchor$. As such, the moduli space $\cM^{B_\anchor}(\btheta^{1,2}; S_\anchor; \arrowrho, \sigma)$ is non-empty only if $B_\anchor = B_\anchor' + \sum_{j \in \overline{\Gamma}} B_{\anchor, j}$, where each $B_{\anchor,j}$ is supported within the handle $\handle_j$, and $\btheta^{1,2} = \btheta^{1,2}_+$, in which case it has the same number of elements as $\sum_{j \in \overline{\Gamma}}\cM_{\anchor, j}$.
\end{prop}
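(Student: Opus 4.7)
The plan is to identify $\cM_\anchor$ with a standard moduli space of holomorphic disks associated to a small Hamiltonian perturbation in a closed Heegaard multi-diagram, and then invoke the nearest-point/small-isotopy results familiar from closed Heegaard Floer homology. The diagram $\cD'_\anchor$ has, by construction, attaching curves $\bgamma^1_{\text{res}}$ and its small Hamiltonian perturbation $H(\bgamma^1_{\text{res}}, \varepsilon/(m+1))$, and the interpolating Lagrangian cylinder $C^1(\varepsilon)$ was precisely built to realise the Lagrangian suspension between these two collections. Because the chosen almost-complex structure is cylindrical near $e^\infty$ and the cylinders $C^1(\varepsilon)$ are $\RR$-invariant for $|t|$ large, any anchor $u_\anchor$ whose boundary lies outside the handles decomposes via $\pi_\Sigma, \pi_\HH$: the $\HH$-projection is a branched cover of prescribed degree, and the $\Sigma$-projection is a holomorphic bigon for the pair $(\bgamma^1_{\text{res}}, H(\bgamma^1_{\text{res}}, \varepsilon/(m+1)))$ asymptotic to $\btheta^{1,2}_{\text{res}}$.

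First I would make this identification precise: viewing $\Sigma_{\bar{e}} \times \HH$ with the Lagrangian $C^1(\varepsilon) \times \RR$ as a Lagrangian-suspension cobordism from $\bgamma^1_{\text{res}}$ to its small perturbation, a standard argument (cf. \cite[Chapter 10]{Seidel:Fukaya}, \cite[Section 3.3]{LOT:SSII}) sets up a one-to-one correspondence between anchors with source a disjoint union of discs and holomorphic bigons in the closed diagram $\cD'_\anchor$ with corner at $\btheta^{1,2}_{\text{res}}$. Second, I would apply the standard small-isotopy computation from closed Heegaard Floer theory: for two sufficiently close Hamiltonian-isotopic collections of attaching curves, the unique homology class in $\pi_2(\btheta)$ carrying a rigid holomorphic representative is supported in the approximation region, contains a single curve, and forces $\btheta$ to be the top generator obtained via the nearest-point map (this is essentially the input used to verify that the type $DA$-bimodule for the identity cobordism is the identity, as in \cite{LOT, LOT:SSII}, and is already implicit in the holomorphic triangle computations of \cite{Ozsvath-Szabo:2001}). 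This produces both the uniqueness of $B_\anchor'$ and the constraint that $\btheta^{1,2} = \btheta^{1,2}_+$.

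Finally, the splitting $\cM^{B_\anchor}(\btheta^{1,2}; S_\anchor; \arrowrho, \sigma) = \cM_\anchor \times \cM_{\anchor,1} \times \cdots \times \cM_{\anchor, |\overline{\Gamma}|}$ is the direct analogue for anchors of the fibre-product decomposition already established for easterly curves in Propositions \ref{easterlyinapprox}--\ref{projectionmodulis}, and is proved in the same way by restricting multiplicities to the interiors of the handles $\handle_j$ with $j \in \overline{\Gamma}$. Combining this with the uniqueness and count for $\cM_\anchor$ above yields the stated conclusion: $B_\anchor$ must decompose as $B_\anchor' + \sum_{j \in \overline{\Gamma}} B_{\anchor,j}$ with each $B_{\anchor,j}$ supported in $\handle_j$, and the cardinality modulo two of the total moduli space is that of $\prod_{j \in \overline{\Gamma}} \cM_{\anchor,j}$. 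The main technical obstacle is making the Lagrangian-suspension identification rigorous while tracking the branched-cover degree, since $\HH$ is non-compact and the anchors' $\infty$-asymptotics interact with the choice of compactification of the suspension; however, this is essentially the same issue handled for anchor moduli spaces in \cite[Section 5]{LOT}, and the argument there adapts with only notational changes.
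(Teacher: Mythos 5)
Your proposal is correct and takes essentially the same approach as the paper: both reduce the key uniqueness statement to the small-Hamiltonian-perturbation computation, which the paper handles by citing \cite[Lemma 11.8]{Lipshitz:cylindrical} and then noting the fibre-product decomposition over the handles $\handle_j$, exactly as you do. One cautionary note: the paper explicitly flags that the proof of that Lemma 11.8 as originally stated was incorrect and points to the errata \cite{Lipshitz:errata2}, so the ``standard small-isotopy computation'' you invoke is a little less routine than your sketch suggests, and a careful write-up should use the corrected argument rather than re-deriving it from scratch in the Lagrangian-suspension picture.
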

\begin{proof}This is considered in \cite[Lemma 11.8]{Lipshitz:cylindrical}, though as stated there the proof is incorrect. See \cite{Lipshitz:errata2} for the correction to this.
\end{proof}

As such, we need only to understand the moduli spaces $\cM_{\anchor, j}$.

\begin{prop}\label{howmanyanchors}Let $S_{\anchor,j}$ be an anchor source with underlying surface a disc, and with unique east puncture, $q$. For the evaluation map $\ev_q: \cM^{B_{\anchor, j}}(\theta^{1,2}_j; S_{\anchor, j}; \rho_j) \rightarrow \RR$, the moduli space $\cM_{\anchor,j}(0) = \ev^{-1}(0)$  contains a unique element for every $\rho_j$.
\end{prop}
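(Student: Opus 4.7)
My plan is to reduce the question to a direct construction of a unique holomorphic strip in a simple model, followed by a uniqueness argument via the Riemann mapping theorem.

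First I would set up the model. By hypothesis $S_{\anchor,j}$ is a topological disc with exactly two boundary punctures: one labelled $\infty$ asymptotic to the intersection point $\theta^{1,2}_{j,+}$, and one east puncture $q$ asymptotic to the Reeb chord $\rho_j$ near $e^\infty_j$. A topological disc with two boundary punctures admits a unique conformal structure up to biholomorphism, namely that of the strip $\RR \times [0,1]$ with the punctures identified with $\pm\infty$. The two boundary components of $S_{\anchor,j}$ must then map into the two Lagrangian cylinders $\delta^1_j \times \RR$ and $\delta^2_j \times \RR$ in $\Sigma_{\bar{e}} \times \HH$, where $\delta^1_j,\delta^2_j$ are the two approximating arcs in the handle $\handle_j$ (compactified to a torus) that meet at $\theta^{1,2}_{j,+}$ and at $e^\infty_j$.

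Next I would analyze the $\HH$-projection. Choose $J_\anchor$ to be translation-invariant in the real direction of $\HH$ near infinity. Consider $u_\HH := \pi_\HH \circ u$. Condition~(9) in the definition of an anchor forces each fibre of $u_\HH$ to contain at most one point, so $u_\HH$ is a biholomorphism from $S_{\anchor,j}$ onto $\HH$. There is a one-parameter family of such biholomorphisms sending the $\infty$-puncture to $\infty \in \HH$ and the east puncture to a point of $\partial\HH = \RR$, parametrised precisely by $\ev_q \in \RR$. Imposing $\ev_q = 0$ therefore determines $u_\HH$ uniquely.

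For existence I would exhibit an explicit anchor for each of the six possible chords $\rho_j \in \{\rho_1,\rho_2,\rho_3,\rho_{12},\rho_{23},\rho_{123}\}$. In each case the domain $B_{\anchor,j}$ dictated by $\rho_j$ is represented by a small embedded disc $D_{\rho_j}$ in the compactified handle, with a single acute corner at $\theta^{1,2}_{j,+}$ and boundary behaviour near $\partial N_j$ matching $B(\rho_j)$ from Section \ref{splayingdomains}. Using the biholomorphism $u_\HH$ from the previous paragraph, and the Riemann mapping theorem applied to $D_{\rho_j}$ viewed as a simply connected planar region (after passing to the universal cover or doubling across the boundary arcs), one obtains a unique holomorphic $u_\Sigma: S_{\anchor,j} \to D_{\rho_j}$ with the prescribed corner and asymptotic data. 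The pair $u=(u_\Sigma,u_\HH)$ is then the required anchor.

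The hard part, and the heart of the proof, is uniqueness. Given any two members $u,u' \in \cM_{\anchor,j}(0)$, their $\HH$-projections coincide by the previous paragraph, so after identifying $S_{\anchor,j} \cong \HH$ we may regard both $u_\Sigma, u'_\Sigma$ as holomorphic maps $\HH \to \handle_j$ with the same boundary conditions, the same asymptotics, and the same homology class $B_{\anchor,j}$. I would then argue as in the corrected proof of \cite[Lemma 11.8]{Lipshitz:cylindrical} given in \cite{Lipshitz:errata2}: by positivity of intersection between $u_\Sigma$ and the curves $\delta^1_j, \delta^2_j$ (combined with the prescribed corner behaviour), $u_\Sigma$ and $u'_\Sigma$ are both embedded onto $D_{\rho_j}$, and then Schwarz reflection doubles both maps to biholomorphisms of $\CP^1$ fixing the asymptotic data, so they agree. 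The principal obstacle is controlling the comparison of $u_\Sigma$ and $u'_\Sigma$ at east infinity in each of the six chord cases, which is precisely the subtlety flagged in the Lipshitz erratum and which the embedded domain analysis of that reference resolves.
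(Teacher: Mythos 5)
Your approach is genuinely different from the one in the paper. The paper's proof does not attempt any direct analysis of the anchor moduli space at all: instead it appeals to Proposition~\ref{gluinganchored} together with a model computation in an explicit bordered Heegaard diagram for the solid torus (Figure~\ref{fig:modelcase}). There, one finds a bigon moduli space $\cM^B(x,y;S;\rho_3,\emptyset;\{\rho_3\},\emptyset)$ and its splayed counterpart $\cM^{B'}(x,\theta^{1,2}_+,y;S';\emptyset,\rho_3;\emptyset,\{\rho_3\})$ which \emph{both} visibly have one element, and the gluing theorem then forces $|\cM_{\anchor,j}(0)| = 1$ as the ratio of the two counts. Your proposal instead carries out a direct analytic construction (biholomorphic $\HH$-projection, Riemann mapping, positivity of intersections plus Schwarz reflection for uniqueness), which is closer in spirit to the original (subsequently corrected) argument of \cite[Lemma~11.8]{Lipshitz:cylindrical} that the paper instead invokes for Proposition~\ref{anchorsawayfromedge}. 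Both routes are valid, but they have different profiles: the gluing argument is short precisely because the hard analysis has been packaged into Proposition~\ref{gluinganchored} and the model count is combinatorially obvious; yours is self-contained and would actually explain \emph{why} the count is one, at the price of having to carry out the delicate corner and boundary analysis by hand in each of the six cases for $\rho_j$.

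A couple of points in your sketch need repair. First, the boundary of $S_{\anchor,j}$ is not mapped to two separate Lagrangian cylinders $\delta^1_j\times\RR$ and $\delta^2_j\times\RR$: the anchor boundary condition is the \emph{single} interpolating cylinder $C^1(\varepsilon)$, which coincides with $\delta^1_j\times\{t\}$ for $t\le -\varepsilon/2$ and with $\delta^2_j\times\{t\}$ for $t\ge \varepsilon/2$. Both boundary arcs of the twice-punctured disc map into this one Lagrangian; the continuation-map picture you have in mind is the right heuristic but the boundary condition should be stated correctly before one can set up the Riemann mapping argument. Second, the claim that $u_\HH$ is a biholomorphism needs an argument beyond condition~(9), which is a boundary monotonicity constraint and does not directly bound the degree of the branched cover $\pi_\HH\circ u$; you should deduce degree one from the fact that $B_{\anchor,j}$ is a single embedded quadrant with multiplicity one (or equivalently from the index). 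Finally, the Riemann mapping theorem gives a biholomorphism between simply connected regions, but you additionally need Carath\'eodory-type boundary control plus a check that the resulting pair $(u_\Sigma, u_\HH)$ actually satisfies the anchor conditions (including the prescribed cut behaviour at the corner $\theta^{1,2}_{j,+}$) for each of the six chords. None of these obstacles is fatal, but they would make a fully rigorous version of your proof considerably longer than the paper's one-line appeal to gluing.
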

\begin{proof}This follows by a model computation for each choice of $\rho_j$, and Proposition \ref{gluinganchored}. For instance, consider the bordered Heegaard diagram and Heegaard multi-diagrams as shown in Figure \ref{fig:modelcase}.
\begin{figure}[h]
\centering{
\makebox[\textwidth]{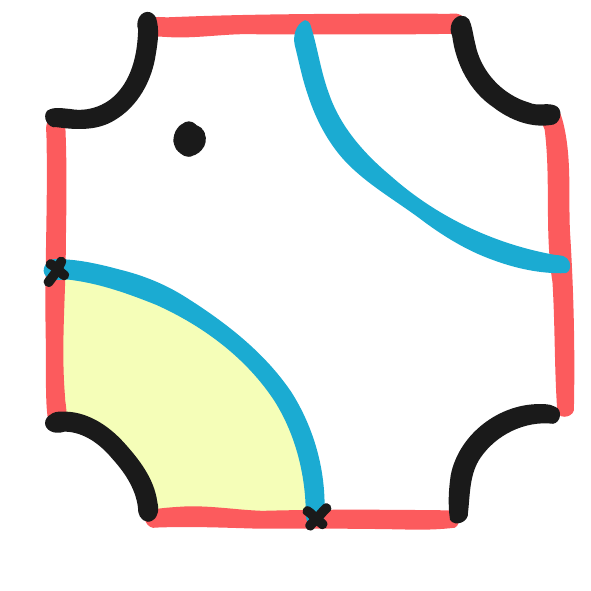\quad 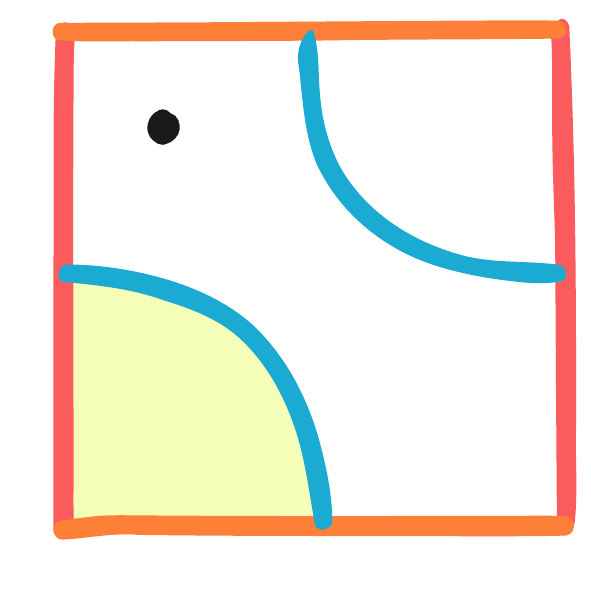} \caption{A model computation for Proposition \ref{howmanyanchors}.}
\label{fig:modelcase}
}
\end{figure}

The moduli space $$\cM^B(x, y; S;\rho_3, \emptyset; \{\rho_3\}, \emptyset)$$ clearly contains a unique element, as does the moduli space $$\cM^{B'}(x, \theta^{1,2}_+, y; S'; \emptyset, \rho_3; \emptyset, \{\rho_{3}\}).$$ Therefore, by Proposition \ref{gluinganchored},
$$|\cM^B(x, y; S;\rho_3, \emptyset; \{\rho_3\}, \emptyset)| = |\cM^{B'}(x, \theta^{1,2}_+, y; S'; \emptyset, \rho_3; \emptyset, \{\rho_{3}\})| \cdot |\cM_{\anchor, j}(0)|,$$ 
hence $|\cM_{\anchor, j}(0)| = 1$, as required.
\end{proof}

This lets us rule out anchored curves with holomorphic anchors that have more than one easterly puncture:
\begin{prop}Suppose that  $$(u', u_\anchor) \in \cM^{B'}(\bx, \btheta^{1,2}, \dots, \btheta^{m-1,0}, \by; S'; P', Q', \sigma_\boat', \vec\sigma_\anchor'; \bJ) \times \cM^{B_\anchor}_\anchor(\btheta^{1,2}; S_\anchor; \arrowrho, \vec\sigma)$$ is an anchored holomorphic curve in the boundary of the moduli space $${\pmb{\cM}}^B_{\ge T}(\bx; S; P, Q; \sigma_\boat, \vec\sigma_\anchor).$$
Then $\vec\sigma_\anchor'$ is interleaved, and hence $S_\anchor$ consists of a disjoint union of topological disks, only one of which has an east puncture.
\end{prop}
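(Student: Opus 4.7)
Proposition \ref{simpleanchorssimpler} already shows that $S_\anchor$ is a disjoint union of topological disks with $|E_j(S_\anchor)| \le 1$ for every $j$. The remaining content of the proposition is the claim that $\vec\sigma_\anchor'$ is interleaved, and as a consequence that at most one component of $S_\anchor$ carries an east puncture. The plan is to identify, for each $j$ with $|E_j(S_\anchor)| = 1$, the contribution of the unique east puncture of the relevant component of $S_\anchor$ to the splicing $\vec\sigma_\anchor$, and then use the standing interleavedness hypothesis on $\vec\sigma_\anchor$ to force at most one such $j$.

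Set $\overline\Gamma = \{j : |E_j(S_\anchor)| = 1\}$. By Proposition \ref{anchorsawayfromedge}, the anchor moduli space splits as $\cM_\anchor \times \prod_{j \in \overline\Gamma} \cM_{\anchor, j}$, where the factor $\cM_\anchor$ is trivial in a neighborhood of each $e^\infty_j$ and each $\cM_{\anchor, j}$ counts anchors supported within the handle $\handle_j$. The boundary of each such anchor lies in the perturbed cylinder $C^1(\varepsilon)$, whose cross-section along $e^1$ interpolates between $\bgamma^1_{\bar e}$ and the small perturbation $H(\bgamma^1_{\bar e}, \varepsilon/(m+1))$; by the discussion following Proposition \ref{perturbingidentification}, this is precisely the first level of the refined partial splaying, i.e.\ the transition between the first two sets $\bdelta^1, \bdelta^2$ of attaching curves of $\cD^\partial(\varepsilon, \iota \cup \arrowiota_{\vec\sigma_\anchor'})$. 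Consequently, the unique east puncture $q_j$ of the $j$-th anchor component, which is labelled by a jumping Reeb chord $\rho_j$ (by Proposition \ref{howmanyanchors} there is a unique such curve for each $\rho_j$), reassembles into $\arrowrho_{\anchor, j}$ at the first position of the splicing $\vec\sigma_\anchor$.

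With this identification in hand, the proof is concluded by the interleavedness of $\vec\sigma_\anchor$, which is a standing hypothesis on the original moduli space $\pmb{\cM}^B_{\ge T}(\bx; S; P, Q; \sigma_\boat, \vec\sigma_\anchor)$. By Definition \ref{splicing}, at the first position of an interleaved splicing at most one of the parts $P^1_j$ is non-empty. Since every $j \in \overline\Gamma$ would contribute a non-empty part at this position of $\vec\sigma_\anchor$, we must have $|\overline\Gamma| \le 1$, so at most one component of $S_\anchor$ carries an east puncture. Moreover, $\vec\sigma_\anchor'$ is obtained from $\vec\sigma_\anchor$ either by removing the single interleaved position occupied by the anchor's chord (when $|\overline\Gamma| = 1$) or by doing nothing (when $\overline\Gamma = \emptyset$), and in either case remains interleaved.

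The main obstacle is the geometric identification in the second paragraph: verifying that every east puncture of the anchor contributes to precisely the first position of $\vec\sigma_\anchor$ requires one to chase through the definitions of the splaying, the perturbed cylinders $C^i(\varepsilon)$, and the relationship between domains on $\cD^\partial(\varepsilon, \arrowiota_{\vec\sigma_\anchor'})$ and its refinement $\cD^\partial(\varepsilon, \iota \cup \arrowiota_{\vec\sigma_\anchor'})$. Given the setup lemmas already proved (particularly Propositions \ref{simpleanchorssimpler}, \ref{anchorsawayfromedge}, and \ref{howmanyanchors}), this amounts to a direct local check in a neighbourhood of each handle $\handle_j$ using the definitions of Section \ref{splayingdomains}.
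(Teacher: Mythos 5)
The paper's proof is by contradiction via gluing: supposing $\vec\sigma_\anchor'$ is not interleaved, Proposition~\ref{howmanyanchors} furnishes a unique anchor $v_\anchor$ with all east punctures at height $0$; gluing $u'$ to $v_\anchor$ (by Proposition~\ref{gluinganchored}) produces an element of a moduli space ${\cM}^B_{\ge T}(\bx; S; \arrowrho_{\boat}, \arrowrho_\anchor; \sigma_\boat; \bJ^t)$ with $\col(\sigma_\boat) > 0$, which has negative expected dimension and is therefore empty. Your proposal takes a different, more direct route; unfortunately it has a gap, and I do not believe it can be completed as written.

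The gap is in the central identification: you claim that the unique east puncture $q_j$ of each anchor component ``reassembles into $\arrowrho_{\anchor, j}$ at the first position of the splicing $\vec\sigma_\anchor$,'' and then invoke interleavedness of $\vec\sigma_\anchor$ to bound $|\overline\Gamma|$. But the chords $\rho_j$ labelling the anchor's east punctures were, in the original curve, genuine east punctures of $S$ and hence part of $\arrowrho_\boat$, not $\arrowrho_\anchor$; they have no position in $\vec\sigma_\anchor$ at all. After the degeneration they are absorbed as \emph{new} corners of $u'$ at the points $e^{\infty,1}_j$ and prepended to form $\arrowrho_{\anchor,j}'$, so they constitute a new level of $\vec\sigma_\anchor'$ --- exactly the splicing whose interleavedness is the thing to be proved. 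Reading ``$\vec\sigma_\anchor$'' as a typo for ``$\vec\sigma_\anchor'$'' makes the argument circular, and reading it as written makes the premise false. Relatedly, your final paragraph asserts that $\vec\sigma_\anchor'$ is obtained from $\vec\sigma_\anchor$ by \emph{removing} a position, but the direction of the splay goes the other way: $u'$ lives in a $(k+1)$-shipping while the original lived in a $k$-shipping, so $\vec\sigma_\anchor'$ gains a level rather than losing one. Interleavedness of $\vec\sigma_\boat$ also does not save the argument, because punctures that are absorbed by the anchor need not have come from a single level of $\vec\sigma_\boat$ --- they can sit at distinct heights in the anchor's copy of $\HH$. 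This is precisely why the paper needs Proposition~\ref{howmanyanchors}: it lets one \emph{move} all the anchor's east punctures to a common height $0$ and glue, and only then does one see the forced collision (a curve in a moduli space with $\col(\sigma_\boat) > 0$) that produces the dimension contradiction.
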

\begin{proof}We already know that each component of $S_\anchor$ is a disk, with at most one east puncture. Suppose that $\vec\sigma_\anchor'$ were not interleaved: by Proposition \ref{howmanyanchors}, there is a (unique) element $v_\anchor$ of  $\cM^{B_\anchor}_\anchor(\btheta^{1,2}; S_\anchor; \arrowrho, \vec\sigma)$ where the height of each east puncture is $0$. As such, by \ref{gluinganchored}, we could glue $u'$ to $v_\anchor$ give an element of a moduli space ${\cM}^B_{\ge T}(\bx; S; \arrowrho_{\boat}, \arrowrho_\anchor; \sigma_\boat; \bJ^t)$ with $\col(\sigma_\boat) > 0$: but this moduli space has negative expected dimension.
\end{proof}

The final result we need is to understand the implied asymptotics of curves in the boundary:
\begin{prop}\label{correctdataboundary}Suppose that  $$(u', u_\anchor) \in \cM^{B'}(\bx, \btheta^{1,2}, \dots, \btheta^{m-1,0}, \by; S'; P', Q', \sigma_\boat', \vec\sigma_\anchor'; \bJ) \times \cM^{B_\anchor}_\anchor(\btheta^{1,2}; S_\anchor; P_\anchor, \sigma)$$ is an anchored holomorphic curve in the boundary of the moduli space $${\pmb{\cM}}^B_{\ge T}(\vec\bx; S; \arrowrho_\boat, \arrowrho_\anchor; \vec\sigma_\boat, \vec\sigma_\anchor).$$

 Then $u' \in \cM^{B'}(\bx, \btheta^{1,2}, \dots, \btheta^{m-1,0}, \by; S'; \arrowrho_\boat', \arrowrho_\anchor', \vec\sigma_\boat', \vec\sigma_\anchor'; \bJ)$, where the pair $(\arrowrho'_\boat, \sigma'_\boat)$ and $(\arrowrho'_\anchor, \sigma'_\anchor)$ splay $(\arrowrho_\boat, \sigma_\boat)$ and $(\arrowrho_\anchor, \sigma_\anchor)$.
\end{prop}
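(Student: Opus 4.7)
The plan is to analyse how the asymptotic data of the sequence $u_n$ distributes between the limit pieces $u'$ and $u_\anchor$ as the neck length tends to infinity, and to match this distribution with the combinatorial operation of splaying.

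First, I would record the structural information about $(u', u_\anchor)$ that has already been established. By Propositions \ref{anchorsaresimple} and \ref{simpleanchorssimpler}, the anchor source $S_\anchor$ is a disjoint union of topological discs, each component has at most one east puncture, and in fact only a single component carries a non-trivial east puncture; by Proposition \ref{anchorsawayfromedge} this east puncture lies on a unique boundary component $j^*$, labelled by some jumping Reeb chord $\rho^*$, and $B_\anchor$ is supported entirely within the approximation region for $(\bdelta^1, \bdelta^2)$. In particular, $B'$ splays $B$ in the sense of Section \ref{splayingdomains}.

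Next I would establish a bijection between the asymptotic data of $u_n$ (for $n$ large) and that of $(u', u_\anchor)$. Gromov convergence provides a topological identification $S \cong S' \natural S_\anchor$ under which every east puncture of $S$ corresponds either to an east puncture of $S'$ or to the single east puncture of $S_\anchor$, and every corner of $B$ at a splayed east infinity $e^{\infty,i}_j$ corresponds to a corner of $\phi(B')$ at the same point together with the anchor contribution. Comparing the heights of east punctures along $e^1$ and the cyclic order of domain corners across this identification shows that the partition--with--ordering data encoded by $(\vec\sigma_\boat, \vec\sigma_\anchor)$ for $u_n$ restricts to the corresponding data $(\vec\sigma'_\boat, \vec\sigma'_\anchor)$ for $(u', u_\anchor)$, with exactly one extra unit of ``anchor splicing length'' arising from the extra corner at $e^{\infty,1}_{j^*}$ produced by the neck degeneration.

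Third, I would deduce the splaying relation. The unique anchor chord $\rho^*$ must appear in the sequence $(\arrowrho_\boat, \vec\sigma_\boat) \star (\arrowrho_\anchor, \vec\sigma_\anchor) = (\arrowrho, \vec\sigma)$ at the ``interface'' between the boat and anchor portions on component $j^*$: that is, $\rho^*$ is either the final jumping chord of $\arrowrho_{\boat, j^*}$ or the initial jumping chord of $\arrowrho_{\anchor, j^*}$ (these two descriptions coincide via the $\star$ operation). Promoting $\rho^*$ from the boat side to the anchor side of the interface yields sequences $(\arrowrho'_\boat, \vec\sigma'_\boat)$ and $(\arrowrho'_\anchor, \vec\sigma'_\anchor)$ which again satisfy $(\arrowrho'_\boat, \vec\sigma'_\boat) \star (\arrowrho'_\anchor, \vec\sigma'_\anchor) = (\arrowrho, \vec\sigma)$, with $\vec\sigma'_\anchor$ of splicing length exactly one greater than $\vec\sigma_\anchor$. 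By the definition of a $(k+1)$-shipping, this is precisely what it means for $(\arrowrho'_\boat, \vec\sigma'_\boat), (\arrowrho'_\anchor, \vec\sigma'_\anchor)$ to splay $(\arrowrho_\boat, \vec\sigma_\boat), (\arrowrho_\anchor, \vec\sigma_\anchor)$.

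The main obstacle is the bookkeeping in Step 3: one must check that $\vec\sigma'_\anchor$ satisfies the minimum-length condition imposed by $|\jump(\arrowrho'_{\anchor, j})|$ for every $j$, and that the new part of $\vec\sigma'_\anchor$ is inserted in a position consistent with the original ordering $\vec\sigma$. The index accounting in Proposition \ref{simpleanchorssimpler} is essential here: it guarantees that the anchor drops the expected dimension by precisely one unit and, together with the fact that each non-trivial component of $S_\anchor$ has exactly one east puncture, forces the splaying to increment $k$ by exactly one (as opposed to several chords being absorbed simultaneously). Once this is verified, no further analytic input is required beyond the Gromov convergence and domain accounting already used elsewhere in the section.
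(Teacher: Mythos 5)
Your outline captures the correct structural facts (discs, unique east puncture on a single component $j^*$, $B'$ splaying $B$), and the overall strategy is sound: one must show that the degeneration shifts a block of chords from $\arrowrho_\boat$ to $\arrowrho_\anchor$ in a way consistent with the definition of a $(k+1)$-shipping. But there is a genuine gap in your Step 3, and Step 2 is too vague to support it.

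In Step 3 you ``promote $\rho^*$ from the boat side to the anchor side,'' treating the transfer as a single-chord operation, and you assert that ``the final jumping chord of $\arrowrho_{\boat,j^*}$'' and ``the initial jumping chord of $\arrowrho_{\anchor,j^*}$'' are the same chord. They are not: these are two distinct jumping chords, one ending the boat sequence and one beginning the anchor sequence. More importantly, the segment that migrates from boat to anchor is not a single chord. It is the final jumping chord $\rho'$ of $\arrowrho_{\boat,j^*}$ \emph{together with its entire trail of non-jumping chords} $(\rho,\dots,\rho)$ of some length $n\ge 0$, where $n$ is \emph{not} determined by any asymptotic or combinatorial datum visible a priori --- it is read off from the multiplicities of the domain $B'$ of $u'$ in the regions adjacent to $e^{\infty,1}_{j^*}$. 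Your Step 2 appeals to ``heights of east punctures along $e^1$'' to establish a bijection of asymptotic data, but this cannot account for the trailing non-jumping chords: in the east-compactified picture the non-jumping east punctures have been removed from the source entirely, so they are invisible to the evaluation map and cannot be matched to anything on $S'$ or $S_\anchor$ by Gromov convergence alone.

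This is exactly where the paper's proof does real work. It fixes a small disc $M^1_j$ around $e^{\infty,1}_j$, decomposes $\Xi=(\pi_\Sigma\circ u')^{-1}(e^{\infty,1}_j)$ into the puncture $q$ (mapped to $v^{1,2}$), the set $\Xi_L$ (mapped to $e^2$), and the set $\Xi_R$ (mapped to $e^1$), and reads off the local homology classes: $\pi_\Sigma\circ u'|_{S_L}\sim n\cdot B(\rho)$ for a non-jumping $\rho$ and $\pi_\Sigma\circ u'|_{S(q)}\sim B(\rho')$ for a jumping $\rho'$. It then uses this to rewrite the sequence of Reeb domains of $B_j'-B(\arrowrho'_{\boat,j})$ explicitly, and finally shows $\arrowrho'_{\boat,j}\star(\rho',\rho,\dots,\rho)=\arrowrho_{\boat,j}$ by comparing $B(\arrowrho_\anchor)-B(\arrowrho'_\anchor)$ to a sum of quadrant domains indexed by points of $\mathcal Q - \mathcal Q'$. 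Without this multiplicity bookkeeping you cannot determine $n$, and therefore cannot verify that the result is a shipping of the same $(\arrowrho,\vec\sigma)$ at all --- the ``bookkeeping'' you flag as an obstacle at the end is, in fact, the heart of the proof, not a routine verification.
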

\begin{proof}
Consider a small disc neighbourhood $M^1_j$ of $e^{\infty,1}_j$ which satisfies that each component of $\pi_\Sigma \circ u^{-1}(M^1_j)$ contains a single point of the set $\Xi : = \pi_\Sigma \circ u^{-1}(e^{\infty,1}_j)$ (we include the puncture $q$ which is mapped to $e^{\infty, 1}_j$ in this set) --- label the components by $S(\xi)$ so that $\xi \in \Xi$ is contained in $S(\xi)$. The set $\Xi$ splits into three sets: $q$, the set $\Xi_L$ of points mapped to $e^2$ by $\pi_D \circ u$, and the set $\Xi_R$ of points mapped to $e^1$ by $\pi_D \circ u$. There are corresponding restricted sources $S_L := \cup_{\xi \in \Xi_L}S(\xi)$ and $S_R : = \cup_{\xi \in \Xi_R}S(\xi)$.

The homology class of $\pi_\Sigma \circ u|_{S_L}$ is modelled upon $n \cdot B(\rho)$ for some non-jumping $\rho$ (which is equal to $\rho_{12}$ if $\gamma^2_j$ is meridional, and $\rho_{23}$ if $\gamma^2_j$ is longitudinal), and the homology class of $\pi_\Sigma \circ u|_{S(q)}$ is modelled upon $B(\rho')$ for some jumping $\rho'$. It follows (by considering multiplicities away from the approximation region between $\bgamma^1$ and $\bgamma^2$) that if
$$(D_j^1,B_j^1,D_j^2,B_j^2, \dots,D_j^k, B_j^k , D_j^{k+1})$$
were a sequence of Reeb domains for $B_j$, the sequence obtained by replacing each $D_j^i = n_i B^i(\rho_i)$ with $n_i B^{i+1}(\rho_i)$ and each $B_j^i = B^i(\rho'_i)$ with $B^{i+1}(\rho'_i)$, 
 adjoining $B^0_j = B^1_j(\rho')$ and replacing $D_j^1 = B^i(\rho_1)$ by $n \cdot B^2(\rho) + n_1 B^2(\rho_1)$ is a sequence of Reeb domains for $B_j' - B(\arrowrho_{\boat,j}')$.
 
It remains to show that the sequence $\arrowrho = (\rho', \underbrace{\rho, \dots, \rho}_{n \text{\ times}})$ satisfies that $\arrowrho_{\boat,j}' \star \arrowrho = \arrowrho_{\boat, j}$. To see this, we note that the domain $B(\arrowrho_\anchor)$ must agree with $B^1(\arrowrho_\anchor') + B^1(\arrowrho)$ away from the approximation region. As such, $B(\arrowrho) = B(\arrowrho_\anchor) - B(\arrowrho_\anchor')$. This latter quantity, however, may be written in a more useful way. 

Let $\mathcal{Q} = \pi_\Sigma \circ u^{-1}(e^\infty_j)$, and let $\mathcal{Q}'$ denote the subset of $\mathcal{Q}$ given by $\{q \in \mathcal{Q}: \ev_j(q) < 0\}$. If we fix a small neighbourhood $M^\infty_j$ of $e^\infty_j$ such that each component of $\pi_\Sigma \circ u^{-1}(M^\infty_j)$ contains a unique point in $\mathcal{Q}$, the domains $B(\arrowrho_\anchor)$ and $B(\arrowrho_\anchor')$ can be written as a sum over points in $\mathcal{Q}$ or $\mathcal{Q}'$ respectively of the quantities $B(\rho(q))$, where $\rho(q)$ is such that $(\pi_\Sigma \circ u)(S(q))$ is modelled upon $B(\rho(q)) \subset M_j^\infty$ --- and $S(q)$ denotes the component of $ \pi_\Sigma \circ u^{-1}(M^\infty_j)$ containing $q$. It follows by considering the multiplicity of $u$ at base-points in each of the four quadrants of $M_j^\infty$ that $B(\arrowrho_\anchor) - B(\arrowrho_\anchor')$ coincides with the sum over points $q$ in $\mathcal{Q} - \mathcal{Q}'$ of $B(q)$, and thus is equal to $B(\underline{\arrowrho})$, where $\underline{\arrowrho}$ satisfies $\arrowrho_{\boat,j}' \star \underline{\arrowrho} = \arrowrho_{\boat, j}$. As such, $B(\arrowrho) = B(\underline{\arrowrho})$ and so $\arrowrho$ must lie in the same $123$-equivalence class as $\underline{\arrowrho}$. The cardinality of the sets $\mathcal{Q}$ and $\mathcal{Q}' \cup \Xi_L \cup \{q\}$ agree, so in fact $\arrowrho = \underline{\arrowrho}$.
\end{proof}

We finish this section by utilising the results above to understand properly the behaviour of moduli spaces under the neck-stretching operation.

\begin{prop}\label{neckstretch}Let $B, S, \arrowrho_\boat, \arrowrho_\anchor$ and $\vec\sigma_\boat$ be such that
$${{\pmb{\cM}}}_{\ge T} := {\pmb{\cM}}^{B_{\bar{e}}}_{\ge T, \bar{e}, \varepsilon}(\vec\bx; S_{\bar{e}}; \arrowrho_\boat, \arrowrho_\anchor ; \sigma_\boat, \sigma_\anchor)$$
is one-dimensional, and $\vec\sigma_\boat$ is interleaved.

Say, as shorthand, that $(B'; \arrowrho'_\boat, \sigma'_\boat; \arrowrho'_\anchor, \sigma'_\anchor)$ splays $(B; \arrowrho_\boat, \sigma_\boat; \arrowrho_\anchor, \sigma_\anchor)$ if $B'$ splays $B$, and $(\arrowrho'_\boat, \sigma'_\boat)$ and $(\arrowrho'_\anchor, \sigma'_\anchor)$ splay $(\arrowrho_\boat, \sigma_\boat)$ and $(\arrowrho_\anchor, \sigma_\anchor)$.

Denote by
$$\cM_{\text{splayed}}: = \bigcup_{\substack{(B'; \arrowrho'_\boat, \sigma'_\boat; \arrowrho'_\anchor, \sigma'_\anchor)\\ \text{ splays }\\ (B; \arrowrho_\boat, \sigma_\boat; \arrowrho_\anchor, \sigma_\anchor)}} \cM^{B'}(\bx, \btheta_+(k-1), \by; S'; \arrowrho'_\boat, \arrowrho'_\anchor; \sigma'_\anchor, \sigma'_\boat; \bJ).
$$

Then $\overline{{\pmb{\cM}}}_{\ge T}$  is a compact one-manifold with boundary given by the disjoint union of $\cM^{B_{\bar{e}}}_{\bar{e},\varepsilon}(\bx, \btheta_+(k-2), \by; S; \arrowrho_\boat, \arrowrho_\anchor; \vec\sigma_\boat, \vec\sigma_\anchor)$ and $$\cM_{\text{splayed}} \times \cM^{B_\anchor}(\btheta^{1,2}, S_\anchor, \rho),$$
where $\rho$ is the unique jumping chord in the final part of $\vec\sigma_\boat$.

Therefore, the moduli spaces $$\cM^B(\bx, \btheta_+(k-2), \by; S; \arrowrho_\boat, \arrowrho_\anchor; \vec\sigma_\boat, \vec\sigma_\anchor; \bJ^T)$$ and $\cM_{\text{splayed}}$  contain the same number of elements modulo two.
\end{prop}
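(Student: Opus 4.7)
The plan is to analyse $\overline{{\pmb{\cM}}}_{\ge T}$ as a compactified one-manifold with boundary and to classify its ends. First, one checks that for generic $\bJ$ and generic $T$, the moduli space ${\pmb{\cM}}_{\ge T}$ is itself a transversely-cut-out smooth $1$-manifold (this was noted just after the definition of $\bJ^t$), and, by Proposition \ref{perturbingidentification}, it is identified with its east-compactified perturbed counterpart ${{\pmb{\cM}}}^{B_{\bar{e}}}_{\ge T, \bar{e}, \varepsilon}$. The boundary at $t = T$ is exactly the moduli space $\cM^{B_{\bar{e}}}_{\bar{e},\varepsilon}(\bx, \btheta_+(k-2), \by; S; \arrowrho_\boat, \arrowrho_\anchor; \vec\sigma_\boat, \vec\sigma_\anchor)$ appearing in the statement, so the rest of the argument concerns what happens as $t \to \infty$.

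Next, I would enumerate the possible degenerations as $t \to \infty$. By Gromov compactness together with the neck-stretching setup, any sequence $(u_n, t_n)$ with $t_n \to \infty$ subconverges either to an interior point of ${\pmb{\cM}}_{\ge T}$, to one of the ends classified in Theorem \ref{evenends} and Definition \ref{possibleends} (spinal two-story, mixed two-story, composition, collision, or cut-vanishing), or to an anchored holomorphic curve $(u', u_\anchor)$ of the type discussed in Section \ref{sub:anchorsperturbations}. The key point is that once $\vec\sigma_\boat$ is interleaved and the index is rigid, the non-anchored ends are either ruled out or already accounted for: composition, collision and cut-vanishing ends cannot degenerate simultaneously with the neck stretching because each would cost an extra dimension by Proposition \ref{indextransversalitycut} and Proposition \ref{bordedpolyindex}; mixed two-story ends at $t = \infty$ would require an easterly piece persisting through the neck-stretching, which is excluded by Proposition \ref{easterlyoptions} together with $\vec\sigma_\boat$ being interleaved; and spinal two-story ends contribute to the open cobordism structure of ${\pmb{\cM}}_{\ge T}$ rather than to its $t = \infty$ boundary.

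The principal step is then to match the $t \to \infty$ limits with the product $\cM_{\text{splayed}} \times \cM^{B_\anchor}(\btheta^{1,2}; S_\anchor; \rho)$. Any such limit is an anchored pair $(u', u_\anchor)$, and Proposition \ref{anchorsaresimple} forces $u_\anchor$ to be a simple holomorphic anchor. Proposition \ref{simpleanchorssimpler} then forces $S_\anchor$ to be a disjoint union of topological discs with at most one east puncture per boundary component, and constrains the acute-corner count so that exactly one acute corner is added at $e^{\infty,1}$; the preceding proposition further forces $\vec\sigma'_\anchor$ to be interleaved, so $S_\anchor$ has exactly one east puncture overall, labelled by the unique jumping chord $\rho$ in the final part of $\vec\sigma_\boat$. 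Proposition \ref{anchorsawayfromedge} then identifies the domain of $u_\anchor$ as $B_\anchor$ supported in a single handle with $\btheta^{1,2} = \btheta^{1,2}_+$, and Proposition \ref{correctdataboundary} identifies $u'$ with an element of $\cM^{B'}(\bx, \btheta_+(k-1), \by; S'; \arrowrho'_\boat, \arrowrho'_\anchor; \vec\sigma'_\boat, \vec\sigma'_\anchor)$ where $(B'; \arrowrho'_\boat, \vec\sigma'_\boat; \arrowrho'_\anchor, \vec\sigma'_\anchor)$ splays $(B; \arrowrho_\boat, \vec\sigma_\boat; \arrowrho_\anchor, \vec\sigma_\anchor)$. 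Hence the set of $t = \infty$ limits is contained in $\cM_{\text{splayed}} \times \cM^{B_\anchor}(\btheta^{1,2}, S_\anchor, \rho)$, and conversely every such pair arises as the limit of a one-parameter family in ${\pmb{\cM}}_{\ge T}$ by the gluing result Proposition \ref{gluinganchored}.

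The main obstacle is to ensure that no exotic degeneration (e.g. a two-story anchored end, or simultaneous cut-vanishing and neck-stretching) contributes to the $t = \infty$ boundary; this is where the index bookkeeping of Proposition \ref{simpleanchorssimpler} and the interleaving hypothesis on $\vec\sigma_\boat$ do all the work. Once this classification of boundary components is complete, $\overline{{\pmb{\cM}}}_{\ge T}$ is a compact $1$-manifold whose boundary is the disjoint union claimed, and since the number of boundary points of a compact $1$-manifold is even, the two moduli spaces $\cM^B(\bx, \btheta_+(k-2), \by; S; \arrowrho_\boat, \arrowrho_\anchor; \vec\sigma_\boat, \vec\sigma_\anchor; \bJ^T)$ and $\cM_{\text{splayed}}$ (the latter being in bijection with $\cM_{\text{splayed}} \times \cM^{B_\anchor}(\btheta^{1,2}, S_\anchor, \rho)$ by Propositions \ref{anchorsawayfromedge} and \ref{howmanyanchors}) have the same cardinality modulo two.
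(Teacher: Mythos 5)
Your proposal is correct and follows essentially the same route as the paper's (very terse) proof: classify the boundary of $\overline{{\pmb{\cM}}}_{\ge T}$ via the neck-stretching degeneration, identify the $t=\infty$ limits as anchored curves using Propositions~\ref{anchorsaresimple}, \ref{simpleanchorssimpler}, \ref{anchorsawayfromedge} and \ref{correctdataboundary}, glue back with Proposition~\ref{gluinganchored}, and collapse the anchor factor mod~2 via Proposition~\ref{howmanyanchors}. Your second paragraph ruling out the non-anchored degenerations is actually more careful than the paper's proof, which leaves that implicit. One small imprecision: Proposition~\ref{simpleanchorssimpler} allows $A\in\{0,1\}$, not $A=1$; the conclusion that $S_\anchor$ has a single east puncture comes instead from the interleaving of $\vec\sigma_\anchor'$ established in the unnumbered proposition immediately preceding Proposition~\ref{correctdataboundary}, so the acute-corner count is not the load-bearing fact there.
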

\begin{proof}By Proposition \ref{correctdataboundary}, we know that any curve in $\partial\overline{{\pmb{\cM}}}_{\ge T}$ lives in the moduli space $$\cM_{\text{splayed}} \times  \cM^{B_\anchor}(\btheta^{1,2}, S_\anchor, \rho).$$
By Proposition \ref{anchorsaresimple}, we know that $S_\anchor$ is a disjoint union of topological disks, and thus by Proposition \ref{gluinganchored},  $\partial\overline{{\pmb{\cM}}}_{\ge T}$ is a compact one-manifold with boundary given.

The second part follows from the identification of $$\cM^{B_{\bar{e}}}_{\bar{e},\varepsilon}(\bx, \btheta_+(k-2), \by; S; \arrowrho_\boat, \arrowrho_\anchor; \vec\sigma_\boat, \vec\sigma_\anchor)$$ with $$\cM^B(\bx, \btheta_+(k-2), \by; S; \arrowrho_\boat, \arrowrho_\anchor; \vec\sigma_\boat, \vec\sigma_\anchor)$$ given in Propositions \ref{eastcompactifying} and \ref{perturbedcylinders}, and with the fact that  $\cM^{B_\anchor}(\btheta^{1,2}, S_\anchor; \rho)$ has one element modulo two by Proposition \ref{howmanyanchors}.
\end{proof}

\section{Statement and proof of theorems}
After all this formality, we are at long last ready to state some theorems.

Let $\border\cD$ be a bordered diagram with $h$ boundary components. We shall define a right $\cA_\infty$ multi-module over the torus algebra, which we call $\poly(\border\cD)$. We first define a series of auxiliary modules $\poly_k(\border\cD)$.

Consider the module generated over $\FF_2$ by the set of generators $\cG(\border\cD)$. This has an obvious right action by the ring of idempotents of $\cT^h$, defined by setting
 $$\bx \cdot \iota = \left\{\begin{array}{c c} \bx & \mbox{ if } \iota(\bx) = \iota\\
0 & \mbox{ otherwise}.\end{array}\right.$$

We define an $\cA_\infty$ module structure upon $\poly_k(\border\cD)$ by first defining maps
$$n^k_{i_1, \dots, i_h}: \poly_k(\border\cD) \otimes_{\cI} \cT_1^{\otimes i_1} \otimes \cdots \otimes  \cT_h^{\otimes i_h} \times \sigma(i_1, \dots, i_h) \rightarrow \poly_k(\border\cD),$$
where $\sigma(i_1, \dots, i_h)$ denotes the set of all splicings of all sets of sequences of Reeb chords in  $\cT_1^{\otimes i_1} \otimes \cdots \otimes  \cT_h^{\otimes i_h}$.

Suppose that $\arrowrho = \arrowrho_1, \dots, \arrowrho_h$ is a set of sequences of Reeb multi-chords in $\cT^h$, where $|\arrowrho_j| = i_j$. For every interleaved splicing $\sigma$ of $\arrowrho$, we define the coefficient 
$$a^k(\bx, \by, \arrowrho, \sigma) : =\sum_{\substack{(\arrowrho_\boat, \sigma_\boat), (\arrowrho_\anchor, \sigma_\anchor)\\ \text{is a\ }k-\text{shipping}\\\text{ of\ }(\arrowrho, \sigma)}} \sum_{\substack{B\in \pi_2^{\btheta_+}(\bx, \by)\\ \chi(S) = \chi_\emb(S)\\  \ind(B, S \arrowrho_\boat, \arrowrho_\anchor, \sigma_\boat) = 2 - k}} \# \cM^B(\bx, \btheta_+(k-2), \by'; S; \arrowrho_\boat, \arrowrho_\anchor; \sigma_{\boat}, \sigma_{\anchor})$$
where the nearest-point map takes $\by'$ to $\by$, and hence put

$$n^k_{i_1, \dots, i_h}(\bx, \arrowrho, \sigma) := \sum_{\by \in \cG(\border\cD)} a^k(\bx, \by, \arrowrho, \sigma) \cdot \by.$$

We define:
$$m^k_{i_1, \dots, i_h}: \poly_k(\border\cD) \otimes_{\cI} \cA_1^{\otimes i_1} \otimes \cdots \otimes  \cA_h^{\otimes i_h} \rightarrow \poly_k(\border\cD)$$
by
 $$m^k_{i_1, \dots, i_h}(x, \arrowrho) : = \sum_{\sigma \text{\ an interleaving for\ }\arrowrho} n^k_{i_1, \dots, i_h}(x, \arrowrho, \sigma).$$

\begin{prop}\label{kiszeroisbsa}For sufficiently generic choice of $\bJ$, the definition of $\poly_0(\border \cD)$ above agrees with the definition of the multi-module $BSA(\border\cD)$ described in \cite[Section 8.3]{Zarev}.
\end{prop}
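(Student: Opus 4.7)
The plan is to check that, when $k=0$, every ingredient in the definition of $\poly_k(\border\cD)$ collapses onto the corresponding ingredient in Zarev's definition of $\bsa(\border\cD)$.

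First I would observe that the underlying diagram of $\poly_0(\border\cD)$ is $\border\cD$ itself. By the convention at the end of Section~\ref{diagrams}, the index $k$ counts the splayed copies of $\balpha$ that are added to a bordered diagram to obtain a partially splayed diagram; for $k=0$ no copies are added, so the relevant diagram has just the two sets of attaching curves $\bbeta$ and $\balpha = \balpha^a \cup \balpha^c$. Consequently the underlying $\FF_2$-module and right action of the idempotent subring of $\cT^h$ coincides tautologically with that of $\bsa(\border\cD)$, both being freely generated by $\cG(\border\cD)$ with the idempotent action $\bx \cdot \iota = \bx$ iff $\iota(\bx) = \iota$.

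Next I would identify the structure maps. For $k=0$ the definition of a $k$-shipping forces $\arrowrho_\anchor$ to be empty (there are no approximation regions in the diagram, hence no jumping anchor chords can be produced by splaying a domain), so $(\arrowrho_\boat, \vec\sigma_\boat) = (\arrowrho, \vec\sigma)$ and $\vec\sigma_\anchor$ is trivial. Since there are also no punctures in $\rho_{123}(S)$ and no $\btheta_+$ insertions, the moduli space $\cM^B(\bx, \by; S; \arrowrho_\boat; \vec\sigma_\boat)$ is exactly the bordered bi-gon moduli space $\tildemod^B(\bx, \by; S; \arrowrho_\boat; \vec\sigma_\boat)$ of Section~\ref{borderedmoduli}, cut down by the splicing $\vec\sigma_\boat$ which imposes height-coincidences within parts and height-orderings between parts. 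Summing over all interleaved splicings $\vec\sigma$ of a fixed $\arrowrho$ then exactly reproduces Zarev's prescription of summing, for the discrete partition of the east asymptotics on each boundary component, over all orderings of jumping-chord heights across distinct boundary components. Combined with the embedded-source restriction $\chi(S)=\chi_\emb(S)$ (Proposition~\ref{embeddedeulerpoly}) and the index condition that cuts the moduli space out to zero dimensions, this matches Zarev's count coefficient-by-coefficient.

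Finally I would deduce the agreement of $\cA_\infty$ operations. The $\cA_\infty$ relations for $\bsa(\border\cD)$ are proved by Zarev via the standard count of ends of 1-dimensional bordered moduli spaces; under the above identification these correspond, for $k=0$, to the two-story spinal ends and the composition/collision ends of Definition~\ref{possibleends} (the mixed two-story ends, the cut-vanishing ends, and the anchored degenerations of Proposition~\ref{neckstretch} all vanish automatically for $k=0$ since they require splayed curves or $\btheta_+$ insertions to be present). Thus the partial $\cA_\infty$ relations of Theorem~\ref{evenends} collapse to the standard $\cA_\infty$ relations used by Zarev, and the two $\cA_\infty$ multi-modules are literally equal. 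The main (entirely routine) obstacle I anticipate is a careful bookkeeping check that the dictionary ``interleaved splicings $\leftrightarrow$ ordered discrete partitions of east asymptotics across boundary components'' is compatible with the index normalisation $\ind = 2 - k = 2$ that singles out 0-dimensional bi-gon moduli spaces after quotienting by the $\RR$-action.
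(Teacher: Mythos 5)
Your argument correctly identifies that the underlying module and idempotent action are tautologically the same, and correctly observes that for $k=0$ the shipping forces $\arrowrho_\anchor = \emptyset$ and $\arrowrho_\boat = \arrowrho$, so the moduli spaces are genuine bordered bi-gon moduli spaces with splicing constraints. You also correctly note that the extra degenerations (mixed ends, cut-vanishing ends, anchored ends) cannot occur for $k=0$, although this observation concerns Theorem~\ref{ainfinity} rather than the proposition at hand.

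However, there is a genuine gap at the step where you claim that summing over interleaved splicings ``exactly reproduces Zarev's prescription.'' Zarev's structure map counts curves in the moduli space $\cM(\bx, \by; S; \arrowrho)$, which carries \emph{no} constraint on the relative heights of jumping-chord punctures across distinct boundary components: it includes curves whose cross-component heights coincide. Your claim implicitly requires that this moduli space decomposes as a disjoint union over interleaved splicings, i.e.\ that the coincidence loci are empty. This is not a matter of ``bookkeeping'' but the crux of the proposition. The paper supplies the missing argument: if a zero-dimensional moduli space contained a curve where two jumping punctures on different boundary components had coinciding heights, that curve would lie in $\cM(\bx, \by; S; \arrowrho; \sigma)$ for some non-interleaved splicing $\sigma$, and by the index formula at the end of Section~\ref{splicingbordered} (each collision in $\sigma$ subtracts one from the expected dimension) this moduli space has negative expected dimension, hence is empty for sufficiently generic $\bJ$. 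Only once this is established does the desired decomposition over interleaved splicings hold, and hence the coefficient-by-coefficient agreement with Zarev's count. Your proposal would need to add exactly this index argument to be complete.
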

\begin{proof}In the case where $k = 0$, the above moduli spaces are formed of bi-gons in the bordered diagram $\border\cD$, with constraints upon evaluation maps given by the interleaving $\sigma$. To see that these are precisely the bi-gons counted in the definition of $BSA(\border\cD)$, we note that for any $(B, S, \arrowrho_\anchor)$ such that the moduli space $\cM(\bx, \by; S; \arrowrho_\anchor)$ is zero-dimensional (as counted in the definition of the $BSA$ multiplication maps), the subspace of $\cM(\bx, \by; S; \arrowrho_\anchor)$ where the heights of any punctures $q, q'$ labelled by a jumping chord coincide is given by $\cM(\bx, \by; S; \arrowrho_\anchor; \sigma)$ for $\sigma$ some non-interleaved splicing of $\arrowrho_\anchor$. By the formula at the end of Section \ref{splicingbordered} (or Proposition \ref{bordedpolyindex}), $\ind(B,S, \arrowrho_\anchor, \sigma)< 1$ and so the corresponding moduli space is empty: i.e. the multiplication maps in $BSA(\border\cD)$ do not count curves where heights coincide. (This should be compared with \cite[Section 2.4.3]{Lipshitz:bimodules}.)

As such, the moduli space $\cM(\bx, \by; S; \arrowrho_\anchor)$  splits as a union over all interleaved splicings of moduli spaces $\cM(\bx, \by; S; \arrowrho_\anchor; \sigma)$, which is precisely what the maps $m^0_{i_1,\dots,i_h}$ count.
\end{proof}

\begin{prop}\label{kappabound} (Compare \cite[Lemma 7.7]{LOT}, \cite[Theorem 7.8.]{Zarev})Suppose that the diagram $\border \cD$ is provincially admissible. For any fixed generators $\bx$ and $\by$, and a set of non-empty sets of Reeb chords, there are at most finitely many $B \in \pi_2^{\btheta_+}(\bx, \by)$ such that $\cM^B(\bx, \btheta_+(k-2), \by'; S; \arrowrho_\boat, \arrowrho_\anchor)$ are non-empty --- i.e. the sums in the definition of the maps $m_n$ are finite.

Moreover, if $\border \cD$ is admissible, then there is a constant $\kappa$ depending only upon $\border \cD$ for which every map $m^k_i$ with $i > \kappa$ is zero, independent of $k$.
\end{prop}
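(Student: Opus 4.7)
The plan is to adapt to our splayed-diagram setting the standard admissibility arguments that appear in \cite[Lemma 7.7]{LOT} and \cite[Theorem 7.8]{Zarev}.

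For the first claim, I would argue by contradiction. Suppose there were an infinite family $\{B_n\}_{n \ge 1}$ of distinct homology classes in $\pi_2^{\btheta_+}(\bx,\by)$, each carrying a holomorphic representative with the prescribed Reeb chord data $\arrowrho_\boat, \arrowrho_\anchor$. Since holomorphic curves have non-negative domain multiplicities, $B_n \ge 0$ for every $n$. Because the chord data is fixed, the images of the $B_n$ in $H_1(\partial\bar\Sigma, \ba)$ all coincide (by Definition \ref{compatible}), so the differences $P_n := B_n - B_1$ are periodic domains whose boundaries lie entirely in the attaching curves $\bgamma^i$. The map conditions (M-4) force each $B_n$ to have multiplicity zero at the basepoints $\bb$, so each $P_n$ is in fact a \emph{provincial} periodic domain. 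Pigeonhole on the finitely many possible sign patterns of $P_n$ extracts a subsequence with a common pair of positive/negative supports, and then the non-negativity constraint $B_1 + P_n \ge 0$ together with distinctness of the $B_n$ forces the multiplicities of $P_n$ to grow without bound at regions of positive $P_n$. This is incompatible with provincial admissibility of $\border\cD$, since any non-zero provincial periodic domain must possess a region of negative multiplicity.

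For the second claim I would invoke full admissibility. Using Proposition \ref{bordedpolyindex}, the condition $\ind(B, S, \arrowrho_\boat, \sigma_\boat) = 2 - k$ rearranges as
$$2e(B) = (2-k) - \tfrac{3-k}{2}g + \chi_\emb(S) + \col(\sigma_\boat) - |\arrowrho_\boat|,$$
where $\chi_\emb(S)$ is determined by $B$ via Proposition \ref{embeddedeulerpoly}. The total chord length $|\arrowrho| = |\arrowrho_\boat| + |\arrowrho_\anchor|$ controls the boundary class of $B$ in $H_1(\partial\bar\Sigma, \ba)$. On the other hand, as in the proof of \cite[Lemma 7.7]{LOT}, admissibility (no periodic domain has pure sign) converts non-negativity of $B$ into an a priori bound $\sum_R n_R(B) \le C \cdot \|[B]|_{\partial}\|$ for some constant $C = C(\border\cD)$, since otherwise one would produce a non-zero periodic domain with non-negative multiplicities. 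The Euler measure $e(B)$ is at most a constant multiple of $\sum_R n_R(B)$ (each region contributes $n_R(B) e(R)$), and combining the two estimates with the rearranged index equation yields an inequality of the form $|\arrowrho| \le \kappa$, where $\kappa$ depends only on the geometry of $\border\cD$ (the Euler measures of the regions, the bound $C$, and $g, h$).

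The main technical obstacle is verifying that $\kappa$ really is independent of $k$: a priori, the splayed diagrams $\cD^\partial(\varepsilon,\arrowiota)$ have a larger set of periodic domains than $\border\cD$, coming from the perturbed families $\balpha^i(d)$, and one worries that admissibility in $\border\cD$ may not control these. I expect to handle this by noting that any periodic domain in the splayed diagram decomposes, using the approximation-region analysis of Section \ref{splayingdomains} and Proposition \ref{constructdomains}, into (i) a piece supported away from the approximation regions, which descends to a periodic domain of $\border\cD$ and is controlled by admissibility of $\border\cD$, and (ii) a piece supported in the approximation regions, whose shape is dictated entirely by the idempotent changes recorded by the splicings (and hence by $\arrowrho_\anchor$). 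The distinguished generators $\btheta_+^{i,i+1}$ contribute no boundary in $H_1(\partial\bar\Sigma, \ba)$ between adjacent approximated curves, so the overall boundary class depends only on $\arrowrho$, and the constant $C$ from the admissibility bound can be taken uniformly over all $k$.
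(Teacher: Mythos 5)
Your proposal takes a genuinely different route from the paper. The paper proves Proposition \ref{kappabound} by induction on $k$: the base case $k=0$ is precisely Zarev's theorem (via Proposition \ref{kiszeroisbsa}), and for the inductive step one glues a holomorphic anchor onto any curve contributing at level $k$ (Proposition \ref{gluinganchored}), producing a curve contributing at level $k-1$ with a $(k-1)$-shipping of the same Reeb data; an infinitude at level $k$ would thus propagate to level $k-1$, contradicting the inductive hypothesis. The $\kappa$ in the second half is simply Zarev's constant for $m^0_i$, and is seen to be uniform by the same gluing argument. This entirely avoids re-proving admissibility estimates in the splayed diagrams.

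Your proof attempts to establish the estimates directly in the splayed diagrams, and you correctly identify the central obstacle: the domains $B$ live in $\dom(\cD^\partial(\varepsilon,\arrowiota))$, which has additional attaching curves and hence additional periodic domains (supported in the approximation regions and near $e^\infty_j$), and (provincial) admissibility of $\border\cD$ does not obviously imply (provincial) admissibility of $\cD^\partial(\varepsilon,\arrowiota)$ for every $\arrowiota$. Your final paragraph sketches a decomposition of periodic domains into an ``away from the approximation region'' piece and a piece dictated by the idempotent data, but this is stated as an expectation rather than verified --- in particular, you do not show that the two pieces of a periodic domain cannot conspire so that the piece supported in the approximation regions compensates for negative multiplicities of the piece that descends to $\border\cD$, nor do you bound the contribution of the handle regions near $e^\infty_j$. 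Until that decomposition argument is carried out (and it requires some care, since new obtuse corners appear at the points $e^{\infty,i}_j$), the direct route has a gap. The paper's inductive reduction to $k=0$ is precisely designed to sidestep this, at the cost of invoking the anchor-gluing machinery of Proposition \ref{gluinganchored}. If you prefer the self-contained direct route, you should either prove a lemma that admissibility is inherited by the bordered stabilisation and its splayings, or at least show that every periodic domain of $\cD^\partial(\varepsilon,\arrowiota)$ with nonnegative multiplicities is supported entirely within the approximation/handle regions and there has multiplicities bounded in terms of $\arrowrho$.
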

\begin{proof}We proceed by induction. In the case $k = 0$, by Proposition \ref{kiszeroisbsa} the proposition coincides with \cite[Theorem 7.8.]{Zarev}. 

For higher $k$, we prove the first half of the proposition by noting that by Proposition \ref{gluinganchored}, if there were an infinite number of $B \in \pi_2(\bx, \btheta_+(k), \by)$ with an element $u_B \in \cM^B(\bx, \btheta_+(k-2), \by'; S; \arrowrho_\boat, \arrowrho_\anchor; \sigma_\boat, \sigma_\anchor)$, we can glue $u_B$ to a holomorphic anchor to give a curve $u_{B'} \in \cM^{B'}(\bx, \btheta_+(k-3), \by'; S'; \arrowrho_\boat', \arrowrho_\anchor'; \sigma_\boat', \sigma_\anchor')$, where $(\arrowrho'_\boat, \sigma'_\boat), (\arrowrho_\anchor', \sigma_\anchor')$ is a $k-1$ shipping of $\arrowrho$. By induction hypothesis, this is impossible.

For the second half, we let $\kappa$ be the corresponding constant for $m^0_i$. Again, if some $i > \kappa$ does not satisfy that $m_i^k = 0$, we may glue the corresponding polygon to a holomorphic anchor, implying that $m_i^{k-1}$ is also nonzero. By induction hypothesis, this is, again, impossible.
\end{proof}

We now turn to the main definition and theorem of the paper. Consider the $2^h$ elementary splayings $\cD(\iota(\delta))$, where $\delta \in \{m,l\}^h$. The module $\poly(\cD)$ is generated over $\FF_2$ by the sum $\cG := \bigoplus_{\delta \in \{m,l\}^h}\cG(\cD(\iota(\delta)))$. We endow this module with the obvious right action of the ring of idempotents of $\cT^h$ by setting $$\bx \cdot \iota(\delta) = \left\{\begin{array}{c c} \bx & \mbox{ if } \bx \in \cG(\cD(\iota(\delta)))\\
0 & \mbox{ otherwise}.\end{array}\right.$$

We define an $\cA_\infty$ module structure upon $\poly(\border\cD)$ by first defining maps
$$n_{i_1, \dots, i_h}: \poly(\border\cD) \otimes_{\cI} \cT_1^{\otimes i_1} \otimes \cdots \otimes  \cT_h^{\otimes i_h} \times \sigma(i_1, \dots, i_h) \rightarrow \poly(\border\cD),$$
where $\sigma(i_1, \dots, i_h)$ denotes the set of all splicings of all sets of sequences of Reeb chords in  $\cT_1^{\otimes i_1} \otimes \cdots \otimes  \cT_h^{\otimes i_h}$.

Suppose that $\arrowrho = \arrowrho_1, \dots, \arrowrho_h$ is a set of sequences of Reeb multi-chords in $\cT^h$, where $|\arrowrho_j| = i_j$. For every interleaved splicing $\sigma$ of $\arrowrho$, we define
$$n^k_{i_1, \dots, i_h}(\bx, \arrowrho, \sigma) := \sum_{\by \in \cG(\border\cD)}  \sum_{\substack{B\in \pi_2^{\btheta_+}(\bx, \by)\\ \chi(S) = \chi_\emb(S)\\  \ind(B, S \arrowrho, \sigma) = 2 - k}} \# \cM^B(\bx, \btheta_+(k-2), \by'; S; \arrowrho; \sigma) \cdot \by,$$
where the nearest-point map takes $\by'$ to $\by$.

We define:
$$m_{i_1, \dots, i_h}: \poly(\border\cD) \otimes_{\cI} \cA_1^{\otimes i_1} \otimes \cdots \otimes  \cA_h^{\otimes i_h} \rightarrow \poly(\border\cD)$$
by
 $$m_{i_1, \dots, i_h}(x, \arrowrho) : = \sum_{\sigma \text{\ an interleaving for\ }\arrowrho} n_{i_1, \dots, i_h}(x, \arrowrho, \sigma).$$

We first verify that the modules $\poly_k(\border\cD)$ are $\cA_{\infty}$ modules.
\begin{thm}\label{ainfinity}The maps $\{n_{i_1, \dots, i_h}\}_{i_1, \dots, i_h}$ satisfy the partial $\cA_\infty$ relation. As such, the data $(\poly_k(\border\cD), \{m^k_{i_1, \dots, i_h}\}_{i_1, \dots, i_h}^\infty)$ forms an $\cA_\infty$ module over $\cT^h$.
\end{thm}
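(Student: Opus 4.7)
The plan is to verify the partial $\cA_\infty$ relation for the maps $\{n^k_{i_1,\dots,i_h}\}$ by applying Theorem \ref{evenends} to a carefully chosen family of one-dimensional moduli spaces; the $\cA_\infty$ relation for the sums $m^k_{i_1,\dots,i_h}$ will then follow from the unnumbered proposition at the end of Section \ref{algebra} which promotes partial $\cA_\infty$ relations to honest $\cA_\infty$ relations via summation over interleavings. Fixing $\bx \in \cG(\border\cD)$, a set of sequences of Reeb chords $\arrowrho$, and an interleaving $\vec\sigma$ of $\arrowrho$, the strategy is to sum the identity ``the mod-two count of boundary points of a compact one-manifold is zero'' over every $\by$, every $k$-shipping $(\arrowrho_\boat,\vec\sigma_\boat),(\arrowrho_\anchor,\vec\sigma_\anchor)$ of $(\arrowrho,\vec\sigma)$, every $S$ with $\chi(S)=\chi_\emb(S)$, and every $B \in \pi_2^{\btheta_+}(\bx,\by)$ for which $\cM^B(\bx,\vec\btheta_+(k-2),\by';S;\arrowrho_\boat,\arrowrho_\anchor;\vec\sigma_\boat,\vec\sigma_\anchor)$ has expected dimension one. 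Proposition \ref{kappabound} ensures the sum is finite, and Theorem \ref{evenends} supplies the enumeration of ends.

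The bulk of the argument is the type-by-type identification of these ends with terms in the partial $\cA_\infty$ relation. Spinal two-story ends, via Proposition \ref{gluingpolys}, recover a height-two tree splitting the data as $(\arrowlambda\star\arrowdelta,\vec\sigma_1\star\vec\sigma_2)=(\arrowrho,\vec\sigma)$ through an intermediate generator $\bw$; summing these gives exactly the composition term $\sum n^k(n^k(\bx,\arrowlambda,\vec\sigma_1),\arrowdelta,\vec\sigma_2)$ of the partial relation. The $(i,j)$-composition ends yield terms of the form $n^k(\bx,\bar\mu^i_j(\arrowrho),\vec\sigma^i_j)$ when the composition falls strictly within the boat part, while the mixed $(\kappa,j)$-composition ends, via the identification of Proposition \ref{compcollidentifications}, contribute the corresponding terms when the composition lies strictly within the anchor part; summed over all shippings these exhaust the compatible $(i,j)$ pairs of the full sequence $\arrowrho$. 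Analogously, the $\kappa$-collision ends and mixed $\kappa$-collision ends combine across shippings to yield $\sum_{\kappa\text{ collidable}} n^k(\bx,\arrowrho,\vec\sigma(\kappa))$.

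The remaining ingredient is to account for the cut-vanishing ends. A cut-vanishing end arises when a cut at a $\rho_{123}$-type puncture degenerates to length zero, producing a limit curve whose labels replace an adjacent pair $\rho_1,\rho_{23}$ or $\rho_{12},\rho_3$ (with the ``outside'' element lying in $\arrowrho_\anchor$) by $\rho_{123}$. Since both pairs multiply to $\rho_{123}$, the discussion in Section \ref{fixingcutbehaviour} implies that the $\rho_{12},\rho_3$-type and $\rho_1,\rho_{23}$-type prescribed-cut moduli spaces sit as distinct path-components of a single ambient moduli space whose common boundary is the cut-zero locus. Consequently, for each boat/anchor configuration where such a composition is possible, the two cut-vanishing ends that feed into the same limit moduli space cancel each other modulo two, contributing nothing to the final sum.

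The main obstacle will be the combinatorial bookkeeping needed to confirm that, after summation over all shippings, the boat and anchor composition (respectively collision) contributions assemble into exactly the partial $\cA_\infty$ terms indexed over $(i,j)$ compatible with the full $(\arrowrho,\vec\sigma)$ (resp.\ over $\kappa$ collidable), without overcount at indices that straddle the boat/anchor interface. Once this check is in hand--effectively a case analysis on whether the $i$-th chord of $\arrowrho_j$ belongs to the boat, the anchor, or the interface--every boundary point of every one-dimensional moduli space is accounted for, the three terms of the partial $\cA_\infty$ relation arise, and the theorem follows.
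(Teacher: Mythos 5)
Your outline correctly identifies the strategy---apply Theorem \ref{evenends} to one-dimensional moduli spaces summed over $k$-shippings of $(\arrowrho,\vec\sigma)$---and your identification of spinal two-story ends with the first sum, of boat-part $(i,j)$-composition ends together with mixed composition ends with the second sum, and of boat-part $\kappa$-collision ends together with mixed collision ends with the third sum, agrees with the paper's proof. The error is in your handling of the cut-vanishing ends, where you have a genuine gap.

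The claim that the cut-vanishing ends cancel pairwise does not hold within a single instance of the partial $\cA_\infty$ relation, and the conclusion that they ``contribute nothing'' is incompatible with the relation being verified. The two moduli spaces you want to pair---the one with $\rho_1,\rho_{23}$-type cut behavior prescribed at a corner and the one with $\rho_{12},\rho_3$-type---arise from shippings of two \emph{different} sets of sequences of Reeb chords, since the anchor sequence of a shipping is literally a suffix of the ambient $\arrowrho_j$. If $\arrowrho_j$ contains the adjacent pair $\rho_1,\rho_{23}$, then no shipping of $(\arrowrho,\vec\sigma)$ has an anchor sequence containing $\rho_{12},\rho_3$, so the partner end is simply absent from the sum being examined. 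The path-component statement from Section \ref{fixingcutbehaviour} that you invoke is about a single ambient moduli space with a \emph{free} (unconstrained) cut at one puncture; the constrained moduli spaces appearing in the coefficients $a^k(\bx,\by,\arrowrho,\sigma)$ are disjoint slices of different such spaces and are not glued to one another by that observation.

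What the cut-vanishing ends actually account for is precisely the subset of the second sum that your other ends leave uncovered: the terms $n^k(\bx,\bar\mu^i_j(\arrowrho),\vec\sigma^i_j)$ in which $\rho^i_j\cdot\rho^{i+1}_j=\rho_{123}$ with $\rho^i_j$ lying in an anchor part. For such a term, the $k$-shippings of $(\bar\mu^i_j(\arrowrho),\vec\sigma^i_j)$ that place the resulting $\rho_{123}$ in the anchor give moduli spaces in which the corresponding corner is prescribed to have $\rho_{123}$-type (no cut); that is exactly the cut-zero locus to which your degenerating sequences converge. These compositions are not seen by the $(i,j)$-composition ends (which require $\rho^i_j$ to be a boat chord) nor by the mixed two-story composition ends (which, by Proposition \ref{easterlyoptions} and Proposition \ref{compcollidentifications}, only produce products $\rho_{12}$ or $\rho_{23}$). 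Deleting the cut-vanishing contributions, as you propose, leaves those $\rho_{123}$-anchor-composition terms unmatched and the relation does not close.
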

\begin{proof}Recall the relation which we need to verify is
\begin{align*}0 & = \sum_{\substack{\arrowlambda \star \arrowdelta = \arrowrho \\ \sigma_1 \star \sigma_2 = \sigma}} n(n(x, \arrowlambda, \sigma_1), \arrowdelta, \sigma_2 )\\
&+ \sum_{i, j \text{\ compatible}} n(x,\bar{\mu}^i_j(\arrowrho), \sigma^i_j) \\
&+ \sum_{k' \text{\ collidable}}^{|\sigma|-1} n(x, \arrowrho, \sigma(k')).\end{align*}

Consider a one-dimensional moduli space $\cM = \cM^B(\bx, \btheta_+(k), \by; S; \arrowrho_\boat, \arrowrho_\anchor; \sigma_\boat, \sigma_\anchor)$, where $\sigma_\boat \star \sigma_\anchor = \sigma$, and $\sigma_\anchor$ is a $k$-splicing. By Proposition \ref{evenends}, the ends of this moduli space are formed of the following.
\begin{itemize}
\item Spinal two-story ends, corresponding with terms in the first sum in the statement of the theorem.
\item Mixed two-story ends, which, by Proposition \ref{easterlyoptions} are either composition ends or collision ends (see Definition \ref{mixednames}). Composition ends correspond (by Proposition \ref{compcollidentifications}) with terms in the second sum in the statement where $i,j$ satisfy that $\rho^i_j \in P^{i'}_j \in \sigma$ for $i'\ge k$, and $\rho^i_j, \rho^{i+1}_j$ are equal to $\rho_1, \rho_2$ or $\rho_2, \rho_3$; collision ends correspond with terms in the third sum in the statement where $k' \ge k$.
\item $(i,j)$-composition ends which correspond with terms in the second sum where $i, j$ satisfy that $\rho^i_j \in P^{i'}_j$ for $i' < k$.
\item  $k'$-collision ends, which correspond with terms in the third sum where $k' < k$.
\item Cut-vanishing ends, corresponding with terms in the third sum in the statement where $i,j$ satisfy that $\rho^i_j \in P^{i'}_j \in \Sigma$ for $i'\ge k$, and $\rho^i_j\cdot \rho^{i+1}_j$ is equal to $\rho_{123}$.
\end{itemize}
By Theorem \ref{evenends}, the number of these is zero modulo two; it corresponds that the sum is zero, as required.
\end{proof}
We will not show directly that $\poly_k(\border\cD)$ is an invariant of $\border\cD$ up to equivalence --- indeed, we have not discussed any notion of equivalence for diagrams. Instead, we just need to understand how $\poly_k(\border\cD)$ depends upon the choice of generic admissible compatible almost-complex structures $\bJ$.

\begin{prop}\label{changeofcomplexstructure}Let $\bJ_0$ and $\bJ_1$ be generic, admissible, compatible families of almost-complex structures. Then the $\cA_\infty$ modules $\poly_k(\border\cD, \bJ_0)$ and $\poly_k(\border\cD, \bJ_1)$ are $\cA_\infty$ homotopy equivalent.
\end{prop}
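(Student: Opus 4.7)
The strategy is the standard continuation-map argument, adapted to the partial $\cA_\infty$ setting developed in Section \ref{algebra}. Fix a path $\bJ_t$, $t \in [0,1]$, of admissible compatible families of almost-complex structures with $\bJ_t = \bJ_0$ for $t$ near $0$ and $\bJ_t = \bJ_1$ for $t$ near $1$; by a minor variant of Propositions \ref{borderedpolytransversality} and \ref{indextransversalitycut} (applied to the moduli problem over $[0,1] \times X_{\cD,m}$, parameterised by $t$), one may choose this path so that every relevant parameterised moduli space
\[
\cM^B_{\bJ_t}(\bx, \vec\btheta_+(k-2), \by'; S; \arrowrho_\boat, \arrowrho_\anchor; \vec\sigma_\boat, \vec\sigma_\anchor) := \bigsqcup_{t \in [0,1]} \cM^B(\bx, \vec\btheta_+(k-2), \by'; S; \arrowrho_\boat, \arrowrho_\anchor; \vec\sigma_\boat, \vec\sigma_\anchor; \bJ_t)
\]
is a manifold of the correct (one higher) dimension, transverse to the relevant diagonal submanifolds encoding the splicings.

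First I would use this to define an $\cA_\infty$ morphism $F^{\bJ_t}\colon \poly_k(\border\cD, \bJ_0) \to \poly_k(\border\cD, \bJ_1)$. Exactly as for the multiplication maps $m^k_{i_1,\dots,i_h}$, I define maps $f^{\bJ_t}_{i_1,\dots,i_h}$ as sums over interleaved splicings of auxiliary maps $g^{\bJ_t}_{i_1,\dots,i_h}(\bx, \arrowrho, \vec\sigma)$ which count rigid elements of the zero-dimensional parameterised moduli spaces above (i.e.\ where the unparameterised moduli space has index $1-k$ rather than $2-k$). The analogue of Theorem \ref{ainfinity} to prove is that the $g$'s satisfy the partial $\cA_\infty$ morphism relation
\begin{align*}
0 \;=\; & \sum_{\substack{\arrowlambda \star \arrowdelta = \arrowrho \\ \vec\sigma_1 \star \vec\sigma_2 = \vec\sigma}} \bigl( g^{\bJ_t}(n^0(x,\arrowlambda,\vec\sigma_1),\arrowdelta,\vec\sigma_2) + n^1(g^{\bJ_t}(x,\arrowlambda,\vec\sigma_1),\arrowdelta,\vec\sigma_2) \bigr) \\
& + \sum_{i,j \text{ compatible}} g^{\bJ_t}(x, \bar\mu^i_j(\arrowrho), \vec\sigma^i_j) + \sum_{k'\text{ collidable}} g^{\bJ_t}(x,\arrowrho,\vec\sigma(k')),
\end{align*}
where $n^0, n^1$ are the auxiliary maps for $\poly_k(\border\cD,\bJ_0), \poly_k(\border\cD,\bJ_1)$. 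The proof is the parameterised analogue of Theorem \ref{ainfinity}: the boundary of a generic one-dimensional parameterised moduli space consists of the same five types of ends listed in Definition \ref{possibleends} (with both stories counted using $\bJ_t$ for the same $t$), together with two new types of end coming from $t = 0$ and $t = 1$, and these latter ends account precisely for the terms $n^1(g^{\bJ_t}(\cdots),\cdots)$ and $g^{\bJ_t}(n^0(\cdots),\cdots)$ respectively. All the analysis of easterly pieces, composition/collision ends, and cut-vanishing ends in Section \ref{easterlyrestrictions} carries over verbatim, since it is local in $t$.

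Next I would construct $G^{\bJ_{1-t}}\colon \poly_k(\border\cD, \bJ_1) \to \poly_k(\border\cD, \bJ_0)$ in the same way, using the reversed path. To show that $G^{\bJ_{1-t}} \circ F^{\bJ_t}$ is $\cA_\infty$-homotopic to the identity morphism, pick a generic two-parameter family $\bJ_{t,s}$, $(t,s) \in [0,1] \times [0,1]$, of almost-complex structures with $\bJ_{t,0}$ the concatenation of $\bJ_t$ and $\bJ_{1-t}$ and $\bJ_{t,1} \equiv \bJ_0$. Counting rigid curves in the resulting two-parameter moduli spaces defines an $\cA_\infty$ homotopy $H$; the boundary of the corresponding one-dimensional moduli spaces, parameterised by $(t,s) \in [0,1]^2$, has contributions from $s = 0$, $s = 1$, and the usual degenerations. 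Proving that this gives the partial $\cA_\infty$ homotopy relation
\[
(G \circ F)_* - \mathrm{id}_* = m^0 \circ H + H \circ m^0 + (\text{composition/collision/cut terms})
\]
amounts once more to listing the ends of a one-parameter family of moduli spaces, as in Theorem \ref{evenends}. The symmetric argument handles $F \circ G$.

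The main obstacle will be verifying that the analysis of Section \ref{easterlyrestrictions} continues to apply in the parameterised setting. Specifically, the proofs of Propositions \ref{easterlyinapprox}--\ref{easterlyoptions} rely on the family $\bJ$ being generic so that moduli spaces of negative expected dimension are empty; in the parameterised setting we need that for generic $\bJ_t$ (respectively generic $\bJ_{t,s}$), moduli spaces of parameterised curves of expected dimension $-1$ are empty, which is the content of the parameterised transversality theorem and requires only that the space of families $\bJ$ be large enough, as is guaranteed by Proposition \ref{varyingsmallfamily}. Granting this, every step in the argument is a direct parameterised analogue of a result already established, and the partial $\cA_\infty$ relations for $F$, $G$, and $H$ follow from enumerating the ends of one-dimensional (parameterised) moduli spaces exactly as in the proof of Theorem \ref{ainfinity}.
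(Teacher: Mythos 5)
Your overall strategy---a continuation $\cA_\infty$ morphism and a two-parameter homotopy, mirroring the degeneration analysis of Theorem \ref{ainfinity}---matches the intent of the paper's proof, which is simply a citation to \cite[Section 7.3.1]{LOT} and \cite[Proposition 3.30]{LOT:SSII}, so your sketch is attempting to unpack those references. However there is a genuine error in the moduli problem you set up.

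You define the parameterised moduli space as the disjoint union $\bigsqcup_{t\in[0,1]}\cM^B(\cdots;\bJ_t)$. With that setup, the ends of a one-dimensional parameterised moduli space at $t=0$ and $t=1$ are simply rigid $\bJ_0$- and $\bJ_1$-holomorphic curves (unparameterised index $2-k$); they contribute the plain terms $n^0(\bx,\arrowrho,\vec\sigma)$ and $n^1(\bx,\arrowrho,\vec\sigma)$, not the composed terms $g(n^0(\cdots),\cdots)$ and $n^1(g(\cdots),\cdots)$ you claim. The two-story breakings---which are the only places a negative-index $g$-type story can appear---occur at interior parameter values $t_0\in(0,1)$, and there the companion story is $\bJ_{t_0}$-holomorphic, controlled by the structure map at the non-generic $\bJ_{t_0}$, not by $n^0$ or $n^1$. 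What you get from your setup is the Floer bifurcation relation, not the $\cA_\infty$ morphism relation, and your maps $g$ need not satisfy the morphism relation you wrote down. (One can extract a zigzag of $\cA_\infty$ homotopy equivalences from the bifurcation analysis, but that is a different and considerably more delicate argument than the one you outlined.)

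The standard fix, and the one used in the sources the paper cites, is to build the interpolation into the target rather than parameterise externally: choose almost-complex data $\bJ$ on $\Sigma\times D_m$ which restricts to something consistent with $\bJ_0$ near the edges corresponding to the incoming module input and to $\bJ_1$ near the outgoing edge, interpolated smoothly in between and consistently across strata of $\conf(D_m)$. Counting $\bJ$-holomorphic curves for this single (non-$\RR$-translation-invariant) family then does define a morphism $f$: degenerations at the incoming end land on $\bJ_0$-holomorphic stories and give $f(n^0(\cdots),\cdots)$; degenerations at the outgoing end land on $\bJ_1$-holomorphic stories and give $n^1(f(\cdots),\cdots)$; and the easterly, composition, collision, and cut-vanishing ends contribute exactly as in Theorem \ref{ainfinity}. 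The statement "modifying to the case of splayed polygons" in the paper's proof refers to the need to carry the interpolating $\bJ$ through the splicing and cut-type constraints, and that is the step where the analysis of Section \ref{easterlyrestrictions} does the work, exactly as you anticipate in your final paragraph. With the interpolating-target setup in place, the rest of your sketch (the reversed-path map and the two-parameter homotopy) goes through.
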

\begin{proof}This follows by modifying \cite[Section 7.3.1]{LOT} to the case of splayed polygons --- combining it with, say, \cite[Proposition 3.30]{LOT:SSII}, and the proof of Theorem \ref{ainfinity} above.
\end{proof}

\begin{thm}\label{polyequivalence}Let $k \ge 0$ be an integer. Then the $\cA_\infty$ modules $\poly_k(\border\cD, \bJ)$ and $\poly_{k+1}(\border\cD, \bJ)$ are $\cA_\infty$ homotopy equivalent.
\end{thm}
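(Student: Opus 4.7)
The plan is a neck-stretching comparison, combining the almost-complex-structure invariance from Proposition \ref{changeofcomplexstructure} with the identification of neck-stretched counts with splayed counts from Proposition \ref{neckstretch}. First observe that the underlying $\FF_2$-modules, together with their right $\cI$-actions, of $\poly_k(\border\cD)$ and $\poly_{k+1}(\border\cD)$ are literally identical: both are freely generated by $\cG(\border\cD)$ with the idempotent action determined by $\iota(\bx)$. Thus everything hinges on identifying structure maps. Starting from a generic, admissible, compatible family $\bJ$, let $\bJ^t$ be the one-parameter family obtained by inserting a neck of length $t$ along the boundary of the half-disc neighbourhood $\eta^m_\varepsilon$ of the chosen interior point of $e^1 \subset \partial D_m$, for every $m$, as in Section \ref{sub:anchorsperturbations}. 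By the transversality discussion preceding Proposition \ref{neckstretch}, for generic choice of $T > 0$ the family $\bJ^T$ is itself generic, so Proposition \ref{changeofcomplexstructure} gives an $\cA_\infty$ homotopy equivalence
$$\poly_k(\border\cD, \bJ) \simeq \poly_k(\border\cD, \bJ^T).$$

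The key step is to show that for $T$ sufficiently large, we have the \emph{strict equality} $\poly_k(\border\cD, \bJ^T) = \poly_{k+1}(\border\cD, \bJ)$. Fix an input $\bx \otimes \arrowrho$, an output $\by$, and an interleaving $\sigma$ of $\arrowrho$. The coefficient of $\by$ in $m_n^k(\bx, \arrowrho)$, using $\bJ^T$, equals the sum over $k$-shippings $(\arrowrho_\boat,\sigma_\boat;\arrowrho_\anchor,\sigma_\anchor)$ of $(\arrowrho,\sigma)$ and over embedded $(B,S)$ with $\ind=2-k$ of $\#\cM^B(\bx,\btheta_+(k-2),\by';S;\arrowrho_\boat,\arrowrho_\anchor;\sigma_\boat,\sigma_\anchor;\bJ^T)$. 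By Proposition \ref{neckstretch}, for $T$ large each such count equals modulo two the sum over $(k+1)$-shippings $(\arrowrho'_\boat,\sigma'_\boat;\arrowrho'_\anchor,\sigma'_\anchor)$ splaying the given $k$-shipping of $\#\cM^{B'}(\bx,\btheta_+(k-1),\by';S';\arrowrho'_\boat,\arrowrho'_\anchor;\sigma'_\boat,\sigma'_\anchor;\bJ)$. Every $(k+1)$-shipping splays a unique $k$-shipping --- namely the one obtained by moving the first jumping part of $\sigma'_\anchor$ back into the tail of $\sigma'_\boat$ --- so the double sum over $k$-shippings and their $(k+1)$-splayings reorganises as a single sum over all $(k+1)$-shippings. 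This is exactly the coefficient of $\by$ in $m_n^{k+1}(\bx,\arrowrho)$ computed with $\bJ$.

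For a single $T$ to witness this identification simultaneously across all structure maps, invoke Proposition \ref{kappabound}: admissibility of $\border\cD$ yields a uniform bound $\kappa$ on the arity of nonzero structure maps, with each a finite sum over domains. Thus a single $T$ works uniformly, giving the claimed equality of $\cA_\infty$-modules, and composing with the previous homotopy equivalence yields $\poly_k(\border\cD,\bJ) \simeq \poly_{k+1}(\border\cD,\bJ)$. The main obstacle --- and the place where care must be taken --- is the combinatorial matching in the middle paragraph: verifying that the "splays" relation on shippings defines a fibration of $(k+1)$-shippings over $k$-shippings whose total space matches the indexing set of Proposition \ref{neckstretch}'s $\cM_{\text{splayed}}$, and that under this correspondence the data $(B,S,\arrowrho_\boat,\arrowrho_\anchor,\sigma_\boat,\sigma_\anchor)$ on the $\bJ^T$-side lines up precisely with the $(B',S',\arrowrho'_\boat,\arrowrho'_\anchor,\sigma'_\boat,\sigma'_\anchor)$ that $\poly_{k+1}$ demands, with generators identified via nearest-point maps. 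A secondary concern is the merely provincially admissible case, where Proposition \ref{kappabound} no longer produces a global $\kappa$; there one must argue coefficient-by-coefficient at each fixed $\bx,\arrowrho$ and assemble the resulting equalities, or work filtration-by-filtration in the domain, rather than appealing to a single $T$ at once.
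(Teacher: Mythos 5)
Your proof is correct and follows the same route as the paper: neck-stretching to $\bJ^T$, invoking Proposition \ref{changeofcomplexstructure} for $\poly_k(\border\cD,\bJ) \simeq \poly_k(\border\cD,\bJ^T)$, and using Proposition \ref{neckstretch} to identify the $\bJ^T$-counts over $k$-shippings with the $\bJ$-counts over $(k+1)$-shippings. Your explicit reorganisation of the double sum (over $k$-shippings and their splayings) into a single sum over $(k+1)$-shippings, and your observation that a single $T$ suffices via the admissibility bound $\kappa$, make precise a couple of points the paper leaves implicit.
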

\begin{proof}
By Proposition \ref{neckstretch}, and the finiteness of the zero-dimensional moduli spaces involved in the definition of the $m^k_i$ and $m^{k+1}_i$ there is some $T >0$ such that, for every $\bx, \by, B, S, \arrowrho, \vec\sigma$,the moduli spaces $\cM^B(\bx, \btheta_+(k), \by; S; \arrowrho_\boat, \arrowrho_\anchor; \sigma_\boat, \sigma_\anchor, \bJ^T)$ and
$$\cM_{\text{splayed}}: = \bigcup_{\substack{(B'; \arrowrho'_\boat, \sigma'_\boat; \arrowrho'_\anchor, \sigma'_\anchor)\\ \text{ splays }\\ (B; \arrowrho_\boat, \sigma_\boat; \arrowrho_\anchor, \sigma_\anchor)}} \cM^{B'}(\bx, \btheta_+(k-1), \by; S'; \arrowrho'_\boat, \arrowrho'_\anchor; \sigma'_\anchor, \sigma'_\boat; \bJ).
$$
 are identified.

It follows that if we sum the number of elements of moduli spaces of both types over all $B \in \pi_2^{\btheta_+}(\bx, \by)$, we get the same number modulo 2. These are the coefficients of $\by$ in the maps  $n_{i_1, \dots, i_h}^k(\bx, \arrowrho)$, defined with respect to $\bJ^T$, and of $\by$ in $n_{i_1, \dots, i_h}^{k+1}(\bx, \arrowrho)$, defined with respect to $\bJ$, respectively. It follows that the $\cA_\infty$ modules $\poly_k(\border\cD, \bJ^T)$ and $\poly_k(\border\cD, \bJ)$ are isomorphic as $\cA_\infty$ modules.

By Proposition \ref{changeofcomplexstructure}, $\poly_k(\border\cD, \bJ^T)$ is $\cA_\infty$ homotopy equivalent to $\poly_k(\border\cD, \bJ)$, and so the proposition follows.
\end{proof}

We finally state and prove the main theorem of this paper:
\begin{thm}$\poly(\border \cD)$ is an $\cA_\infty$ module, and $\poly(\border\cD)$ is $\cA_\infty$ homotopy equivalent to $BSA(\border \cD)$.
\end{thm}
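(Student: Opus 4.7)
The plan is to first verify that $\poly(\border\cD)$ carries an honest $\cA_\infty$-module structure, then to identify it with $\poly_K(\border\cD)$ for some sufficiently large $K$, and finally to chain equivalences. Concretely, once this identification is in hand, iterating Theorem \ref{polyequivalence} together with Proposition \ref{kiszeroisbsa} yields
$$\poly(\border\cD) \;\cong\; \poly_K(\border\cD) \;\simeq\; \poly_{K-1}(\border\cD) \;\simeq\; \cdots \;\simeq\; \poly_0(\border\cD) \;=\; BSA(\border\cD),$$
giving the desired $\cA_\infty$-homotopy equivalence.

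For the first step I would adapt the proof of Theorem \ref{ainfinity} essentially verbatim, but work with the fully splayed diagrams $\cD(\varepsilon,\arrowiota)$ (whose underlying surfaces are closed) in place of the partially splayed ones. The structure maps $m_{i_1,\dots,i_h}$ are now defined entirely in terms of polygonal moduli spaces carrying no east asymptotics; the sequence of idempotents determining the relevant splayed diagram is the one read off from $\vec\sigma$ via Definition \ref{splicingtoidemps}. The analysis of ends of one-dimensional such moduli spaces, as in Theorem \ref{evenends}, leaves only spinal two-story ends, $(i,j)$-composition ends, collision ends, and cut-vanishing ends -- mixed two-story ends with an easterly bubble do not arise because the diagram has no east infinity in the sense of Section \ref{borderedpolygonmodulispaces}. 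Via Proposition \ref{compcollidentifications}, these ends match term-by-term with the three sums in the partial $\cA_\infty$ relation, which by the general passage established just before Theorem \ref{ainfinity} promotes to the ordinary $\cA_\infty$ relations.

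The second step, which I expect to be the principal source of bookkeeping difficulty, is the identification $\poly(\border\cD)\cong \poly_K(\border\cD)$ for $K$ larger than the admissibility bound $\kappa$ of Proposition \ref{kappabound}. On generators, the set $\cG(\border\cD)$ splits by idempotent as $\bigsqcup_{\delta}\cG(\border\cD)\cdot\iota(\delta)$, and the map $\cap_D^{-1}$ from the discussion after Definition \ref{defunsplayed} identifies each summand with $\cG(\cD(\iota(\delta)))$, giving an isomorphism of the underlying modules respecting the $\cI$-action. On operations, Proposition \ref{kappabound} forces $m^K_{i_1,\dots,i_h}=0$ whenever $i_1+\cdots+i_h>\kappa$, so we only need to match coefficients for total input length at most $\kappa<K$. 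For such inputs, the definition of a $K$-shipping forces $\arrowrho_\boat=\emptyset$: the splicing $\vec\sigma_\anchor$ must have at least $K$ parts, but $\vec\sigma_\anchor$ is obtained from $\vec\sigma$ by restriction, so when $K$ exceeds the length of $\vec\sigma$ the only option is $(\arrowrho_\anchor,\vec\sigma_\anchor)=(\arrowrho,\vec\sigma)$ with trailing empties. With $\arrowrho_\boat=\emptyset$ (which has no jumping chords), Proposition \ref{eastcompactifytoclosed} identifies the moduli space $\cM^B(\bx,\btheta_+(K-2),\by;S;\emptyset,\arrowrho;\emptyset,\vec\sigma)$ defined via the partially splayed diagram with the corresponding moduli space defined via the fully splayed diagram, which is precisely what enters the definition of $\poly(\border\cD)$.

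Combining these two steps produces a strict isomorphism of $\cA_\infty$-modules $\poly(\border\cD)\cong \poly_K(\border\cD)$, and the chain of equivalences above then delivers the theorem. The main obstacle is the care required in step two: ensuring that ``$K$ large enough'' really does force every $K$-shipping to be entirely anchored (requiring admissibility to invoke the uniform bound $\kappa$), and tracking the nearest-point identification of generators and of domains through the cap maps $\cap_D^{-1}$ so that the coefficient counts agree on the nose rather than merely up to homotopy. Once these matches are verified, no further analytic input is needed beyond the results already assembled in the preceding sections.
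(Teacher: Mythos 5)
Your overall strategy is the same as the paper's: identify $\poly(\border\cD)$ with $\poly_K(\border\cD)$ for a suitable $K$ via Propositions \ref{kappabound} and \ref{eastcompactifytoclosed}, then iterate Theorem \ref{polyequivalence} down to $\poly_0(\border\cD)=BSA(\border\cD)$. That second and third step is precisely the paper's proof (which takes $K=\kappa$ directly), and it already gives the $\cA_\infty$-module structure on $\poly(\border\cD)$ as a free by-product of the isomorphism with $\poly_\kappa(\border\cD)$ -- so your first step is redundant work.

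That redundant step also contains a genuine confusion. You assert that in the fully splayed (closed) diagrams, \emph{mixed two-story ends do not arise} and that the composition/collision/cut-vanishing terms in the $\cA_\infty$ relation instead come from the $(i,j)$-composition and collision ends. This is exactly backwards. Mixed two-story ends (Definition \ref{possibleends}, item 2) come from degenerations of the polygon $D_m$ along a non-spinal arc and involve only the $\bgamma^1,\dots,\bgamma^m$ curves; they occur in closed multi-diagrams just as in bordered ones, and by Propositions \ref{easterlyoptions} and \ref{compcollidentifications} they are the \emph{source} of the composition and collision terms when $\arrowrho_\boat=\emptyset$. By contrast, the $(i,j)$-composition ends and $k'$-collision ends (items 3 and 4 of Definition \ref{possibleends}) arise from holomorphic teeth degenerating at east infinity of the bordered surface and hence \emph{do not occur} once all inputs are anchored -- this is visible in the proof of Theorem \ref{ainfinity}, which tags those ends with boat indices $i'<k$ and $k'<k$ respectively. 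So in the fully splayed setting those two classes of ends vanish while the mixed two-story ends remain; your description swaps them.

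One smaller point in your step two: ``with trailing empties'' is not consistent with Definition \ref{splicing}, which requires each slot $i$ of a splicing to have $P^i_1\cup\cdots\cup P^i_h$ nonempty, so a splicing cannot be padded. The correct resolution is the one the paper uses implicitly: take $K=\kappa$ and observe that by admissibility (Proposition \ref{kappabound}) only inputs with at most $\kappa$ jumping chords survive, forcing every $\kappa$-shipping of a surviving input to have $\arrowrho_\boat=\emptyset$ and $\vec\sigma_\anchor$ interleaved. With that correction, the rest of your identification (generators via $\cap_D^{-1}$ and the nearest-point map, moduli spaces via Proposition \ref{eastcompactifytoclosed}) is the same as the paper's and is sound.
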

\begin{proof}Let $\kappa$ denote the constant defined in Proposition \ref{kappabound}. The only nonzero maps in $\poly_\kappa$ are determined by moduli spaces  $\cM^B(\bx, \btheta_+(k), \by; S; \emptyset, \arrowrho_\anchor; \emptyset, \sigma_\anchor; \bJ)$, which, by Proposition \ref{eastcompactifytoclosed}, are all identified with the moduli spaces used in the definition of the structure maps of $\poly(\border\cD)$. As such, $\poly(\border\cD)$ is an $\cA_\infty$-module which is $\cA_\infty$ isomorphic to $\poly_\kappa(\border\cD)$. Moreover, by Theorem \ref{polyequivalence},
$$BSA(\border\cD) \equiv \poly_0(\border\cD) \cong \poly_1(\border\cD) \cong \cdots \cong \poly_{\kappa -1}(\border\cD) \cong \poly_\kappa(\border\cD) \equiv \poly(\border\cD).$$
\end{proof}

\bibliographystyle{alpha}
\bibliography{library}
\end{document}